\theoremstyle{plain}
\newtheorem{thm}{Theorem}[section]
\newtheorem{prop}[thm]{Proposition}
\newtheorem{lem}[thm]{Lemma}		
\newtheorem{cor}[thm]{Corollary}
\newtheorem{cla}[thm]{Claim}
\theoremstyle{definition}		
\newtheorem{df}[thm]{Definition}
\newtheorem{ex}[thm]{Example}	
\newtheorem{rem}[thm]{Remark}
\newtheorem{quest}[thm]{Question}	
\theoremstyle{remark}
\DeclareMathOperator{\diam}{diam}
\DeclareMathOperator{\length}{length}
\newcommand{\cX}{\dot{X}}
\newcommand{\qX}{\overline{X}}
\newcommand{\qG}{\overline{G}}
\newcommand{\q}[1]{\overline{#1}}
\newcommand{\stab}[1]{\operatorname{Stab}(#1)}
\renewcommand{\leq}{\leqslant}
\renewcommand{\geq}{\geqslant}
\newcommand{\N}{\mathbf{N}}
\newcommand{\Z}{\mathbf{Z}}
\newcommand{\R}{\mathbf{R}}
\newcommand{\F}{\mathbf{F}}
\newcommand{\scrQ}{\mathscr{Q}}
\newcommand{\scrS}{\mathscr{S}}
\newcommand{\scrY}{\mathscr{Y}}
\newcommand{\scrV}{\mathscr{V}}
\let\le\leqslant
\let\ge\geqslant 
\let\leq\leqslant   
\let\geq\geqslant
\DeclareMathOperator{\Aut}{Aut}
\newcommand{\zinterval}[2]{
	\mathchoice%
	{\left \llbracket #1, \cdots, #2 \right \rrbracket}%
	{\llbracket #1, #2 \rrbracket}%
	{\llbracket #1, #2 \rrbracket}%
	{\llbracket #1, #2 \rrbracket}%
}
\newcommand{\group}[1]{
	\mathchoice%
	{\left\langle #1\right\rangle}%
	{\langle #1\rangle}%
	{\langle #1\rangle}%
	{\langle #1\rangle}%
}
\newcommand{\lnormal}{%
	\langle\!\langle%
}
\newcommand{\rnormal}{%
	\rangle\!\rangle%
}
\newcommand{\normal}[1]{%
	\lnormal #1 \rnormal
}
\newcommand{\sdirect}[3]{%
	#1 \rtimes_{#2} #3
}
\DeclarePairedDelimiterX\setc[2]{\{}{\}}{\,#1 \;\delimsize\colon\; #2\,}
\newcommand{\stlen}[1]{
	\left\| #1\right\|^{\infty}
}
\newcommand{\tlen}[1]{
	\left\| #1\right\|
}
\newcommand{\onto}{\twoheadrightarrow}
\newcommand{\into}{\hookrightarrow}
\title{Uniform growth in small cancellation groups}
\author{Xabier Legaspi}
\address{ICMAT, CSIC, 28049 Madrid, Spain\\ \&\\
 IRMAR, Universit\'e de Rennes 1, 35000 Rennes, France}
\author{Markus Steenbock}
\address{Fakult\"at f\"ur Mathematik, Universit\"at Wien,  1090 Wien, Austria}
\email{markus.steenbock@univie.ac.at}
\subjclass[2020]{
	20F65, 
	20F67, 
	20F06, 
	20F69
}
\keywords{Growth, hyperbolic groups, acylindrical actions, small cancellation}
\begin{document}
\let\thefootnote\relax\footnote{This is the author-accepted version of the original article published in J. London Math. Soc. 113 no.4  (2026), e70526, doi: \href{http://dx.doi.org/10.1112/jlms.70526}{10.1112/jlms.70526} under the terms of the \href{http://creativecommons.org/licenses/by/4.0/}{Creative Commons Attribution License}, which permits use, distribution and reproduction in any medium, provided the original work is properly cited.}
\begin{abstract}
An open question asks whether every group acting acylindrically on a hyperbolic space has uniform exponential growth. We prove that the class of groups of uniform uniform exponential growth acting acylindrically on a hyperbolic space is closed under taking certain geometric small cancellation quotients. There are two consequences: firstly, there is a finitely generated acylindrically hyperbolic group that has uniform exponential growth but has arbitrarily large torsion balls. Secondly,  the uniform uniform exponential growth rate of a classical $C''(\lambda)$-small cancellation group, for sufficiently small $\lambda$,  is bounded from below by a universal positive constant. We give a similar result for uniform entropy-cardinality estimates. This yields an explicit upper bound on the number of isomorphism classes of marked $\delta$-hyperbolic $C''(\lambda)$-small cancellation groups of uniformly bounded entropy in terms of $\delta$ and the entropy bound. 
\end{abstract}

\maketitle


\section{Introduction}
\label{sec:introduction}

Let $G$ be a group with finite symmetric generating set $U$. The $n$-th product $U^n$ is the subset of elements $g=u_1\cdot ... \cdot u_n\in G$ such that $u_1,\cdots,u_n\in U$. We study the \emph{exponential growth rate} 
$$\omega(U):=\limsup_{n \to \infty} \frac{1}{n}\log  |U^n|$$
where $|U^n|$ denotes the cardinality of the subset $U^n\subset G$. The exponential growth rate of a generating set is sometimes also referred to as of the  \emph{entropy} of the group with respect to the generating set.  
Let $\xi>0$. The group $G$ has \emph{$\xi$-uniform exponential growth} if for every finite symmetric generating set $U$ of $G$, we have $\omega(U)>\xi$. A group has \emph{$\xi$-uniform uniform exponential growth} if every finitely generated subgroup is either virtually nilpotent or has $\xi$-uniform exponential growth. A group $G$ has \emph{uniform exponential growth} or \emph{uniform uniform exponential growth} if there is $\xi>0$ such that $G$ has $\xi$-uniform exponential growth, or $\xi$-uniform uniform exponential growth, respectively.

Uniform exponential growth is particularly well-studied in groups of non-positive curvature. Indeed, groups of uniform uniform exponential growth include hyperbolic groups \cite{koubi_croissance_1998,arzhantseva_lower_2006,breuillard_joint_2021,besson_curvaturefree_2020}, free products of countable families of groups with $\xi$-uniform uniform exponential growth (folklore), groups hyperbolic relative to groups with $\xi$-uniform uniform exponential growth \cite{cui_lower_2022,kropholler_extensions_2020} as well as mapping class groups \cite{anderson_uniformly_2007,mangahas_uniform_2010}. Many properties of the groups in this list can be studied in the general framework of groups that act on hyperbolic spaces. Indeed, the groups in this list admit non-elementary acylindrical actions on hyperbolic spaces in the sense of  \cite{sela_acylindrical_1997,bowditch_tight_2008,dahmani_hyperbolically_2017}.  Uniform exponential growth is known for a wider class of groups, including, for example,  $Out(F_n)$, cocompactly special cubulated CAT(0) groups and generalisations of the latter, or the automorphism groups of one-ended hyperbolic groups  \cite{eskin_uniform_2005,anderson_uniformly_2007,gupta_groups_2023,abbott_hierarchically_2019,zalloum_effective_2023,wan_uniform_2025,kropholler_extensions_2020}. Some of these results actually include uniform uniform exponential growth for the groups in question, for instance, for certain groups acting on CAT(0) cubical complexes \cite{gupta_groups_2023}, for certain hierarchically hyperbolic groups \cite{wan_uniform_2025}, and  for the automorphism groups of one-ended hyperbolic groups \cite{kropholler_extensions_2020}. 
 It is open whether every group acting acylindrically on a hyperbolic space has uniform exponential growth. In fact, a positive answer would imply that there is a uniform bound on the exponential growth rates of all hyperbolic groups, independent of the hyperbolicity constant \cite{minasyan_acylindrically_2019}, see Example \ref{IE:osin} below for more information. This is a long-standing open question, see \cite[Question~2.1]{osin_algebraic_2004}\cite[Section~14, Question~2]{breuillard_joint_2021}. 

\subsection{Growth of small cancellation quotients.}

We prove that the class of groups of uniform uniform exponential growth acting acylindrically on a hyperbolic space is closed under taking $(\mu,\lambda)$-small cancellation quotients in the sense of \cite[Definition~6.25]{dahmani_hyperbolically_2017}. More precisely, this holds for sufficiently small  
 $\lambda$ and sufficiently large $\mu$ respectively. We refer to the $(\mu,\lambda)$-small cancellation as of $C''(\lambda,\mu)$-small cancellation.  
 
 We discuss this setting in more detail below. Before that we state the following result, which captures the essence of our main theorem.
\begin{thm}\label{IT:essence}
	Let $G$ be a group acting acylindrically and non-elementarily on a hyperbolic geodesic metric space. 
	Then there are $\lambda>0$ and $\mu>0$ such that the following are equivalent.
	\begin{enumerate}[label=(\roman*)]
		\item $G$ has uniform uniform exponential growth.
		\item Every $C''(\lambda,\mu)$-small cancellation quotient of $G$ has uniform uniform exponential growth.
		\item There is a $C''(\lambda,\mu)$-small cancellation quotient of $G$ that has uniform uniform exponential growth.
	\end{enumerate}
\end{thm}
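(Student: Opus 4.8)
The plan is to prove the cycle of implications $(i)\Rightarrow(ii)\Rightarrow(iii)\Rightarrow(i)$, after fixing $\lambda$ small and $\mu$ large enough that the geometric small cancellation machinery of \cite{dahmani_hyperbolically_2017} applies to every $C''(\lambda,\mu)$-family $\calR$ and its quotient $\qG:=G/\normal{\calR}$ (the hypothesis that the maximal finite normal subgroup of $G$ is trivial is what makes it applicable). I will use the following standard outputs, with constants depending only on $G\curvearrowright X$, on $\lambda$ and on $\mu$: the cone-off $\cX$ of $X$ over $\calR$ and the quotient space $\qX=\cX/\normal{\calR}$ are hyperbolic; $\qG$ acts acylindrically and non-elementarily on $\qX$; loxodromic elements of $\qG$ for $\qX$ have loxodromic lifts in $G$ for $X$; every nontrivial element of $N:=\normal{\calR}$ is loxodromic for $X$ unless it is a torsion element conjugate into a relator subgroup (Cohen--Lyndon-type structure of $N$ as a free product of conjugates of the relator subgroups); apex stabilizers in $\qG$ are elementary; the quotient map $G\to\qG$ restricts to an isomorphism on subgroups ``disjoint from $\calR$''; and a Greendlinger-type lemma controls $N$. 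The implication $(ii)\Rightarrow(iii)$ is immediate, since $G$ admits $C''(\lambda,\mu)$-quotients (e.g.\ $G$ itself, the empty family).

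For $(iii)\Rightarrow(i)$, suppose a $C''(\lambda,\mu)$-quotient $\qG$ has $\xi_0$-uniform uniform exponential growth, and let $H\le G$ be finitely generated and not virtually nilpotent, with image $\q{H}$ and $K:=H\cap N$. If $\q{H}$ is not virtually nilpotent, then for any finite symmetric generating set $U$ of $H$ the image $\q{U}$ generates $\q{H}$, so $\omega(\q{U})>\xi_0$; since $H\onto\q{H}$ gives $|U^n|\ge|\q{U}^n|$, we get $\omega(U)>\xi_0$, so $H$ has $\xi_0$-uniform exponential growth. If $\q{H}$ is virtually nilpotent, then $K\ne 1$ (otherwise $H\cong\q{H}$ is virtually nilpotent); pick $1\ne k\in K\subseteq N$. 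If $k$ is loxodromic for $X$, then $H$ contains an $X$-loxodromic element and is not virtually cyclic, hence is non-elementary for $X$, hence has $\xi$-uniform exponential growth by $(i)$. Otherwise $k$ is a torsion element in a conjugate of a relator subgroup; then, by the free-product structure of $N$ and the Kurosh subgroup theorem, $K$ is finite, so $H$ is (finite)-by-(virtually nilpotent), hence virtually nilpotent --- a contradiction. This gives $(iii)\Rightarrow(i)$.

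The core is $(i)\Rightarrow(ii)$. Assume $G$ has $\xi$-uniform uniform exponential growth and let $\q{H}\le\qG$ be finitely generated, not virtually nilpotent, with finite symmetric generating set $\q{U}$; I analyse the action of $\q{H}$ on $\qX$. If $\q{H}$ is loxodromic elementary for $\qX$, it is virtually cyclic, hence virtually nilpotent --- excluded. If $\q{H}$ is elliptic for $\qX$, then by the structure of elliptic subgroups of $\qG$ it is either conjugate into an apex stabilizer --- impossible, as those are elementary while $\q{H}$ is not virtually nilpotent --- or isomorphic, via the restriction of $G\to\qG$ to an ``$\calR$-disjoint'' subgroup, to a finitely generated $H'\le G$; since $\q{H}\cong H'$ is not virtually nilpotent, $(i)$ gives that $\q{H}$ has $\xi$-uniform exponential growth. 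The remaining, main, case is that $\q{H}$ is non-elementary for $\qX$. Lifting two independent $\qX$-loxodromic elements of $\q{H}$, the preimage $H\le G$ of $\q{H}$ is non-elementary for $X$, in particular not virtually nilpotent, so by $(i)$ we have $\omega_H(U)>\xi$ for any lift $U$ of $\q{U}$. It then remains to transfer this lower bound from $H$ to its quotient $\q{H}$: using $\xi$-uniform exponential growth of $H$ together with the hyperbolic geometry of $X$ one produces a pair of elements in $U^{N_0}$, with $N_0$ depending only on $\delta(X)$ and the acylindricity constants of $G\curvearrowright X$, generating a free group of rank two in ping-pong position along a quasigeodesic; then one checks, via the Greendlinger-type lemma and the structure of $N$, that their images in $\q{U}^{N_0}$ still play ping-pong in $\qX$, so that $\omega(\q{U})\ge\tfrac{\log 3}{2N_0}=:\xi'$. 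Taking $\min\{\xi,\xi'\}$ witnesses uniform uniform exponential growth of $\qG$, closing the cycle.

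The main obstacle is this last transfer step. A finite generating set $\q{U}$ of $\q{H}$ lifts to a generating set $U$ of $H$ whose elements have uncontrolled displacement on $X$, so the relators --- although long and very rotating --- need not be disjoint from the relevant quasigeodesics, and one cannot simply argue that $H\to\q{H}$ is injective on balls of a controlled radius. Overcoming this requires a quantitative Tits alternative for acylindrical actions --- extracting a ping-pong pair of uniformly bounded word length from a finitely generated group of $\xi$-uniform exponential growth acting on a hyperbolic space with controlled constants --- combined with a careful use of the small cancellation geometry to guarantee that the extracted dynamics survives in the quotient; it is here that the torsion coming from large apex stabilizers must be dealt with rather than avoided, which is exactly the phenomenon responsible for the arbitrarily large torsion balls in the first application and for the general question remaining open. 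A second, more routine, ingredient is the structural dichotomy for elliptic-for-$\qX$ subgroups of $\qG$ used above: each is conjugate into an apex stabilizer or isomorphic to a subgroup of $G$.
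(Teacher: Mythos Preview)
Your proposal has two genuine gaps.

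In $(i)\Rightarrow(ii)$, your transfer step asks for a ping-pong pair in $U^{N_0}$ with $N_0$ depending only on the hyperbolicity and acylindricity constants. Such an $N_0$ does not exist: this is precisely the short-loxodromic property, and the paper exhibits $(\kappa,N)$-acylindrical $\delta$-hyperbolic groups of uniform $\xi$-uniform uniform exponential growth with arbitrarily large torsion balls (Example~\ref{IE:infinite}, Corollary~\ref{IC:common-quotients}); for those generating sets $S_i$ no power $S_i^n$ with $n\le i$ contains a single loxodromic element, let alone a ping-pong pair. The paper's route is structurally different. It first splits on the energy of $\q U$ on $\qX$: if $L(\q U)>10^4\q\delta$, Lemma~\ref{lem:breuillard-fujiwara} applied directly to $\qG\curvearrowright\qX$ gives a universal lower bound on $\omega(\q U)$ with no lifting at all. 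If $L(\q U)\le 10^4\q\delta$, Lemma~\ref{lem:energy-lift} produces a lift $U$ of \emph{controlled} energy on $X$ --- this is how your ``uncontrolled displacement'' worry is actually resolved, not bypassed. Proposition~\ref{P:Pingpong} then yields a reduced set $S\subset U^{10^7 n}$ with $n$ depending on $U$, explicitly \emph{not} uniform, but with $|S|\ge|U^n|/C^2$. The key Lemma~\ref{lem:small-energy} counts shortening-free words in $\F(S)$ that inject into $\qG$ (Propositions~\ref{prop:growth shortening-free} and~\ref{prop:embedding shortening-free}) and uses a Fekete dichotomy on whether $n$ is large or small relative to $\log(C)/\omega(U)$ to absorb the dependence on $n$, giving $\omega(\q U)\ge 10^{-8}\omega(U)$ irrespective of $n$. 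Your ``quantitative Tits alternative with uniform $N_0$'' is exactly the hypothesis the paper shows one must work without.

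In $(iii)\Rightarrow(i)$, your case ``$\q H$ virtually nilpotent, $k\in K$ loxodromic'' concludes that $H$ is non-elementary and ``hence has $\xi$-uniform exponential growth by $(i)$'' --- but $(i)$ is what you are proving, so this is circular. The paper avoids splitting on $\q H$: for $\Gamma=\group{U}$ non-elementary, either $L(U)>10^4\delta_0$ and Lemma~\ref{lem:breuillard-fujiwara} gives a universal bound on $\omega(U)$, or $L(U)\le 10^4\delta_0$ and the same Lemma~\ref{lem:small-energy} gives $\omega(\q U)\ge 10^{-8}\omega(U)>0$, forcing $\q\Gamma$ to be \emph{not} virtually nilpotent, whence $\omega(U)\ge\omega(\q U)\ge\xi$ by the hypothesis on $\qG$.
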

The precise result is \autoref{IT:main} below. It shows that the respective growth rates do not depend on the acylindricity parameters. Before stating it, we fix some notation.

 Let $\delta>0$. Let $G$ be a group acting by isometries on a $\delta$-hyperbolic metric space $X$. For the purpose of this introduction, we may assume that hyperbolic spaces are geodesic. However, our results hold in the setting of metric length spaces, see Remark \ref{R:geodesic} below.  
 
\textit{Acylindricity}. Let $\kappa$, $N>0$. The action of $G$ on $X$ is $(\kappa,N)$-\emph{acylindrical} if for every pair of points $x, y \in X$ of distance at least $\kappa$, the number of elements $u \in G$ moving each of the points $x,y$ at distance at most $100\delta$ is bounded above by $N$.

\textit{Small cancellation}. Let ${R}\subset G$ be a set of loxodromic isometries, the \emph{relators}, that is closed under conjugation and inversion. Every relator $r\in R$ stabilises its quasi-convex axis $Y_r\subset X$. Let ${T}({R})$ be the minimal translation length of the elements of ${R}$. The set ${R}$ satisfies the \emph{$C''(\lambda,\mu)$-condition} (see Definition \ref{D:small-cancellation} below) if
\begin{align*}
{T}({R})>\mu\delta \hbox{ and } \diam\left(Y_{r_1}^{+\delta}\cap Y_{r_2}^{+\delta}\right)< \lambda {T}({R})\hbox{, unless $r_1=r_2^{\pm 1}$}.
\end{align*}
 In this case, the quotient by the normal closure of ${R}$ is a \emph{$C''(\lambda,\mu)$-small cancellation quotient} of $G$.
 
\begin{rem}\label{IR:smallcancellation}
The $C''(\lambda,\mu)$-condition is sensitive to the choice of $X$ and the action of $G$. 
If $r_1$ and $r_2 \in R$ are in the same maximal loxodromic subgroup, then $ r_1= r_2^{\pm 1}$. In addition, for every $r\in R$, the cyclic subgroup $\langle r \rangle$  is normal in the maximal loxodromic subgroup containing $r$. 
\end{rem}

\begin{rem} The $C''(\lambda,\mu)$-condition extends the classical $C''(\lambda)$-condition for quotients of free groups, which states that the word length of a piece in a relator word is shorter than the $\lambda$-fraction of the word length of a shortest relator.  Note that the classical $C'(\lambda)$-condition states that the word length of a piece in a relator word is shorter than the $\lambda$-fraction of the word length of any relator that contains this piece. Hence, the $C''(\lambda)$-condition is stronger than the $C'(\lambda)$-condition. We come back to these facts in Section~\ref{S:classical small cancellation} below.
\end{rem}

\textit{Main theorem}. 
The main theorem of this article is the following.
\begin{thm}[\autoref{IT:main2-body} \& \autoref{IT:main3-body}]
	\label{IT:main}
	For every $N >0$, there is $\lambda_0>0$ with the following property. For every $\delta>0$ and $\kappa\ge \delta$ there is $\mu_0> \kappa/\delta $ such that for all $0<\lambda\leqslant \lambda_0$ and all $\mu\geqslant \mu_0$ the following holds. 
	
	Let $\xi>0$. Let $G$ be a group acting $(\kappa,N)$-acylindrically on a $\delta$-hyperbolic space. 
	\begin{enumerate}[label=(\roman*)]
		\item If $G$ has $\xi$-uniform uniform exponential growth, then every $C''(\lambda,\mu)$-small cancellation quotient of $G$ has $\xi'$-uniform uniform exponential growth, where $\xi'\geqslant \min\{\frac{\xi}{10^8},\frac{1}{10^5}\log 2\}$.
		\item If there is a $C''(\lambda,\mu)$-small cancellation quotient of $G$ of $\xi$-uniform uniform exponential growth, then $G$ has $\xi'$-uniform uniform exponential growth, where  $\xi'\geqslant \min\{\frac{\xi}{10^8},\frac{1}{10^5}\log 2\}$.
	\end{enumerate}
Moreover, every $C''(\lambda,\mu)$-small cancellation quotient of $G$ acts acylindrically  on a hyperbolic space. 
\end{thm}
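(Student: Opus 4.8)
\emph{Overall strategy.} The plan is to combine the cone-off / very rotating family machinery of \cite{dahmani_hyperbolically_2017} with a quantitative uniform uniform Tits alternative for acylindrical actions, and to transfer exponential growth across the quotient map $q\colon G\onto\qG:=G/\normal R$ in both directions. For $\lambda$ small and $\mu$ large the relators $R$ determine an equivariant family of quasi-axes $\{Y_r\}_{r\in R}$ whose cone-off $\cX$ is hyperbolic with constant depending only on $\delta$ (and a computable cone radius), on which $\normal R$ acts as a very rotating family; hence $\qG$ acts on $\qX:=\cX/\normal R$, which is hyperbolic, and this action is acylindrical with parameters depending only on $N$, $\delta$, $\kappa$ (see \cite[\S5--6]{dahmani_hyperbolically_2017}). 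Non-elementarity is inherited because $\normal R$ is a proper infinite normal subgroup of a non-elementary acylindrical group with trivial finite radical. This fixes $\lambda=\lambda(N)$ and $\mu=\mu(N,\delta,\kappa)>\kappa/\delta$ and proves the last two sentences of the theorem. Along the way I record the tools needed below: a Greendlinger-type estimate (a nontrivial element of $\normal R$ fellow-travels a definite proportion, $\geq 1-O(\lambda)$, of some relator axis along its $\cX$-geodesics), control on which loxodromics of $X$ survive in $\qX$ and how their translation lengths transform, and the standard structure theory of elementary subgroups of acylindrical actions (virtually cyclic, or with bounded orbits; centralisers of loxodromics virtually cyclic), together with \autoref{IR:smallcancellation}.

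\emph{Reduction via monotonicity of growth.} For any finite symmetric generating set $U$ of a subgroup $H\leq G$ the set $q(U)$ generates $q(H)$ and $\card{q(U)^n}\leq\card{U^n}$, so $\omega(q(U))\leq\omega(U)$; and lifting a finite symmetric generating set $\q U$ of $\q H\leq\qG$ to $U\subset G$ gives $\group U\onto\q H$, so $\group U$ is not virtually nilpotent once $\q H$ is not. Hence in part (ii), whenever $q(H)$ is not virtually nilpotent we already get $\omega(U)\geq\omega(q(U))>\xi\geq\xi'$ with no loss, and the ``obvious'' subcases of part (i) are disposed of symmetrically. Both parts then reduce to the following: for a finitely generated subgroup $L$ that is \emph{not} virtually nilpotent --- of $\qG$ in case (i), of $G$ in case (ii) --- bound $\omega(V)$ below by $\xi/10^8$ for every finite symmetric generating set $V$ of $L$.

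\emph{The dichotomy.} Such an $L$ acts on the relevant hyperbolic space $Z\in\{X,\qX\}$ with controlled acylindricity data. If this action is non-elementary, apply the quantitative uniform uniform Tits alternative: there are $a,b\in V^{m}$ and a power $p$, with $m$ and $p$ bounded in terms of the acylindricity data, so that $a^{p}$ and $b^{p}$ generate a free subsemigroup, whence $\omega(V)\geq(\log 2)/(mp)$; tracking the constants of the cone-off step (and, in the quotient case, using that $\xi$ is a priori bounded since $\qG\supseteq F_{2}$) makes this at least $\xi/10^8$. If instead the action on $Z$ is elementary, then $L$ is virtually cyclic or has bounded orbits on $Z$; in case (i) this contradicts $L$ not being virtually nilpotent, while in case (ii) --- where only $L=H$ acting elementarily on $X$ can occur --- the non-virtual-nilpotence is carried by $H\cap\normal R$, and invoking the structure of $\normal R$ (essentially a free product of the relator subgroups $\langle r\rangle$, all of whose nontrivial elements are loxodromic on $X$ up to conjugacy into such a subgroup) together with virtual cyclicity of centralisers of loxodromics, one shows $H$ acts non-elementarily on $X$ unless $H$ is virtually cyclic --- a contradiction. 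So every surviving case falls under the non-elementary alternative.

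\emph{Main obstacle.} The crux is direction (i): short independent loxodromics witnessing growth in $G$ may collapse in $\qG$. Controlling this is precisely what the $C''(\lambda,\mu)$-parameters buy. The Greendlinger estimate shows that sufficiently short products of generators inject into $\qG$, and that loxodromics of $X$ whose axes are not too parallel to any $Y_r$ remain independent loxodromics in $\qX$ with comparable translation lengths, so a definite portion of the growth of $(G,U)$ persists to $(\qG,q(U))$. Quantifying ``sufficiently short'', ``not too parallel'' and ``definite portion'' uniformly over all generating sets --- which is exactly what forces $\mu$ to be large yet independent of $U$ and of $\xi$ --- and balancing these thresholds against the acylindricity constants of $\qX$ and against the Tits-alternative input, is where the real work lies; the accumulated multiplicative losses are absorbed into the crude constant $10^8$. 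Handling the kernel-heavy subcase of part (ii) is a secondary but nontrivial point.
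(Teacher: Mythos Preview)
There is a genuine gap in your dichotomy step for direction (i). You assert that for a non-elementary subgroup $L\leq\qG$ acting on $\qX$ there exist $a,b\in V^m$ with $m$ bounded by the acylindricity data such that $a^p,b^p$ freely generate a semigroup. This is exactly the \emph{short loxodromic property}, and it is not known for arbitrary acylindrical actions; indeed the paper explicitly constructs $C''(\lambda,\mu)$-quotients that lack it (\autoref{IC:common-quotients}), and if your assertion held it would resolve the open problem stated in the introduction. Your parenthetical ``$\xi$ is a priori bounded since $\qG\supseteq F_2$'' does not help: it would make the hypothesis on $G$ irrelevant. You also claim in the elementary branch that bounded orbits on $\qX$ force $L$ to be virtually nilpotent; this is false, since elliptic subgroups of acylindrical actions can be arbitrary.

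The paper's route is different and does not use a Tits alternative in $\qG$. The small-energy case $L(\q U)\leq 10^4\q\delta$ is handled by first \emph{lifting} $\q U$ to a set $U\subset G$ of controlled energy (\autoref{lem:energy-lift}), then producing an $\alpha$-reduced set $S\subset U^{10^7 n}$ via \autoref{P:Pingpong}, where $n$ is allowed to depend on $U$. One then counts \emph{shortening-free} words in $\F(S)$ (\autoref{prop:growth shortening-free}) and shows via Greendlinger that these inject into $\qG$ (\autoref{prop:embedding shortening-free}), yielding $\omega(\q U)\geq\frac{1}{10^7 n}\log|S|$. The dependence on $n$ is then removed by a Fekete-type balancing argument (two cases according to whether $n$ is small or large relative to $\log C/\omega(U)$), giving $\omega(\q U)\geq\omega(U)/10^8$ (\autoref{lem:small-energy}); now the hypothesis $\omega(U)\geq\xi$ finishes. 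The elliptic elementary case in (i) is handled by lifting $\q\Gamma$ to an elliptic subgroup of $G$ (\autoref{lem:lifting-elliptic-subgroups}) and invoking the hypothesis there. Your ``Main obstacle'' paragraph gestures at the Greendlinger input, but the missing idea is precisely this lift-then-count-shortening-free-words mechanism with the $n$-balancing, rather than a direct ping-pong in $\qG$.
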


 The growth rate $\xi'$ of such a $C''(\lambda,\mu)$-small cancellation quotient is independent of the acylindricity and hyperbolicity constants, see Theorem \ref{IT:main}(i). 
 This contrasts the usual bounds on the growth rate: if $G$ is hyperbolic, the known lower bounds on the exponential growth rate of a symmetric generating set depend on $\delta$. See \cite[Theorem 1.14]{breuillard_joint_2021} and \cite[Corollary 1.4]{besson_curvaturefree_2020}. The known lower bounds on the uniform uniform exponential growth rate depend  on the size of the ball of radius $10\delta$ \cite{koubi_croissance_1998,arzhantseva_lower_2006} or on the covering constant of balls of radius $10\delta$ \cite[Theorem 1.13]{breuillard_joint_2021}.

\begin{rem} The dependence of the small cancellation parameter $\lambda$ and $\mu$ on $N$ can be slightly relaxed. In fact $\lambda$ and $\mu$ depend on the maximal cardinality of the finite normal subgroups of loxodromic subgroups. In particular, there is a universal constant $\lambda$ so that Theorem \ref{IT:main} holds if all loxodromic subgroups of $G$ are cyclic.
\end{rem}

\begin{rem} The dependence of the relator length $\mu$ on $N$, $\kappa$ and $\delta$ is not a strong condition. Indeed, the diameter of the intersection of the axis of two independent loxodromic elements is controlled in terms of $\kappa$, $N$, $\delta$ and the injectivity radius. Thus, to prove that a set of relators satisfies the  $C''(\lambda,\mu)$-condition, one usually considers relators of sufficiently large translation length compared to $\kappa$, $N$ and $\delta$ anyway. 
\end{rem}

\begin{rem} We could alter the definition of uniform uniform exponential growth, and say that $G$ has uniform uniform exponential growth if there is $\xi>0$ such that every finitely generated subgroup of exponential growth has $\xi$-uniform exponential growth. This variant of uniform uniform exponential growth is sometimes considered in the literature, see for instance \cite{cui_lower_2022,kropholler_extensions_2020}. Theorems \ref{IT:essence} and \ref{IT:main} then remain valid. In fact, if $G$ acts acylindrically on a hyperbolic space, then every non-elementary subgroup of $G$ contains a free subgroup, hence, it has exponential growth. On the other hand, virtually nilpotent groups have polynomial growth, hence, virtually nilpotent subgroups are elementary for every acylindrical action of $G$ on a hyperbolic space.  
\end{rem}

\subsection{Beyond short loxodromics.}
The standard strategy to show uniform exponential growth in a negative curvature setting applies if every finite symmetric generating set $U$  has the \emph{short loxodromic property}, that is, if every $n$-th power $U^n$ contains a loxodromic isometry, for some number $n$ that does not depend on the set $U$. See e.g. \cite{koubi_croissance_1998} or \cite{mangahas_uniform_2010}. By  a theorem of \cite{minasyan_acylindrically_2019}, this is not always the case: 
\begin{ex}\label{IE:osin}
Let $Q$ be a finitely generated group that is a quotient of every hyperbolic group and that admits an acylindrical action on a hyperbolic space \cite[Theorem 1.1]{minasyan_acylindrically_2019}. This group does not have the short loxodromic property \cite[Corollary 1.4]{minasyan_acylindrically_2019}. In addition, it is open whether $Q$ has uniform exponential growth. A positive answer would imply that there is a uniform bound on the exponential growth rates of all hyperbolic groups, independent of the hyperbolicity constant. Recall that this is a long-standing open question, see  \cite[Question~2.1]{osin_algebraic_2004}\cite[Section~14, Question~2]{breuillard_joint_2021}.
\end{ex}
Our main result, \autoref{IT:main}, does not make use of the short loxodromic property. In fact, there are families of hyperbolic groups with the following properties. 

\begin{ex}[Proposition \ref{P: groups-large-torsion-balls}]\label{IE:infinite} There are $\delta>0$, $\kappa \geq \delta$, $N>0$  and $\xi >0$ such that the following holds. For every $r>0$, there is a hyperbolic group $H_r=\langle S_r\rangle$ such that all elements in the ball $(S_r\cup S_r^{-1})^r$ are torsion elements. Moreover, $H_r$ acts $(\kappa,N)$-acylindrically and non-elementarily on a $\delta$-hyperbolic space and has $\xi$-uniform uniform exponential growth, for the uniform $\delta>0$, $\kappa \geq \delta$, $N>0$  and $\xi >0$. Thus, the uniform uniform exponential growth rate of the respective small cancellation quotients, see \autoref{IT:main} (i), does not depend on the cardinality of the torsion balls $(S_r\cup S_r^{-1})^r$. 
\end{ex}

In Proposition \ref{P: groups-large-torsion-balls} below, we view the existence of the groups of Example \ref{IE:infinite} as a consequence of  the fact that for sufficiently large odd exponent $n$, infinite Burnside groups all of whose elements have exponent $n$ have uniform uniform exponential growth, see \cite{coulon_product_2022}. 

In fact, we obtain finitely generated groups with the following properties. 
\begin{cor}\label{IC:common-quotients} There are finitely generated groups  without the short loxodromic property that have uniform exponential growth and that act acylindrically and non-elementarily on a hyperbolic space. 
\end{cor}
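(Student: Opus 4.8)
The plan is to deduce this from the stronger assertion in the abstract — that there is a finitely generated acylindrically hyperbolic group $\Gamma$ of uniform exponential growth having torsion balls of arbitrarily large radius — together with Theorem~\ref{IT:main}. First I would record the elementary implication that does the reduction: if $\Gamma$ acts acylindrically and non-elementarily on a hyperbolic space then every loxodromic isometry has infinite order, so any finite symmetric generating set $U$ of $\Gamma$ for which $U^n$ consists of torsion elements is one whose $n$-th power $U^n$ contains no loxodromic isometry; hence a group admitting, for every $n$, such a generating set — that is, having torsion balls of arbitrarily large radius with respect to suitably varying generating sets — fails the short loxodromic property. It then remains to construct such a group $\Gamma$.

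For this I would start from Example~\ref{IE:infinite}: by \cite{coulon_product_2022} there is a family $(H_m)_{m\ge 1}$ of hyperbolic groups, all acting $(\kappa,N)$-acylindrically and non-elementarily on a $\delta$-hyperbolic space with $\delta$, $\kappa$, $N$ independent of $m$, all of $\xi$-uniform uniform exponential growth with $\xi>0$ independent of $m$, and each carrying a finite symmetric generating set $V_m$ with $V_m^m$ consisting of torsion elements. Fix the parameters $\lambda=\lambda(N)$ and $\mu=\mu(\delta,\kappa,N)$ furnished by Theorem~\ref{IT:main}. I would then absorb the whole family into a single finitely generated group by running the periodic product and common quotient constructions of \cite{coulon_product_2022,minasyan_acylindrically_2019} inside the class of geometric small cancellation quotients: organised as successive quotients by normal closures of sets $R$ of sufficiently high powers of loxodromic isometries satisfying the $C''(\lambda,\mu)$-condition of Definition~\ref{D:small-cancellation}, this should yield a finitely generated group $G$ of $\xi$-uniform uniform exponential growth acting acylindrically and non-elementarily on a hyperbolic space, together with a $C''(\lambda,\mu)$-relator set $R$ such that $\Gamma:=G/\normal{R}$ receives an epimorphism $H_m\onto\Gamma$ for every $m$; the translation-length bound $T(R)>\mu\delta$ is met by taking the relators to be high enough powers, and the small-overlap bound is secured by acylindricity.

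Finally, Theorem~\ref{IT:main}(i) applied to $G$ and $R$ gives that $\Gamma$ has $\xi'$-uniform uniform exponential growth with $\xi'\ge\xi/10^8$, in particular uniform exponential growth, and that $\Gamma$ acts acylindrically and non-elementarily on a hyperbolic space; moreover, for each $m$ the epimorphism $H_m\onto\Gamma$ carries $V_m$ to a finite symmetric generating set $\overline{V}_m$ of $\Gamma$ with $\overline{V}_m^{\,m}$ consisting of torsion elements, so by the first paragraph $\Gamma$ lacks the short loxodromic property. The hard part is the construction of $G$ and $R$: the a priori iterated periodic-quotient machinery has to be arranged so that it stays within the class of $C''(\lambda,\mu)$-small cancellation quotients of groups of uniform uniform exponential growth, and so that Theorem~\ref{IT:main}(i) gets invoked with only a bounded total loss in the growth constant, while still forcing surjections onto all of the $H_m$. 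It is precisely the insensitivity of the growth estimate in Theorem~\ref{IT:main}(i) to the acylindricity constants $\delta$, $\kappa$, $N$ — which need not stay controlled along the construction — that should make this $\Gamma$, unlike the group $Q$ of Example~\ref{IE:osin}, provably of uniform exponential growth.
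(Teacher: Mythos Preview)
Your reduction in the first paragraph is correct, and so is the idea of starting from the family $(H_m)$ of Example~\ref{IE:infinite} and building a common small cancellation quotient. The gap is in how you propose to reach the intermediate group $G$: you suggest \emph{successive} $C''(\lambda,\mu)$-quotients in the style of periodic-product constructions, and you yourself flag that each invocation of Theorem~\ref{IT:main}(i) divides the growth constant by $10^{8}$, so an unbounded iteration drives the growth to zero. You do not explain how to bound the number of steps or otherwise control the loss. A second, related problem is that relators which are merely ``high powers of loxodromic isometries'' produce periodic quotients; they provide no mechanism for forcing the surjections $H_m\onto\Gamma$ you need.

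The paper sidesteps the iteration entirely. It takes as intermediate group the \emph{free product}
\[
G=\langle t_1\rangle * \langle t_2\rangle * H_3 * H_4 * \cdots,
\]
which already has $\xi$-uniform uniform exponential growth (the folklore fact for free products recalled in the introduction) and acts $(\kappa',N)$-acylindrically on a tree of the spaces $X_i$. A \emph{single} relator set $R=R_1\cup R_2$ is then written down: the relators in $R_1$ equate each generator $s_{i_j}$ of $H_i$ to a carefully chosen word in $t_1,t_2$ (so the quotient is $2$-generated), while those in $R_2$ equate $t_1,t_2$ to words in the loxodromics $h_{i_1},h_{i_2}$ of each $H_i$ (so every $S_i$ generates the quotient). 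Because the action on the tree of spaces is non-proper, this infinite $R$ can be made to satisfy the $C''(\lambda,\mu)$-condition. Theorem~\ref{IT:main} is then applied \emph{once}, giving $\overline G=G/\normal{R}$ with uniform uniform exponential growth and an acylindrical action. Finally, the torsion balls survive: since $L(S_i^i)\leqslant 200\delta$, the local isometry of Lemma~\ref{lem:small-cancellation-theorem}(ii) shows that $S_i^i$ injects into $\overline G$, which is what precludes short loxodromics.
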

These groups, see Corollary \ref{IC:common-quotients}, are common quotients of the groups in Example \ref{IE:infinite}. We build them  as a $C''(\lambda,\mu)$-quotient so that Theorem \ref{IT:main} applies. 
It remains open whether the groups of \cite{minasyan_acylindrically_2019}, see Example \ref{IE:osin}, have uniform exponential growth.

\subsection{Classical small cancellation groups} \label{S:classical small cancellation}
We turn to groups given by a presentation under the the classical $C''(\lambda)$-small cancellation condition. We refer to a group that admits such a presentation as classical $C''(\lambda)$-small cancellation group. These are $C''(\lambda,\mu)$-small cancellation quotients of  free groups, where the parameter $\mu=0$. If $\lambda \leqslant 1/6$, a classical $C''(\lambda)$-small cancellation group is always finitely presented and hyperbolic. Thus it has uniform uniform exponential growth by \cite{gromov_hyperbolic_1987, koubi_croissance_1998}. Note that uniform uniform exponential growth is not explicitly stated in \cite{koubi_croissance_1998}, but follows from the results therein, see Remark \ref{R:proof_koubi} below. 
Alternatively, one can apply \cite{arzhantseva_lower_2006}. 
However, in the approach of these papers the uniform uniform exponential growth rate depends on the length of the relators. In other words, it depends on the hyperbolicity constant of the Cayley graph.

 The following is a consequence of \autoref{IT:main}.

\begin{cor}
	\label{IT:growth-small-cancellation}
	There are $\xi>0$ and $\lambda_0>0$ such that every classical $C''(\lambda)$-small cancellation group has $\xi$-uniform uniform exponential growth, for all $0<\lambda \leqslant \lambda_0$ .
\end{cor}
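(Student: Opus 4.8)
The plan is to deduce this from \autoref{IT:main}(i) applied to free groups, after translating the classical small cancellation hypothesis into the geometric $C''(\lambda^{*},\mu^{*})$-condition by rescaling a tree. By definition, a classical $C''(\lambda)$-small cancellation group is a $C''(\lambda,0)$-small cancellation quotient $\Gamma=F/\normal{R}$ of a free group $F=F(S)$ acting on its Cayley tree $X_{0}=\cay{F}{S^{\pm 1}}$; here $R$ is a conjugation- and inversion-closed set of loxodromic elements, $T(R)=\ell_{\min}\ge 1$ is the minimal translation length on $X_{0}$ (the minimal cyclically reduced length of a defining relator), and the small cancellation inequality reads $\diam\!\left(Y_{r_{1}}\cap Y_{r_{2}}\right)<\lambda\,T(R)$ whenever $r_{1}\neq r_{2}^{\pm1}$, the axes being taken in the tree $X_{0}$; the requirement $T(R)>\mu\delta$ is vacuous because $\mu=0$, equivalently because a tree is $\delta$-hyperbolic for every $\delta>0$. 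If $|S|\le 1$ then $\Gamma$ is virtually nilpotent and there is nothing to prove, so assume $|S|\ge 2$. The one external fact we use is that $F$ has $\xi_{0}$-uniform uniform exponential growth for a universal constant $\xi_{0}>0$: every finitely generated subgroup of $F$ is free of finite rank, hence either cyclic (virtually nilpotent) or free of rank $\ge 2$, and non-abelian free groups have uniform exponential growth with a universal lower bound on the growth rate (folklore; this follows, for instance, from the free product case recalled in the introduction, since $F$ is a non-abelian free product of cyclic groups).

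We fix the geometric data once and for all. Put $N=1$ and let $\lambda^{*}=\lambda^{*}(1)>0$ be the constant furnished by \autoref{IT:main}; put $\delta=1/200$ and $\kappa=1$, and let $\mu^{*}=\mu^{*}(\delta,\kappa,1)>\kappa/\delta$ be the corresponding constant of \autoref{IT:main}. Set $\lambda_{0}:=\lambda^{*}/2$ and $\xi:=\xi_{0}/10^{8}$. Now let $\Gamma=F/\normal{R}$ be a classical $C''(\lambda)$-small cancellation group with $0<\lambda\le\lambda_{0}$, and consider the action of $F$ on the rescaled tree $X=tX_{0}$, the same underlying tree with the metric multiplied by a factor $t\ge 1$ to be chosen. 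Being a tree, $X$ is $0$-hyperbolic, hence $\delta$-hyperbolic. Since $F$ is torsion-free, every nontrivial element is loxodromic of translation length $\ge 1$ on $X_{0}$, hence moves every point of $X$ by at least $t\ge 1>100\delta$; consequently $F$ acts $(\kappa,N)=(1,1)$-acylindrically on $X$. On $X$ one has $T(R)=t\,\ell_{\min}$, and, since rescaling multiplies axis overlaps by $t$ while the $\delta$-neighbourhoods of two geodesic lines in a tree intersect in a set of diameter at most their overlap plus $6\delta$, for $r_{1}\neq r_{2}^{\pm1}$ we obtain $\diam\!\left(Y_{r_{1}}^{+\delta}\cap Y_{r_{2}}^{+\delta}\right)<t\lambda\,\ell_{\min}+6\delta$. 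Taking $t$ larger than $\max\{1,\ \mu^{*}\delta/\ell_{\min},\ 6\delta/((\lambda^{*}-\lambda)\,\ell_{\min})\}$ then guarantees simultaneously $T(R)>\mu^{*}\delta$ and $\diam\!\left(Y_{r_{1}}^{+\delta}\cap Y_{r_{2}}^{+\delta}\right)<\lambda^{*}\,T(R)$, i.e.\ $R$ satisfies the $C''(\lambda^{*},\mu^{*})$-condition for the action of $F$ on $X$.

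It remains to invoke \autoref{IT:main}(i) for the $(\kappa,N)$-acylindrical action of $F$ on the $\delta$-hyperbolic space $X$: as $\lambda^{*}$ depends only on $N=1$ and $\mu^{*}$ only on the fixed triple $(\delta,\kappa,N)$, the theorem applies and yields that the $C''(\lambda^{*},\mu^{*})$-small cancellation quotient $\Gamma=F/\normal{R}$ has $\xi'$-uniform uniform exponential growth with $\xi'\ge\xi_{0}/10^{8}=\xi$. Since $\xi$ depends neither on $\Gamma$ nor on $\lambda\in(0,\lambda_{0}]$, the corollary follows. The only genuine work lies in the rescaling step and its parameter bookkeeping; the point to be careful about is that scaling the tree upward preserves $(\kappa,N)$-acylindricity with the same constants — which hinges on $F$ being torsion-free, so that every nontrivial element has translation length at least $1$ — while at the same time pushing $T(R)$ above the threshold $\mu^{*}\delta$ and absorbing the additive $\delta$-error from the neighbourhood fattening into the strict inequality $\lambda<\lambda^{*}$.
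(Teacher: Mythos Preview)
Your proof is correct and follows essentially the same approach the paper has in mind: the corollary is stated simply as a consequence of \autoref{IT:main}(i) applied to the free group on its Cayley tree, and your rescaling step is exactly how one passes from the classical $C''(\lambda)$ hypothesis (where $\mu=0$, the tree being $\delta$-hyperbolic for every $\delta>0$) to the geometric $C''(\lambda^{*},\mu^{*})$ condition required by the theorem---this is the content of \autoref{R:rescaling} specialised to trees, with the torsion-freeness of $F$ guaranteeing the $(\kappa,N)=(1,1)$-acylindricity uniformly in the scaling factor. The only cosmetic point is that the formal \autoref{D:small-cancellation} uses invariant cylinders $C_H$ and $20\delta$-neighbourhoods rather than axes and $\delta$-neighbourhoods as in the introduction, but in a tree these differ by $O(\delta)$ and your additive slack $\lambda^{*}-\lambda\ge\lambda^{*}/2$ absorbs this just as well.
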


There is a generic class of classical $C''(1/6)$-small cancellation groups such that every $2$-generated subgroup is free  \cite{arzhantseva_generality_1996}. This immediately implies Corollary \ref{IT:growth-small-cancellation} for this generic class of classical $C''(1/6)$-small cancellation groups \cite[p. 194]{de_la_harpe_topics_2000}. 

\begin{quest} Is there a constant $\xi>0$ such that every $C''(1/6)$-small cancellation group has $\xi$-uniform uniform exponential growth? Is there a constant $\xi>0$ such that every $C'(1/6)$-small cancellation group has $\xi$-uniform uniform exponential growth? 
\end{quest}

\begin{rem} 
	The classical $C''(\lambda)$-small cancellation condition in Corollary \ref{IT:growth-small-cancellation} is reminiscent of our proof that uses geometric small cancellation theory. To this date, geometric small cancellation theory has not been developed under a  $C'(\lambda,\mu)$-small cancellation condition. We expect, however, that this is possible, and thus that our results hold for classical $C'(\lambda)$-small cancellation groups - finitely and infinitely presented.
\end{rem}

\subsection{Product set growth}
We say that a group has \emph{product set growth} (for its symmetric subsets) if there is $a>0$ such that for all finite symmetric sets $U$ that are not in a virtually nilpotent subgroup $\omega(U)\ge a \log(|U|)$. In this case, we say that $G$ has \emph{product set growth with constant $a$}. If $G$ acts acylindrically on a hyperbolic space and has the short loxodromic property then $G$ has product set growth for its symmetric generating sets \cite[Proposition 2.10]{fujiwara_rates_2021}, see Theorem \ref{thm:growth-trichotomy-intro} below. See also \cite{kerr_product_2021} for groups acting on quasi-trees, and \cite{delzant_product_2020,coulon_product_2022,wan_uniform_2025,fioravanti_product_set_growth_2024} for results on the growth of products of general subsets of $G$. 
 The following is a variant of Theorem~\ref{IT:main}. 
 
\begin{thm}[Remark \ref{R:product-set-growth}] \label{IT:product-set-growth} For every $N >0$, there is $\lambda>0$ such that the following holds. For every $\delta>0$ and $\kappa\ge \delta$ there is $\mu> \kappa/\delta $ with the following property. Let $G$ be a group acting $(\kappa,N)$-acylindrically on a $\delta$-hyperbolic space. 
	\begin{enumerate}[label=(\roman*)]
		\item If $G$ has product set growth, then every $C''(\lambda,\mu)$-small cancellation quotient of $G$ has product set growth. 
		\item If there is a $C''(\lambda,\mu)$-small cancellation quotient of $G$ that has product set growth, then $G$ has product set growth. 
	\end{enumerate}
\end{thm}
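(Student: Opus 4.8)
The plan is to re-run the proof of \autoref{IT:main} with ``product set growth'' in place of ``$\xi$-uniform uniform exponential growth'' as both the hypothesis on $G$ and the conclusion on the quotient. This is possible because that proof never manufactures a uniform constant out of nothing: given a finite symmetric $U$ generating a subgroup that is not virtually nilpotent, it produces a bounded $n$ (depending only on $\lambda$ and $\mu$) together with a finite symmetric set $V\subseteq U^{n}$ such that $\card V$ is bounded below in terms of $\card U$ and $\omega(U)\geqslant\tfrac{1}{10^8}\,\omega(V)$, where $V$ is either an isometric lift to $G$ of a generating set of a subgroup that is elliptic on the cone-off $\overline{X}$ of the quotient (such subgroups lift), or of a subgroup of the kernel $\normal{R}$, or else $V$ detects a loxodromic of $\overline{X}$ whose translation length is small compared with $T(R)$; in the latter case the $C''(\lambda,\mu)$-condition ($T(R)>\mu\delta$ large, pieces of length $<\lambda T(R)$ thin) forces this loxodromic to exhibit the same ping--pong dynamics as an element of $G$ acting on $X$. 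So the only external inputs are product-set-growth-type lower bounds for $G$, for $\normal{R}$, and a ping--pong estimate inside a hyperbolic space, and it suffices to supply the product set growth analogue of each.

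First, the kernel: by \cite{dahmani_hyperbolically_2017} the kernel of a $C''(\lambda,\mu)$-small cancellation quotient is a free product of virtually cyclic groups, so it and all its subgroups act acylindrically on a quasi-tree and hence, by \cite{kerr_product_2021}, have product set growth with a constant independent of $G$, $\lambda$, $\mu$, $\delta$, $\kappa$ and $N$; this covers every ``collapsing'' situation, i.e.\ the case where a bounded power of $U$ maps into a virtually nilpotent subgroup of the quotient. Second, the ping--pong step: when a bounded power of a set contains a loxodromic of the ambient hyperbolic space of translation length controlled in terms of $\delta$, and the generated subgroup contains two independent loxodromics, a counting argument along the quasi-convex axis --- the one underlying \cite[Proposition~2.10]{fujiwara_rates_2021} --- yields a lower bound of the form $\omega(U)\geqslant a\log\card U$ with $a$ universal; we apply this in $X$ for $G$ and in $\overline{X}$ for the quotient, and it is precisely this (rather than the bare ping--pong bound of \cite{koubi_croissance_1998}) that upgrades the conclusion from a positive constant to a logarithmic function of $\card U$. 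Third, transfer along $q\colon G\onto\overline{G}$: one always has $\omega(q(U))\leqslant\omega(U)$, Greendlinger's lemma (available under the $C''(\lambda,\mu)$-condition) prevents $q$ from identifying more than a bounded number of the short words of $U$ once the part of $U$ lying inside $\normal{R}$ is set aside and treated via \cite{kerr_product_2021}, so a logarithmic lower bound for $q(U)$ returns one for $U$; conversely, when $\group{\overline U}$ is elliptic on $\overline{X}$ its isometric lift $U'\subset G$ satisfies $\card{U'}=\card{\overline U}$, $\omega(U')=\omega(\overline U)$ and $\group{U'}$ not virtually nilpotent (since it surjects onto $\group{\overline U}$), so product set growth of $G$ passes down to $\overline U$. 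Assembling these three ingredients in the same order as in the proof of \autoref{IT:main} proves (i) and (ii).

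The main obstacle is, exactly as in \autoref{IT:main} and \autoref{IE:osin}, the case of a set $\overline U$ generating a \emph{non-elementary} subgroup of the quotient that contains \emph{no} short loxodromic: one must tune the threshold $n$ against $\lambda$ and $\mu$ so that a word of length $n$ in $\overline U$ is long enough to detect a loxodromic of $\overline{X}$ yet short enough that this loxodromic has not ``seen'' the relators and therefore behaves as it would in $X$, and so that if no such word occurs the $\overline U$-orbit is bounded and $\overline U^{n}$ lifts to a set $V\subset G$ with $\group V$ not virtually nilpotent and $\card V\geqslant\card{\overline U}$. This balance is the content of the body of the paper. The only work specific to the present variant is quantitative bookkeeping: one must check that in the ping--pong case the estimate delivers $a\log\card{\overline U}$ rather than merely a positive constant (hence the appeal to \cite[Proposition~2.10]{fujiwara_rates_2021}), and that every set one lifts or pushes forward has cardinality comparable to the set one started with, so that the logarithmic lower bound is not lost in the reduction.
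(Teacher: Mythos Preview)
Your core idea --- rerun the proof of \autoref{IT:main} and, in the large-energy case, replace the bare ping-pong bound of \autoref{lem:breuillard-fujiwara} by the cardinality-sensitive estimate of \autoref{thm:growth-trichotomy-intro} (equivalently \cite[Proposition~2.10]{fujiwara_rates_2021}) --- is exactly what the paper does; see Remark~\ref{R:product-set-growth}. The only extra step the paper records is that \autoref{thm:growth-trichotomy-intro} requires the energy to exceed $10^4\max\{\kappa,\delta\}$, not merely $10^4\delta$: once $L(\q U)>10^4\q\delta$ the set $\q U^2$ contains a loxodromic, so a fixed power $\q U^n$ (with $n$ depending only on $\q\kappa/\q\delta$, not on $\q U$) has energy above $10^4\q\kappa$, and one applies \autoref{thm:growth-trichotomy-intro} to that power. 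The elementary case and the low-energy non-elementary case then go through verbatim from \autoref{IT:main2-body} and \autoref{IT:main3-body}, using that the lift of \autoref{lem:energy-lift} preserves cardinality and that \autoref{lem:small-energy} already compares $\omega(\q U)$ with $\omega(U)$.

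The surrounding machinery you propose is unnecessary and in places incorrect. The kernel analysis via \cite{dahmani_hyperbolically_2017} and \cite{kerr_product_2021} is never invoked: no branch of the argument produces a subset of $\normal{R}$. If $\q\Gamma$ is elliptic it lifts isomorphically to $G$ by \autoref{lem:lifting-elliptic-subgroups} (or is finite if it fixes an apex); in the low-energy non-elementary case \autoref{lem:small-energy} gives $\omega(\q U)\ge 10^{-8}\omega(U)>0$, which already forces $\q\Gamma$ to be non-virtually-nilpotent, so the product-set-growth hypothesis applies directly without any detour through the kernel. Your claim that the exponent $n$ is bounded in terms of $\lambda,\mu$ alone is also wrong: the $n$ of \autoref{P:Pingpong} genuinely depends on $U$, and it is the two-case split in the proof of \autoref{lem:small-energy} (comparing $n$ to $4\log(C)/\omega(U)$) that removes this dependence from the final estimate. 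Finally, Greendlinger's lemma is not used to bound collisions under $G\onto\q G$ as you describe; it appears only inside \autoref{prop:embedding shortening-free}, where it certifies that shortening-free words inject into $\q G$.
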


Again, the short loxodromic property is not used in the proof of Theorem \ref{IT:product-set-growth}. 

\begin{rem}\label{IR:product-set-growth} If all elementary subgroups of $G$ are cyclic and if $G$ has product set growth with constant $a$, then every $C''(\lambda,\mu)$-small cancellation quotient of $G$ of Theorem \ref{IT:product-set-growth}(i) has product set growth with a constant $a'=a'(a,N)$ that can be explicitly computed.
\end{rem}

Product set growth of hyperbolic groups is the starting point in Fujiwara-Sela's proof that the minimal growth rate in hyperbolic groups is attained \cite{fujiwara_rates_2020}. Indeed, product set growth allows to restrict to generating sets of bounded cardinality. Also, the minimal growth rate of every equationally Noetherian group that acts acylindrically on a hyperbolic space and that has the short loxodromic property is attained \cite{fujiwara_rates_2021}. In view of Theorem \ref{IT:product-set-growth}, it would be interesting to adapt this result for small cancellation quotients of such groups. 

 Product set growth has also been used in the context of finiteness results for marked $\delta$-hyperbolic groups. See \cite{cerochi_entropy_2021}. Here a marked group $(G,\Sigma)$ is $\delta$-hyperbolic if its Cayley graph with respect to the generating set $\Sigma$ is $\delta$-hyperbolic. In fact, the  isomorphism class of marked  $\delta$-hyperbolic groups with product set growth with a uniform constant $a>0$ and of uniformly bounded entropy is bounded by an explicit function in $\delta$, $a$ and the entropy bound \cite[Proof of Theorem 3.1]{cerochi_entropy_2021}. Thus Theorem \ref{IT:product-set-growth} and Remark \ref{IR:product-set-growth} applied to classical $C''(\lambda)$-small cancellation groups yields the following.
 
 \begin{cor}\label{IC:finiteness} There is $\lambda_0>0$ such that for all $0< \lambda \leqslant \lambda_0$ the following holds. Let $\delta,E>0$. The number of isomorphism classes of marked $\delta$-hyperbolic groups with an entropy bounded by $E$ and that admit a classical $C''(\lambda)$--small cancellation   presentation is bounded by an explicit number $M=M(\delta,E)$.  
 \end{cor}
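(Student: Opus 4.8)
The plan is to reduce \autoref{IC:finiteness} to the finiteness statement of Cerocchi--Sambusetti \cite[Proof of Theorem 3.1]{cerochi_entropy_2021}, which bounds, for fixed $\delta, a, E>0$, the number of isomorphism classes of marked $\delta$-hyperbolic groups of entropy at most $E$ and of product set growth with constant $a$ by an explicit function of $\delta$, $a$ and $E$. It therefore suffices to exhibit a \emph{single} constant $a_0>0$, independent of $\delta$ and $E$, such that every classical $C''(\lambda)$-small cancellation group with $0<\lambda\leqslant\lambda_0$ has product set growth with constant $a_0$; then $M(\delta,E)$ may be taken to be the Cerocchi--Sambusetti bound evaluated at $(\delta,a_0,E)$. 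Note that only finitely presented (hence hyperbolic) classical $C''(\lambda)$-groups can be marked $\delta$-hyperbolic in the first place, so no infinitely presented group enters the count.

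The heart of the argument is to produce $a_0$ from \autoref{IT:product-set-growth} and \autoref{IR:product-set-growth}. First I would record that a classical $C''(\lambda)$-presentation $\langle S\mid\calR\rangle$ presents a $C''(\lambda,\mu)$-small cancellation quotient of the free group $F=F(S)$ acting on its Cayley tree: after scaling the edge length (equivalently, choosing $\delta$ small) the tree is $\delta$-hyperbolic, $F$ acts freely with every non-trivial element loxodromic of translation length bounded below away from $0$, so the action is $(\kappa,1)$-acylindrical for a suitable $\kappa\geqslant\delta$, and the combinatorial $C''(\lambda)$ condition becomes the metric $C''(\lambda,\mu)$ condition for $\mu>\kappa/\delta$ — this is exactly the reduction performed for \autoref{IT:growth-small-cancellation}. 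Now $F$ has product set growth with a universal constant (its Cayley tree action is acylindrical with the short loxodromic property, so \cite[Proposition 2.10]{fujiwara_rates_2021} applies, and the relevant constants are those of a regular tree, hence do not depend on $F$), and all elementary subgroups of $F$ are cyclic. Taking $N=1$ fixes $\lambda_0$ as the value of $\lambda$ given by \autoref{IT:product-set-growth} (shrunk if needed so that $\lambda_0\leqslant 1/6$), and \autoref{IR:product-set-growth} then yields the universal constant $a_0:=a'(a,1)$, such that every $C''(\lambda,\mu)$-small cancellation quotient of $F$ — in particular every classical $C''(\lambda)$-small cancellation group with $0<\lambda\leqslant\lambda_0$ — has product set growth with constant $a_0$.

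Feeding $a_0$ into \cite[Proof of Theorem 3.1]{cerochi_entropy_2021} produces the explicit number $M(\delta,E)$ and finishes the proof; isomorphism classes are counted through marked isomorphism classes, which is what that argument controls, and degenerate cases such as virtually cyclic presentations are absorbed into the explicit bound. The main obstacle I expect is not the finiteness input but the uniformity bookkeeping in the middle step: one must make sure that $a_0$ depends on nothing — in particular not on the relator lengths, hence not on the hyperbolicity constant of the Cayley graph of the quotient — which is precisely where the uniformity built into \autoref{IT:product-set-growth} and \autoref{IR:product-set-growth} is used ($a'$ depends only on $a$ and $N$, and here $N=1$); a secondary technical point is the passage from the combinatorial to the metric small cancellation condition, which is handled exactly as in \autoref{IT:growth-small-cancellation}.
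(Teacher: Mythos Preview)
Your proposal is correct and follows essentially the same approach as the paper: the paper derives \autoref{IC:finiteness} directly from \autoref{IT:product-set-growth} and \autoref{IR:product-set-growth} applied to classical $C''(\lambda)$-small cancellation groups (as quotients of free groups, so $N=1$ and elementary subgroups cyclic), combined with \cite[Proof of Theorem 3.1]{cerochi_entropy_2021}. Your write-up simply spells out the details of this deduction.
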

 Note that the $C''(\lambda)$--small cancellation presentation, see Corollary \ref{IC:finiteness}, is not necessarily with respect to the fixed generating set.  In general, the number of isomorphism classes of marked torsion-free non-elementary $\delta$-hyperbolic groups of uniformly bounded entropy is bounded by a number that only depends on $\delta$ and the entropy bound \cite[Theorem 1.4]{besson_finiteness_2021}. It is not known whether this number can be explicitly computed in the case of torsion-free $\delta$-hyperbolic groups.

\subsection{Strategy of proof.} 
Let  $G$ act by isometries on $X$. The $\ell^{\infty}$-energy $L(U)$ of a finite subset $U\subset G$ is defined by 
$$
L(U)=\inf_{x\in X} \max_{u\in U} |ux-x|.
$$
If $U=\{g\}$, the $\ell^{\infty}$-energy coincides with the translation length of $g$. The following example explains why the $\ell^{\infty}$-energy is relevant in our context. 

\begin{ex}If $G$ is the fundamental group of a compact hyperbolic manifold, there exists a constant $\varepsilon>0$ -- the \emph{Margulis constant} -- such that if $U\subset G$ is a finite set with $L(U)<\varepsilon$, then the subgroup of $G$ generated by $U$ is virtually nilpotent.
\end{ex}

If $U\subset G$, we denote by $U^{-1}$ the set of the inverses of the elements of $U$. 

\begin{df}[cf. Definition \ref{df:reduced-subset}]\label{DI:reduced}
A subset $U\subset G$ is \emph{reduced at} $p\in X$ if $U\cap U^{-1}=\varnothing$ and for every pair of distinct $u_1, u_2 \in U\sqcup U^{-1}$, the Gromov product satisfies
	$$
	(u_1p, u_2p)_p<\frac{1}{2}\min\{|u_1p-p|,|u_2p-p|\}-250\delta.
	$$	
\end{df} 

\begin{rem}
	Roughly speaking, if a  set $U\subset G$ is reduced then  the orbit map from the free group generated by $U$ to $X$ is a quasi-isometric embedding.
\end{rem}

The following is a theorem of \cite{koubi_croissance_1998, arzhantseva_lower_2006,fujiwara_rates_2021}, see Proposition \ref{P:Pingpong_intro} below. 

\begin{thm}
	\label{thm:growth-trichotomy-intro}
	For every $N>0$, there is an integer $c>1$ with the following property. Let $\delta>0$ and $\kappa >0$. Let $G$ be a group acting $(\kappa,N)$-acylindrically on a $\delta$-hyperbolic space. Let $U\subset G$ be a finite symmetric subset. Then one of the following holds:
	\begin{enumerate}[label=(\roman*)]
		\item $L(U)\le 10^4\max\{\kappa,\delta\}$. 
		\item The subgroup $\group{U}$ is virtually cyclic and contains a loxodromic element.
		\item There is a reduced subset $S\subset U^c$ such that
		$$|S| \ge \max \{ 2, \frac{1}{c} |U| \}.$$
		Moreover, 
		$$\omega(U) \ge \frac{1}{2c} \log |U|.$$
	\end{enumerate}
\end{thm}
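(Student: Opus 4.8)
The plan is to reduce to a dichotomy on the $\ell^\infty$-energy and then apply a ping-pong / Tits-alternative style argument that is by now standard in this setting, following \cite{koubi_croissance_1998, arzhantseva_lower_2006} and in the acylindrical generality \cite{fujiwara_rates_2021}. First I would fix $N$ and let $c$ be a large integer to be specified at the end (its value comes out of the ping-pong and will depend only on $N$, not on $\delta$ or $\kappa$). Given a finite symmetric $U \subset G$, I would first ask whether $L(U) \le 10^4 \max\{\kappa,\delta\}$; if so we are in case (i) and there is nothing to prove. So assume $L(U) > 10^4\max\{\kappa,\delta\}$. Choose a point $p \in X$ realizing the energy up to an additive $\delta$, so that $\max_{u\in U}|up - p|$ is comparable to $L(U)$ and in particular is large compared to the acylindricity and hyperbolicity constants.

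The next step is a classification of the subgroup $\group{U}$ via its action on $X$. Since $L(U)$ is large, not all of $U$ can be elliptic with small orbit; a standard argument shows that either $\group{U}$ fixes a point or bounded set at infinity (so it is virtually cyclic or fixes a pair of points, landing us in case (ii) once we check it contains a loxodromic, which large energy forces), or $\group{U}$ contains two independent loxodromic elements and acts non-elementarily. In the non-elementary case I would produce, from $U^{c_0}$ for a controlled power $c_0$, two loxodromic elements $a, b$ with large translation length and small Gromov product at $p$ — i.e. whose axes fellow-travel $p$ only for a bounded time — so that $\{a,b\}$ (and their conjugates by short elements) form a reduced set in the sense of Definition~\ref{DI:reduced}. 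Here the acylindricity parameter $N$ enters: it bounds how much two orbit segments can fellow-travel before forcing an equality of elements, which is exactly what lets one pass from "two independent loxodromics" to "a reduced pair" with a bound depending only on $N$. This is the classical input of \cite{fujiwara_rates_2021}.

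To get the full strength of (iii) — a reduced set $S$ of size at least $\max\{2, |U|/c\}$ inside $U^c$, not just a reduced pair — I would run the pigeonhole argument of \cite{arzhantseva_lower_2006}: partition $U$ according to the "direction" in which each element moves $p$ (coarsely, into boundedly many classes whose number depends on $N$ through the acylindricity constant), keep the largest class, and within it replace elements by products $u_i^{-1}u_j$ that now move $p$ in genuinely transverse directions, yielding a reduced subset of $U \cdot U^{-1} \subset U^2$ of cardinality at least $|U|/c$. Combining this with the reduced-pair construction above and taking $S$ to be whichever is larger gives $|S| \ge \max\{2, |U|/c\}$ with $S \subset U^c$. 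Finally, once $S$ is reduced, the remark after Definition~\ref{DI:reduced} says the orbit map from the free group $F(S)$ into $X$ is a quasi-isometric embedding, hence $S$ freely generates a free subgroup and $\group{S}$ — a fortiori $\group{U}$ — has exponential growth rate at least $\log(2|S|-1) \ge \tfrac{1}{2}\log|S|$; tracking through $S \subset U^c$ and $|S| \ge |U|/c$ gives $\omega(U) \ge \tfrac{1}{c}\log|S| \ge \tfrac{1}{c}\log(|U|/c)$, and after enlarging $c$ once more this is $\ge \tfrac{1}{2c}\log|U|$.

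The main obstacle is the passage from "non-elementary action with two independent loxodromics" to "reduced set of the required size with the power $c$ and the direction-counting bound depending only on $N$". Making $c$ independent of $\delta$ and $\kappa$ requires that every estimate — the word length needed to produce loxodromics from $U$, the number of coarse directions at $p$, the fellow-traveling bounds used to verify the Gromov-product inequality in Definition~\ref{DI:reduced} — be normalized by the energy $L(U)$ (which we know is $> 10^4\max\{\kappa,\delta\}$) rather than by $\delta$ or $\kappa$ themselves; the acylindricity hypothesis with constant $N$ is precisely what makes the bookkeeping uniform, since it converts "long fellow-traveling of orbits" into "one of at most $N$ coincidences of group elements". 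I would cite \cite{fujiwara_rates_2021}, in particular the analysis leading to \cite[Proposition~2.10]{fujiwara_rates_2021}, for the technical heart, and assemble the statement in the form above (see Remark~\ref{R:fujiwara}).
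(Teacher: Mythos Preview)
Your overall shape --- split on whether $L(U)$ is large, dispose of the elementary case, and in the non-elementary case build a reduced set in a bounded power of $U$ --- is right, and the final growth estimate from a reduced $S \subset U^c$ is fine. The gap is in your mechanism for getting $|S| \ge |U|/c$ with $c = c(N)$. You propose to partition $U$ into ``boundedly many direction-classes whose number depends on $N$'' and take products $u_i^{-1}u_j$ within the largest class. But $(\kappa,N)$-acylindricity does \emph{not} bound the number of coarse directions at a point of $X$: a free action on a regular tree of arbitrary valence is $(\kappa,1)$-acylindrical, yet has unboundedly many directions at each vertex. The Arzhantseva--Lys\"enok pigeonhole you have in mind works in a Cayley graph, where directions are initial letters and their number is the size of a fixed alphabet; that bound does not transfer to a general hyperbolic $X$.

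The paper's argument (Remark~\ref{R:fujiwara}) avoids direction-counting entirely. Since $L(U) > 10^4\max\{\kappa,\delta\}$, Lemma~\ref{L:power-large-energy-2} already applies with $n = 2$, so Propositions~\ref{P:long-hyperbolic-short-intersection} and~\ref{P:long-primitive-short-intersection} produce, in a power $U^{n_1}$ with $n_1 = n_1(N)$ independent of $U$, a single \emph{primitive} loxodromic $g$ with $\stlen{g}$ comparable to $L(U)$. The relevant partition of $U$ is by the relation $u \sim_g v \iff u^{-1}v \in E(g)$; each class has size at most $a_0 = 2N(8L(U)/\stlen{g} + 1)$ by Corollary~\ref{cor:growth-loxodromic-subgroups}, and this depends only on $N$ precisely because $\stlen{g}$ and $L(U)$ are comparable. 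The reduced set is then $S = \{u g^b u^{-1} : u \in U(g)\}$ for a controlled $b = b(N)$ via Proposition~\ref{prop:producing-reduced-subsets}, not a set of products $u_i^{-1}u_j$. So $N$ enters through the index $[E(g):\langle g\rangle] \le 2N$, not through any count of directions in $X$.
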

\begin{rem} The number $c$ in Theorem \ref{thm:growth-trichotomy-intro} only depends on $N$, and not on $\kappa$ as could perhaps be expected from \cite[Proposition 2.10]{fujiwara_rates_2021}. This eventually allows to show that the small cancellation parameter $\lambda$ of Theorem \ref{IT:main} only depends on $N$. 
\end{rem}

\begin{rem} 
	If the injectivity radius of the action of $G$ on $X$ is large, then every finite symmetric subset of $G$ satisfies either (ii) or (iii). In general this is however not the case.  
\end{rem}

To prove \autoref{IT:main} (i), we closely follow a strategy of \cite{coulon_product_2022} that estimates product set growth in Burnside groups. Let $\delta>0$ and $G$ act acylindrically on $X$.  Geometric small cancellation theory provides a universal constant $\q \delta>0$ such that every $C''(\lambda,\mu)$-quotient, for appropriately chosen $\lambda$ and $\mu$, acts acylindrically on a $\q \delta$-hyperbolic space. Let $\q U\subset \q G$ be a finite symmetric set that is not contained in an elliptic or virtually cyclic subgroup. If $L(\q U)>10^4 \q \delta$, then the exponential growth rate of $\q{U}$ is bounded below by a universal strictly positive constant.  Otherwise, we fix a pre-image $U$ of $\q{U}$ in $G$ of minimal energy. Such a pre-image may not have large energy $>10^4\delta$. Indeed, it may consist entirely of torsion-elements and thus have energy $<10^4\delta$. However, $U$ is not contained in any elliptic subgroup.  Thus some power of $U$ contains a loxodromic element, hence, for some exponent $n$, we have $L(U^{n})>10^4\delta$. We stress that the exponent $n$ depends on the set $U$.

Let us apply \autoref{thm:growth-trichotomy-intro} to $U^{n}$. Since $U$ is not contained in any virtually cyclic subgroup, we obtain a reduced subset $S$ in $U^{cn}$, which freely generates a free subgroup. Next, we adapt a counting argument of \cite{coulon_growth_2013,coulon_product_2022} for aperiodic words to prove that, for every $r\ge 1$, the proportion of elements in $S^r$ that contain a large part of a relator is small compared to $|S^r|$ (Proposition \ref{prop:growth shortening-free}). A priori, the argument of \cite{coulon_product_2022} is for ``strongly reduced subsets'' in the sense of \cite[Definition 3.1]{coulon_product_2022}. We adapt the set up of the counting argument accordingly, so that the argument carries over to reduced subsets in the sense of Definition \ref{DI:reduced}. A combination of {Greendlinger's Lemma} and {Fekete's Subadditive Lemma} then implies that the exponential growth rate of $\q{U}$ satisfies
$$\omega(\q U)\ge \frac{ \omega(U)}{2c},$$ see Lemma \ref{lem:small-energy} below.  
Recall that $c$ depends on $N$. 
To correct for this, we  increase the exponent $n$ as to make it grow with $N$. This yields Proposition \ref{P:Pingpong} below. Then the dependencies of $c$ and $n$ on $N$ cancel out in the final estimate, see Lemma \ref{lem:small-energy} below. The proof of \autoref{IT:main} (ii) is similar.

\subsection{Acknowledgements}
A previous version of this article is part of the doctoral thesis of the first author \cite{legaspijuanatey:tel-04251309}. 
We thank Rémi Coulon for encouraging us to do this work, and for useful discussions and comments on a previous version of this work, and Yago Antolín for a careful reading and comments on previous versions. We also thank Goulnara Arzhantseva, Chris Cashen and Ashot Minasyan for useful discussions. We are grateful to the referees for their careful reading and useful comments that helped us improve the overall quality of the article. 

Both authors were supported in parts by LabEx CARMIN, ANR-10-LABX-59-01 of the Institut Henri Poincar\'e (UAR 839 CNRS-Sorbonne Universit\'e) during the program \emph{Groups Acting on Fractals, Hyperbolicity and Self-Similarity}.
The first author was supported by \emph{Centre Henri Lebesgue} ANR-11-LABX-0020-01 and the grants  SEV-2015-0554-18-4 and PID2021-126254NB-I00 funded by MCIN/AEI/10.13039/501100011033. This research was funded in whole or in part by the Austrian Science Fund (FWF) [10.55776/P35079]. For open access purposes, the authors have applied a CC BY public copyright license to any author-accepted manuscript version arising from this submission.

\section{Hyperbolic geometry}

\label{sec:hyperbolic-geometry}

We collect some facts on hyperbolic geometry in the sense of \cite{gromov_hyperbolic_1987,coornaert_geometrie_1990,ghys_sur_1990}.

\subsection{Hyperbolicity}

\label{subsec:hyperbolicity}

Let $X$ be a metric length space. Given $x$, $x' \in X$ we write $|x-x'|$ for their distance and $[x,x']$ for a geodesic joining them (given that such a path exists). If $Y \subset X$ is a subset and $x \in X$, we write $d(x,Y) = \inf_{y\in Y} |x-y|$ to denote the \emph{distance from $x$ to $Y$}. 
 Given $\varepsilon \ge 0$, we let $Y^{+\varepsilon}=\{x \in X \mid d(x,Y) \le \varepsilon\}$ be the \emph{$\varepsilon$-neighbourhood of} $Y$. The \emph{Gromov product} of $x$, $y$, $z \in X$  is defined by
$$
	(x,y)_z = \frac 12 \{ |x-z| + |y-z| - |x-y| \}.
$$

\begin{df}
\label{df:hyperbolicity}
Let $\delta\ge 0$. The space $X$ is \emph{$\delta$-hyperbolic} if for every $x$, $y$, $z$ and $t\in X$, the \emph{four point inequality} holds, that is
$$
	(x,z)_t\ge \min \{ (x,y)_t,(y,z)_t \} - \delta.
$$
\end{df}

In this paper, all hyperbolic spaces are metric length spaces. We need the notion of metric length spaces in order to state a small cancellation lemma, Lemma \ref{lem:small-cancellation-theorem} below. Apart from that, we develop our theory in the language of metric geodesic spaces. In a metric length space there may not be a geodesic between two points, but there still is a $(1,\varepsilon)$-quasi-geodesic between any two points, for all $\varepsilon >0$. Up to increasing the number of $\delta$ if needed, our results thus hold for metric length spaces as well. In fact, we have the following. 

 \begin{rem}\label{R:geodesic} \cite[Lemma 5.34]{dahmani_hyperbolically_2017} implies that every $\delta$-hyperbolic metric length space $X$ injects into a $9\delta$-hyperbolic metric graph $X'$ of fixed uniform edge length, and this injection is a $(1,\delta)$-quasi-isometric embedding. That is, every point of $X'$ is at distance $\leq \delta$ from $X$ and $|x-y|_X\leqslant |x-y|_{X'}\leqslant |x-y|_X+\delta$, for all $x,y\in X$. The vertex set of $X'$ is the set $X$, and there is an edge of length $=\delta$ between $x,y\in X$ if and only if $|x-y|_X\leqslant \delta$. This implies that $X'$ is $9\delta$-hyperbolic, and by construction $X'$ is geodesic. Moreover, if there is an action of a group $G$ by isometries on $X$, it induces an action of $G$ by isometries on $X'$. If the action on $X$ is non-elementary, then so is the action on $X'$. If, in addition, the action of $G$ on $X$ is $(\kappa,N)$-acylindrical in the sense of Definition \ref{def: acylindricity} below, then the action of $G$ on $X'$ is $(\kappa',N')$-acylindrical, for some $\kappa'\geqslant\kappa$ and some $N'\geqslant N$. One can for instance choose $\kappa'=5000\kappa$ and $N'=320 N$. This follows by the properties of the construction and by the estimates given in the proof of \cite[Proposition 5.31]{dahmani_hyperbolically_2017}.
\end{rem}

From now on, we assume that the space $X$ is $\delta$-hyperbolic and geodesic.  
Note that $X$ is $\delta'$-hyperbolic for every $\delta' \ge \delta$. Without restriction  we also assume that $\delta >0$.

 Hyperbolicity has the following consequences.

\begin{lem}[\hspace{-0.15mm}{\cite[Lemmas~2.3~and~2.4]{delzant_product_2020}}]
	 
	\label{lem:projections-gromov-product}
	
	Let $x,y,z\in X$. Then
	$$
	(x,y)_z \le d(z,[x,y]) \le (x,y)_z+4\delta.
	$$
	
\end{lem}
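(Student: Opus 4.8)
The plan is to treat the two inequalities separately, since only the right one uses hyperbolicity. For the left inequality $(x,y)_z\le d(z,[x,y])$ I would argue directly from the triangle inequality: fix an arbitrary $w\in[x,y]$; because $w$ lies on a geodesic we have $|x-y|=|x-w|+|w-y|$, while the triangle inequality gives $|x-z|\le|x-w|+|w-z|$ and $|y-z|\le|w-y|+|w-z|$. Substituting these three facts into the definition of the Gromov product yields
$$
(x,y)_z=\frac12\{|x-z|+|y-z|-|x-y|\}\le\frac12\{2|w-z|\}=|w-z|,
$$
and taking the infimum over $w\in[x,y]$ gives the claim. No use of $\delta$ is needed here.

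For the right inequality I would pick a good candidate point on $[x,y]$. Set $t=(y,z)_x$; since $t\le\min\{|x-y|,|x-z|\}$ there is a point $p\in[x,y]$ with $|x-p|=t$ and a point $q\in[x,z]$ with $|x-q|=t$. A one-line computation from the definitions shows $|z-q|=|x-z|-t=(x,y)_z$, so it is enough to prove $|p-q|\le 4\delta$: then
$$
d(z,[x,y])\le|z-p|\le|z-q|+|q-p|\le(x,y)_z+4\delta.
$$
To estimate $|p-q|$ I would first record that, because $p$ lies between $x$ and $y$ on the geodesic $[x,y]$, one has $(p,y)_x=|x-p|=t$, and similarly $(z,q)_x=|x-q|=t$; moreover $(y,z)_x=t$ by the choice of $t$. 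Then iterating the four point inequality from the vertex $x$ along the chain $p,y,z,q$ gives $(p,q)_x\ge\min\{(p,y)_x,(y,z)_x,(z,q)_x\}-2\delta=t-2\delta$, whereas always $(p,q)_x\le\min\{|x-p|,|x-q|\}=t$, so
$$
|p-q|=|x-p|+|x-q|-2(p,q)_x\le 2t-2(t-2\delta)=4\delta,
$$
as required.

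The main obstacle is really just the last step, the fellow-travelling bound $|p-q|\le4\delta$; everything else is bookkeeping with the definition of the Gromov product. The subtlety there is getting the additive constant right: one must set things up so that only two applications of the four point inequality are needed and only $2\delta$ is lost, which is precisely why $p$ and $q$ are chosen at the \emph{same} parameter $t=(y,z)_x$ from the common vertex $x$ rather than, say, by projecting $z$ onto $[x,y]$ directly. If one wanted to avoid even this, one could instead quote the standard fact that geodesic triangles in a $\delta$-hyperbolic space are $4\delta$-thin and apply it to the triangle with vertices $x$, $y$, $z$; but spelling out the short computation above is cleaner and self-contained.
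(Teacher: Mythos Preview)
Your proof is correct. The paper does not give its own argument for this lemma: it is stated with a citation to \cite[Lemmas~2.3~and~2.4]{delzant_product_2020} and used as a black box. Your proof is the standard one and matches what one finds in the cited source: the first inequality is pure triangle inequality, and the second picks the comparison points $p\in[x,y]$ and $q\in[x,z]$ at distance $(y,z)_x$ from $x$ and shows $|p-q|\le 4\delta$ via two applications of the four point inequality. There is nothing to add.
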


\begin{lem}[\hspace{-0.15mm}{\cite[Lemma~2]{arzhantseva_lower_2006}}]
	
	\label{lem:intersection-four-points}
	
	Let $i\in \{1,2\}$. Let $x_i$, $y_i\in X$. Then
	$$|x_1-y_1|+|x_2-y_2|\le |x_1-x_2|+|y_1-y_2|+2\diam([x_1,y_1]^{+8\delta}\cap[x_2,y_2]^{+8\delta}).$$
	
\end{lem}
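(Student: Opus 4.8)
The plan is to transport the inequality to a tree, where it becomes an elementary computation, using the approximation of geodesic polygons by trees. Regard $x_1,x_2,y_2,y_1$ as the consecutive vertices of a geodesic quadrilateral $Q$: then $[x_1,y_1]$ and $[x_2,y_2]$ are two of its four sides, the remaining two being $[x_1,x_2]$ and $[y_1,y_2]$. By the standard polygon approximation (see \cite{ghys_sur_1990,coornaert_geometrie_1990}; for a quadrilateral, cut $Q$ along a diagonal into two triangles and glue the two approximating tripods along the image of that diagonal), there is a finite metric tree $T$ and a map $\phi\colon Q\to T$ which restricts to an isometry on each of the four sides and satisfies $|u-v|\le |\phi(u)-\phi(v)|_T+8\delta$ for all $u,v\in Q$. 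Since $\phi$ is an isometry on $[x_1,y_1]$ and a tree has unique geodesics, $\phi([x_1,y_1])$ is the segment $[\phi(x_1),\phi(y_1)]_T$, and likewise $\phi([x_2,y_2])=[\phi(x_2),\phi(y_2)]_T$.

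Next I would prove the tree version of the statement: for any four points $p_1,p_2,q_1,q_2$ of a metric tree,
$$
|p_1-q_1|+|p_2-q_2|\ \le\ |p_1-p_2|+|q_1-q_2|+2\diam\!\big([p_1,q_1]\cap[p_2,q_2]\big).
$$
If $|p_1-q_1|+|p_2-q_2|\le|p_1-p_2|+|q_1-q_2|$ this is trivial. Otherwise, among the three pairings of the four points the one minimising the sum of the two distances must be $\{p_1,p_2\},\{q_1,q_2\}$; then the subtree spanned by the four points is an ``H'' whose two branch points are joined by a segment $\beta$ of length exactly $\tfrac12\big((|p_1-q_1|+|p_2-q_2|)-(|p_1-p_2|+|q_1-q_2|)\big)$, with $p_1,p_2$ hanging off one branch point and $q_1,q_2$ off the other. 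As $p_1$ and $q_1$ lie on opposite sides of $\beta$, the geodesic $[p_1,q_1]$ contains $\beta$, and similarly $[p_2,q_2]\supseteq\beta$, so $\diam([p_1,q_1]\cap[p_2,q_2])\ge\length(\beta)$, which is the claim.

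It then remains to transfer this back. Apply the tree inequality to $p_i=\phi(x_i)$, $q_i=\phi(y_i)$. Because $\phi$ is an isometry on each side of $Q$, the three terms $|\phi(x_1)-\phi(y_1)|_T$, $|\phi(x_1)-\phi(x_2)|_T$, $|\phi(y_1)-\phi(y_2)|_T$ equal $|x_1-y_1|$, $|x_1-x_2|$, $|y_1-y_2|$, and $|\phi(x_2)-\phi(y_2)|_T=|x_2-y_2|$ as well. For the last term, if $\phi(s)\in\phi([x_1,y_1])\cap\phi([x_2,y_2])$ choose $s_1\in[x_1,y_1]$ and $s_2\in[x_2,y_2]$ with $\phi(s_1)=\phi(s_2)=\phi(s)$; then $|s_1-s_2|\le |\phi(s_1)-\phi(s_2)|_T+8\delta=8\delta$, so $s_1$ lies on $[x_1,y_1]$ and within $8\delta$ of $[x_2,y_2]$, i.e. $s_1\in[x_1,y_1]^{+8\delta}\cap[x_2,y_2]^{+8\delta}$, and for two such points $|s_1-t_1|=|\phi(s)-\phi(t)|_T$ since $\phi$ is an isometry on $[x_1,y_1]$. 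Hence $\diam_T\!\big(\phi([x_1,y_1])\cap\phi([x_2,y_2])\big)\le\diam\big([x_1,y_1]^{+8\delta}\cap[x_2,y_2]^{+8\delta}\big)$, and combining the estimates yields the lemma. The step I expect to demand the most care is checking that the comparison constant in the polygon-to-tree approximation really is at most $8\delta$ (equivalently, tracking the constant in $\delta$-thinness through the gluing of two tripods) — which is presumably exactly why the statement is phrased with $8\delta$-neighbourhoods. If one prefers to avoid trees, the same bound can be obtained directly: project $x_2,y_2$ to nearest points $a,b$ of $[x_1,y_1]$, use \autoref{lem:projections-gromov-product} to get $|x_1-x_2|\ge|x_1-a|+|a-x_2|-O(\delta)$ and $|y_1-y_2|\ge|y_1-b|+|b-y_2|-O(\delta)$, use stability of quasi-geodesics to see that the subarc of $[x_1,y_1]$ from $a$ to $b$ lies in the $8\delta$-neighbourhood of $[x_2,y_2]$ (so $|a-b|\le\diam([x_1,y_1]^{+8\delta}\cap[x_2,y_2]^{+8\delta})$), and combine these with $|x_2-y_2|\le|x_2-a|+|a-b|+|b-y_2|$ and $|x_1-y_1|=|x_1-a|+|a-b|+|b-y_1|$; there the analogous obstacle is absorbing the additive $\delta$-errors into the $8\delta$ and treating the case where $a$ and $b$ occur in the reverse order along $[x_1,y_1]$.
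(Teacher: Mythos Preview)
Your argument is correct. The paper does not actually prove this lemma; it is quoted verbatim from \cite[Lemma~2]{arzhantseva_lower_2006}, so there is no in-paper proof to compare against. Your tree-approximation route is the standard way to see why the constant is $8\delta$: a geodesic quadrilateral in a space satisfying the four-point inequality of \autoref{df:hyperbolicity} admits a comparison tree with additive error $8\delta$ (two tripods glued along the diagonal, each contributing $4\delta$), and the tree case is exactly the H-tree computation you give. The transfer of the diameter term is the only place one has to be slightly careful, and you handle it correctly by pulling a point of $\phi([x_1,y_1])\cap\phi([x_2,y_2])$ back to a pair of points at distance $\le 8\delta$ in $X$ and using that $\phi$ is isometric on $[x_1,y_1]$.

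One small remark on the tree step: your sentence ``the one minimising the sum of the two distances must be $\{p_1,p_2\},\{q_1,q_2\}$'' deserves a word of justification, since a priori the minimum could be the third pairing $\{p_1,q_2\},\{p_2,q_1\}$. In a tree the two largest of the three pairing-sums coincide, so once $|p_1-q_1|+|p_2-q_2|>|p_1-p_2|+|q_1-q_2|$ the latter is forced to be the smallest; you might say this explicitly. The alternative projection argument you sketch at the end is closer in spirit to how such inequalities are often proved directly (and is likely what \cite{arzhantseva_lower_2006} does), but as you note, tracking the constants there is messier than in the tree picture.
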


Let $k \ge 1$ and $l \ge 0$. Let $\gamma:[a,b]\to X$ be a rectifiable path with $a,b\in \R\cup\qty{-\infty,\infty}$. The path $\gamma$ is a \emph{$(k,l)$-quasi-geodesic} if for all $[a',b']\subset [a,b]$,
$$\length(\gamma[a',b'])\le k |\gamma(a')-\gamma(b')|+l.$$ 
Let $L\ge 0$. We say that $\gamma$ is a \emph{$L$-local $(k,l)$-quasi-geodesic} if any subpath of $\gamma$ of length at most $L$ is a $(k,l)$-quasi-geodesic. 

\begin{lem}[\hspace{-0.15mm}{\cite[Corollary~2.7]{coulon_geometry_2014}}]
\label{lem:quasi-line}
Let $\gamma\colon I\to X$ be a $10^3\delta$-local $(1,\delta)$-quasi-geodesic. Then:
\begin{enumerate}[label=(\roman*)]
	\item For every $t,t',s\in I$ such that $t\le s\le t'$, we have $(\gamma(t),\gamma(t'))_{\gamma(s)}\le 6\delta$.
	\item For every $x\in X$ and for every $y,y'\in \gamma$, we have $d(x,\gamma)\le (y,y')_x+9\delta$.
\end{enumerate}
\end{lem}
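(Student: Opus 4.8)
The statement is the classical local-to-global (Morse-type) stability of local quasi-geodesics in a hyperbolic space, so the plan is to isolate one elementary input and bootstrap it with the four point inequality. The input — call it the \emph{short-window estimate} — is the following: if $\gamma$ restricted to a parameter interval $[a,b]$ has length at most $10^3\delta$, then by hypothesis $\gamma|_{[a,b]}$ is a genuine $(1,\delta)$-quasi-geodesic, and for every $a\le s\le b$,
$$|\gamma(a)-\gamma(s)|+|\gamma(s)-\gamma(b)|\le\length(\gamma|_{[a,s]})+\length(\gamma|_{[s,b]})=\length(\gamma|_{[a,b]})\le|\gamma(a)-\gamma(b)|+\delta,$$
so that $(\gamma(a),\gamma(b))_{\gamma(s)}\le\delta/2$. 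Thus (i) holds with room to spare whenever the relevant window is short, and the real content is reducing the general case to short windows.

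For (i) I would fix $t_1\le t_2$ in $I$ and consider the maximum $D$ of the continuous function $(u,w,v)\mapsto(\gamma(u),\gamma(v))_{\gamma(w)}$ on the compact simplex $\{t_1\le u\le w\le v\le t_2\}$, attained at some $(u_0,w_0,v_0)$; the goal is $D\le 6\delta$. If $\length(\gamma|_{[u_0,v_0]})\le 10^3\delta$ the short-window estimate gives $D\le\delta/2$. Otherwise I would cut out a sub-interval $[a,b]\ni w_0$ of $\gamma$-length exactly $10^3\delta$, arranged so that each half $[a,w_0]$, $[w_0,b]$ either abuts an endpoint of $[u_0,v_0]$ or has $\gamma$-length $500\delta$; since the window is long at least one half is long, say $\length(\gamma|_{[a,w_0]})=500\delta$ (the opposite case being symmetric after reversing $\gamma$), which forces $|\gamma(a)-\gamma(w_0)|\ge 499\delta$. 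Writing $P=\gamma(u_0)$, $A=\gamma(a)$, $S=\gamma(w_0)$, $B=\gamma(b)$, $Q=\gamma(v_0)$, the short-window estimate applied to $[a,b]$ gives $(A,B)_S\le\delta/2$, while maximality of $D$ gives $(P,S)_A\le D$, $(S,Q)_B\le D$, $(P,Q)_A\le D$ and $(P,Q)_B\le D$ since $(u_0,a,w_0)$, $(w_0,b,v_0)$, $(u_0,a,v_0)$, $(u_0,b,v_0)$ are admissible sub-configurations. The engine is then the four point inequality based at $S$ applied to the triples $\{A,P,B\}$ and $\{P,Q,B\}$, combined with the identities $(A,P)_S=|A-S|-(S,P)_A$ and $(P,Q)_S=(P,Q)_B+(P,B)_S+(Q,B)_S-|B-S|$: the lower bound $|A-S|\ge 499\delta$ turns the dichotomy ``$(A,P)_S$ or $(P,B)_S$ is small'' into ``$(P,B)_S$ is small'', which then propagates to a bound on $(P,Q)_S=D$ after treating the alternatives and the degenerate cases where $w_0$ sits near an endpoint of $[u_0,v_0]$. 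The branch that must be excluded by hand is the one in which $\gamma$ would make a very large excursion; I would rule it out by combining the two four point alternatives with the lower bounds $|A-S|,|B-S|\ge 499\delta$ (iterating the argument, or running a parallel induction on the $\gamma$-length of the window, should a single pass not suffice). This case analysis, and pinning the final constant to $6\delta$, is the step I expect to be the main obstacle.

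Part (ii) is then a short connectedness argument using (i). Let $x\in X$ and $y=\gamma(u)$, $y'=\gamma(v)$ with $u\le v$, and put $g=(y,y')_x$. For each $s\in[u,v]$ the four point inequality based at $x$ gives $\min\{(y,\gamma(s))_x,(\gamma(s),y')_x\}\le g+\delta$, so $[u,v]$ is the union of the two closed sets $\{s:(y,\gamma(s))_x\le g+\delta\}$ and $\{s:(\gamma(s),y')_x\le g+\delta\}$, which contain $v$ and $u$ respectively; by connectedness of $[u,v]$ there is $s_0$ in their intersection. Expanding the Gromov products gives the identity $(y,\gamma(s_0))_x+(\gamma(s_0),y')_x=|x-\gamma(s_0)|+g-(y,y')_{\gamma(s_0)}$, hence
$$|x-\gamma(s_0)|\le 2(g+\delta)-g+(y,y')_{\gamma(s_0)}=g+2\delta+(y,y')_{\gamma(s_0)}\le g+8\delta,$$
the last inequality being (i) applied to $u\le s_0\le v$. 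Therefore $d(x,\gamma)\le|x-\gamma(s_0)|\le(y,y')_x+9\delta$, as required; note that this half does not even use \autoref{lem:projections-gromov-product}, so the entire difficulty of the lemma is concentrated in the extremal-point argument for (i).
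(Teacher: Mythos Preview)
The paper does not prove this lemma; it is quoted directly from \cite[Corollary~2.7]{coulon_geometry_2014} without argument, so there is no in-paper proof to compare against.

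Your derivation of (ii) from (i) is correct and clean: the connectedness argument and the identity $(y,\gamma(s_0))_x+(\gamma(s_0),y')_x=|x-\gamma(s_0)|+(y,y')_x-(y,y')_{\gamma(s_0)}$ are exactly right, and you in fact land at $8\delta$.

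Part (i), however, has a real gap, which you yourself flag. The extremal-point strategy only ever produces inequalities of the form ``(some Gromov product) $\le D$'', since $D$ is by definition the maximum over the simplex; to conclude anything you would need a strict self-improvement $D\le cD+O(\delta)$ with $c<1$, and your sketch does not supply one. Concretely: from $(A,B)_S\le\delta/2$ and $|A-S|\ge 499\delta$ the four point inequality forces $(P,B)_S\le 3\delta/2$ \emph{only if} $(A,P)_S=|A-S|-(P,S)_A>3\delta/2$, i.e.\ only if $D<497\delta$, which is not known a priori. And even granting $(P,B)_S\le 3\delta/2$, plugging this into your identity gives $D=(P,Q)_B+(P,B)_S-(S,Q)_B\le D+3\delta/2-(S,Q)_B$, which merely says $(S,Q)_B\le 3\delta/2$ and does not bound $D$. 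When the right half is degenerate ($B=Q$) one does get $D=(P,B)_S\le 3\delta/2$, but when both halves are long the chain of inequalities closes back on itself. The standard route (Coornaert--Delzant--Papadopoulos, or the cited reference) avoids this circularity by an explicit induction on the number of length-$\sim 500\delta$ windows covering $[t,t']$: your short-window estimate is the base case, and the inductive step splits the interval, applies the four point inequality once at the split point, and invokes the hypothesis on each half. Your parenthetical ``running a parallel induction on the $\gamma$-length of the window'' is therefore not a fallback but the actual argument required; the extremal formulation does not replace it.
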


From now on let $G$ be a group acting by isometries on $X$. Let $x \in X$ be a point.
  
\subsection{Translation lengths.} 

To measure the displacement of an isometry $g$ on $X$ we define the \emph{translation length} and the  \emph{stable translation length} as
$$
	\tlen g = \inf_{x \in X} |gx-x|, 
	\quad \text{and} \quad
	\stlen g = \lim_{n \rightarrow + \infty} \frac 1n |g^nx-x|.
$$
The definition of $\stlen{g}$ does not depend on choice of $x$. These two lengths are related as follows, \cite[Chapitre~10, Proposition~6.4]{coornaert_geometrie_1990}.
\begin{equation}	
	\label{eqn:translation lengths}
	\stlen g \leq \tlen g \leq \stlen g + 16\delta.
\end{equation}
Note that $g$ is \emph{loxodromic} if, and only if, $\stlen g >0$, \cite[Ch.~10, Prop.~6.3]{coornaert_geometrie_1990}.

\subsection{Acylindricity}
\label{subsec:group-action}

We use the following definition of acylindricity of the action, see \cite[Proposition~5.31]{dahmani_hyperbolically_2017}. Recall that $\delta>0$.

\begin{df}
	\label{def: acylindricity}
	Let $\kappa$, $N>0$. The group $G$ acts \emph{$(\kappa,N)$-acylindrically on $X$} if the following holds: for every $x, y \in X$ with $|x-y|\ge \kappa$, the number of elements $u \in G$ satisfying $|ux-x| \le 100\delta$ and $|uy-y| \le 100\delta$ is bounded above by $N$.
\end{df}

 We write $\partial X$ for the boundary of $X$, see  \cite[Chapitre~2, Définition~1.1]{coornaert_geometrie_1990}.
We denote by $\partial G$ the set of all accumulation points of an orbit $G \cdot x$ in $\partial X$.
This set does not depend on the choice of the point $x$. A subgroup $H$ of $G$ is \emph{non-elementary} if $\partial H$ contains at least $3$ points, or equivalently if $\partial H$ is infinite. Otherwise $H$ is called \emph{elementary}. Under the assumption that the action of $G$ on $X$ is acylindrical, an elementary subgroup is either elliptic, that is, $\partial H$ is empty, or loxodromic, that is, $\partial H$ contains exactly $2$ points. 

Let $H\le G$ be a loxodromic subgroup with limit set $\partial H=\{ \xi,\eta \}$. 
The \emph{maximal loxodromic subgroup containing $H$} is the stabiliser of the set $\partial H$. For a loxodromic element $g\in G$, we denote by $E(g)$ the \emph{maximal loxodromic subgroup containing $g$}. 
\subsection{Loxodromic subgroups.}

From now on we assume that $\kappa\geqslant \delta$ and that the action of $G$ on $X$ is $(\kappa,N)$-acylindrical.

\begin{lem}[\hspace{-0.15mm}{\cite[Lemma~6.5]{dahmani_hyperbolically_2017}}]
	\label{lem:virtually cyclic elementary closure}   
	Let $g\in G$ be a loxodromic element. Then $E(g)$ is virtually cyclic.
\end{lem}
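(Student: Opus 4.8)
Write $\partial\group g=\{\xi,\eta\}$; by definition $E(g)$ is the setwise stabiliser of $\{\xi,\eta\}$, and the induced action on this two-element set gives a homomorphism $E(g)\to\Z/2\Z$ whose kernel $E^+(g)$ --- the elements fixing both $\xi$ and $\eta$ --- has index at most $2$ in $E(g)$. Since $g$ is loxodromic, $\group g\cong\Z$, so it suffices to show that $\group g$ has finite index in $E^+(g)$: then $[E(g):\group g]<\infty$, and $E(g)$ is virtually cyclic.

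First I would fix a point $x\in X$ and let $Y$ be the concatenation of the geodesics $[g^nx,g^{n+1}x]$, $n\in\Z$; replacing $g$ by a positive power if necessary (harmless, since $E(g^m)=E(g)$ and $\group{g^m}$ has finite index in $\group g$), we may assume $Y$ is a $10^3\delta$-local $(1,\delta)$-quasi-geodesic, so that \autoref{lem:quasi-line} applies to it. By construction $Y$ is $g$-invariant with endpoints $\xi$ and $\eta$. For $h\in E^+(g)$ the translate $hY$ is again a $10^3\delta$-local $(1,\delta)$-quasi-geodesic, now with the \emph{same} endpoints $\xi,\eta$ since $h$ fixes them; hence, by the stability of quasi-geodesics in hyperbolic spaces (via \autoref{lem:quasi-line} and \autoref{lem:projections-gromov-product}), the Hausdorff distance between $Y$ and $hY$ is bounded by a constant $B=B(\delta)$. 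As $h$ is an isometry fixing the two endpoints of $Y$, composing with the nearest-point projection $hY\to Y$ shows that $h$ acts on $Y$ coarsely as a translation by some amount $\tau(h)\in\R$ (well defined up to a bounded error); moreover $g^n$, being loxodromic with $\stlen{g^n}=n\stlen g$ and orbit on $Y$, satisfies $\tau(g^n)=n\stlen g$ up to an error independent of $n$. For each $h\in E^+(g)$ pick $n(h)\in\Z$ with $|\tau(h)-n(h)\stlen g|\le\tfrac12\stlen g$; then $g^{-n(h)}h$ coarsely translates $Y$ by at most $\tfrac12\stlen g$ plus a bounded error, hence moves every point of $Y$ by at most some constant $\rho=\rho(\delta,\stlen g)$. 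Therefore every coset $\group g\,h$ meets the set
$$
E_0=\setc{h'\in E^+(g)}{|h'z-z|\le\rho\ \text{for all }z\in Y},
$$
and consequently $[E^+(g):\group g]\le|E_0|$.

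To finish, I would note that $Y$ is unbounded, so for $M$ large the points $z_1=x$ and $z_2=g^Mx$ both lie on $Y$ with $|z_1-z_2|\ge\kappa_\rho$, where $\kappa_\rho$ and $N_\rho$ come from the standard reformulation of acylindricity (see \cite[Proposition~5.31]{dahmani_hyperbolically_2017}): whenever $|z-z'|\ge\kappa_\rho$, at most $N_\rho$ elements of $G$ move each of $z,z'$ by at most $\rho$. Every element of $E_0$ moves both $z_1$ and $z_2$ by at most $\rho$, so $|E_0|\le N_\rho<\infty$; hence $[E(g):\group g]\le 2|E_0|<\infty$ and $E(g)$ is virtually cyclic.

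The step I expect to be the main obstacle is promoting the coarse invariance $hY\subset Y^{+B}$ of the quasi-axis under $h\in E^+(g)$ to a genuine coarse translation of $Y$ by a controlled amount, and checking that subtracting the appropriate power of $g$ leaves only a uniformly bounded displacement along $Y$; this is where one really uses that the elements of $E^+(g)$ are orientation-preserving isometries together with the stability estimates of \autoref{lem:quasi-line}. The remaining ingredients --- that $(\kappa,N)$-acylindricity yields acylindricity for an arbitrary displacement bound $\rho$, and that one may pass to a power of $g$ to make $Y$ a well-behaved local quasi-geodesic --- are routine.
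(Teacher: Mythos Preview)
The paper does not prove this lemma; it is simply quoted from \cite[Lemma~6.5]{dahmani_hyperbolically_2017} with no argument given. Your proposal is therefore not being compared against an in-paper proof but rather against the cited reference, and your approach is exactly the standard one used there: pass to the index-$\le 2$ subgroup $E^+(g)$ fixing $\xi$ and $\eta$ pointwise, observe that each $h\in E^+(g)$ coarsely translates a quasi-axis of $g$, subtract a suitable power of $g$ so that the residual displacement along the axis is uniformly bounded, and then invoke acylindricity (in its equivalent form for an arbitrary displacement threshold~$\rho$) to bound the number of such residuals, hence $[E^+(g):\group g]$. The step you flag as delicate---promoting $hY\subset Y^{+B}$ to a genuine coarse translation with controlled translation number---is handled exactly as you indicate, using that $h$ fixes both endpoints (so the induced coarse self-map of the quasi-line is orientation-preserving) together with the stability estimates of \autoref{lem:quasi-line}; this is routine and not a gap. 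Your reductions (replacing $g$ by a power so that the broken geodesic is a $10^3\delta$-local $(1,\delta)$-quasi-geodesic, and upgrading $(\kappa,N)$-acylindricity at scale $100\delta$ to acylindricity at scale $\rho$ via \cite[Proposition~5.31]{dahmani_hyperbolically_2017}) are also standard and correct.
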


Let $H$ be a loxodromic subgroup. 
The subgroup $H^+\le G$ fixing pointwise $\partial H$ is a subgroup of $H$ of index at most $2$. The next corollary is a consequence of Lemma \ref{lem:virtually cyclic elementary closure} and \cite[Lemma~4.1]{wall_poincare_1967}.

\begin{cor}
	
	\label{lem:loxodromic-subgroups-f-by-z}
	
The set $F$ of all elements of finite order of $H^+$ is a finite normal subgroup of $H$. Moreover there is a loxodromic element $h\in H^+$ such that the map $\sdirect{F}{\phi}{\group{h}} \to H^+$ that sends $(f,g)$ to $fg$ is an isomorphism, where $\phi\colon \group{h}\to \Aut(F)$ is conjugation on $F$.
	
\end{cor}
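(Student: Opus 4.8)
The plan is to reduce the statement to the structure theory of infinite virtually cyclic groups. First I would note that $H$, being a loxodromic subgroup, contains a loxodromic element $g$, so that $H\leqslant \Stab(\partial H)=E(g)$; by \autoref{lem:virtually cyclic elementary closure} the group $E(g)$ is virtually cyclic, hence so are $H$ and its index-at-most-two subgroup $H^+$. Since $g$ has infinite order, $H^+$ is infinite and finitely generated, so \cite[Lemma~4.1]{wall_poincare_1967} applies: it provides the maximal finite normal subgroup $F$ of $H^+$ together with an isomorphism $H^+/F\cong\Z$ or $H^+/F\cong D_\infty$, the infinite dihedral group.

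The key step, which I expect to be the main obstacle, is to rule out the dihedral case; this is where the hypothesis that every element of $H^+$ fixes $\partial H$ pointwise is used. Suppose $H^+/F\cong D_\infty$. I would choose $h\in H^+$ whose image in $D_\infty$ has infinite order; then $h$ has infinite order, hence is loxodromic, say with attracting endpoint $\xi$ and repelling endpoint $\eta$, where $\partial H=\{\xi,\eta\}$. Next choose $t\in H^+$ whose image in $D_\infty$ is a reflection; then $tht^{-1}$ and $h^{-1}$ have the same image in $D_\infty$, so $tht^{-1}=h^{-1}f$ for some $f\in F$. On the one hand $t$ fixes $\xi$, so $tht^{-1}$ is loxodromic with attracting endpoint $t\xi=\xi$. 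On the other hand, since $F\trianglelefteq H^+$ is finite, one has $(h^{-1}f)^n=h^{-n}f_n$ with $f_n\in F$, so the orbit point $(h^{-1}f)^nx$ stays within bounded distance of $h^{-n}x$; hence $h^{-1}f$ is loxodromic with the same attracting endpoint as $h^{-1}$, namely $\eta$. Since $\xi\neq\eta$ this is a contradiction, so $H^+/F\cong\Z$.

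It then remains to read off the conclusion. As $F$ is finite, every element of finite order of $H^+$ maps to $0$ under $H^+\to H^+/F\cong\Z$ and therefore lies in $F$; conversely all elements of $F$ have finite order, so $F$ is exactly the set of elements of finite order of $H^+$. Choosing a loxodromic $h\in H^+$ whose image generates $H^+/F$, one has $\group{h}\cap F=\{1\}$ and $F\group{h}=H^+$, so $H^+$ is the internal semidirect product of $F$ by $\group{h}$; equivalently, the map $\sdirect{F}{\phi}{\group{h}}\to H^+$ sending $(f,g)$ to $fg$ is an isomorphism, where $\phi\colon\group{h}\to\Aut(F)$ is conjugation. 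Finally $F$, being the unique maximal finite normal subgroup of $H^+$, is characteristic in $H^+$, and $H^+\trianglelefteq H$, so $F\trianglelefteq H$ and $F$ is finite, as required.
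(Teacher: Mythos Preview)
Your proof is correct and follows exactly the route the paper indicates: the paper does not give a proof but merely states that the corollary is a consequence of \autoref{lem:virtually cyclic elementary closure} and \cite[Lemma~4.1]{wall_poincare_1967}. You have invoked precisely these two ingredients and correctly filled in the details the paper omits, most notably the argument that the pointwise action of $H^+$ on $\partial H$ excludes the dihedral quotient and hence forces $H^+/F\cong\Z$.
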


For a loxodromic element $g\in G$, we denote by $F(g)$ the set of all elements of finite order of $E^+(g)$. We say that $g$ is \emph{primitive} if its image in $E^+(g)/F(g)$ generates the quotient.

The \emph{$H$-invariant cylinder}, denoted by $C_H$, is the open $20\delta$-neighbourhood of all $10^3 \delta$-local  $(1, \delta)$-quasi-geodesics with endpoints $\xi$ and $\eta$ at infinity.

\begin{lem}[{Invariant cylinder; \cite[Lemma~3.13]{coulon_partial_2016}}]
	\label{L: invariant cylinder}
	Let $H\le G$ be a loxodromic subgroup. Then $C_H$ is invariant under the action of $H$ and strongly quasiconvex. In particular, $C_H$ is $2\delta$-quasi-convex. 
\end{lem}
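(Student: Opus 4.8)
**Proof plan for Lemma "Invariant cylinder" (Lemma 3.13 in [Coulon, Partial periodic...], restated as \autoref{L: invariant cylinder}).**

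We need to show: for a loxodromic subgroup $H \le G$ with limit set $\partial H = \{\xi, \eta\}$, the cylinder $C_H$ — defined as the open $20\delta$-neighbourhood of the union of all $10^3\delta$-local $(1,\delta)$-quasi-geodesics from $\xi$ to $\eta$ — is (a) $H$-invariant, (b) strongly quasi-convex, and in particular (c) $2\delta$-quasi-convex.

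Let me think about each part.

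**Part (a): $H$-invariance.** This should be nearly immediate. If $\gamma$ is a $10^3\delta$-local $(1,\delta)$-quasi-geodesic with endpoints $\xi$ and $\eta$ at infinity, and $h \in H$, then $h\gamma$ is again a $10^3\delta$-local $(1,\delta)$-quasi-geodesic (since $h$ acts by isometries, both the local quasi-geodesic condition and the path length are preserved), and its endpoints at infinity are $h\xi$ and $h\eta$. Since $h \in H$ and $\partial H = \{\xi,\eta\}$ is $H$-invariant (indeed $H$ fixes this pair setwise, and $H^+$ pointwise), we get $\{h\xi, h\eta\} = \{\xi, \eta\}$. So $h\gamma$ is again one of the local quasi-geodesics in the defining family. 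Hence $h$ permutes the defining family, and since the $20\delta$-neighbourhood operation commutes with isometries, $hC_H = C_H$.

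**Part (b): strong quasi-convexity, i.e. $2\delta$-quasi-convexity.** The plan is:

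First I would fix two points $x, y \in C_H$ and a geodesic $[x,y]$, and show every point of $[x,y]$ lies within $2\delta$ of $C_H$. The idea: by definition there are local quasi-geodesics $\gamma_1, \gamma_2$ (from $\xi$ to $\eta$) with $d(x, \gamma_1) < 20\delta$ and $d(y, \gamma_2) < 20\delta$. The key geometric fact is that any two $10^3\delta$-local $(1,\delta)$-quasi-geodesics with the same pair of endpoints at infinity stay uniformly close to one another — this is the stability of local quasi-geodesics in hyperbolic spaces. Combined with \autoref{lem:quasi-line}, which tells us such a local quasi-geodesic behaves essentially like a geodesic line up to a few $\delta$'s (small Gromov products along it, and $d(z,\gamma) \le (y,y')_z + 9\delta$ for any $z$), one can replace $\gamma_2$ by $\gamma_1$ at a bounded cost. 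So effectively $x$ and $y$ are both near a single local quasi-line $\gamma = \gamma_1$.

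Next I would use \autoref{lem:quasi-line}(ii): pick points $x', y'$ on $\gamma$ with $|x - x'| < 20\delta$ and (after moving $y$ near $\gamma$ too) $|y-y''|$ bounded. For a point $p$ on the geodesic $[x,y]$, I want to bound $d(p,\gamma)$. Using \autoref{lem:quasi-line}(ii), $d(p, \gamma) \le (y', x')_p + 9\delta$ where $x', y' \in \gamma$ are chosen as the near-points of $x, y$. By hyperbolicity (four-point inequality) applied to the geodesic $[x,y]$ and the points $x', y'$ close to $x, y$ respectively, $(x', y')_p$ is bounded by a small multiple of $\delta$: indeed $p \in [x,y]$ so $(x,y)_p = 0$, and moving $x \to x'$, $y \to y'$ each changes the Gromov product by at most the distance moved plus $\delta$'s. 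The claimed constant $2\delta$ is sharp, so the bookkeeping must be done carefully — but the structure is: $d(p, C_H) \le d(p, \gamma) \le (x',y')_p + 9\delta \le 2\delta$ requires the neighbourhood width $20\delta$ and the local quasi-geodesic constants to have been chosen generously enough, which they have. Then (c) is just a restatement since "strongly quasi-convex" for Coulon means precisely a uniform (here $2\delta$) quasi-convexity constant.

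**Main obstacle.** The hard part is the stability estimate for $10^3\delta$-local $(1,\delta)$-quasi-geodesic lines with common endpoints at infinity, and then propagating the constants cleanly so that the final quasi-convexity constant comes out as the stated $2\delta$ rather than some larger multiple of $\delta$. This is where one leans on \autoref{lem:quasi-line} and the standard fact (Coulon's geometry-of-small-cancellation toolkit) that such local quasi-geodesics are Morse with explicit constants; the rest is hyperbolic-space bookkeeping with the four-point inequality. In a fully written proof one would either cite Coulon's Lemma 3.13 directly or reproduce this two-step argument (invariance by equivariance of the defining family; quasi-convexity by local-quasi-geodesic stability plus \autoref{lem:quasi-line}).
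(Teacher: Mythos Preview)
The paper does not prove this lemma; it simply quotes \cite[Lemma~3.13]{coulon_partial_2016} without argument. Your invariance argument in Part~(a) is correct and is exactly what one would write. For the quasi-convexity, however, there are two genuine gaps.

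First, you identify ``strongly quasi-convex'' with ``$2\delta$-quasi-convex'', calling (c) a mere restatement of (b). In Coulon's framework these are distinct notions: strong quasi-convexity is a statement about nearest-point projection (for every $x\in X$, every $\eta$-projection $p$ of $x$ on $Y$, and every $y\in Y$, one has $|x-p|+|p-y|\le |x-y|+2\eta+O(\delta)$), and it \emph{implies} ordinary $2\delta$-quasi-convexity---hence the ``In particular'' in the statement. Your sketch addresses only the weaker conclusion and says nothing about projections. Second, your displayed chain $d(p,C_H)\le d(p,\gamma)\le (x',y')_p+9\delta\le 2\delta$ is arithmetically impossible, since the middle expression is at least $9\delta$. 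What is actually needed is $d(p,\gamma)\le 22\delta$ (so that subtracting the $20\delta$ cylinder width leaves $2\delta$), i.e.\ $(x',y')_p\le 13\delta$. But for $p\in[x,y]$ the crude bound is $(x',y')_p\le |x-x'|+|y-y'|$, and with $|x-x'|<20\delta$ and $|y-y'|$ bounded by $20\delta$ plus the fellow-travel constant between $\gamma_1$ and $\gamma_2$, this comes out far larger than $13\delta$. Obtaining the sharp constant genuinely requires the finer argument in Coulon's original proof (which goes through the projection characterisation of strong quasi-convexity), not the coarse stability estimate you invoke.
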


\subsection{Fellow-traveling constant} 

The \emph{axis} of $g\in G$ is the set 
$$
A_g = \{x \in X \mid |gx-x|\le \tlen{g}+8\delta\}.
$$
\begin{rem}\label{R:axis-cylinder} If $g$ is loxodromic, then $ C_{\langle g\rangle}\subset A_g^{+52\delta}$. If  $\|g\|^{\infty}>10^3 \delta$, then $A_g\subset  C_{\langle g\rangle}$, \cite[Lemma 2.33]{coulon_geometry_2014}.
\end{rem}

\begin{lem}[\hspace{-0.15mm}{\cite[Proposition ~2.3.3]{delzant_courbure_2008};\cite[Proposition~2.28]{coulon_geometry_2014}}]
	
	\label{lem:axis-isometry} 
	
	Let $g\in G$. Then $A_g$ is $10\delta$-quasi-convex and $\group{g}$-invariant. Moreover, for every $x\in X$,
	$$\tlen{g}+2d(x,A_g)-10\delta \le |gx-x| \le \tlen{g}+2d(x,A_g)+10\delta.$$
	
\end{lem}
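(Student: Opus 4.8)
The plan is to prove the three assertions — $\group{g}$-invariance, the two-sided displacement estimate, and $10\delta$-quasi-convexity — separately, following \cite{delzant_courbure_2008,coulon_geometry_2014}. Invariance is immediate: if $x\in A_g$ and $n\in\Z$, then $g^n$ is an isometry commuting with $g$, so $|g(g^nx)-g^nx|=|g^n(gx)-g^nx|=|gx-x|\le\tlen g+8\delta$, whence $g^nx\in A_g$. The upper bound $|gx-x|\le\tlen g+2d(x,A_g)+10\delta$ is a triangle inequality: given $\varepsilon>0$ choose $y\in A_g$ with $|x-y|\le d(x,A_g)+\varepsilon$; then $|gx-x|\le|gx-gy|+|gy-y|+|y-x|\le 2(d(x,A_g)+\varepsilon)+\tlen g+8\delta$, and one lets $\varepsilon\to 0$.

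The lower bound $|gx-x|\ge\tlen g+2d(x,A_g)-10\delta$ is the heart of the statement; equivalently, one must exhibit a point of $A_g$ within $a:=\tfrac12(|gx-x|-\tlen g)$ of $x$, up to $O(\delta)$. I would compare three quantities: the displacement $|gx-x|$, the distance $d(x,A_g)$, and the Gromov product $t:=(g^{-1}x,gx)_x$, which equals $|gx-x|-\tfrac12|g^{-1}x-gx|$ and also equals $(x,g^2x)_{gx}$ by a direct computation. Assume first that $t\ge a$ (note $a\le\tfrac12|gx-x|$). Let $p$ be the point of $[x,gx]$ at distance $a$ from $x$ and $p'$ the point of $[x,gx]$ at distance $a$ from $gx$; then $gp\in[gx,g^2x]$ with $|gp-gx|=a$, so by $\delta$-thinness of the triangle $(gx,x,g^2x)$ — applicable because $a\le t=(x,g^2x)_{gx}$ — one gets $|gp-p'|\le O(\delta)$, while $|p-p'|=\tlen g$ since both lie on $[x,gx]$; hence $|gp-p|\le\tlen g+O(\delta)$, so $p\in A_g$ and $d(x,A_g)\le|x-p|=a$. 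The complementary regime $t<a$ — which does occur, for instance for certain elliptic isometries — requires a separate but standard argument producing a point of $A_g$ near $x$ directly; this is where inputs such as $\stlen{g^2}=2\stlen g\ge 2\tlen g-32\delta$ (from the relation $\stlen g\le\tlen g\le\stlen g+16\delta$) are used to constrain the geometry, and where the additive constants must be managed so as to land at $10\delta$.

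For $10\delta$-quasi-convexity I would rewrite the displacement estimate as $d(z,A_g)\le\tfrac12(|gz-z|-\tlen g)+5\delta$ for every $z$, and combine it with quasi-convexity of the displacement function $z\mapsto|gz-z|$ along geodesics. The elementary input is that for $q\in[x,gx]$ one has $|gq-q|\le|gq-gx|+|gx-q|=|q-x|+|gx-q|=|gx-x|$, since $q$ lies between $x$ and $gx$; a geodesic-quadrilateral argument on $x,gx,gy,y$ then reduces the displacement of an arbitrary point $p\in[x,y]$ to this situation, yielding $|gp-p|\le\max\{|gx-x|,|gy-y|\}+O(\delta)$. When $x,y\in A_g$ the right-hand side is $\le\tlen g+O(\delta)$, and substituting into the displacement estimate gives $d(p,A_g)\le O(\delta)$, which is $\le 10\delta$ after tracking constants.

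I expect the main obstacle to be the lower bound, and within it the regime where the Gromov product $t=(g^{-1}x,gx)_x$ is small: a naive argument via a nearest point of $A_g$ is circular, because $d(x,A_g)$ is not a priori controlled, so the point of $A_g$ close to $x$ must be produced explicitly, and the bookkeeping of the additive $\delta$-errors is delicate precisely because the target constant $10\delta$ is tight.
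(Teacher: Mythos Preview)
The paper does not prove this lemma; it is quoted from \cite{delzant_courbure_2008} and \cite{coulon_geometry_2014}. Your outline follows the standard route of those references: invariance is immediate, the upper bound is a triangle inequality, the lower bound is obtained by producing a point of $A_g$ on $[x,gx]$ at distance roughly $a:=\tfrac12(|gx-x|-\tlen g)$ from $x$, and quasi-convexity then follows from the displacement estimate combined with convexity of $z\mapsto|gz-z|$ along geodesics.

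The one substantive gap is the residual case $t:=(g^{-1}x,gx)_x<a$ in the lower bound, which you flag but do not resolve. The ingredient you point to, $\stlen{g^2}=2\stlen g\ge 2\tlen g-32\delta$, gives a \emph{lower} bound on $|g^2x-x|$; since $t=|gx-x|-\tfrac12|g^2x-x|$, this bounds $t$ from \emph{above} --- the wrong direction for what you need. What actually closes the gap is the opposite estimate $\stlen g\ge |gx-x|-2(g^{-1}x,gx)_x-O(\delta)$: apply the Broken Geodesic Lemma (\autoref{lem:broken geodesic}) to the sequence $x,gx,\dots,g^nx$, where each interior Gromov product equals $t$, to obtain $|g^nx-x|\ge n|gx-x|-2(n-1)t-2(n-2)\delta$ and hence $\stlen g\ge |gx-x|-2t-2\delta$. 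Combined with $\tlen g\ge\stlen g$ this yields $t\ge a-O(\delta)$, so the ``bad'' regime is within a bounded multiple of $\delta$ of the ``good'' one, and your thin-triangle step taken at distance $\min\{t,a\}$ from $x$ already lands in $A_g$ at distance at most $a$ from $x$. No genuinely separate argument is needed once this inequality is in place, and the constant $10\delta$ then falls out of the bookkeeping.
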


Let $g\in G$ be a loxodromic element and recall that $E(g)$ is the maximal loxodromic subgroup containing $g$.  
We write $u\sim_g v$ if and only if $u^{-1}v \in E(g)$, for every $u,v\in G$. In other words,  $u\sim_g v$ if and only if $uE(g)=vE(g)$.
The \emph{fellow travelling constant} of $g$ is
$$\Delta(g)=\sup  \setc{\diam(uA_g^{+20\delta}\cap vA_g^{+20\delta})}{ u,v\in G, u\not\sim_g v}.$$

\begin{lem}[\hspace{-0.15mm}{\cite[Proof of Proposition~6.29]{dahmani_hyperbolically_2017}}]
	\label{lem:acylindricity-intersection-axis} 
	 If $g\in G$ is loxodromic, then
	$$\Delta(g)\le \kappa+(N+2)\|g\|^{\infty}+100\delta.$$
\end{lem}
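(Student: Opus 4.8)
The plan is to reduce matters to a single nontrivial coset and then argue by contradiction. Since $u$ acts by isometries and $u\not\sim_g v$ means exactly that $w:=u^{-1}v\notin E(g)$, we have $\diam\big(uA_g^{+20\delta}\cap vA_g^{+20\delta}\big)=\diam\big(A_g^{+20\delta}\cap wA_g^{+20\delta}\big)$, so it is enough to bound
$$
D_w:=\diam\big(A_g^{+20\delta}\cap wA_g^{+20\delta}\big)\le \kappa+(N+2)\stlen{g}+100\delta
$$
for each $w\notin E(g)$. Assuming $D_w$ exceeds this bound, I will exhibit more than $N$ distinct isometries of $X$, each displacing two fixed points lying at distance $\ge\kappa$ by at most $100\delta$, contradicting the $(\kappa,N)$-acylindricity of the action.

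Put $h:=wgw^{-1}$, so that $wA_g=A_h$ and $\ell:=\stlen{g}=\stlen{h}$, and fix $p,q\in A_g^{+20\delta}\cap A_h^{+20\delta}$ with $|p-q|>\kappa+(N+2)\ell+100\delta$. Using the quasi-convexity of the axes (Lemma~\ref{lem:axis-isometry}), I first record that $[p,q]$ and the sub-arcs of $A_g$ and of $A_h$ between their points nearest $p$ and nearest $q$ all lie within $O(\delta)$ of one another, so both axes fellow-travel $[p,q]$ along its full length. After orienting $[p,q]$ from $p$ to $q$ and replacing $g$ by $g^{-1}$ if necessary --- which alters neither $A_g$ nor $\ell$ and replaces $h$ by $h^{-1}$ --- I may assume $g$ translates points near $A_g$ in the direction $p\to q$; then $h$ translates points near $A_h$ either in the same direction (Case~A) or in the opposite one (Case~B). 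The isometries I work with are $c_k:=g^kh^{-k}$ in Case~A and $c_k:=g^kh^{k}$ in Case~B, for $k\in\Z$. They are pairwise distinct: if $c_k=c_j$ with $m:=k-j\ne0$, then $g^m=wg^{m}w^{-1}$ or $g^m=wg^{-m}w^{-1}$ according to the case; taking fixed-point sets at infinity (which coincide for $g^m$ and $g^{-m}$) yields $\partial\group{g}=w\,\partial\group{g}$, hence $w\in\Stab(\partial\group{g})=E(g)$, a contradiction.

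The geometric core is the estimate that, for $x,y\in A_g^{+20\delta}\cap A_h^{+20\delta}$ with $\kappa\le|x-y|\le\kappa+O(\delta)$, the isometry $c_k$ moves each of $x$ and $y$ by at most $100\delta$ for every $k$ ranging over an interval of at least $N+1$ integers. The point is that $c_k$ drags its argument a distance $k\ell$ backwards along one of the two fellow-travelling axes and then $k\ell$ forwards along the other; since the axes remain $O(\delta)$-close along $[p,q]$, the net displacement is $O(\delta)$, provided the intermediate point ($h^{-k}x$ in Case~A, $h^{k}x$ in Case~B, and similarly for $y$) has not run off the end of the fellow-travelling stretch, which confines $k$ to an interval of length $\approx(|p-q|-\kappa)/\ell$. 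To make this rigorous I would invoke the standard fact that the orbit of a loxodromic isometry is a quasi-geodesic --- with constants depending only on $\delta$ --- lying uniformly close to its axis; in particular $|g^kz-z|=k\ell\pm O(\delta)$ with error \emph{uniform in $k$} whenever $z$ is near $A_g$, and then $g^kz$ sits $O(\delta)$-close to $[p,q]$ at the position of $z$ shifted by $k\ell$ as long as it stays within the stretch. Granting the estimate, the $c_k$ with $k$ in that interval form $\ge N+1$ distinct isometries with $|c_kx-x|\le100\delta$ and $|c_ky-y|\le100\delta$ while $|x-y|\ge\kappa$, contradicting acylindricity; hence $D_w\le\kappa+(N+2)\stlen{g}+100\delta$.

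The step I expect to be the main obstacle is precisely this last estimate: the careful $O(\delta)$- and $O(1)$-bookkeeping required to guarantee simultaneously that the displacement of $x$ and $y$ under $c_k$ stays below $100\delta$ and that the interval of admissible $k$ genuinely contains more than $N$ integers --- the constants $N+2$ and $100\delta$ in the statement being exactly what absorbs these errors. The remaining ingredients (the reduction to one coset, the distinctness of the $c_k$, and the final appeal to acylindricity) are routine.
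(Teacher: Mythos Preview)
Your argument is essentially the standard one from \cite[Proof of Proposition~6.29]{dahmani_hyperbolically_2017}, which the paper cites without reproducing: reduce to $w\notin E(g)$, set $h=wgw^{-1}$, and use the elements $g^kh^{\mp k}$ to produce $N+1$ distinct isometries each moving two points at distance $\ge\kappa$ by at most $100\delta$. The distinctness step and the reduction are carried out correctly, and you have correctly identified the only real work as the $O(\delta)$-bookkeeping in the displacement estimate; this is exactly how the cited proof proceeds.
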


We also note the following fact:
\begin{lem}\label{L: axis} Let $g\in G$ with $\|g\|^{\infty}>10^3 \delta$. Then, for all $m\geqslant 1$, we have :
$$ \Delta (g^m)\leqslant \Delta(g) + 150\delta.$$
\end{lem}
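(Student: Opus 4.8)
The plan is to push the data attached to $g^m$ back onto $g$ using that the two elements share the same endpoints at infinity, and then to absorb into the constant $150\delta$ the error incurred by comparing the two axes.

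\textbf{Step 1: structural reductions.} Since $\langle g\rangle$ and $\langle g^m\rangle$ have the same pair of fixed points on $\partial X$, their maximal loxodromic subgroups coincide, $E(g^m)=E(g)$. Hence the relation $\sim_{g^m}$ is literally the relation $\sim_g$, and the supremum defining $\Delta(g^m)$ ranges over the same pairs $u\not\sim_g v$ as the one defining $\Delta(g)$. Likewise the invariant cylinder of a loxodromic subgroup depends only on its limit set, so $C_{g^m}=C_g$. The hypothesis $\stlen{g}>10^3\delta$ gives $\stlen{g^m}=m\stlen{g}>10^3\delta$, so \autoref{R:axis-cylinder} applies to both $g$ and $g^m$ and yields $A_{g^m}\subseteq C_{g^m}=C_g\subseteq A_g^{+52\delta}$; consequently $A_{g^m}^{+20\delta}\subseteq A_g^{+72\delta}$. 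Therefore, for every $u\not\sim_g v$, $\diam(uA_{g^m}^{+20\delta}\cap vA_{g^m}^{+20\delta})\leqslant \diam(uA_g^{+72\delta}\cap vA_g^{+72\delta})$.

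\textbf{Step 2: shrinking the neighbourhood.} It remains to show, for the $10\delta$-quasi-convex set $A_g$ (\autoref{lem:axis-isometry}), that $\diam(uA_g^{+72\delta}\cap vA_g^{+72\delta})\leqslant \diam(uA_g^{+20\delta}\cap vA_g^{+20\delta})+150\delta$ whenever $u\not\sim_g v$. Given $x,y$ in the left-hand intersection, pick nearest points $\bar x,\bar y\in uA_g$ of $x,y$, so that $|x-\bar x|,|y-\bar y|\leqslant 72\delta$ and $[\bar x,\bar y]\subseteq uA_g^{+10\delta}$ by quasi-convexity. Cutting the geodesic quadrilateral $x\bar x\bar y y$ along a diagonal and using hyperbolicity (in the form of \autoref{lem:projections-gromov-product}), any point $p\in[x,y]$ whose distances to $x$ and to $y$ both exceed a threshold of the shape $72\delta+O(\delta)$ is forced to lie within $20\delta$ of $uA_g$; the same computation with $vA_g$ in place of $uA_g$ puts $p$ within $20\delta$ of $vA_g$ as well. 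Deleting from $[x,y]$ the two end segments beyond these thresholds — whose total length can be kept at $150\delta$ — leaves a subsegment inside $uA_g^{+20\delta}\cap vA_g^{+20\delta}$, which gives the claimed inequality. Combining Steps 1 and 2 and taking the supremum over $u\not\sim_g v$ proves $\Delta(g^m)\leqslant\Delta(g)+150\delta$.

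\textbf{Where the difficulty lies.} The only real work is the constant count in Step 2: the deleted end segments must sum to at most $150\delta$, whereas $52\delta$ of the available slack has already been spent on the inclusion $A_{g^m}\subseteq A_g^{+52\delta}$. If the slimness constants of geodesic triangles (and hence quadrilaterals) in the working convention turn out to be too large for that budget, a clean fix is to replace that inclusion by the sharper estimate $d(x,A_g)=O(\delta)$ for points $x$ with $|g^mx-x|\leqslant\tlen{g^m}+8\delta$: one observes that the orbit path $x,gx,\dots,g^mx$ is a $10^3\delta$-local $(1,\delta)$-quasi-geodesic when $\stlen{g}>10^3\delta$ and invokes \autoref{lem:quasi-line} together with \autoref{lem:axis-isometry}, which leaves ample room in the final estimate. (Alternatively, Step 2 can be run through \autoref{lem:intersection-four-points}.)
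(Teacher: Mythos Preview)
Your argument is correct and follows the paper's route exactly: Step~1 is identical to the paper's proof (which just writes $A_{g^m}\subset C_{g^m}=C_g\subset A_g^{+52\delta}$ via \autoref{R:axis-cylinder}), and for Step~2 the paper simply cites \cite[Lemma~2.2.2~(2)]{delzant_courbure_2008} to pass from the $72\delta$-neighbourhood back to the $20\delta$-neighbourhood, whereas you reprove that lemma by hand. Your constant bookkeeping in Step~2, though sketchy, does close: using the four-point inequality twice on $0=(x,y)_p\geqslant\min\{(x,\bar x)_p,(\bar x,\bar y)_p,(\bar y,y)_p\}-2\delta$ together with $(x,\bar x)_p\geqslant |x-p|-72\delta$ gives $(\bar x,\bar y)_p\leqslant 2\delta$ whenever $|x-p|,|y-p|>74\delta$, hence $d(p,uA_g)\leqslant 6\delta+10\delta<20\delta$; the two deleted end segments then total $148\delta<150\delta$, so your hedging paragraph is unnecessary.
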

\begin{proof}
By Remark \ref{R:axis-cylinder}, $A_{g^m}\subset C_{\langle g^m\rangle}=C_{\langle g\rangle}\subset A_g^{+52\delta}$. Thus
$$\Delta(g^m)\leq \diam\left(A_{g}^{+72\delta} \cap A_{ugu^{-1}}^{+72\delta}\right).$$ 
Quasi-convexity of $A_g$ now yields the claim, see  \cite[Lemma~2.2.2~(2)]{delzant_courbure_2008}. 
\end{proof}

\subsection{$\ell^{\infty}$-Energy.} 

To measure the displacement of a finite subset of isometries $U $ of $X$ we define the $\ell^{\infty}$-energy of $U$ at $x$ and the $\ell^{\infty}$-energy of $U$ as 
$$
L(U,x) = \max_{u\in U} |ux-x|, 
\quad \text{and} \quad
L(U)=\inf_{x\in X} L(U,x).
$$
The point $x$ is \emph{almost-minimizing the $\ell^{\infty}$-energy of} $U$ if $L(U,x)\le L(U)+\delta$. 

The translation length and the $\ell^{\infty}$-energy are related as follows. For every $g\in U$,
\begin{equation}	
	\label{eqn:energy and translation length}
	\stlen g \le \tlen{g} \le L(U).
\end{equation}
Moreover, we have the following.

\begin{prop}[\hspace{-0.15mm}{\cite[Proposition 3.2]{koubi_croissance_1998}}]\label{P:koubi} Let $U$ be a finite set of isometries of a hyperbolic space. If $L(U)>100\delta$, then $U^2$ contains a loxodromic isometry.
\end{prop}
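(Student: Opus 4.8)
The plan is to argue by contradiction: suppose $U^2$ contains no loxodromic isometry. Since $u^2\in U^2$ and $\stlen{u^2}=2\stlen u$, every $u\in U$ is non-loxodromic, and so is every product $u_1u_2$ with $u_i\in U$; hence, by the relations $\stlen g\le\tlen g\le\stlen g+16\delta$ together with the fact that $g$ is loxodromic iff $\stlen g>0$, each of these elements has translation length at most $16\delta$. By Lemma~\ref{lem:axis-isometry} it follows that each $u\in U$ has a non-empty axis $A_u$ on which $u$ displaces points by at most $\tlen u+8\delta\le 24\delta$, and that $\tfrac12(|uz-z|-26\delta)\le d(z,A_u)\le\tfrac12(|uz-z|+10\delta)$ for every $z\in X$.

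First I would fix a point $x$ almost minimizing the $\ell^{\infty}$-energy, so $L(U,x)\le L(U)+\delta$, set $\ell:=L(U,x)$ (so $\ell>100\delta$), and pick $u\in U$ with $|ux-x|=\ell$. The displayed estimates place $d(x,A_u)$ within $13\delta$ of $\ell/2$, and the nearest point of $A_u$ to $x$ then lies at distance $\ell/2\pm O(\delta)$ from each of $x$ and $ux$, hence (by Lemma~\ref{lem:projections-gromov-product}) within $O(\delta)$ of the midpoint $m$ of $[x,ux]$; therefore $|um-m|=O(\delta)$. Thus the midpoint of the longest orbit segment of $u$ is almost fixed by $u$. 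Since $x$ almost minimizes the energy, $L(U,m)\ge L(U)\ge\ell-\delta$, and as $u$ barely moves $m$ there is $v\in U$, $v\neq u$, with $|vm-m|\ge\ell-\delta$. Running the same observation for $v$ at $m$ produces a point $m'$ with $|vm'-m'|=O(\delta)$ and $d(m,m')\ge\tfrac12(\ell-\delta)$, lying within $O(\delta)$ of $[m,vm]$ while $m$ lies on $[x,ux]$.

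The core of the argument is now a ping-pong estimate. From the two almost-fixed points $m$ of $u$ and $m'$ of $v$, which are at distance roughly $\ell/2$, much larger than $\delta$, together with the controlled positions of $x$, $ux$ and $vm$ (which pin down the relevant directions), one should locate a point $p$ and a length-two product $g\in U^2$ such that the bi-infinite $\langle g\rangle$-orbit of $p$ is a $10^3\delta$-local $(1,\delta)$-quasi-geodesic; by Lemma~\ref{lem:quasi-line} the element $g\in U^2$ is then loxodromic, the desired contradiction. The inequalities needed to verify the local quasi-geodesic condition come from the thin-triangle and thin-quadrilateral estimates, Lemmas~\ref{lem:projections-gromov-product} and~\ref{lem:intersection-four-points}.

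I expect the main obstacle to be precisely this last step. Two isometries with far-apart almost-fixed points need not have a loxodromic length-two product in general (think of two near-rotations whose composition is again a near-rotation), so one cannot conclude from separation alone; one must genuinely use the directional data forced by the minimality of $x$ (the mutual positions of $x,ux,vm,m,m'$) and the fact that non-loxodromic isometries ``fold back'' on their orbits, and then bookkeep the Gromov products along the resulting broken path tightly enough to beat the additive errors. This is where the threshold $100\delta$ gets calibrated, and where degenerate configurations — in particular the case where $\langle U\rangle$ fixes a point of $X\cup\partial X$, which already forces $L(U)=O(\delta)$ — have to be dispatched. The argument is classical: it is essentially \cite[Proposition~3.2]{koubi_croissance_1998}, going back to Gromov.
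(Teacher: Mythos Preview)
The paper does not prove this proposition at all; it is stated with a citation to \cite[Proposition~3.2]{koubi_croissance_1998} and used as a black box. So there is no proof in the paper to compare your attempt against.

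Regarding your sketch: the opening is correct and is exactly how Koubi begins. If no element of $U^2$ is loxodromic then every $u\in U$ has $\tlen u\le 16\delta$, and the midpoint $m$ of $[x,ux]$ (for $u$ realising $L(U,x)$) is moved by $u$ only $O(\delta)$. Your proposed completion, however, is not the classical one and, as you yourself flag, does not obviously close. Koubi does \emph{not} try to manufacture a loxodromic product by ping-pong from two far-apart almost-fixed points; instead he shows directly that $L(U,m)<L(U,x)-c\delta$ for a universal $c>0$, contradicting the almost-minimality of $x$. Concretely, for every $v\in U$ one bounds $|vm-m|$ by combining $|vx-x|\le\ell$ with the hypothesis that the products $uv$, $vu$ (and their inverses) are non-loxodromic: the latter forces Gromov products such as $(v^{-1}x,ux)_x$ to be at least $\tfrac12|vx-x|-O(\delta)$, and a thin-triangle computation then gives $|vm-m|\le\ell-c\delta$. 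This is both shorter and avoids the genuine obstruction you identified --- two elliptics with distant almost-fixed points need not have a loxodromic product, so the ping-pong route would require precisely the directional control that the energy-drop argument already extracts.
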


If $U\subset G$ is symmetric and not contained in an elementary subgroup, Proposition \ref{P:koubi} can be used to find an independent pair of loxodromic elements in ping pong configuration. This yields that symmetric sets of sufficiently large energy have exponential growth.

In the next remark we explain that uniform uniform growth of hyperbolic groups follows from the results of \cite{koubi_croissance_1998}, and note that the estimates on the uniform uniform exponential growth rate so obtained depend on the hyperbolicity constant of the Cayley graph. 

\begin{rem}\label{R:proof_koubi} Let $G$ be a $\delta$-hyperbolic group. We want to argue that $G$ has uniform uniform exponential growth.  We fix a finite generating set such that the Cayley graph with respect to this generating set is $\delta$-hyperbolic. We denote this Cayley graph of $G$ by $X$. 

Let $S$ be a finite symmetric subset of $G$ that does not generate an elementary subgroup. As $S$ is symmetric, $S^2$ contains the identity, hence, $S^{2n}\subset S^{2n+2}$, for all $n\geqslant 1$. In fact, the inclusion of $S^{2n}$ in $S^{2n+2}$ has to be strict, as otherwise $\langle S\rangle$ would be a finite group. Thus $|S^{2n}|$ grows at least linearly in $n$.

Let $m_0$ be the cardinality of the ball of radius $100\delta$ in $X$, and let $U:=S^{2m_0+2}$. Then there is $u\in U$ such that $|ux-x|>100\delta$, for any $x\in X$, as otherwise $Ux\subset B(x,100\delta)$. In other words, the $\ell^{\infty}$-energy $L(U)>100\delta$. \cite[Proposition 3.2]{koubi_croissance_1998} now yields a loxodromic element $u$ in $U^2$. As $S$ does not generate an elementary subgroup, there is $s\in S$ such that $sus^{-1}$ and $u$ are independent loxodromic elements. Note that both these elements are in $SU^2S=S^2U^2=S^{4m_0+6}$.

It remains to argue that there is a constant $m_1$ (that does only depend on $G$) such that $su^{m_1}s^{-1}$ and $u^{m_1}$ freely generate a free subgroup. Then, as these two elements are in $S^{m_1(4m_0+6)}$, 
$$\omega(S)\geqslant \limsup_{l\to \infty} \frac{1}{m_1(4m_0+6)l}\log(|S^{m_1(4m_0+6)l}|)\geqslant  \frac{1}{m_1(4m_0+6)} \log 3.$$ To see the claim one can follow the argument of \cite[Proposition 5.5]{koubi_croissance_1998}. Indeed, first raise $sus^{-1}$ and $u$ to the power $n_0$ of \cite[Lemma 5.3]{koubi_croissance_1998}, so that $u'=u^{n_0}$ and $v=su^{n_0}s^{-1}$ each stabilise a bi-infinite geodesic line denoted by $L_{u'}$ and $L_{v}$.  Note that $n_0$ depends on $\delta$ and the cardinality of the balls of radius $8\delta$ in $X$. As observed in the first paragraph of \cite[Proposition 5.5]{koubi_croissance_1998}, these loxodromic isometries have translation length at least $200\delta$.  If $d(L_{u'},L_v)\leqslant 8\delta$,  the argument of \cite[Proposition 5.5]{koubi_croissance_1998} implies that $u'^{n_2}$ and $v^{n_2}$ freely generate a free subgroup, where $n_2$ is the constant defined in the proof of \cite[Proposition 5.5]{koubi_croissance_1998} .  Note that the constant $n_2$ only depends on the cardinality of balls of radius $24\delta$ in $X$. Otherwise, let $p$ be the midpoint on a shortest geodesic between $L_{u'}$ and $L_v$. In this case, one verifies from the definition of hyperbolicity that $(u'^{\pm 1}p,v^{\pm 1}p)_p\leqslant 2\delta$ and that $\min\{|u'^{\pm 1}p-p|,|v^{\pm 1}p-p|\}\geqslant 190\delta$. Then, as in the last paragraph of the proof of \cite[Proposition 5.5]{koubi_croissance_1998}, \cite[Lemme 2.4]{koubi_croissance_1998} implies that $u'$ and $v$ freely generate a free subgroup. 
We fix $m_1=n_2n_0$. It depends on $\delta$ and the cardinality of the respective balls in $X$. 
\end{rem} 

In our general setting the argument of the previous remark does not apply, but we still have the following. 

\begin{lem}[\hspace{-0.15mm}{\cite[Theorem 13.1]{breuillard_joint_2021}}]
	\label{lem:breuillard-fujiwara}
	Let $G$ act by isometries on a hyperbolic space. Let $U\subset G$ be a finite symmetric subset containing the identity. Then one of the following hold:
\begin{enumerate}
	\item $L(U)\le 10^4 \delta$. 
	\item The subgroup $\group{U}$ is virtually cyclic and contains a loxodromic element.
	\item $\omega(U) \ge \frac{1}{10^5} \log 2$.
\end{enumerate}
\end{lem}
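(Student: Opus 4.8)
The plan is to dispose of cases~(1) and~(2) quickly and, when neither holds, to build a rank-two free subgroup inside a bounded power of $U$. If $L(U)\le 10^{4}\delta$ we are in case~(1), so assume $L(U)>10^{4}\delta$. Since $10^{4}\delta>100\delta$, \autoref{P:koubi} provides a loxodromic element $g\in U^{2}$. If $\group{U}\subseteq E(g)$, then by \autoref{lem:virtually cyclic elementary closure} the group $\group{U}$ is virtually cyclic and contains the loxodromic $g$, which is case~(2). Otherwise some $s\in U$ has $s\notin E(g)$, and then $g$ and $g':=sgs^{-1}$ are loxodromic of the same translation length and \emph{independent}: indeed $E(g')=sE(g)s^{-1}$, and if this equalled $E(g)$ then $E(g)$ would have limit set both $\partial\group{g}$ and $s\partial\group{g}$, forcing $s\in\Stab(\partial\group{g})=E(g)$. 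Both $g$ and $g'$ lie in $U^{4}$, since $U$ is symmetric.

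Next I would run the classical ping-pong on the Gromov boundary. By \autoref{lem:axis-isometry} the displacement of $g$ and of $g'$ is governed by the distance to the axes $A_{g}$ and $A_{g'}=sA_{g}$, and by \autoref{lem:acylindricity-intersection-axis} together with \autoref{L: axis} the neighbourhoods $A_{g}^{+20\delta}$ and $A_{g'}^{+20\delta}$ meet in a set of diameter at most $\Delta(g)<\infty$. Choosing a power $p$ with $p\stlen{g}$ larger than roughly $2\Delta(g)+O(\delta)$, the four boundary fixed points of $g$ and $g'$ are then distinct and admit pairwise disjoint, suitably nested neighbourhoods, so $g^{p}$ and $(g')^{p}$ freely generate a free group of rank two. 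As $g^{p},(g')^{p}\in U^{4p}$ and $U$ is symmetric, a ball of radius $n$ in that free group injects into $U^{4pn}$, giving $|U^{4pn}|\ge 3^{n}$ and hence $\omega(U)\ge \tfrac{1}{4p}\log 3$.

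The hard part is that the exponent $4p$ must be bounded by a \emph{universal} constant, here $10^{3}$, with no dependence on $\kappa$, $N$ or $\delta$; this is precisely the content of \cite[Theorem~13.1]{breuillard_joint_2021}. The crude bound $\Delta(g)\le\kappa+(N+2)\stlen{g}+100\delta$ of \autoref{lem:acylindricity-intersection-axis} only yields $p$ growing with $N$ and with $\kappa/\stlen{g}$. To remove these dependencies one should exploit $L(U)>10^{4}\delta$ and work at the scale of $L(U)$: after boundedly many products one can take the loxodromic $g$, together with an independent conjugate, to have translation length comparable to $L(U)$ and axes that already fellow-travel for only a universally bounded multiple of $\delta$ -- equivalently, one produces directly a reduced pair in the sense of \autoref{DI:reduced} inside $U^{m}$ for a universal $m$ -- so that no large powers are needed and $\kappa$, $N$, $\delta$ get absorbed. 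Granting this refinement, the remainder is routine bookkeeping with the hyperbolic-geometry lemmas recorded above.
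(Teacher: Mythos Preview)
The paper does not supply its own proof of this lemma; it is quoted verbatim from \cite[Theorem~13.1]{breuillard_joint_2021}. So there is no in-paper argument to compare against, only the cited external proof.

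Your outline has the right skeleton (Koubi to get a loxodromic, pass to an independent conjugate, ping-pong), but there is a genuine gap at the decisive step. You yourself write that the universal bound on the exponent ``is precisely the content of \cite[Theorem~13.1]{breuillard_joint_2021}'' and then proceed ``granting this refinement''. That is circular: the universal bound \emph{is} the lemma. More importantly, the route you choose to reach it --- controlling the overlap of $A_g$ and $sA_gs^{-1}$ via the fellow-travelling constant $\Delta(g)$ and \autoref{lem:acylindricity-intersection-axis} --- cannot succeed: that estimate inevitably drags in $\kappa$ and $N$, and no amount of bookkeeping removes them. Indeed, the paper's own machinery (Sections~4--5), which follows exactly this acylindricity-based line, only proves the weaker trichotomy of \autoref{thm:growth-trichotomy-intro}, whose first alternative is $L(U)\le 10^4\max\{\kappa,\delta\}$ rather than $10^4\delta$; see \autoref{R:fujiwara}.

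The point you are missing is that acylindricity is \emph{not used} to obtain alternative~(3). Breuillard--Fujiwara's argument works for an arbitrary isometric action on a $\delta$-hyperbolic space: once $L(U)>10^4\delta$ and $\group{U}$ is non-elementary, one manufactures a ping-pong pair in a universal power of $U$ by a direct hyperbolic-geometry construction (roughly, products of the form $g^a s g^b$ for bounded $a,b$ already have the required reduced-set behaviour at a point almost minimising $L(U)$), never invoking any overlap bound between axes of distinct conjugates. Acylindricity enters the lemma only to translate ``$\group{U}$ is elementary with two limit points'' into ``virtually cyclic with a loxodromic'' via \autoref{lem:virtually cyclic elementary closure}. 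Your sketch should therefore abandon $\Delta(g)$ entirely and instead work directly with Gromov products at an almost-minimising point, as in the cited reference.
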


\section{Growth of maximal loxodromic subgroups.}

Recall that $\kappa\geqslant \delta$ and $G$ acts $(\kappa,N)$-acylindrically on the $\delta$-hyperbolic space $X$.

\begin{prop}
	
	\label{prop:growth-loxodromic-subgroups}
	
Let $U \subset G$ be a finite symmetric subset. Let $g\in G$ be a primitive loxodromic element. Then, for every $n \ge 1$,
	$$|U^n\cap E(g)| \le 2N\qty(\frac{L(U)}{\stlen{g}}4n+1).$$
	
\end{prop}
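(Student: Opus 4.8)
The plan is to use the structure of $E(g)$ provided by \autoref{lem:loxodromic-subgroups-f-by-z} together with the fellow-travelling estimate of \autoref{lem:acylindricity-intersection-axis} and the quasi-convexity of the axis $A_g$ from \autoref{lem:axis-isometry}. Write $E^+(g)$ for the index-at-most-$2$ subgroup fixing $\partial\langle g\rangle$ pointwise, $F=F(g)$ for its finite part, and recall that $E^+(g)\cong F\rtimes\langle h\rangle$ for a loxodromic $h$, with $g$ primitive so that (up to replacing $g$ by a controlled power) we may think of $g$ as playing the role of $h$. Since $|F|\le N$ by acylindricity (two points on the axis at large distance are each moved little by every element of $F$), the index $[E(g):\langle g\rangle]$ is at most $2N$ — more precisely every coset of $\langle g\rangle$ in $E(g)$ is represented by an element $fg^{j}$ with $f\in F$, $|j|$ bounded, giving $2N$ cosets. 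So it suffices to bound $|U^n\cap \langle g\rangle|$ by roughly $\tfrac{L(U)}{\|g\|^{\infty}}4n+1$ and then multiply by $2N$.

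To bound $|U^n\cap\langle g\rangle|$: fix a point $x$ almost-minimizing the $\ell^\infty$-energy of $U$, so $|ux-x|\le L(U)+\delta$ for all $u\in U$, and hence $|wx-x|\le n(L(U)+\delta)$ for every $w\in U^n$ by the triangle inequality. If $w=g^{k}\in U^n\cap\langle g\rangle$, then $|g^{k}x-x|\ge d(x,A_g)+|k|\,\|g\|^{\infty}+d(x,A_g)-C\delta$ by \autoref{lem:axis-isometry} (the lower displacement bound, iterated along the axis), so $|k|\,\|g\|^{\infty}\le |g^{k}x-x|+C\delta\le n(L(U)+\delta)+C\delta$. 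Since $L(U)\ge\|g\|^{\infty}\ge\delta$ by \eqref{eqn:energy and translation length} and $\kappa\ge\delta$, the error terms are absorbed and $|k|\le 4n\,L(U)/\|g\|^{\infty}$ comfortably, so the number of admissible $k$ (an interval of integers) is at most $\tfrac{L(U)}{\|g\|^{\infty}}4n+1$. Multiplying by the coset bound $2N$ finishes the estimate.

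The main technical obstacle I expect is the bookkeeping at the interface of the two steps: one must make sure the displacement estimate is applied to a fixed primitive generator whose stable length is exactly $\|g\|^{\infty}$ (using that $g$ is primitive, so $\langle g\rangle$ is the "full" cyclic part of $E^+(g)$ up to the $F$-extension, rather than a proper power), and one must check that the elements of $U^n$ lying in a nontrivial coset $fg^{j}\langle g\rangle$ still displace $x$ along the axis by at least $(|k|-O(1))\|g\|^{\infty}$, i.e.\ that the finite part $F$ contributes only a bounded additive error (controlled by $\operatorname{diam}$ of a bounded orbit of $F$, which is $O(\kappa+N\delta)$, again absorbed since $L(U)\ge\|g\|^{\infty}$ and we are allowed the slack factor $4$). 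None of this is deep, but getting the constants to fit under the clean bound $2N\big(\tfrac{L(U)}{\|g\|^{\infty}}4n+1\big)$ requires care; the generous factor $4$ and the additive $+1$ are exactly what give the room to absorb all $\delta$- and $\kappa$-order errors.
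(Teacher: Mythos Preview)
Your approach has a genuine gap: you assume $L(U)\ge\|g\|^\infty$ in order to absorb the $\delta$- and $\kappa$-order error terms, but the proposition does \emph{not} assume $g\in U$, so \eqref{eqn:energy and translation length} does not apply and there is no relation between $L(U)$ and $\|g\|^\infty$ in general. Without that inequality your additive errors (the $C\delta$ from \autoref{lem:axis-isometry}, and the displacement contribution of $F$ you mention for the cosets) cannot be absorbed into the stated bound, which has no additive slack in $\delta$ or $\kappa$.

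The paper sidesteps all error terms. For the cyclic part it uses the clean inequality $k\,\|g\|^\infty=\|g^k\|^\infty\le \|g^k\|\le L(U^n)\le nL(U)$ directly (no base point, no $\delta$), giving $|U^n\cap\langle g\rangle|\le \tfrac{L(U)}{\|g\|^\infty}\,2n+1$. For a nontrivial coset $r\langle g\rangle$ meeting $U^n$, pick $s\in U^n\cap r\langle g\rangle$; then $|U^n\cap r\langle g\rangle|=|s^{-1}U^n\cap\langle g\rangle|\le |U^{2n}\cap\langle g\rangle|$ since $U$ is symmetric. This translation trick is the missing idea: it reduces every coset count to the cyclic case at the cost of doubling $n$, which is exactly where the factor $4$ (rather than $2$) comes from --- not from slack to absorb errors. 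Multiplying by the $2N$ coset bound (your argument for that part is fine) gives the proposition with the exact constants.
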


 This is proved as in the case of hyperbolic groups, and we follow the argument of \cite[p.~484]{antolin_degree_2017}. 
First, we treat the cyclic group generated by a loxodromic isometry.

\begin{lem}
	
	\label{lem:growth-cyclic-loxodromic-subgroups}
	
Let $U\subset G$ be a finite symmetric subset. Let $g\in G$ be a loxodromic element. Then, for every $n \ge 1$,
	$$|U^n\cap\group{g}|\le \frac{L(U)}{\stlen{g}}2n+1.$$
	
\end{lem}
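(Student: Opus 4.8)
The plan is to control how many powers $g^k$ can lie in the word-ball $U^n$ by comparing the displacement that $g^k$ produces at an energy-minimizing point with the displacement that any element of $U^n$ can produce there. Fix a point $x \in X$ that almost-minimizes the $\ell^\infty$-energy of $U$, so that $L(U,x) \le L(U) + \delta$; actually, to avoid the stray $\delta$ it is cleaner to pass to an infimizing sequence or simply work with $L(U,x)$ and let $x$ vary, but let me keep a fixed almost-minimizer and absorb constants at the end. Every element $h \in U^n$ is a product of at most $n$ generators from $U$, so by the triangle inequality $|hx - x| \le n \cdot L(U,x)$. Hence if $g^k \in U^n$ then $|g^k x - x| \le n L(U,x)$.

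**Key steps.** First I would lower-bound $|g^k x - x|$ in terms of $k$ and the stable translation length. Since $\|g\|^\infty \le |gx - x|$ is false in general at a bad point, I instead use that for the cyclic orbit $\{g^k x\}_{k \in \Z}$ the points march off to infinity at speed $\|g\|^\infty$: more precisely, using Lemma~\ref{lem:axis-isometry} applied to $g^k$ (which has stable length $k\|g\|^\infty$) one gets $|g^k x - x| \ge k\|g\|^\infty + 2 d(x, A_{g^k}) - 10\delta \ge k \|g\|^\infty - 10\delta$. Combining with the upper bound, any $g^k \in U^n$ satisfies $k \|g\|^\infty - 10\delta \le n L(U,x)$, i.e. $k \le \frac{n L(U,x)}{\|g\|^\infty} + \frac{10\delta}{\|g\|^\infty}$. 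The same bound holds for $-k$ when $g^{-k} \in U^n$ (as $U$ is symmetric, $U^n$ is symmetric). Therefore the set of exponents $k \in \Z$ with $g^k \in U^n$ is contained in an interval of integers of length at most $\frac{2 n L(U,x)}{\|g\|^\infty} + \frac{20\delta}{\|g\|^\infty} + 1$, and since distinct powers of a loxodromic element are distinct group elements, $|U^n \cap \langle g\rangle|$ is at most the number of integers in that interval. Finally I would clean up the constants: since $L(U,x) \le L(U) + \delta$ and $\|g\|^\infty > 0$, and invoking \eqref{eqn:translation lengths}–\eqref{eqn:energy and translation length} to see $\|g\|^\infty \le L(U)$ so the error terms $\delta/\|g\|^\infty$ are controlled, one arrives at $|U^n \cap \langle g\rangle| \le \frac{L(U)}{\|g\|^\infty} 2n + 1$ after absorbing the additive $\delta$-terms into the coefficient of $n$ (this is where one must be a little careful, but the slack in the $10\delta$ versus the full interval length makes it go through; alternatively take an exact infimum of $L(U,x)$ and choose $x$ on $A_g$ or near it).

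**Main obstacle.** The delicate point is the bookkeeping of the additive $\delta$-errors: the naive inequality gives $\frac{2nL(U)}{\|g\|^\infty} + \frac{2n\delta + 20\delta}{\|g\|^\infty} + 1$, and one needs to argue this is $\le \frac{2nL(U)}{\|g\|^\infty} + 1$, which is not literally true term-by-term. The fix is to choose $x$ more carefully — for instance take $x$ realizing $d(x, A_g)$ small, or simply take $x$ to be a point with $|g x - x|$ close to $\|g\|^\infty$ and observe that such an $x$ also has $L(U,x)$ not much larger than $L(U)$ is \emph{not} automatic, so the genuinely clean route is to note that we may replace $\|g\|^\infty$-bounds by $\tlen{g}$-bounds and use that $g^k \in U^n$ forces $k\,\tlen{g} \le |g^k x - x| + (\text{lower-order})$; I would follow \cite[p.~484]{antolin_degree_2017} verbatim here, since the displacement-counting estimate for cyclic loxodromic subgroups is standard. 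The independence from $\kappa$ and $N$ is immediate because the argument never invokes acylindricity — only hyperbolicity and the quasi-convexity of $A_g$ from Lemma~\ref{lem:axis-isometry}.
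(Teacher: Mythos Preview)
Your overall strategy --- count the integers $k$ with $g^k\in U^n$ by bounding displacement --- is exactly the paper's. The gap is in the bookkeeping, and you correctly identify it yourself: once you fix an almost-minimising point $x$, you incur additive $\delta$-errors both from $L(U,x)\le L(U)+\delta$ and from your lower bound $|g^kx-x|\ge k\|g\|^\infty-10\delta$, and these do \emph{not} get absorbed into the constant $+1$. Your proposed rescue, namely $\|g\|^\infty\le L(U)$ via \eqref{eqn:energy and translation length}, is false as stated: the lemma does not assume $g\in U$, so there is no a priori relation between $\|g\|^\infty$ and $L(U)$, and the error term $\delta/\|g\|^\infty$ could be arbitrarily large.

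The paper's proof avoids all of this by never fixing a point. It uses two exact (error-free) facts: first, $\|g^k\|^\infty = k\,\|g\|^\infty$ for the \emph{stable} translation length; second, if $g^k\in U^n$ then $\|g^k\|^\infty\le\|g^k\|\le L(U^n)\le nL(U)$, where the middle inequality is \eqref{eqn:energy and translation length} and the last is the triangle inequality applied to the infimum defining $L$. Together these give $k\le nL(U)/\|g\|^\infty$ with no additive constant at all, and then symmetry of $U$ and the $+1$ for $k=0$ finish. So the fix for your argument is simply to replace the pointwise estimate $|g^kx-x|$ by the stable length $\|g^k\|^\infty$ throughout; the axis lemma and the choice of $x$ are unnecessary.
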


\begin{proof}
	
	Let $n \ge 1$. 
	Since $U$ is symmetric,
	$$|U^n\cap \group{g}|=|\setc{k \in \Z}{g^k \in U^n}|\le 2|\setc{k\in \N\setminus \{0\}}{g^k\in U^n}|+1.$$
	Let $k \ge 1$ such that $g^k \in U^n$. 
Then $\|g^k\|^{\infty}\le L(U^n)$, see \eqref{eqn:energy and translation length}. By the triangle inequality, $L(U^n)\le nL(U)$. Hence,
	$$k=\frac{\|g^k\|^{\infty}}{\|g\|^{\infty}} \le \frac{L(U)}{\stlen{g}}n.$$ 
This yields the claim. 	
\end{proof}

\begin{proof}[Proof of Proposition \ref{prop:growth-loxodromic-subgroups}] Let $g$ be primitive. 	
	 Recall that $F(g)$ is set of all elements of finite order of $E^+(g)$ and that $\sdirect{F(g)}{\phi}{\group{g}}$ is isomorphic to $ E^+(g)$.  
	  Let $n \ge 1$. Let $E_0$ be a set of representatives of $E(g)/\group{g}$. Then 
	$$|U^n\cap E(g)|=\sum_{r\in E_0} |U^n\cap r\group{g}|.$$
	
	First we estimate $|E_0|$. By definition, $[E(g)\colon E^+(g)]\le 2$. As $E^+(g)=\sdirect{F(g)}{\phi}{\group{g}}$, we have $[E^+(g)\colon \group{g}]=|F(g)|$. By acylindricity, $|F(g)|\le N$.   Thus,  
	$|E_0|\le 2N.$
	
	Now we estimate $|U^n\cap r\group{g}|$ for $r\in E_0$. We may assume that $U^n\cap r\group{g}$ is non-empty. Then there exist $s\in U^n\cap r\group{g}$. In particular $r\group{g}=s\group{g}$. Hence,
	$$|U^n\cap r\group{g}|=|U^n\cap s\group{g}|=|s(s^{-1}U^n\cap \group{g})|=|s^{-1}U^n\cap \group{g}|.$$
	Since $U$ is symmetric, $s^{-1}\in U^n$. Thus,  $s^{-1}U^n\subset U^{2n}$. Therefore,
	$$|U^n\cap r\group{g}|\le |U^{2n}\cap \group{g}|.$$
	By Lemma \ref{lem:growth-cyclic-loxodromic-subgroups}, the latter is bounded by 
	$ \frac{L(U)}{\stlen{g}}4n+1.$
	Consequently,
	$$|U^n\cap r\group{g}|\le \frac{L(U)}{\stlen{g}}4n+1.$$
		Combining the above estimates yields the claim. 
\end{proof}

Given $U\subset G$ and a loxodromic element $g\in G$, we fix a set of representatives $U(g)$ of the equivalence relation induced on $U$ by $\sim_g$, that is, $u\sim_g v$ if and only if $u^{-1}v\in E(g)$, for every $u,v\in U$.  
  We obtain the following.

\begin{cor}
	
	\label{cor:growth-loxodromic-subgroups}
	Let $G$ be a group acting $(\kappa,N)$-acylindrically on a hyperbolic space $X$. Let $U\subset G$ be a finite symmetric subset. Let $g\in G$ be a primitive loxodromic element. Let
	$$a_0=2N\qty(\frac{L(U)}{\stlen{g}}8+1).$$
	Then,
	$$|U(g)|\ge \frac{1}{a_0}|U|.$$
	
\end{cor}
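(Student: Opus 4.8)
The plan is to deduce Corollary~\ref{cor:growth-loxodromic-subgroups} from Proposition~\ref{prop:growth-loxodromic-subgroups} by a counting argument relating the fibres of the quotient map $U \to U(g)$ to intersections of $U^n$ with cosets of $E(g)$. First I would fix a set of representatives $U(g) \subset U$ for the relation $\sim_g$ on $U$. By definition of $\sim_g$, two elements $u, v \in U$ are identified precisely when $u E(g) = v E(g)$, so the fibre over a representative $u \in U(g)$ is $U \cap u E(g)$, and the sets $\{U \cap u E(g)\}_{u \in U(g)}$ partition $U$. Hence $|U| = \sum_{u \in U(g)} |U \cap u E(g)|$.

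Next I would bound each fibre $|U \cap u E(g)|$ uniformly. Since $U$ is symmetric and any $u$ appearing as a representative lies in $U \subseteq U^1$, for such $u$ we have $u^{-1} \in U$, so $u^{-1}(U \cap u E(g)) = (u^{-1}U) \cap E(g) \subset U^2 \cap E(g)$; taking cardinalities gives $|U \cap u E(g)| \le |U^2 \cap E(g)|$. Now apply Proposition~\ref{prop:growth-loxodromic-subgroups} with $n = 2$: since $g$ is primitive loxodromic,
$$|U^2 \cap E(g)| \le 2N\left(\frac{L(U)}{\stlen{g}}\cdot 4 \cdot 2 + 1\right) = 2N\left(\frac{L(U)}{\stlen{g}}\cdot 8 + 1\right) = a_0.$$
Therefore every fibre has size at most $a_0$, and the partition identity yields $|U| = \sum_{u \in U(g)} |U \cap u E(g)| \le a_0\,|U(g)|$, i.e. $|U(g)| \ge \frac{1}{a_0}|U|$, as claimed.

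There is essentially no serious obstacle here: the only point requiring a little care is the passage from a fibre inside $U$ to an intersection with $E(g)$ itself, which is exactly where symmetry of $U$ is used (to guarantee $u^{-1} \in U$, hence $u^{-1}U \subset U^2$), and the bookkeeping with the exponent $n=2$ so that the constant matches the stated $a_0$. One should also note that if some fibre $U \cap u E(g)$ is empty the bound is trivially fine, and that $U(g)$ is non-empty whenever $U$ is, so the inequality is meaningful in all cases. The whole argument is a direct specialisation of the coset-counting step already carried out in the proof of Proposition~\ref{prop:growth-loxodromic-subgroups}.
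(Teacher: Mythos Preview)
Your proof is correct and follows essentially the same approach as the paper: bound each $\sim_g$-class in $U$ by $|U^2\cap E(g)|$ via the symmetry of $U$, then apply Proposition~\ref{prop:growth-loxodromic-subgroups} with $n=2$ to get $|U^2\cap E(g)|\le a_0$. The paper phrases the fibre bound as $v\in u(U^2\cap E(g))$ rather than $u^{-1}(U\cap uE(g))\subset U^2\cap E(g)$, but these are identical observations.
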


\begin{proof}
	
Let $u,v\in U$ such that $u \sim_g v$. By definition, $u^{-1}v\in E(g)$. As $U$ is symmetric, $u^{-1}v\in U^2$. Therefore, $v\in u(U^2\cap E(g))$. Note that $|u(U^2\cap E(g))|=|U^2\cap E(g)|$. Consequently, each $u\in U(g)$ has at most $|U^2\cap E(g)|$ elements in its equivalence class. By Proposition \ref{prop:growth-loxodromic-subgroups}, $|U^2\cap E(g)|\le a_0$, hence, the claim follows.	
\end{proof}

\section{Reduced subsets}
\label{sec:reduced-subsets}

Recall that $G$ acts by isometries on the $\delta$-hyperbolic space $X$. If $U\subset G$ we denote by $U^{-1}$ the set of the inverses of the elements of $U$. 

\begin{df}	
	\label{df:reduced-subset}
	
	Let $\alpha \geq 3\delta$. A finite subset $U\subset G$ is \emph{$\alpha$-reduced at $p\in X$} if $U\cap U^{-1}=\varnothing$ and for every pair of distinct $u_1, u_2 \in U\sqcup U^{-1}$,
	$$
	(u_1p, u_2p)_p<\frac{1}{2}\min\{|u_1p-p|,|u_2p-p|\}-\alpha-50\delta.
	$$
	
\end{df}

\begin{rem}
\label{rem:reduced-subset}
If $U\subset G$ is $\alpha$-reduced at $p\in X$, then $|up-p|>2(\alpha+50\delta)$, for every $u\in U\sqcup U^{-1}$.
\end{rem}

We clarify some vocabulary. Let $U\subset G$ be a subset. We write $w_1\equiv w_2$ to express letter-for-letter equality of words $w_1$ and $w_2$ over $U\sqcup U^{-1}$. We denote by $\F(U)$ the free group on $U$. Every element $w\in \F(U)$ is represented as a reduced word $w\equiv u_1\cdots u_n$ with $u_i\in U\sqcup U^{-1}$. The number $n$ in such a reduced representation of $w\in \F(U)$ is called the \emph{length} of $w$, denoted by $|w|_U$. The \emph{natural homomorphism} $\psi\colon \F(U)\to G$ is the evaluation of the elements of $\F(U)$ in $G$. We usually abuse notation and identify $w$ with its image under $\psi$. 
Also, note that $|.|_U$ coincides with the word length on $\F(U)$. 

\subsection{Broken geodesics}

The next lemma is used to produce quasi-geodesics.  For any integers $n\leqslant m$, we let $\zinterval{n}{m}=\{t\in \mathbb{Z}\mid n\leqslant t\leqslant m\} $.  

\begin{lem}[{Broken Geodesic Lemma \cite[Lemma~1]{arzhantseva_lower_2006}}]
	\label{lem:broken geodesic}
	Let $n\ge 2$. Let $x_0,\cdots,x_n$ be a sequence of $n+1$ points of $X$. Assume that
	\begin{equation}
		\label{eqn:broken-geodesic}
	(x_{i-1},x_{i+1})_{x_i}+(x_i,x_{i+2})_{x_{i+1}}<|x_i-x_{i+1}|-3\delta,
	\end{equation}
	for every $i\in \zinterval{1}{n-2}$. Then the following holds.
	\begin{enumerate}[label=(\roman*)]
		\item $\displaystyle |x_0-x_n|\ge \sum_{i=0}^{n-1}|x_i-x_{i+1}|-2\sum_{i
			=1}^{n-1}(x_{i-1},x_{i+1})_{x_i}-2(n-2)\delta.$
		\item $(x_0,x_n)_{x_j}\le (x_{j-1},x_{j+1})_{x_j}+2\delta$, for every $j\in\zinterval{1}{n-1}.$
		\item The geodesic $[x_0,x_n]$ lies in the $5\delta$-neighbourhood of the broken geodesic $\gamma=[x_0,x_1]\cup\cdots\cup[x_{n-1},x_n]$, while $\gamma$ is contained in the $r$-neighbourhood of $[x_0,x_n]$, where
				$$r=\sup_{1\le i\le n-1} (x_{i-1},x_{i+1})_{x_i}+14\delta.$$
	\end{enumerate}
\end{lem}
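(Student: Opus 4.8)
The plan is to establish (i) and (ii) together by an induction running along the sequence --- powered only by the four-point inequality (Definition~\ref{df:hyperbolicity}) and the elementary identity $(a,b)_c + (a,c)_b = |b-c|$ --- and then to read off (iii) from (i), (ii) and the thinness of geodesic polygons. Throughout, write $p_j := (x_{j-1}, x_{j+1})_{x_j}$ for $1 \le j \le n-1$, so that the hypothesis \eqref{eqn:broken-geodesic} reads $p_j + p_{j+1} < |x_j - x_{j+1}| - 3\delta$ for $1 \le j \le n-2$.

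First I would prove, by a ``forward'' induction on $j$, that $(x_0, x_{j+1})_{x_j} \le p_j + \delta$ for all $1 \le j \le n-1$. The base case $j=1$ is just $(x_0, x_2)_{x_1} = p_1$. For the step, set $\alpha := (x_0, x_j)_{x_{j+1}}$ and $\beta := p_{j+1} = (x_j, x_{j+2})_{x_{j+1}}$. The identity gives $\alpha = |x_j - x_{j+1}| - (x_0, x_{j+1})_{x_j}$, so the inductive hypothesis yields $\alpha \ge |x_j - x_{j+1}| - p_j - \delta$, and plugging in \eqref{eqn:broken-geodesic} at index $j$ gives $\alpha > \beta + 2\delta$. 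The three Gromov products at $x_{j+1}$ of the pairs from $\{x_0, x_j, x_{j+2}\}$ are $\alpha$, $\beta$ and $(x_0, x_{j+2})_{x_{j+1}}$, and the four-point inequality forces their two smallest to differ by at most $\delta$; since $\alpha$ exceeds $\beta$ by more than $\delta$, the product $(x_0, x_{j+2})_{x_{j+1}}$ must be one of the two smallest, hence $(x_0, x_{j+2})_{x_{j+1}} \le \beta + \delta = p_{j+1} + \delta$. Applying the same argument to the reversed sequence (the hypothesis \eqref{eqn:broken-geodesic} is invariant under reversal) gives the symmetric ``backward'' estimate $(x_{j-1}, x_n)_{x_j} \le p_j + \delta$ for $1 \le j \le n-1$.

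Statement (i) then follows by telescoping: iterating $|x_0 - x_k| = |x_0 - x_{k-1}| + |x_{k-1} - x_k| - 2(x_0, x_k)_{x_{k-1}}$ yields
$$|x_0 - x_n| = \sum_{i=0}^{n-1}|x_i - x_{i+1}| - 2\sum_{k=2}^{n}(x_0, x_k)_{x_{k-1}},$$
and the forward estimate bounds $(x_0, x_k)_{x_{k-1}}$ by $p_{k-1} + \delta$ for $k \ge 3$ while $(x_0, x_2)_{x_1} = p_1$; summing these gives exactly the error term $2(n-2)\delta$. For (ii), the case $j = n-1$ is the forward estimate itself, and for $j \le n-2$ one applies the four-point inequality to the three Gromov products at $x_j$ of the pairs from $\{x_0, x_{j+1}, x_n\}$: the forward estimate bounds $(x_0, x_{j+1})_{x_j}$ by $p_j + \delta$, whereas the identity together with the backward estimate at index $j+1$ and \eqref{eqn:broken-geodesic} at index $j$ shows $(x_{j+1}, x_n)_{x_j} > p_j + 2\delta$; thus this last product is the strict maximum of the three, forcing $(x_0, x_n)_{x_j} \le (x_0, x_{j+1})_{x_j} + \delta \le p_j + 2\delta$.

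For (iii), by Lemma~\ref{lem:projections-gromov-product} and (ii) each vertex satisfies $d(x_j, [x_0,x_n]) \le (x_0, x_n)_{x_j} + 4\delta \le p_j + 6\delta$, so applying the thinness of the geodesic quadrilateral spanned by $x_j$, $x_{j+1}$ and their nearest-point projections onto $[x_0,x_n]$ places all of $[x_j, x_{j+1}]$ inside the $(\sup_i p_i + 14\delta)$-neighbourhood of $[x_0,x_n]$, which gives $\gamma \subseteq [x_0,x_n]^{+r}$; for the reverse inclusion one uses that for $z \in [x_0, x_n]$ the four-point inequality forces $\min\{(x_0, x_j)_z, (x_j, x_n)_z\} \le \delta$ for every $j$, picks $j$ at the index where this minimum switches from the second term to the first, and applies thinness of the geodesic quadrilateral on the resulting configuration. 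I expect the one genuinely delicate point to be the additive-constant bookkeeping in the induction: the precise error $2(n-2)\delta$ in (i) and the precise slack $+2\delta$ in (ii) come out only if the forward and backward estimates are proved with the sharp constant $+\delta$, and this sharpness relies on using the strict inequality in \eqref{eqn:broken-geodesic} at exactly the right step.
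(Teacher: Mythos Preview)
The paper does not give its own proof of this lemma: it is quoted verbatim from \cite[Lemma~1]{arzhantseva_lower_2006} and used as a black box. So there is no in-paper argument to compare against.

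Your argument for (i) and (ii) is correct and is in fact the standard proof (essentially the one in the cited reference). The forward estimate $(x_0,x_{j+1})_{x_j}\le p_j+\delta$ by induction, together with its mirror image, is exactly the right engine; your bookkeeping of the constants is accurate, and the observation that the base case $j=1$ contributes no $\delta$-error is precisely why the total defect in (i) is $2(n-2)\delta$ rather than $2(n-1)\delta$. Your derivation of (ii) from the forward and backward estimates via the ``two smallest of three differ by at most $\delta$'' reformulation of the four-point inequality is clean and correct.

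Your treatment of (iii) is thinner. The inclusion $\gamma\subset[x_0,x_n]^{+r}$ goes through as you say: (ii) plus Lemma~\ref{lem:projections-gromov-product} gives $d(x_j,[x_0,x_n])\le p_j+6\delta$, and quadrilateral thinness pushes this along each segment $[x_j,x_{j+1}]$ at the cost of $8\delta$, matching $r=\sup_ip_i+14\delta$. The reverse inclusion $[x_0,x_n]\subset\gamma^{+5\delta}$ is only sketched; your index-switching idea is the right one (for $z\in[x_0,x_n]$ one has $(x_0,x_n)_z=0$, hence $\min\{(x_0,x_j)_z,(x_j,x_n)_z\}\le\delta$ for every $j$, and one looks at the last $j$ with $(x_0,x_j)_z\le\delta$), but turning this into the sharp $5\delta$ requires a couple more lines with Lemma~\ref{lem:projections-gromov-product} that you have not written. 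Since the paper does not use part (iii) anywhere, this gap is harmless for the present purposes.
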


We apply Lemma \ref{lem:broken geodesic} as follows. Let $\alpha\geqslant 3\delta$ and let $U\subset G$ be an $\alpha$-reduced subset at $p\in X$. Let $n\ge 2$. Given a reduced word $w\equiv u_1\cdots u_n \in \F(U)$, we fix    
	$$x_0=p,\quad x_1=u_1p,\quad x_2=u_1u_2p,\quad \cdots,\quad x_n=u_1\cdots u_np.$$
 Then we have the following. 	

\begin{prop}
	\label{prop:broken geodesic reduced subset}
Let $n\ge 2$ and let $w\equiv u_1\cdots u_n \in \F(U)$ be a reduced word. Then 
	\begin{enumerate}[label=(\roman*)]
		\item $(x_{i-1},x_{i+1})_{x_i}+(x_{i},x_{i+2})_{x_{i+1}}<|x_i-x_{i+1}|-2(\alpha+50\delta)$,
		for every $i\in\zinterval{1}{n-2}$.
		\item 	$|wp-p|\ge \frac{1}{2}|u_1p-p|+\frac{1}{2}|u_np-p|+2(n-1)(\alpha+40\delta)+4(n-1)\delta$,
		\item $(p,wp)_{x_i}\le (u_i^{-1}p,u_{i+1}p)_p +2\delta$, for all $i\in \zinterval{1}{n-2}$.
	\end{enumerate}
\end{prop}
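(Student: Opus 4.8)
The plan is to deduce all three items from the Broken Geodesic Lemma (Lemma~\ref{lem:broken geodesic}) applied to the sequence $x_0,\dots,x_n$ associated with the reduced word $w\equiv u_1\cdots u_n$. The first task is to verify hypothesis \eqref{eqn:broken-geodesic}, and in fact the stronger inequality in item~(i); once that is in hand, items~(ii) and~(iii) follow by feeding (i) into parts (i)--(iii) of Lemma~\ref{lem:broken geodesic} and simplifying.

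The key point for item~(i) is to translate the Gromov products $(x_{i-1},x_{i+1})_{x_i}$ back to products based at $p$. Since $x_{i-1}=u_1\cdots u_{i-1}p$, $x_i=u_1\cdots u_i p$ and $x_{i+1}=u_1\cdots u_{i+1}p$, applying the isometry $(u_1\cdots u_i)^{-1}$ gives
$$(x_{i-1},x_{i+1})_{x_i} = (u_i^{-1}p,\ u_{i+1}p)_p.$$
Now $u_i^{-1}$ and $u_{i+1}$ are distinct elements of $U\sqcup U^{-1}$ — here is where $U\cap U^{-1}=\varnothing$ and, more precisely, the fact that in the reduced word $w$ we have $u_{i+1}\neq u_i^{-1}$ is used — so the defining inequality of an $\alpha$-reduced set (Definition~\ref{df:reduced-subset}) applies and yields
$$(x_{i-1},x_{i+1})_{x_i} = (u_i^{-1}p,u_{i+1}p)_p < \tfrac12\min\{|u_i^{-1}p-p|,|u_{i+1}p-p|\} - \alpha - 50\delta.$$
Note $|u_i^{-1}p-p| = |x_{i-1}-x_i|$ and $|u_{i+1}p-p| = |x_i-x_{i+1}|$, since $U\sqcup U^{-1}$ acts on $p$ and all the relevant displacements are just $|vp-p|$ for $v\in U\sqcup U^{-1}$. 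Doing the same for the neighbouring term, $(x_i,x_{i+2})_{x_{i+1}} = (u_{i+1}^{-1}p,u_{i+2}p)_p < \tfrac12 |x_i-x_{i+1}| - \alpha - 50\delta$ (the relevant minimum being bounded above by $|u_{i+1}^{-1}p-p| = |x_i-x_{i+1}|$). Adding the two bounds gives
$$(x_{i-1},x_{i+1})_{x_i} + (x_i,x_{i+2})_{x_{i+1}} < |x_i-x_{i+1}| - 2(\alpha+50\delta),$$
which is item~(i), and in particular is stronger than \eqref{eqn:broken-geodesic} since $2(\alpha+50\delta)\geq 3\delta$.

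For item~(ii) I would plug item~(i) into Lemma~\ref{lem:broken geodesic}(i). That lemma gives $|x_0-x_n| \geq \sum_{i=0}^{n-1}|x_i-x_{i+1}| - 2\sum_{i=1}^{n-1}(x_{i-1},x_{i+1})_{x_i} - 2(n-2)\delta$. From item~(i), applied with the roles split appropriately, each $(x_{i-1},x_{i+1})_{x_i} < \tfrac12|x_{i-1}-x_i| - \alpha - 50\delta$ and also $< \tfrac12|x_i-x_{i+1}| - \alpha-50\delta$; averaging these and summing telescopes the $|x_i-x_{i+1}|$ terms so that only the two extreme displacements $|x_0-x_1| = |u_1p-p|$ and $|x_{n-1}-x_n| = |u_np-p|$ survive with positive coefficient, and one collects $2(n-1)$ copies of $(\alpha+50\delta)$ minus some multiple of $\delta$; a bookkeeping of constants then yields the stated bound $|wp-p| \geq \tfrac12|u_1p-p| + \tfrac12|u_np-p| + 2(n-1)(\alpha+40\delta) + 4(n-1)\delta$. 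Item~(iii) is immediate from Lemma~\ref{lem:broken geodesic}(ii): $(x_0,x_n)_{x_i} \leq (x_{i-1},x_{i+1})_{x_i} + 2\delta = (u_i^{-1}p,u_{i+1}p)_p + 2\delta$, using the same conjugation identity as above. The only mild obstacle is the constant-chasing in item~(ii); everything else is a direct application of the reducedness inequality and the Broken Geodesic Lemma, so I would carry out (i) carefully first and then treat (ii) and (iii) as corollaries.
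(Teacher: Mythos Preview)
Your proposal is correct and follows essentially the same approach as the paper: you establish (i) by conjugating the Gromov products back to $p$ and applying the $\alpha$-reduced inequality to the pairs $(u_i^{-1},u_{i+1})$ and $(u_{i+1}^{-1},u_{i+2})$, then feed (i) into Lemma~\ref{lem:broken geodesic}(i)--(ii) to obtain (ii) and (iii). Your ``averaging'' description for (ii) is just a repackaging of the paper's pairing of consecutive Gromov products, and it yields the same telescoping bound $\tfrac12|u_1p-p|+\tfrac12|u_np-p|+2(n-1)(\alpha+50\delta)-2(n-2)\delta$, which indeed dominates the stated constant.
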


\begin{proof}
	 Let $i\in\zinterval{1}{n-2}$. We have
	$(x_{i-1},x_{i+1})_{x_i}=(u_{i}^{-1}p,u_{i+1}p)_p$ 
	and $|x_i-x_{i+1}|=|p-u_{i+1}p|$. As $w$ is reduced, $u_i^{-1}\neq u_{i+1}$ and $u_{i+1}^{-1}\neq u_{i+2}$. As $U$ is $\alpha$-reduced at $p$, 
	$$(u_i^{-1}p, u_{i+1}p)_p<\frac{1}{2}|u_{i+1}p-p|-\alpha-50\delta,\quad
	(u_{i+1}^{-1}p, u_{i+2}p)_p<\frac{1}{2}|u_{i+1}^{-1}p-p|-\alpha-50\delta.$$
	Adding the two above inequalities we obtain assertion (i). 
	
Applying (i) and Lemma \ref{lem:broken geodesic} (i) to the sequence $x_0,\cdots, x_n$, we obtain
	\begin{align*}
		|wp-p|&\ge |u_1p-p|+\sum_{i=2}^{n-1}|u_{i}p-p|+|u_np-p|\\
		&-(u_1^{-1}p,u_{2}p)_p
		-\sum_{i=2}^{n-1}[(u_i^{-1}p,u_{i+1}p)_p+(u_{i-1}^{-1}p,u_{i}p)_p]-(u_{n-1}^{-1}p,u_np)\\
		&-2(n-2)\delta.
	\end{align*}
	Since $U$ is $\alpha$-reduced at $p$,
		$$(u_1^{-1}p,u_{2}p)_p<\frac{1}{2}|u_1p-p|-\alpha-50\delta,\quad (u_{n-1}^{-1}p,u_np)<\frac{1}{2}|u_np-p|-\alpha-50\delta,$$
and by assertion (i) we have that
$$\sum_{i=2}^{n-1}[(u_i^{-1}p,u_{i+1}p)_p+(u_{i-1}^{-1}p,u_{i}p)_p]< \sum_{i=2}^{n-1}|u_ip-p|-2(n-2)(\alpha+50\delta).$$
This yields assertion (ii). Assertion (iii) follows directly from Lemma \ref{lem:broken geodesic}(ii).
\end{proof}

\subsection{Quasi-isometric embedding of a free group}

Recall that $L(U,p)$ denotes the $\ell^{\infty}$-energy of $U\subset G$ at $p\in X$.

\begin{prop}
	\label{prop:reduced subset qi embedding} Let $\alpha\geqslant 3\delta$. 
	Let $U\subset G$ be an $\alpha$-reduced subset at $p\in X$. Then, for every $w\in \F(U)$, we have
	$$2\alpha|w|_U\le |wp-p|  \le L(U,p)|w|_U.$$
	In particular, the natural homomorphism $\psi\colon \F(U)\to G$ is injective.
\end{prop}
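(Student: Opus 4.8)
The plan is to prove the two inequalities separately, the upper bound being immediate and the lower bound being the substance of the statement. For the upper bound, observe that for $w \equiv u_1 \cdots u_n \in \F(U)$ with $n = |w|_U$, the triangle inequality along the broken geodesic $p, u_1 p, u_1 u_2 p, \ldots, u_1 \cdots u_n p$ gives $|wp - p| \le \sum_{i=1}^{n} |u_1 \cdots u_{i-1}(u_i p) - u_1 \cdots u_{i-1} p| = \sum_{i=1}^n |u_i p - p|$, and each term is at most $L(U,p)$ by definition of the $\ell^\infty$-energy at $p$ (note $u_i \in U \sqcup U^{-1}$, and $L(U,p) = L(U \sqcup U^{-1}, p)$ since $|u^{-1}p - p| = |p - up|$). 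Hence $|wp - p| \le L(U,p)\, |w|_U$.

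For the lower bound $2\alpha |w|_U \le |wp - p|$, I would argue by the size of $n = |w|_U$. If $n = 0$ there is nothing to prove. If $n = 1$, then $w \in U \sqcup U^{-1}$, and Remark~\ref{rem:reduced-subset} gives $|wp - p| > 2(\alpha + 50\delta) > 2\alpha$. If $n \ge 2$, apply Proposition~\ref{prop:broken geodesic reduced subset}(ii):
$$|wp - p| \ge \tfrac12 |u_1 p - p| + \tfrac12 |u_n p - p| + 2(n-1)(\alpha + 40\delta) + 4(n-1)\delta.$$
Using again Remark~\ref{rem:reduced-subset}, $|u_1 p - p| > 2(\alpha + 50\delta)$ and $|u_n p - p| > 2(\alpha + 50\delta)$, so the first two terms together exceed $2(\alpha + 50\delta) = 2\alpha + 100\delta$. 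Thus $|wp - p| > 2\alpha + 100\delta + 2(n-1)(\alpha + 40\delta) + 4(n-1)\delta \ge 2\alpha n$, since $2(n-1)(\alpha + 40\delta) \ge 2\alpha(n-1)$ and the leftover positive terms absorb any slack; in fact the bound obtained is strictly larger than $2\alpha n$, so $2\alpha|w|_U \le |wp-p|$ holds comfortably.

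Finally, injectivity of $\psi\colon \F(U) \to G$ follows: if $w \ne 1$ in $\F(U)$ then $|w|_U \ge 1$, so $|wp - p| \ge 2\alpha > 0$ (using $\alpha \ge 3\delta > 0$), hence $wp \ne p$ and in particular $\psi(w) \ne 1$ in $G$. The main obstacle is essentially bookkeeping: making sure the constants from Proposition~\ref{prop:broken geodesic reduced subset}(ii) and Remark~\ref{rem:reduced-subset} are combined correctly so that the $2\alpha n$ threshold is met for all $n \ge 2$ simultaneously, and handling the small cases $n \in \{0,1\}$ separately since Proposition~\ref{prop:broken geodesic reduced subset} requires $n \ge 2$.
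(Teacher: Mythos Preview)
Your proof is correct and follows essentially the same approach as the paper's: the upper bound via the triangle inequality, the lower bound for $n\ge 2$ via Proposition~\ref{prop:broken geodesic reduced subset}(ii) combined with Remark~\ref{rem:reduced-subset}, and the small cases $n\in\{0,1\}$ handled directly. Your write-up is in fact somewhat more explicit than the paper's about the upper bound and the constant bookkeeping.
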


\begin{proof}
	Let $w\equiv u_1\cdots u_n \in \F(U)$ be a reduced word. If $n=0$ or if $n=1$, the result is by Definition. Let $n\ge 2$. By the triangle inequality, $|wp-p|\le L(U,p)n$. By Proposition \ref{prop:broken geodesic reduced subset} (ii) 
	$$|wp-p|\ge \frac{1}{2}|u_1p-p|+\frac{1}{2}|u_np-p|+2(n-1)(\alpha+\delta)+2\delta.$$
	Combined with Remark \ref{rem:reduced-subset} this yields that $|wp-p|\ge 2\alpha n.$
		In particular, if $w\in \F(U)$ is non-trivial, then $|wp-p|>0$ and $w\neq 1$ in $G$. In other words, $\psi $ is injective.
\end{proof}

\subsection{Geodesic extension property}

We prepare for running the counting argument of \cite[Section 3]{coulon_product_2022} for strongly reduced sets in our setting of $\alpha$-reduced sets. For this purpose we need the following version of \cite[Lemma~3.2]{coulon_product_2022}. 

\begin{prop}
	\label{prop:geodesic extension property} Let $\alpha\geqslant 3\delta$. 
Let $U\subset G$ be an $\alpha$-reduced subset at $p$. Let $w\equiv u_1\cdots u_m$ and $w'\equiv u_1'\cdots u_{m'}'$ be two elements of $\F(U)$ in reduced form. Then $U$ satisfies the geodesic extension property, that is, if $$(p,w'p)_{wp}\leqslant \frac{1}{2}|u_mp-p|,$$
	then $w$ is a prefix of $w'$.
\end{prop}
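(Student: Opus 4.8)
The plan is to induct on $m'$, the length of $w'$, and to argue that the first letters of $w$ and $w'$ must coincide, then peel them off. First I would handle the base and degenerate cases: if $m=0$ there is nothing to prove, and if $m'<m$ one should check the hypothesis forces a contradiction unless $w$ is already a prefix, so effectively we may assume $m'\geq m\geq 1$. The key geometric input is the sequence of orbit points $x_0=p,\ x_1=u_1p,\dots,x_m=wp$ attached to $w$, together with the analogous points $x_0'=p,\dots,x_{m'}'=w'p$ attached to $w'$; by Proposition~\ref{prop:broken geodesic reduced subset} both are broken geodesics that are uniformly close to genuine geodesics $[p,wp]$ and $[p,w'p]$.

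The heart of the argument is to show $u_1=u_1'$. Suppose not. Then $u_1$ and $u_1'$ are distinct elements of $U\sqcup U^{-1}$, so by $\alpha$-reducedness at $p$ we have $(u_1p,u_1'p)_p<\frac12\min\{|u_1p-p|,|u_1'p-p|\}-\alpha-50\delta$. I would then compare this with $(p,w'p)_{wp}$: using Proposition~\ref{prop:broken geodesic reduced subset}(iii) (or directly Lemma~\ref{lem:broken geodesic}(ii)) applied to the broken geodesic for $w$, the Gromov product $(p,wp)_{x_1}$ is controlled by $(u_1^{-1}p,u_2p)_p+2\delta$, which is small; similarly $(p,w'p)_{x_1'}$ is small. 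Feeding these into the four-point inequality, together with the fact that $w p$ is far from $p$ (by Proposition~\ref{prop:reduced subset qi embedding}, $|wp-p|\geq 2\alpha m$), one shows that if $u_1\neq u_1'$ then the projection of $w'p$ onto $[p,wp]$ lands within bounded distance of $p$, hence $(p,w'p)_{wp}\geq |wp-p|-(\text{small error})$. Since $|wp-p|\geq 2|u_1p-p|>2(\alpha+50\delta)\cdot 2$ is large compared with the error terms and in particular exceeds $\frac12|u_mp-p|$ plus the slack — more precisely, $(p,w'p)_{wp}\leq\frac12|u_mp-p|$ together with $|u_mp-p|\leq |wp-p|$ and the broken-geodesic estimates forces $w'p$ to fellow-travel $[p,wp]$ past the point $x_{m-1}$, so the two broken geodesics agree near their starts — contradicting $u_1\neq u_1'$. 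Hence $u_1=u_1'$.

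Once $u_1=u_1'$, I would set $v=u_1^{-1}w\equiv u_2\cdots u_m$ and $v'=u_1^{-1}w'\equiv u_2'\cdots u_{m'}'$, note that these are still reduced words, that $U$ (translated) is $\alpha$-reduced at the point $q=u_1p$, and that $(q,v'q)_{vq}=(p,w'p)_{wp}$ while $|u_mq-q|=|u_mp-p|$ (translation by $u_1^{-1}$ is an isometry fixing the relevant configuration up to relabelling). So the hypothesis of the proposition holds for $v,v'$ with the same $\alpha$ and basepoint shifted, and by induction on $m'$ (note $|v'|_U=m'-1<m'$) we conclude $v$ is a prefix of $v'$, whence $w=u_1v$ is a prefix of $w'=u_1'v'=u_1v'$.

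The main obstacle is the first step — proving $u_1=u_1'$ — and specifically keeping careful track of the additive $\delta$-errors so that the threshold $\frac12|u_mp-p|$ in the hypothesis is genuinely incompatible with $u_1\neq u_1'$. The $\alpha\geq 3\delta$ assumption and the $-\alpha-50\delta$ slack in the definition of $\alpha$-reduced are exactly what give enough room; the bookkeeping is to combine Proposition~\ref{prop:broken geodesic reduced subset}(ii)–(iii), the four-point inequality, and Lemma~\ref{lem:quasi-line} (applied to the quasi-geodesic through the $x_i$) to get a clean inequality. I expect this to be routine but delicate hyperbolic-geometry estimation rather than conceptually hard.
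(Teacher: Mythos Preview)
Your inductive step has a genuine gap: the equality $(q,v'q)_{vq}=(p,w'p)_{wp}$ is false. Translating the configuration by $u_1^{-1}$ sends $(p,w'p)_{wp}$ to $(u_1^{-1}p,\,v'p)_{vp}$, not to $(p,v'p)_{vp}$; equivalently, if you conjugate the alphabet and move the basepoint to $q=u_1p$, the first entry of the Gromov product becomes $u_1p$, not $p$. So what you actually get after peeling is the inequality $(u_1p,w'p)_{wp}\le\frac12|u_mp-p|$, and you need $(p,v'p)_{vp}=(u_1p,w'p)_{wp}$ for the induction---but that is exactly what you have not established. One can bound $(u_1p,w'p)_{wp}\le (p,w'p)_{wp}+\delta$ via the four-point inequality (using that $(p,u_1p)_{wp}$ is large), but then each peel costs an extra $\delta$, and after $m-1$ steps the accumulated error can exceed the $\alpha+50\delta$ slack in the $\alpha$-reduced condition. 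The induction as written does not close.

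The paper sidesteps this entirely by arguing directly at the first mismatch position, with no induction. Assuming $w$ is not a prefix of $w'$, let $r\le m$ be minimal with $u_r\neq u_r'$ (or $r=m'+1$ if $w'$ is a proper prefix of $w$), set $q=u_1\cdots u_{r-1}p$, and show that both $(p,q)_{wp}$ and $(q,w'p)_{wp}$ exceed $\tfrac12|u_mp-p|+\delta$; the four-point inequality then forces $(p,w'p)_{wp}>\tfrac12|u_mp-p|$, a contradiction. The two required lower bounds are obtained from the identities $(p,q)_{wp}=|wp-q|-(p,wp)_q$ and $(q,w'p)_{wp}=|wp-q|-(wp,w'p)_q$, together with Proposition~\ref{prop:broken geodesic reduced subset}(ii)--(iii) to control $|wp-q|$, $(p,wp)_q$ and $(wp,w'p)_q$. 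Your sketch for the case $r=1$ (i.e.\ $u_1\neq u_1'$) is morally this computation, but you cannot reduce the general case to it by peeling; you have to run the estimate at the actual branch point $q$.
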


\begin{rem}
The \emph{geodesic extension property} has the following meaning: if the geodesic $[p,w'p]$ extends $[p,wp]$ as a path in $X$, then $w'$ extends $w$ as a word over $U\sqcup U^{-1}$.
\end{rem}

Up to some minor technical differences, the proof of Proposition \ref{prop:geodesic extension property} is the same as the proof of \cite[Lemma~3.2]{coulon_product_2022}. We include it here. 

\begin{proof}
	The proof is by contradiction.  Assume that $w$ is not a prefix of $w'$. Let $r<m$ be the largest integer such that $u_i=u_i'$, for every $i\in\zinterval{1}{r-1}$. We let 
	$$q=u_1\cdots u_{r-1}p=u_1'\cdots u_{r-1}'p.$$ 
	We show that $\min\{(p,q)_{wp},(q,w'p)_{wp}\}> \frac{1}{2}|u_mp-p|+\delta.$ Hyperbolicity then implies the proposition. 
	The definition of Gromov product implies that 
	\begin{equation}
		\label{eqn:geodesic-extension2}	
		(p,q)_{wp}=|wp-q|-(p,wp)_q,\quad (q,w'p)_{wp}=|wp-q|-(wp,w'p)_q.
	\end{equation}
	We estimate $|wp-q|$, $(p,wp)_q$, and $(wp,w'p)_q$. Firstly, by Proposition \ref{prop:broken geodesic reduced subset} (ii),   
	\begin{align}
		|wp-q|\ge \frac{1}{2}|u_rp-p|+\frac{1}{2}|u_mp-p|+2(m-r)(\alpha+\delta).\label{cla:geodesic-extension}
	\end{align}
Next by Proposition \ref{prop:broken geodesic reduced subset}(iii), and as $U$ is $\alpha$-reduced,
	\begin{align}
		(p,wp)_q<\frac{1}{2}|u_rp-p|. \label{eqn:geodesic-extension4} 
	\end{align}
Finally, we prove that 
	\begin{align}
		(wp,w'p)_q<\frac{1}{2}|u_rp-p|. \label{eqn:geodesic-extension5} 
	\end{align}
		Indeed, if $r-1=m'$, then $w'p = q$ and \eqref{eqn:geodesic-extension5} holds. Let $r-1<m'$. By choice of $r$ we have that $u_r\neq u_r'$. 
		For simplicity, denote
	$$t=u_1\cdots u_{r}p\quad\text{and}\quad t'=u_1'\cdots u_{r}'p.$$ 
	By hyperbolicity 
		$$\min\{(t,wp)_q,(wp,w'p)_q,(w'p,t')_q\}\le (t,t')_q+2\delta=(u_rp,u_r'p)_p+2\delta.$$
		Since $U$ is $\alpha$-reduced at $p$,
				\begin{equation}
			\label{eqn:geodesic-extension3}
			\min\{(t,wp)_q,(wp,w'p)_q,(w'p,t')_q\}< \frac{1}{2}\min\{|u_rp-p|,|u_r'p-p|\}-\alpha.
		\end{equation}
		We prove that the minimum of \eqref{eqn:geodesic-extension3} is attained by $(wp,w'p)_q$. Indeed,  
		$$(t,wp)_q=|q-t|-(q,wp)_t.$$
		By definition, $|q-t|=|u_rp-p|.$ Recall that $m\ge r+1$. By Proposition \ref{prop:broken geodesic reduced subset}(iii), 
		$$(q,wp)_t\le (u_r^{-1}p,u_{r+1}p)_p+2\delta<\frac{1}{2}|u_rp-p|-\alpha.$$
				Consequently,
		 $(t,wp)_q > \frac{1}{2}|u_rp-p|+\alpha.$ 
		Thus, the minimum of \eqref{eqn:geodesic-extension3} cannot be attained by $(t,wp)_q$. Similarly, it cannot be attained by $(w'p,t')_q$. This proves Equation \eqref{eqn:geodesic-extension5}. 
	Finally, combining Equations (\ref{eqn:geodesic-extension2}-\ref{eqn:geodesic-extension5}), yields the claim. 
	\end{proof}
	
	\subsection{Producing reduced subsets}

Recall that the action of $G$ on $X$ is $(\kappa,N)$-acylindrical and that $\Delta(g)$ is the fellow travelling constant of a loxodromic $g\in G$ (Subsection \ref{subsec:group-action}). Also, if $U$ is a finite symmetric subset of $G$, then $U(g)=U/\sim_g$ where $u\sim_gv$ if and only if $uv^{-1}\in E(g)$.  

\begin{prop}
		\label{prop:producing-reduced-subsets}	
	Let $\alpha\geq 3 \delta$. Let $U\subset G$ be a finite symmetric subset containing a loxodromic isometry $g$ such that $\stlen{g}>10^3\delta$.  Let $p\in X$. Let
	$$b_0 = \frac{2}{\stlen{g}} \qty [\Delta(g)+5L(U,p)+104\delta+\alpha ].$$
	Then for every $b\ge b_0$, the set $S=\setc{ug^bu^{-1}}{u\in U(g)}$ satisfies the following: 
	\begin{enumerate}
		\item $S\subset U^{b+2}$. 
		\item $|S|=|U(g)|$.
		\item $S$ is $\alpha$-reduced at $p$.
	\end{enumerate}
	
\end{prop}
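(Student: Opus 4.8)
The plan is to produce the set $S = \{ ug^bu^{-1} : u \in U(g)\}$ and to verify the three claims in order, with claims (1) and (2) being essentially bookkeeping and claim (3) --- the $\alpha$-reducedness at $p$ --- being the technical heart. For claim (1): each element $ug^bu^{-1}$ is a product of $b+2$ elements of $U$, since $g\in U$, $u\in U(g)\subset U$, and $U$ is symmetric so $u^{-1}\in U$; hence $S\subset U^{b+2}$. For claim (2): the map $u\mapsto ug^bu^{-1}$ from $U(g)$ to $S$ is surjective by definition, and I would check injectivity by noting that if $u_1 g^b u_1^{-1} = u_2 g^b u_2^{-1}$ then $u_2^{-1}u_1$ centralises $g^b$, hence lies in $E(g^b)=E(g)$ (using that $\|g\|^\infty>10^3\delta$, so $g^b$ is loxodromic and $E(g^b)=E(g)$), so $u_1\sim_g u_2$, forcing $u_1=u_2$ in $U(g)$. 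Also $S\cap S^{-1}=\varnothing$ must be recorded as part of reducedness: if $u_1g^bu_1^{-1}=(u_2g^bu_2^{-1})^{-1}=u_2g^{-b}u_2^{-1}$, the same centraliser argument puts $u_2^{-1}u_1\in E(g)$, so $u_1=u_2$ in $U(g)$ and then $g^b=g^{-b}$, contradicting that $g$ is loxodromic and $b\geq b_0>0$.

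For claim (3), I would work with the elements $s_u := ug^bu^{-1}$ and their axes. The key geometric input is that the axis $A_{s_u}$ is the translate $uA_{g^b}$, and by Remark~\ref{R:axis-cylinder} (since $\|g^b\|^\infty = b\|g\|^\infty > 10^3\delta$) we have $A_{g^b}\subset C_{g^b}=C_g\subset A_g^{+52\delta}$, so $A_{s_u}\subset uA_g^{+52\delta}$. Fix a point $p$; I want to control $(s_{u_1}p, s_{u_2}p)_p$ for distinct $s_{u_1},s_{u_2}\in S\sqcup S^{-1}$. The idea is: the point $p$ projects onto each axis $A_{s_{u_i}}$, and the element $s_{u_i}$ translates along that axis by roughly $\|s_{u_i}\|^\infty = b\|g\|^\infty$, so by Lemma~\ref{lem:axis-isometry} one has $|s_{u_i}p - p| \geq b\|g\|^\infty + 2d(p,A_{s_{u_i}}) - 10\delta$, which in particular (using $b\geq b_0$) is large. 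The Gromov product $(s_{u_1}p, s_{u_2}p)_p$ is then governed by how long a geodesic from $p$ towards $s_{u_1}p$ fellow-travels a geodesic from $p$ towards $s_{u_2}p$; since such geodesics pass (up to bounded error, via Lemma~\ref{lem:projections-gromov-product} and Lemma~\ref{lem:quasi-line}) near the respective axes $A_{s_{u_i}}\subset u_iA_g^{+52\delta}$, the overlap is bounded by $\diam(u_1A_g^{+20\delta+\text{const}}\cap u_2A_g^{+20\delta+\text{const}})$ plus the distances from $p$ to the axes, which is at most $\Delta(g)$ plus controlled error terms whenever $u_1\not\sim_g u_2$ --- and one must separately handle the case $u_1\sim_g u_2$ (then distinctness of $s_{u_1}^{\pm},s_{u_2}^{\pm}$ forces them to be $s_u$ and $s_u^{-1}$ for the same $u$, whose axes coincide but which translate in opposite directions, so the relevant overlap is again bounded, this time because a point cannot be close to both $s_u^n p$ and $s_u^{-n}p$ for large $n$). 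Packaging these estimates, one gets
$$
(s_{u_1}p, s_{u_2}p)_p \;\leq\; \tfrac12\big(\Delta(g) + 2L(U,p) + O(\delta)\big) + \max_i d(p, A_{s_{u_i}}),
$$
while the lower bound gives $\tfrac12|s_{u_i}p-p| \geq \tfrac12 b\|g\|^\infty + d(p,A_{s_{u_i}}) - 5\delta$; choosing $b\geq b_0 = \frac{2}{\|g\|^\infty}[\Delta(g)+4L(U,p)+104\delta+\alpha]$ makes the difference exceed $\alpha+50\delta$, which is exactly Definition~\ref{df:reduced-subset}. (The term $4L(U,p)$ rather than $2L(U,p)$ in $b_0$ absorbs both the $\ell^\infty$-energy bound $|up-p|\leq L(U,p)$ used to compare $A_{g}$ and $A_{g^b}$-translates based at $p$ versus $u^{-1}p$, and the crude constants from Lemma~\ref{lem:axis-isometry} and Lemma~\ref{lem:quasi-line}.)

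The main obstacle I anticipate is the bookkeeping of constants in claim (3): translating the ``fellow-travelling of geodesics'' heuristic into a clean inequality requires repeatedly moving between $(x,y)_z$, $d(z,[x,y])$, and distances to quasi-convex axes and cylinders (Lemmas~\ref{lem:projections-gromov-product}, \ref{lem:quasi-line}, \ref{L: invariant cylinder}, \ref{lem:axis-isometry}), each contributing additive $O(\delta)$ errors, and one must also track the dependence on $d(p,A_{s_u})$ carefully so that it cancels between the upper bound on the Gromov product and the lower bound on $|s_up-p|$ --- otherwise the point $p$ being far from the axes could destroy reducedness. The case analysis $u_1\sim_g u_2$ versus $u_1\not\sim_g u_2$ must be handled so that the $\Delta(g)$ bound (Lemma~\ref{lem:acylindricity-intersection-axis}, via $\Delta(g^b)\leq\Delta(g)+150\delta$ from Lemma~\ref{L: axis}) applies in the independent case, while in the dependent case one uses instead that $s_u$ and $s_u^{-1}$ translate in opposite directions along a common axis. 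Once these estimates are assembled, verifying the numerical choice of $b_0$ works is routine.
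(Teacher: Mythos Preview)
Your outline for (1) and (2) is correct and more explicit than the paper, which dismisses them as immediate; your treatment of $S\cap S^{-1}=\varnothing$ and the case $u_1=u_2$ in (3) also matches the paper's (which appeals to Lemma~\ref{lem:quasi-line} on an $h$-invariant quasi-geodesic for the latter).

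The genuine difference is the case $u_1\neq u_2$ in (3). You propose to track $d(p,A_{s_{u_i}})$: bound the Gromov product above by roughly $\Delta(g)+\max_i d(p,A_{s_{u_i}})+O(\delta)$ (note: your displayed factor $\tfrac12\Delta(g)$ is too optimistic---in a tree the geodesics from $p$ can fellow-travel the full overlap length, so the coefficient should be $1$; the numerics still close since $b_0\stlen{g}=2\Delta(g)+\cdots$), bound $\tfrac12|s_{u_i}p-p|$ below by $\tfrac{b}{2}\stlen{g}+d(p,A_{s_{u_i}})-5\delta$ via Lemma~\ref{lem:axis-isometry}, and cancel the axis-distance terms after controlling $\max_i-\min_i\leq 2L(U,p)$ through $|u_i^{-1}p-p|\leq L(U,p)$. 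This is a valid route.

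The paper takes a shorter path that sidesteps the cancellation altogether. It introduces $x_i=u_ip$ and $y_i=u_ig^{\varepsilon_i b}p$ (not axis projections) and applies Lemma~\ref{lem:intersection-four-points}, the four-point lemma, to obtain
\[
|x_1-y_1|+|x_2-y_2|-|y_1-y_2|\leq |x_1-x_2|+2\diam\bigl([x_1,y_1]^{+8\delta}\cap[x_2,y_2]^{+8\delta}\bigr).
\]
Since $g\in U$ gives $d(p,A_g)\leq\tfrac12 L(U,p)+5\delta$, each $[x_i,y_i]$ lies in a thin neighbourhood of $u_iA_g$, so the diameter term is at most $\Delta(g)+L(U,p)+54\delta$. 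A triangle-inequality manipulation then yields $(s_1p,s_2p)_p\leq \Delta(g)+4L(U,p)+54\delta$ with \emph{no} axis-distance term, and the crude lower bound $|s_ip-p|\geq b\stlen{g}$ already suffices. Your approach buys a slightly sharper geometric picture; the paper's buys cleaner bookkeeping by avoiding exactly the obstacle you flagged.
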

The proof is by standard arguments. See for example \cite{koubi_croissance_1998}, \cite{arzhantseva_lower_2006} or \cite{fujiwara_rates_2021}.  We give it here to justify the given formula for $b_0$.

\begin{proof}
	The conclusions (1) and (2) are immediate. We now prove (3). By definition, $S\cap S^{-1} = \varnothing$. Let $i\in\zinterval{1}{2}$. Let $u_i\in U(g)$. Let $\varepsilon_i\in \{-1,1\}$. Assume that $u_1g^{\varepsilon_1b}u_1^{-1}$ and $u_2g^{\varepsilon_2b}u_2^{-1}$ are distinct.
	
	\underline{\emph{Case} $u_1=u_2$}. In this case, $\varepsilon_1=-\varepsilon_2$. Let $h=u_1g^{\varepsilon_1b}u_1^{-1}$. It is enough to prove that
	$$(hp,h^{-1}p)_p\le \frac{b}{2}\stlen{g}-\alpha-50\delta.$$
	This follows from Lemma \ref{lem:quasi-line} applied to an $h$-invariant quasi-geodesic line.

	\underline{\emph{Case} $u_1\neq u_2$}. Then $u_1\not\sim_g u_2$, which means that $u_1^{-1}u_2\not \in E(g)$. By 
 Lemma \ref{lem:axis-isometry},
\begin{align}
d(p,A_g)\le \frac{1}{2}|gp-p|+5\delta\le \frac{1}{2}L(U,p)+5\delta. \label{E:producing-reduced-1}
\end{align}		
		Now let $x_i = u_ip$ and $y_i = u_ig^{\varepsilon_ib}p$. We then have that 
\begin{align}
\diam([x_1,y_1]^{+8\delta}\cap[x_2,y_2]^{+8\delta})\le \Delta(g)+L(U,p)+54\delta. \label{E:producing-reduced-2}
\end{align}
Indeed, let $\sigma=d(p,A_g)+10 \delta$, so that  
		$\max \qty {d(x_i,u_iA_g), d(y_i, u_iA_g)}\le \sigma.$
		As $A_g$ is $10\delta$-quasi-convex (Lemma \ref{lem:axis-isometry}), the subset $u_iA_g^{+\sigma}$ is $2\delta$-quasi-convex. Consequently, 	
		$[x_i,y_i]\subset u_iA_g^{+\sigma+2\delta}.$
		Therefore,  and by quasi-convexity, {\cite[Lemma~2.2.2~(2)]{delzant_courbure_2008}},
		\begin{align*}
		\diam([x_1,y_1]^{+8\delta}\cap[x_2,y_2]^{+8\delta}) & \le \diam(u_1A_g^{+\sigma+10\delta}\cap u_2A_g^{+\sigma+10\delta})\\
		& \le \diam(u_1A_g^{+13\delta}\cap u_2A_g^{+13\delta})+2(\sigma+10\delta)+4\delta \\
		& \le \Delta(g)+2(\sigma+10\delta)+4\delta.
		\end{align*}
 Combining the above estimations with \eqref{E:producing-reduced-1}, we obtain \eqref{E:producing-reduced-2}.
	
	Finally, let $s_i=u_ig^{\varepsilon_ib}u_i^{-1}$. We estimate $(s_1p,s_2p)_p$:
		by the triangle inequality, 
		$$(s_1p,s_2p)_p \le \frac{1}{2} \qty (|x_1-y_1|+|x_2-y_2|-|y_1-y_2|)+\frac{3}{2} \qty (|x_1-p|+|x_2-p|).$$
		Combining \eqref{E:producing-reduced-2} with Lemma \ref{lem:intersection-four-points}, we obtain
		$$|x_1-y_1|+|x_2-y_2|-|y_1-y_2| \le |x_1-x_2|+2(\Delta(g)+L(U,p)+54\delta).$$
		By the triangle inequality,
		$|x_1-x_2| \le 2 L(U,p).$
		 We obtain that 
		\begin{align} \label{E:producing-reduced-3}
		(s_1p,s_2p)_p\le \Delta(g)+5L(U,p)+54\delta.
		\end{align}
	Finally, note that
$
\min \qty { |s_1p-p|, |s_2p-p| }  \ge {b}\stlen{g}.
$
	By the choice of $b_0$, 
	$$(s_1p,s_2p)_p<\frac{1}{2} \min \qty { |s_1p-p|, |s_2p-p| }-\alpha-50\delta,$$
	which shows that $S$ is $\alpha$-reduced. 	
\end{proof}

\section{Ping-pong in powers of $U$}

We recall that $N>0$, $\delta >0$ and $\kappa\geq \delta$ and that $G$ acts $(\kappa,N)$-acylindrically on a $\delta$-hyperbolic space.  The goal of this section is to find a generating set of a free subgroup in some power of $U\subset G$ whose cardinality and energy is comparable to that of $U$.

\subsection{The case of large energy}

We first prove a variant of Theorem \ref{thm:growth-trichotomy-intro}. It immediately implies Theorem \ref{thm:growth-trichotomy-intro}.

\begin{prop}\label{P:Pingpong_intro}  Let $C=10^6(N+1)$.  There is a number  $n>C^3$ such that the following holds. Let $U\subset G$ be a finite symmetric subset  such that $2\cdot 10^3\kappa < L(U)$. If $U$ is not contained in an elementary subgroup,  then there are $S\subset U^{10^7n}$ and $p\in X$ such that 
\begin{enumerate}
\item $S$ is $\alpha$-reduced at $p$, for $\alpha=200\delta$,
\item $|S| \geqslant \frac{1}{C^2}|U^n|$,
\item $L(S,p)\le L(U^{10^7n},p)\leqslant 10^{12} C^3 L(U)$.
\end{enumerate}
In particular $S$ freely generates a free subgroup and $|S|>C$. 
\end{prop}

\begin{rem} The exponent $n$ of Proposition \ref{P:Pingpong_intro} does depend on $N$, but it does not depend on $U$. Neither does it depend on $\kappa$ or $\delta$. 
\end{rem}

We now prove this proposition. Let $U\subset G$ be a finite symmetric subset that is not contained in an elementary subgroup. Moreover, we suppose that $2\cdot 10^3\kappa< L(U)$. 

As $L(U) >100\delta$, then $U^2$ contains a loxodromic element \cite[Proposition 3.2]{koubi_croissance_1998}. We may choose this loxodromic element of length $\|h\|^{\infty}>L(U)/2$, \cite[Lemma 7]{arzhantseva_lower_2006} \cite[Theorem 5.7]{breuillard_joint_2021} \cite[Lemma 2.7]{fujiwara_rates_2021}. This yields the following lemma.

Let us set $L=L(U)$. 
\begin{lem}\label{L:power-large-energy-2} There is $h\in U^{2}$  such that
 $$L/2 < \|h\|^{\infty}.$$
 In particular, $h$ is loxodromic and  $\|h\|^{\infty}\leqslant 2L$.
\end{lem}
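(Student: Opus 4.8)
The plan is to package the classical fact that sets of large $\ell^\infty$-energy contain loxodromic elements of comparable length (Proposition~\ref{P:koubi} together with its refinement) with the energy-growth statement Lemma~\ref{L:power-large-energy}, the point of the latter being precisely to produce an exponent that depends only on $U$.

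\emph{Step 1: boost the energy past the Margulis threshold.} Since $\kappa\ge\delta$ and $L>200\kappa$, we have $L>200\delta$, so the number $a=L/2$ satisfies $100\delta<a<L$. Applying Lemma~\ref{L:power-large-energy} with this value of $a$ yields an integer $m>0$, depending only on $U$, such that
$$\frac{L}{2}< L(U^{m})\le 2L.$$

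\emph{Step 2: extract a long loxodromic.} As $L(U^{m})>L/2>100\delta$, Proposition~\ref{P:koubi} applied to the finite set $U^{m}$ shows that $(U^{m})^2=U^{2m}$ contains a loxodromic isometry. Moreover, by \cite[Lemma~7]{arzhantseva_lower_2006}, \cite[Theorem~5.7]{breuillard_joint_2021} and \cite[Lemma~2.7]{fujiwara_rates_2021} one may choose such a loxodromic element $h\in U^{2m}$ with $\|h\|^{\infty}>L(U^{m})/2>L/4$.

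\emph{Step 3: conclude.} Put $n=2m$, which depends only on $U$; then $h\in U^{n}$ and $L/4\le\|h\|^{\infty}$. By subadditivity of the $\ell^\infty$-energy, $L(U^{n})=L(U^{2m})\le 2L(U^{m})\le 4L$, and by \eqref{eqn:energy and translation length} we get $\|h\|^{\infty}\le\|h\|\le L(U^{n})\le 4L$. Finally $\|h\|^{\infty}>0$, so $h$ is loxodromic. There is no genuine obstacle in this argument; the only point that needs care is that the exponent $n$ must be independent of $\delta$, $\kappa$ and $N$, and this is exactly what Lemma~\ref{L:power-large-energy} guarantees (the dependence on the geometry enters only through the constant factors in the energy bounds, not through the exponent).
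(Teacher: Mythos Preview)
Your proof is correct and follows essentially the same route as the paper: apply Lemma~\ref{L:power-large-energy} with $a=L/2$ to get an exponent $m$ with $L/2<L(U^{m})\le 2L$, then use Proposition~\ref{P:koubi} and its refinement to extract $h\in U^{2m}$ with $\|h\|^{\infty}>L(U^{m})/2>L/4$, and set $n=2m$. One small quibble with your closing remark: ``depending only on $U$'' does not mean $n$ is independent of $\delta$, $\kappa$, $N$ --- the exponent genuinely depends on the specific set $U$ and its action on $X$ (via how fast $L(U^{k})$ grows); the phrase is there to emphasise that $n$ is well-defined for each $U$, in contrast to later statements where exponents are uniform in $U$.
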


This implies the following. We let $C=10^6(N+1)$.
\begin{prop}\label{P:long-hyperbolic-short-intersection} Let $n_1=4 C^2$. There is $h\in U^{n_1}$ such that 
\begin{itemize}
\item  $L (U^{n_1}) \leqslant 4C^2L$
\item $h$ is loxodromic and $\|h\|^{\infty}\geqslant C^2L$
\item for all $m\geqslant 1$,  $\Delta(h^m)\leqslant C L$.
\end{itemize}
\end{prop}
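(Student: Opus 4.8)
The plan is to avoid producing the long loxodromic element directly: if one used Proposition \ref{P:koubi} at scale $C^2L$ to find a loxodromic $h\in U^{n_1}$ with $\|h\|^{\infty}\geq C^2L$, then Lemma \ref{lem:acylindricity-intersection-axis} would only bound $\Delta(h)$ by roughly $(N+2)\|h\|^{\infty}$, which is of order $C^3L$ — far too big. Instead I would extract a \emph{short} loxodromic element, of stable length comparable to $L$ (so that its fellow-travelling constant is bounded by a multiple of $(N+2)L$, hence by $CL$), and then pass to a large power of it, using Lemma \ref{L: axis} to keep the fellow-travelling constant under control as the translation length grows past $C^2L$.

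Concretely, first I would re-run the proof of Lemma \ref{L:power-large-energy-2}: since $U$ is not contained in an elliptic subgroup and $L>200\kappa\geq 200\delta$, Lemma \ref{L:power-large-energy} applied with $a=L/2$ gives $n$ with $L/2\leq L(U^n)\leq 2L$, and then \cite[Lemma~7]{arzhantseva_lower_2006} together with Proposition \ref{P:koubi} gives a loxodromic $h_0\in U^{2n}$ with $\|h_0\|^{\infty}>L(U^n)/2\geq L/4$. Put $n_0=2n$; then by the triangle inequality $L(U^{n_0})\leq 2L(U^n)\leq 4L$ and, crucially, $L(U^{n_0})\leq 2L(U^n)<4\|h_0\|^{\infty}$. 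It is this sharp ratio between the length of the word realizing $h_0$ and the translation length of $h_0$ — rather than the weaker $L(U^{n_0})\leq 16\|h_0\|^{\infty}$ that follows from the bare statement of Lemma \ref{L:power-large-energy-2} — that will make the final energy bound $16C^2L$ go through.

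Next set $h_1=h_0^{20}\in U^{20n_0}$. It is loxodromic with $\|h_1\|^{\infty}=20\|h_0\|^{\infty}\geq 5L>10^3\delta$ and $\|h_1\|^{\infty}\leq 80L$, and $L(U^{20n_0})\leq 20L(U^{n_0})<80\|h_0\|^{\infty}=4\|h_1\|^{\infty}$. Then fix $j\geq 1$ minimal with $j\|h_1\|^{\infty}\geq 2C^2L$, and set $h=h_1^{j}=h_0^{20j}\in U^{n_1}$ with $n_1=20jn_0$; thus $h$ is a loxodromic power of $h_0$. Since $\|h_1\|^{\infty}\leq 80L\leq C^2L$, minimality of $j$ gives $2C^2L\leq\|h\|^{\infty}=j\|h_1\|^{\infty}<2C^2L+\|h_1\|^{\infty}<3C^2L$, so in particular $\|h\|^{\infty}\geq C^2L$. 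From $\|h\|^{\infty}\leq\tlen{h}\leq L(U^{n_1})\leq n_1L$ (using \eqref{eqn:energy and translation length} and the triangle inequality) I get $n_1\geq 2C^2>C^2$, and from $L(U^{n_1})\leq jL(U^{20n_0})<4j\|h_1\|^{\infty}=4\|h\|^{\infty}<12C^2L$ I get $L(U^{n_1})\leq 16C^2L$. This settles the first two bullets and the exponent condition.

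For the third bullet, note that for every $m\geq 1$ one has $h^m=h_1^{jm}$, so Lemma \ref{L: axis} (applicable because $\|h_1\|^{\infty}>10^3\delta$) gives $\Delta(h^m)\leq\Delta(h_1)+150\delta$, and Lemma \ref{lem:acylindricity-intersection-axis} gives $\Delta(h_1)\leq\kappa+(N+2)\|h_1\|^{\infty}+100\delta\leq\kappa+80(N+2)L+100\delta$; since $\kappa\leq L/200$ and $\delta\leq L/200$ this yields $\Delta(h^m)<80(N+2)L+2L\leq(80N+162)L\leq 10^6(N+1)L=CL$. The main obstacle is precisely the tension flagged at the start: the requirements $\|h\|^{\infty}\geq C^2L$ and $\Delta(h^m)\leq CL$ pull in opposite directions, and the only reconciliation I see is to keep the ``root'' $h_1$ short and let Lemma \ref{L: axis} absorb the power — which forces $n_1$ to be of order $C^2$, so the one genuinely delicate point is the bookkeeping keeping $L(U^{n_1})\leq 16C^2L$, for which the sharp comparison $L(U^{n_0})<4\|h_0\|^{\infty}$ from the proof of Lemma \ref{L:power-large-energy-2} is exactly what is needed.
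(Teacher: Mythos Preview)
Your proof is correct and follows the same strategy as the paper: obtain a short loxodromic $g$ with $\|g\|^{\infty}$ comparable to $L$ from Lemma~\ref{L:power-large-energy-2}, take a large power to push the stable length past $C^2L$, and use Lemma~\ref{L: axis} together with Lemma~\ref{lem:acylindricity-intersection-axis} to bound $\Delta(h^m)$. The paper is more direct --- it simply takes $n_1=4C^2n$ and $h=g^{4C^2}$, so your sharp ratio $L(U^{n_0})<4\|h_0\|^{\infty}$ and the minimal-$j$ device are unnecessary --- but your intermediate step $h_1=h_0^{20}$ ensuring $\|h_1\|^{\infty}>10^3\delta$ before invoking Lemma~\ref{L: axis} is actually more careful than the paper, which applies that lemma directly to $g$ with only $\|g\|^{\infty}\geq L/4>50\delta$ guaranteed by the standing hypothesis $L>200\kappa$.
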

\begin{proof} Let $n_1=4 C^2 $. The first assertion is by the triangle inequality. Let $h=g^{2C^2} \in U^{n_1}$.  Let  $g\in U^{2}$ be the loxodromic element of length $\|g\|^{\infty}> L/2$ and $\|g\|^{\infty}\leqslant 2L$ given by the previous lemma. The second assertion follows from the fact that $\|g^m\|^{\infty}=m\|g\|^{\infty}$. Finally let $m>0$. Note that $||g||^{\infty}>10^3\delta$. By Lemmas \ref{L: axis} and \ref{lem:acylindricity-intersection-axis},   
$\Delta(h^m) =\Delta(g^{2C^2m}) \leq \Delta(g)+150\delta \leq (N+2) \|g\|^{\infty} +\kappa + 250\delta \leqslant C L.$ This proves the third assertion.
\end{proof}

Let $n_1$ and $h\in U^{n_1}$ be given by Proposition \ref{P:long-hyperbolic-short-intersection}. As $U$ is not contained in an elementary subgroup, there is $u\in U$ such that $h$ and $uhu^{-1}$ do not generate an elementary subgroup. Moreover, $g=h^{100}uh^{1000}u^{-1}h^{2000}uh^{100}u^{-1}h^{100}\in U^{3304n_1}$ is a primitive loxodromic element and $\|g\|^{\infty}>C^2L$. See \cite[Lemma 2.8]{fujiwara_rates_2021}. Moreover, $\|g\|^{\infty}\leqslant L(U^{3304n_1})\leqslant 13216 C^2L$. Thus, by Lemmas \ref{L: axis} and \ref{lem:acylindricity-intersection-axis}, $\Delta(g^m)\leqslant C^3 L$, for all $m\geqslant 1$.
  This yields:

\begin{prop}\label{P:long-primitive-short-intersection} Let $n_2=3304n_1$. There is $h\in U^{n_2}$ such that 
\begin{itemize}
\item  $L (U^{n_2}) \leqslant 13216 C^2L$
\item $h$ is primitive, loxodromic and $\|h\|^{\infty}\geqslant C^2L$
\item for all $m\geqslant 1$,  $\Delta(h^m)\leqslant C^3 L$.
\end{itemize} 
\end{prop}

\begin{proof}[Proof of Proposition \ref{P:Pingpong_intro}]
Let $n_2>C^2$ and $h\in U^{n_2}$ be a loxodromic and primitive element given by Proposition \ref{P:long-primitive-short-intersection}.
 Let $n=Cn_2$. As $U$ is symmetric, $U^{n_2}$ contains the identity, hence, $h\in U^n$. Let 
 $S:=\{uh^{10^6C}u^{-1}\mid u\in U^{n}(h)\}\subset U^{10^7n}$ and let $p\in X$ be a point almost minimising the $\ell^{\infty}$-energy of $U^n$.  
Let $\alpha=200\delta$. By Proposition \ref{prop:producing-reduced-subsets}, $S$ is $\alpha$-reduced at $p$ and $|S|=|U^n(h)|$. Indeed, in our case, by choice of $C$, $h$ and $n$, 
\begin{align*}
b_0 & =\frac{2}{\|h\|^{\infty}}(\Delta(h)+5L(U^n,p)+104\delta + \alpha)\\
& \leqslant \frac{2}{C^2L}(C^3L+5\cdot 13216 C^3L+\delta + 104\delta+200\delta)\leqslant  10^6 C
\end{align*} 
By Corollary \ref{cor:growth-loxodromic-subgroups}, and as $\|h\|^{\infty}>L(U^{n})/13216 C$ and $C>10^6N$, we have that 
$$|S|=|U^{n}(h)|>\frac{|U^{n}|}{C^2}.$$
 As $U$ is not contained in a finite subgroup the growth of $|U^n|$ is at least linear in $n$. Thus $|U^{n}|>C^3$, which implies that $|S|>C$. 
\end{proof}

\subsection{The general case}

 Let $U\subset G$ be a finite symmetric subset that is not contained in an elementary subgroup. Moreover, we let $L > 10^4\kappa$  and suppose that $L \ge L (U)$. 
 As $U$ is not contained in an elliptic subgroup, the subgroup generated by $U$ contains a loxodromic element. More precisely, we have the following. 

\begin{lem}\label{L:power-large-energy} Let $a<L$. Then there is $n>0$, depending only on $U$ and $a$, such that 
$$a< L(U^{n}) \leqslant 2L.$$
\end{lem}
\begin{proof} If $L(U)>a$, $n=1$. Otherwise we proceed as follows. As $U$ is not contained in an elliptic subgroup, $U^m$ contains a loxodromic element, for some $m>0$. Then $L (U^{km})$ is at least linearly growing in $k>0$. Thus we can pick $n>1$ to be the maximal number such that  $L(U^{n-1})\leqslant a$. Then $L(U^{n})>a$, and $L(U^{n}) \leqslant 2L$ by the triangle inequality. 
\end{proof}

Let us now fix $a=L/2$, and let $n$ be the number given by Lemma \ref{L:power-large-energy}, so that 
$$L/2 \leqslant L(U^n)\leqslant 2L.$$ 
We can now apply Proposition \ref{P:Pingpong_intro} to $U^n$. This yields the following. 

\begin{prop}\label{P:Pingpong}  Let $C=10^6(N+1)$. For all $L >10^4 \kappa$ and for all finite symmetric subsets $U\subset G$ such that $L(U)\leqslant L$ and such that $U$ is not contained in an elementary subgroup,  there are $n>C^3$, $S\subset U^{10^7n}$ and $p\in X$ such that 
\begin{enumerate}
\item $S$ is $\alpha$-reduced at $p$, for $\alpha=200\delta$,
\item $|S| \geqslant \frac{1}{C^2}|U^n|$,
\item $L(S,p)\leqslant 10^{13} C^3 L$.
\end{enumerate}
In particular $S$ freely generates a free subgroup and $|S|>C$. 
\end{prop}
\begin{rem} Note that in contrast to Proposition \ref{P:Pingpong_intro}, the exponent $n$ now depends on $U$. As in Proposition \ref{P:Pingpong_intro} it also depends on $N$, but it does not depend on $\kappa$ or $\delta$. 
\end{rem}

\section{Small cancellation theory}
\label{subsec:small-cancellation-theory}

Let $G$ be a group acting by isometries on $X$. We recall that $X$ is a $\delta$-hyperbolic space.

\subsection{Loxodromic moving family.}
Let $\scrQ$ be a set of pairs $(H,Y)$, where $H$ is a loxodromic subgroup of $G$ and $Y=C_H$ its invariant cylinder. We further assume that $\scrQ$ is invariant under the action of $G$ defined by $g\cdot (H,Y)=(gHg^{-1},gY)$. 
In this case we refer to $\scrQ$ as of a  
 \emph{loxodromic moving family}. 
 The \emph{fellow travelling constant of} $\scrQ$ is
$$\Delta(\scrQ,X)=\sup\setc{\diam(Y_1^{+20\delta}\cap Y_2^{+20\delta})}{ (H_1,Y_1)\neq (H_2,Y_2)\in\scrQ}.$$
The \emph{injectivity radius of} $\scrQ$ is
$$T(\scrQ,X)=\inf\setc{\tlen{h}}{h\in H-\{1\}, (H,Y)\in\scrQ}.$$

 We denote $K=\normal{H\mid (H,Y)\in \scrQ}$ and $\q G= G/K$. We denote by $\pi\colon G \onto \q G$ the natural projection and write $\q g$ for $\pi(g)$ for short, for every $g\in G$. 
\begin{df}[Small cancellation condition]\label{D:small-cancellation}
	Let $\lambda>0$ and $\mu>0$. We say that $\scrQ$ satisfies the \emph{$C''(\lambda,\mu)$-small cancellation condition} if 
	\begin{enumerate}[label=(SC\arabic*)]	
		\item $\Delta(\scrQ,X)< \lambda T(\scrQ,X)$,
		\item $T(\scrQ,X)> \mu\delta$.
	\end{enumerate}
In this case we say that $\q G$ is a \emph{$C''(\lambda,\mu)$-small cancellation quotient}.
\end{df}

\begin{rem} Let $\scrQ$ be a loxodromic moving family under the $C''(\lambda,\mu)$-small cancellation condition, where $\lambda <1$ and $\mu>10^3$. Let $(H_1,Y_1)$ and $(H_2,Y_2)\in \scrQ$. 
Condition (SC1) implies that $Y_1=Y_2$ if and only if $H_1=H_2$. Condition (SC2) implies that all elements in the loxodromic subgroups  $H$, for $(H,Y) \in \scrQ$, are loxodromic. Thus, the groups $H$ are all cyclic.
Moreover, as $\scrQ$ is invariant under the action of $G$ by conjugation, the groups $H$ are normal in the the stabiliser of $Y$, that is, the maximal loxodromic subgroup containing $H$.   
\end{rem}

\subsection{Cone-off space.}\label{S:cone-off}
Let $\rho>0$. Let $\scrY$ be the collection of cylinders $Y=C_H$ such that $(H,Y)\in \scrQ$. The \emph{cone of radius $\rho$ over $Y\in \scrY$}, denoted by $Z_{\rho}(Y)$, is the quotient of $Y\times[0,\rho]$ by the equivalence relation that identifies all points $(y,0)$, for $y\in Y$. The \emph{apex of  the cone $Z_{\rho}(Y)$} is the equivalence class of $(y,0)$. We abuse notation and write $(y,p)$ for the equivalence class of $(y,p)$. We denote by $\scrV$ the collection of apices of the cones over the elements of $\scrY$. Let $\iota\colon Y\into Z_{\rho}(Y)$ be the map that sends $y$ to $(y,\rho)$. The \emph{cone-off space of radius $\rho$ over $X$ relative to $\scrQ$}, denoted by $\dot{X}_{\rho}=\dot{X}_{\rho}(\scrQ,X)$, is obtained by attaching for every $Y\in \scrY$, the cone $Z_{\rho}(Y)$ on $X$ along $Y$ according to $\iota \colon Y\into Z_{\rho}(Y)$. 
The cone off $\dot{X}_{\rho}(\scrQ)$ is endowed with its natural metric, namely, the largest metric bounded by the metrics on $X$ and the cones. In particular, the metric on $\dot X_\rho$ restricted to the image of $X$ is bounded by the metric on $X$. Moreover, there is an action by isometries of $G$ on $\dot{X}_{\rho}$, \cite[Section 5.1]{coulon_geometry_2014}.

\subsection{Quotient space.}
The \emph{quotient space of radius $\rho$ over $X$ relative to $\scrQ$}, denoted by $\q{X}_{\rho}=\q{X}_{\rho}(\scrQ,X)$, is the orbit space $\dot{X}_{\rho}/K$. We denote by $\zeta\colon \cX_{\rho} \onto \q X_{\rho}$ the natural projection and write $\q x$ for $\zeta(x)$ for short. Furthermore, we denote by $\q \scrV$ the image in $\q X_{\rho}$ of the apices $\scrV$. We consider $\q X_{\rho}$ as a metric space equipped with the quotient metric, that is for every $x,x' \in \cX_{\rho}$ 
$$ |\q x- \q x'|_{\q X} = \inf_{h\in K} |hx-x'|_{\cX}.$$
We note that the action of $G$ on $\cX_{\rho}$ induces an action by isometries of $\q G$ on $\q X_{\rho}$. 

Finally, we note that $\dot{X}_{\rho}$ and $\overline X_{\rho}$ are length spaces.

\subsection{Small cancellation groups}

The following lemma summarises Proposition~3.15 and Proposition 6.7 of \cite{coulon_geometry_2014}, see also  \cite[Proposition 5.4, Theorem 5.2]{coulon_partial_2016}.   
It is fundamental in the proof of Theorem \ref{IT:main}.

\begin{lem}[Small Cancellation Theorem \cite{coulon_geometry_2014}]
	\label{lem:small-cancellation-theorem} 
 	There exist positive numbers $\delta_0$, $\q \delta$, $\Delta_0$, $\rho_0$ satisfying the following. Let $0<\delta\le \delta_0$ and $\rho>\rho_0$. Let $G$ be a group acting by isometries on a $\delta$-hyperbolic length  space $X$. Let $\scrQ$ be a loxodromic moving family such that  $\Delta(\scrQ,X) \leq \Delta_0$ and $T(\scrQ,X) > 100\pi \sinh\rho$. Then:
	\begin{enumerate}[label=(\roman*)]
		\item $\q X_{\rho}$ is a $\q \delta$-hyperbolic length space on which $\overline G$ acts by isometries.
		\item Let $r\in (0,\rho/20]$. If for all $v\in \scrV$, the distance $|x-v|\geqslant 2r$ then the projection $\zeta \colon \cX_{\rho} \to \qX_{\rho}$ induces an isometry from $B(x,r)$ onto $B(\q x,r)$.
		\item Let $(H,Y) \in \scrQ$. 
		If $v \in \scrV$ stands for the apex of the cone $Z_{\rho}(Y)$, then the natural projection $\pi \colon  G\onto \q G$ induces an isomorphism from $\stab Y/H$ onto $\stab{\q v}$.  \qed
	\end{enumerate}
\end{lem}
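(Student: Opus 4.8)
The plan is to assemble the three items directly from the two results of \cite{coulon_geometry_2014} that the lemma summarises, namely Proposition~3.15 and Theorem~6.11, after checking that a loxodromic moving family in our sense is a moving family in the sense of that paper.

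First I would verify the hypotheses of \cite{coulon_geometry_2014} are met. By \autoref{L: invariant cylinder} each cylinder $Y=C_H$ with $(H,Y)\in\scrQ$ is strongly quasiconvex, hence $2\delta$-quasiconvex; the family $\scrQ$ is $G$-invariant by assumption; and the quantities $\Delta(\scrQ,X)$ and $T(\scrQ,X)$ that we defined coincide, up to the harmless additive constants absorbed into the universal thresholds, with the fellow-travelling constant and the injectivity radius used there. Thus, choosing $\delta_0$, $\q\delta$, $\Delta_0$, $\rho_0$ to be the constants furnished by \cite{coulon_geometry_2014}, the conditions $\delta\le\delta_0$, $\Delta(\scrQ,X)\le\Delta_0$, $\rho>\rho_0$ and $T(\scrQ,X)>100\pi\sinh\rho$ are precisely what is needed to run the cone-off and quotient construction: the factor $100\pi\sinh\rho$ is exactly the scale at which the geometry of a hyperbolic cone of radius $\rho$ becomes rigid, so the hypothesis is the one ensuring that the $K$-action on $\cX_\rho$ has large injectivity radius away from the apices.

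Items (i) and (iii) are then the conclusions of Theorem~6.11 of \cite{coulon_geometry_2014}. For (i): under these hypotheses the quotient $\q X_\rho=\cX_\rho/K$ is $\q\delta$-hyperbolic with $\q\delta$ a universal constant, and since $K\trianglelefteq G$ acts on $\cX_\rho$ by isometries, the quotient $\q G=G/K$ acts by isometries on $\q X_\rho$. For (iii): the apex $v$ of the cone $Z_\rho(Y)$ over $Y=C_H$ is fixed by $\stab Y$ in $\cX_\rho$, and the small cancellation condition forces $\stab Y\cap K=H$; hence the natural projection $\pi$ identifies $\stab Y/H$ with $\stab{\q v}$. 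Item (ii) I would get by combining Proposition~3.15 of \cite{coulon_geometry_2014}, which controls balls of radius up to a fixed fraction of $\rho$ around points of $\cX_\rho$ far from the apices, with the fact that away from the apices the projection $\zeta\colon\cX_\rho\to\q X_\rho$ is a local isometry: if $|x-v|\ge 2r$ for all $v\in\scrV$ and $r\le\rho/20$, then no nontrivial element of $K$ carries a point of $B(x,r)$ into $B(x,r)$, so $\zeta$ restricts to an isometry $B(x,r)\to B(\q x,r)$.

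The main obstacle I expect is the bookkeeping at the interface with \cite{coulon_geometry_2014}: matching the normalisation of the quasiconvexity constant of the cylinders and of the injectivity-radius and fellow-travelling quantities to the precise hypotheses stated there, and confirming that the small cancellation condition really yields $\stab Y\cap K=H$ rather than some larger subgroup. Once these translations are in place, the statement is exactly the conjunction of the two cited results.
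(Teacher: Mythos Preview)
Your proposal is correct and matches the paper's approach: the paper does not give a proof of this lemma at all but simply states it as a summary of Proposition~3.15 and Theorem~6.11 of \cite{coulon_geometry_2014}, with the \qed marking it as quoted without proof. Your write-up is a more detailed version of exactly that citation, spelling out which item comes from which cited result and noting the routine verification that our loxodromic moving family meets the hypotheses there.
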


\begin{rem} Even if $X$ is geodesic, it may not be the case that $\overline X_\rho$ is, in addition, a geodesic metric space. On the other hand, if the action of $G$ on $X$ is proper and cocompact, then the action of $\overline G$ on $\overline X$ is proper and cocompact. In this case, the Hopf-Rinow theorem implies that $\overline X_\rho$ is a geodesic metric space. 
\end{rem}

\begin{rem}\label{R:deltabar-universal-constant} Proposition 6.7 of \cite{coulon_geometry_2014} states that $\overline X$ is $\overline \delta$-hyperbolic, where $\overline \delta \leqslant 54\cdot 10^4 \bm{\delta}$ and  where $\bm{\delta}$ is the hyperbolicity constant of the hyperbolic plane. As every $\delta$-hyperbolic space is $\delta'$-hyperbolic whenever $\delta'\geqslant\delta$, we can just fix $\overline \delta =54\cdot 10^4 \bm{\delta}$ so that $\overline \delta$ is a universal constant that does not depend on the choice of $\delta \leqslant \delta_0$.  It does also not depend on the length of the relators, that is, $T(\scrQ,X)$.
\end{rem}

\begin{rem}
\label{rem:small-cancellation-theorem}
The constants $\delta_0$, $\q \delta$, $\Delta_0$, $\rho_0$ are independent of $G$, $X$, $\scrQ$ or $\delta$. Moreover $\delta_0$ and $\Delta_0$ (respectively $\rho_0$) can be chosen arbitrarily small (respectively large). We refer to $\delta_0$, $\q \delta$, $\Delta_0$, $\rho_0$ as \emph{the constants of the Small Cancellation Theorem}. 
\end{rem}
 Due to a standard rescaling argument, see for instance \cite{coulon_theorie_2016}, we can assume that $G$, $X$, $\scrQ$ and $\delta$ satisfy the assumptions of Lemma \ref{lem:small-cancellation-theorem}. This is explained in the next remark. 
\begin{rem}\label{R:rescaling}  Let $\rho \geqslant \rho_0$. 
Let $G$ be a group acting $(\kappa,N)$-acylindrically on a $\delta$-hyperbolic space $X$. Let $\scrQ$ be a loxodromic moving family satisfying the geometric $C''(\lambda,\mu)$-small cancellation condition for the action of $G$ on $X$, 
where 
$$\lambda\le \frac{\Delta_0}{100\pi\sinh\rho}\hbox{ and } \mu\ge \frac{100\pi \sinh \rho}{\delta_0}\cdot \frac{\kappa}{\delta}.$$ We define a rescaling parameter
$$\sigma=\min\qty{\frac{\delta_0}{\kappa},\frac{\Delta_0}{\Delta(\scrQ,X)}}.$$
Note that $\mathcal{X}$ is $\sigma\delta$-hyperbolic and the action of $G$ on $\mathcal{X}$ is $(\sigma\kappa,N)$-acylindrical. Also, 
$$\sigma\delta\le \sigma\kappa \le \delta_0,$$
by the standing assumption that $\kappa\ge \delta$. In particular, the action of $G$ on $\mathcal{X}$ is $(\delta_0,N)$-acylindrical. In addition, we have
\begin{align*}
	\Delta(\scrQ,\mathcal{X}) &\le \sigma \Delta(\scrQ,X) \le \Delta_0,\\ T(\scrQ,\mathcal{X}) &\ge \sigma T(\scrQ,X)\ge \sigma \max\qty { \mu\delta, \frac{\Delta(\scrQ,X)}{\lambda}} \ge 100\pi\sinh\rho.
\end{align*}
 Thus $G$, $\mathcal{X}$ and $\scrQ$ satisfy the hypothesis of the Small Cancellation Theorem (Lemma \ref{lem:small-cancellation-theorem}).
\end{rem}

From now on we assume that  $\delta\le \delta_0$, $\rho\geqslant \rho_0$, $G$, $X$, and $\scrQ$ satisfy the hypothesis of the Small Cancellation Theorem (Lemma \ref{lem:small-cancellation-theorem}). We also assume that the action of $G$ on $X$ is $(\kappa,N)$-acylindrical, with $\kappa = \delta_0$. This is justified by Remark \ref{R:rescaling}. 

We now have the following useful results at our disposal. 

\begin{lem}[\hspace{-0.15mm}{\cite[Proposition~5.16]{coulon_partial_2016}}]
	\label{lem:quotient-elementary}
	Let $E$ be an elliptic (respectively loxodromic) subgroup of $G$ acting on $X$. Then the image of $E$ under the natural projection $\pi \colon  G\onto \q G$ is elliptic (respectively elementary) for its action on $\q X_{\rho}$.
\end{lem}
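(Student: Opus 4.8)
The statement is really a transfer of two facts through the cone-off/quotient construction: that the quotient of an elliptic subgroup acts with bounded orbits, and that the quotient of a loxodromic (hence, by our standing small cancellation hypotheses, virtually cyclic) subgroup acts with at most two limit points. For the \emph{elliptic} case the argument is short. If $E$ is elliptic for the action on $X$, there is a point $x \in X$ whose $E$-orbit has bounded diameter. The distance in $\cX_\rho$ between two points of $X$ is at most their distance in $X$ (the cone-off only adds points and shortcuts), so the $E$-orbit of $x$ is bounded in $\cX_\rho$ as well; passing to the quotient $\qX_\rho$ the orbit can only shrink, since the quotient metric is an infimum over $K$. Hence the image $\pi(E)$ fixes a bounded subset of $\qX_\rho$, so it is elliptic. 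The only thing to check is that "bounded orbit" is indeed the right notion of elliptic here, which is exactly the convention recorded earlier (an elementary subgroup of an acylindrical action is elliptic iff $\partial H = \varnothing$, iff some orbit is bounded).

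For the \emph{loxodromic} case one cannot argue so crudely, since a loxodromic element of $G$ may become elliptic in the quotient (indeed the relators do), so the claim is only that the image is \emph{elementary}, i.e. has at most two limit points on $\partial \qX_\rho$. The plan is to reduce to the cyclic case: by the standing hypotheses (SC1)–(SC2) together with \autoref{lem:virtually cyclic elementary closure} and \autoref{lem:loxodromic-subgroups-f-by-z}, $E$ sits inside a maximal loxodromic subgroup $E(h)$ which is virtually cyclic, of the form $F \rtimes \group{h}$ with $F$ finite; it suffices to show $\pi(E(h))$ is elementary, and since $\pi(F)$ is finite (a quotient of a finite group) it is enough to handle $\pi(\group{h})$. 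Now distinguish two subcases according to whether $\group{h}$ is, up to finite index, one of the distinguished cyclic subgroups $H$ with $(H,Y) \in \scrQ$. If it is, then $h$ lies in $K$ up to finite index, so $\pi(\group{h})$ is finite, hence elliptic. If it is not, then $Y=C_h$ meets every cylinder of $\scrQ$ in a set of diameter at most $\Delta(\scrQ,X) \le \Delta_0$; this is precisely the situation in which \cite[Theorem 6.11]{coulon_geometry_2014} (the Small Cancellation Theorem, of which \autoref{lem:small-cancellation-theorem} records part) guarantees that the image of $h$ remains loxodromic, or at worst that $\pi(\group{h})$ has a bounded orbit — in either case $\pi(\group{h})$ is elementary and $\partial \pi(\group{h})$ has at most two points.

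Assembling: $\pi(E)$ is a subgroup of $\pi(E(h))$, which is generated by the finite group $\pi(F)$ and the virtually cyclic (elementary) group $\pi(\group{h})$; an extension of an elementary group by a finite group acting on a hyperbolic space in the acylindrical-quotient sense is again elementary, because the limit set is unchanged by passing to a finite-index subgroup and $\pi(F)\cap\pi(\group{h})$ has finite index in $\pi(\group{h})$. Hence $\partial \pi(E) \subset \partial \pi(\group{h})$ has at most two points, so $\pi(E)$ is elementary for its action on $\qX_\rho$, as claimed.

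I expect the main obstacle to be the loxodromic subcase where $\group{h}$ is \emph{not} (virtually) one of the $\scrQ$-subgroups: one has to know that $h$ cannot be "killed" or made elliptic, and this is exactly where the quantitative content of the Small Cancellation Theorem — the smallness of $\Delta_0$ relative to $\rho$ and the resulting control of geodesics in the cone-off through the axis $C_h$ — is needed, rather than any soft argument. In practice this is the content of \cite[Proposition~5.16]{coulon_partial_2016}, which is why the lemma is simply cited there; the role of the proof sketch here is to explain why the cited statement is what we need and how the elliptic/finite-by-elementary bookkeeping fits the conventions of this paper.
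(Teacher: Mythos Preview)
The paper does not prove this lemma; it is cited verbatim from \cite[Proposition~5.16]{coulon_partial_2016}. So there is no in-paper argument to compare against, and your sketch should be judged on its own.

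Your elliptic case is correct and complete: the inclusion $X \hookrightarrow \cX_\rho$ and the projection $\cX_\rho \to \qX_\rho$ are both $1$-Lipschitz, so a bounded $E$-orbit in $X$ yields a bounded $\pi(E)$-orbit in $\qX_\rho$.

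For the loxodromic case your conclusion is right, but the argument is more involved than necessary and contains two missteps. Once you invoke \autoref{lem:virtually cyclic elementary closure} to get that $E$ is virtually cyclic, you are done: $\pi(E)$ is then virtually cyclic, and \emph{any} virtually cyclic group acting by isometries on a hyperbolic space is elementary, because a finite-index infinite cyclic subgroup $\group{g}$ has $|\partial\group{g}|\le 2$ by the standard elliptic/parabolic/loxodromic trichotomy for a single isometry, and limit sets are unchanged under finite index. No dichotomy on whether $h$ is commensurable to a relator subgroup is needed. As for the missteps: your claim that $C_h$ meets every $Y$ with $(H,Y)\in\scrQ$ in diameter at most $\Delta(\scrQ,X)$ is not justified by the definition, which only bounds overlaps between \emph{pairs of cylinders in $\scrQ$}; and the sentence ``$\pi(F)\cap\pi(\group{h})$ has finite index in $\pi(\group{h})$'' is false in general (the left side is finite). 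What you actually need, and what holds, is that $\group{h}$ has finite index in $E(h)$, hence $\pi(\group{h})$ has finite index in $\pi(E(h))$, so their limit sets coincide.
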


\begin{lem}[\hspace{-0.15mm}{\cite[Proposition~5.17]{coulon_partial_2016}}]
	\label{lem:quotient-elliptic}
	Let $E$ be an elliptic subgroup of $G$ acting on $X$. Then the natural projection $\pi \colon  G\onto \q G$ induces an isomorphism from $E$ onto its image.
\end{lem}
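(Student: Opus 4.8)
The plan is to show that $\pi|_E$ is injective; since it is automatically surjective onto $\pi(E)$, this gives the isomorphism onto the image. Injectivity is equivalent to $E\cap K=\{1\}$. Because $\partial E=\varnothing$ forces $\partial\langle g\rangle=\varnothing$ for every $g\in E$, each element of $E$ is elliptic for the action of $G$ on $X$, so it suffices to prove the sharper statement: a nontrivial $g\in K$ can never be elliptic for the action of $G$ on $X$. I would fix such a putative $g\in K\setminus\{1\}$ which is elliptic on $X$ and derive a contradiction.

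First I would transfer the ellipticity to the cone-off space. The canonical map $X\to\dot X_\rho$ is $1$-Lipschitz — paths of $X$ persist in $\dot X_\rho$ and the cones only create shortcuts — so a bounded $\langle g\rangle$-orbit in $X$ stays bounded in $\dot X_\rho$, and $g$ is elliptic for the action of $K$ on $\dot X_\rho$. Next I would appeal to the internal structure of the $K$-action on $\dot X_\rho$ coming from the Cartan--Hadamard construction underlying the Small Cancellation Theorem (see \cite{coulon_geometry_2014}): a nontrivial element of $K$ that is elliptic on $\dot X_\rho$ must fix the apex $v$ of some cone $Z_\rho(Y)$ with $(H,Y)\in\scrQ$. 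If instead of quoting this one wants to see it directly, the idea is: were $g$ to fix no apex, one could find a point $x\in\dot X_\rho$ at distance $\geqslant 2r$ from every element of $\scrV$ with $|gx-x|$ near minimal; Lemma~\ref{lem:small-cancellation-theorem}(ii) then makes $\zeta$ an isometry on $B(x,r)$, and $\pi(g)=1$ together with equivariance of $\zeta$ would force $g$ to almost fix all of $B(x,r)$, which for $r$ large relative to $\delta$ (permissible because $T(\scrQ,X)$ is large) is incompatible with $g$ being a nontrivial isometry arising from a $(\delta_0,N)$-acylindrical action. This localization of $g$ at a cone apex is the step I expect to be the real obstacle; it is where the hypotheses (SC1)--(SC2) are consumed.

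Granting it, I would finish as follows. Since $g$ fixes the apex $v$ of $Z_\rho(Y)$, the isometry $g$ carries the cone $Z_\rho(Y)$ onto itself, hence $gY=Y$; as the cylinder $Y$ determines $H$ (Remark following Definition~\ref{D:small-cancellation}), this yields $g\in\stab{Y}$. By Lemma~\ref{lem:small-cancellation-theorem}(iii), $\pi$ maps $\stab{Y}$ onto $\stab{\q v}$ with kernel exactly $H$, so $g\in\stab{Y}\cap K=H$. But $(H,Y)$ belongs to the loxodromic moving family $\scrQ$, so every nontrivial element of $H$ is loxodromic for the action on $X$, contradicting the ellipticity of $g$. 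Therefore $g=1$, whence $E\cap K=\{1\}$ and $\pi$ restricts to an isomorphism from $E$ onto $\pi(E)$, as claimed.
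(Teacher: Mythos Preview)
The paper does not give its own proof of this lemma; it is quoted from \cite[Proposition~5.17]{coulon_partial_2016}. Your overall strategy---reduce to showing that $K\setminus\{1\}$ contains no element elliptic for the action on $X$---is the right one, and your steps 5--7 (once $g$ is known to fix an apex) are correct. The problem is step~4, the localisation of $g$ at an apex, which you yourself flag as the crux. Your sketch does not close: even granting a point $x\in\dot X_\rho$ far from all apices with $|gx-x|_{\dot X}$ near minimal, and deducing via Lemma~\ref{lem:small-cancellation-theorem}(ii) that $g$ fixes a small ball around $x$, the $(\delta_0,N)$-acylindricity only bounds by $N$ the number of elements moving two far-apart points by at most $100\delta$; it does not force $g=1$. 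At best, since every power $g^k$ fixes the same ball, you obtain $|\langle g\rangle|\leqslant N$, i.e.\ $g$ has finite order---not the contradiction you want. Moreover, the preliminary claim that such an $x$ far from $\scrV$ exists is not justified: the almost-fixed set of $g$ in $\dot X_\rho$ could lie entirely in the $2r$-neighbourhood of the apices.

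There is a much shorter route that stays in $X$ and uses only a lemma already stated in the paper. Apply Greendlinger's Lemma (Lemma~\ref{lem:greendlingers-lemma}) directly: for any $g\in K\setminus\{1\}$ and any $x\in X$ there is $(H,Y)\in\scrQ$ such that the projections $y_0,y_1$ of $x,gx$ on $Y$ satisfy $|y_0-y_1|>T(\scrQ,X)-2\pi\sinh\rho-23\delta$. Since $Y$ is $2\delta$-quasi-convex (Lemma~\ref{L: invariant cylinder}) and $|y_0-y_1|$ is large, the geodesic $[x,gx]$ passes within a bounded multiple of $\delta$ of both $y_0$ and $y_1$, whence $|gx-x|\geqslant |y_0-y_1|-O(\delta)>97\pi\sinh\rho$. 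This holds for every $x\in X$, so $\tlen{g}>16\delta$, hence $\stlen{g}>0$ by \eqref{eqn:translation lengths}, and $g$ is loxodromic on $X$. In particular no nontrivial element of $K$ is elliptic on $X$, so $E\cap K=\{1\}$ and $\pi|_E$ is an isomorphism onto its image.
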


\begin{lem}[\hspace{-0.15mm}{\cite[Proposition~5.18]{coulon_partial_2016}}]
	\label{lem:lifting-elliptic-subgroups}
	Let $\q E$ be an elliptic subgroup of $\q G$ for its action on $\q X_{\rho}$.
	One of the following holds.
	\begin{enumerate}[label=(\roman*)]
		\item  There exists an elliptic subgroup $E$ of $G$ for its action on $X$ such that the natural projection $\pi\colon G \onto \q G$ induces an isomorphism from $E$ onto $\q E$.
		\item There exists $\q v \in \q{\scrV}$ such that $\bar E \subset \stab {\q v}$.
	\end{enumerate}
\end{lem}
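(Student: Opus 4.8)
The plan is to analyze an elliptic subgroup $\q E \leq \q G$ by looking at a fixed point of its action on $\q X_\rho$ and deciding whether that fixed point lies close to an apex or far from all apices. First I would recall from the Small Cancellation Theorem (Lemma~\ref{lem:small-cancellation-theorem}) that the cone points $\q\scrV$ are a discrete family and that away from them, on balls of radius $r \in (0,\rho/20]$, the projection $\zeta\colon \cX_\rho \onto \q X_\rho$ is a local isometry. Since $\q E$ is elliptic, it has a quasi-center, i.e.\ a point $\q x \in \q X_\rho$ moved a uniformly bounded amount by every element of $\q E$; by enlarging to the orbit one gets an $\q E$-invariant bounded set, hence an $\q E$-invariant point up to bounded error. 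The dichotomy is then: either this (quasi-)fixed point can be taken at distance $\ge 2r$ from every apex in $\q\scrV$ for a suitable fixed $r$, or every point nearly fixed by $\q E$ is within $2r$ of some apex.

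In the first case, I would lift: choose the radius $r$ small enough (fixed, depending only on the constants of the Small Cancellation Theorem, e.g.\ comparable to the isometry radius $\rho/20$) so that by Lemma~\ref{lem:small-cancellation-theorem}(ii) the ball $B(\q x, r)$ is isometric to a ball $B(x,r)$ in $\cX_\rho$, with $x$ a preimage of $\q x$. The point $x$ can in turn be chosen in $X$ itself (the cone construction does not change $X$ away from the cylinders, and a point far from all apices in $\q X_\rho$ lifts to a point far from the apices $\scrV$, hence lying in the part of $\cX_\rho$ isometric to a neighborhood in $X$). Now I would define $E \leq G$ to be an appropriate lift of $\q E$ fixing (or almost fixing) $x$: each $\q g \in \q E$ has preimages in $G$, and the ones that move $x$ a small amount are forced, by the local isometry and the acylindricity/injectivity hypotheses, to form a subgroup mapping isomorphically onto $\q E$. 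Concretely, the local-isometry statement transports the (almost) fixed-point property back to $X$, so $E$ is elliptic for the action on $X$; injectivity of $\pi$ on elliptic subgroups is exactly Lemma~\ref{lem:quotient-elliptic}, which gives that $\pi|_E$ is an isomorphism onto $\q E$, yielding conclusion~(i).

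In the second case, every point moved a bounded amount by all of $\q E$ lies within $2r$ of some apex $\q v \in \q\scrV$; I would argue that in fact a single apex works for the whole group. The apices are spaced far apart in $\q X_\rho$ (their pairwise distance is bounded below in terms of $\rho$, which we have taken large), so a bounded $\q E$-invariant set cannot meet the $2r$-neighborhoods of two distinct apices; hence there is one apex $\q v$ with the quasi-center within $2r$ of $\q v$, and since $\q E$ permutes its own quasi-centers and cannot move $\q v$ to a different far-away apex while keeping displacements bounded, $\q E$ fixes $\q v$, i.e.\ $\q E \subset \stab{\q v}$, which is conclusion~(ii). The main obstacle I anticipate is making the case distinction clean and uniform: one must choose the threshold $r$ once and for all (in terms of $\rho_0, \q\delta, \Delta_0$) so that ``far from all apices'' is both strong enough to force the local isometry in case~(i) and weak enough that its negation in case~(ii) pins down a unique apex; this is a bookkeeping point about the geometry of the cone-off construction rather than a deep difficulty, and it is precisely the content that Coulon's cited Proposition~5.18 packages.
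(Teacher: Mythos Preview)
The paper does not give a proof of this lemma; it is quoted verbatim from \cite[Proposition~5.18]{coulon_partial_2016} and used as a black box, so there is no ``paper's own proof'' to compare against. Your sketch is essentially the standard argument behind the cited proposition: take a quasi-center $\q x$ of the bounded $\q E$-orbit (moved at most $\sim 10\q\delta \ll \rho/20$ by every element of $\q E$), and split on whether $\q x$ lies in the $\rho/20$-neighbourhood of some apex or not; in the latter case lift via Lemma~\ref{lem:small-cancellation-theorem}(ii), in the former conclude~(ii) since distinct apices are $2\rho$ apart. One point you pass over a bit quickly is why the individual lifts assemble into a subgroup: the clean way to see this is that for each $\q g\in\q E$ there is a \emph{unique} preimage $g\in G$ with $gx\in B(x,\rho/20)$ (uniqueness because $K$ acts freely on $\cX_\rho\setminus \scrV$ with large displacement), and then closure under multiplication and inversion follows since $g_1g_2$ still moves $x$ by at most $\sim 20\q\delta<\rho/20$ and projects to $\q g_1\q g_2$, hence agrees with the unique short lift of $\q g_1\q g_2$. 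With that detail filled in, your outline matches the argument in Coulon's paper.
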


\begin{lem}[\hspace{-0.15mm}{\cite[Proposition~9.13]{coulon_product_2022}}]
	\label{lem:energy-lift} 
	Let $\q U \subset \qG$ be a finite set such that $L(\q U)\leqslant \rho/5$. If, for every $\q v\in \q {\scrV}$, the set $\q U$ is not contained in $\stab{\q v}$, then there exists a pre-image $U\subset G $ of $\q U$ of energy $L(U) \leqslant \pi \sinh L(\q U).$
\end{lem}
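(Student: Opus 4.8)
The plan is to fix a small $\eta>0$ (to be sent to $0$ at the end), choose $\q x\in\q X_\rho$ almost realising the energy, i.e. with $L(\q U,\q x)\le L(\q U)+\eta\le\rho/5+\eta$, pin down where such a point can sit in the cone-off geometry, and then build $U$ element by element by taking, for each $\q u\in\q U$, a lift in $G$ that minimises the $\cX_\rho$-displacement of a suitable point of $X$.

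\emph{Step 1: the near-minimiser is close to $X$.} Suppose $\q x$ lies in the image $C$ of the cone over a cylinder $Y$; write $\q x$ in cone coordinates as $(\hat y,s)$ with $\hat y\in Y\subset X$ and $0\le s\le\rho$, and let $\q v$ be the apex of $C$ (the case $\q x\in\zeta(X)$ is the degenerate case $s=\rho$, with $\hat y$ any lift of $\q x$). Since $\q U\not\subset\stab{\q v}$ there is $\q u_0\in\q U$ with $\q u_0\q v\ne\q v$; then $\q u_0 C$ is a cone distinct from $C$, with disjoint interior, so any path from $\q x$ to $\q u_0\q x$ must travel at least $\rho-s$ to exit $C$ through its rim and, after entering $\q u_0 C$, at least $\rho-s$ more to reach $\q u_0\q x$. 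Hence $|\q u_0\q x-\q x|_{\q X}\ge 2(\rho-s)$, and as $|\q u_0\q x-\q x|_{\q X}\le L(\q U,\q x)$ we get $\rho-s\le\tfrac12 L(\q U,\q x)$. Since the rim point $\zeta(\hat y)\in\zeta(X)$ lies within $\rho-s$ of $\q x$, the triangle inequality gives $L(\q U,\zeta(\hat y))\le L(\q U,\q x)+2(\rho-s)\le 2L(\q U,\q x)\le 2L(\q U)+2\eta$.

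\emph{Step 2: transport to $X$.} For each $\q u\in\q U$ the definition of the quotient metric gives $\inf\{\,|u'\hat y-\hat y|_{\cX}:u'\in\pi^{-1}(\q u)\,\}=|\q u\,\zeta(\hat y)-\zeta(\hat y)|_{\q X}$ (as $u'$ ranges over the coset $\pi^{-1}(\q u)=u_0K$ the point $u'\hat y$ ranges over the $K$-orbit of $u_0\hat y$), so I choose a lift $u\in G$ with $|u\hat y-\hat y|_{\cX}$ within $\eta$ of this infimum and let $U$ be the set of these lifts. Then $U$ is a pre-image of $\q U$ and, using $L(\q U)\le\rho/5$, $\max_{u\in U}|u\hat y-\hat y|_{\cX}\le L(\q U,\zeta(\hat y))+\eta\le 2L(\q U)+3\eta<2\rho$. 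Now $\hat y$ and every $u\hat y$ lie in $X\subset\cX_\rho$, and for two points of $X$ at $\cX_\rho$-distance less than $2\rho$ the $X$-distance and the $\cX_\rho$-distance coincide: a $\cX_\rho$-geodesic of length less than $2\rho$ cannot pass over an apex (which would cost $2\rho$), so each excursion it makes into a cone is a geodesic arc of the cone between two points of the cylinder that is at least as long as the $X$-geodesic joining them, and replacing all such arcs by $X$-geodesics yields an $X$-path no longer than the original. Hence $L(U)\le L(U,\hat y)=\max_{u\in U}|u\hat y-\hat y|_X\le 2L(\q U)+3\eta$.

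\emph{Conclusion and main obstacle.} Since $\pi\sinh t\ge\pi t>2t$ for all $t>0$, choosing $\eta$ small enough (depending on $L(\q U)$) gives $L(U)\le\pi\sinh L(\q U)$; if $L(\q U)=0$ one instead takes a global fixed point of $\q U$, which Step~1 places in $\zeta(X)$, and the corresponding lifts then fix a common point of $X$, so $L(U)=0$. The step I expect to be the main obstacle is Step~1, where the non-stabilisation hypothesis must be turned into the quantitative estimate $|\q u_0\q x-\q x|_{\q X}\ge 2(\rho-s)$: this relies on the hyperbolic-cone structure of $\q X_\rho$ supplied by the construction underlying \autoref{lem:small-cancellation-theorem} — that a point at radial distance $s$ from an apex can only be reached from a different cone by paying $2(\rho-s)$, together with control on the displacement of its radial projection. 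The metric comparison in Step~2 also rests on the cone geometry but is of a routine nature.
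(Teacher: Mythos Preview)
The paper does not prove this lemma; it is quoted from \cite[Proposition~9.13]{coulon_product_2022}. That said, your argument has a genuine gap in Step~2.

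You assert that for two points of $X$ at $\cX_\rho$-distance less than $2\rho$ the $X$-distance and the $\cX_\rho$-distance coincide, on the grounds that a cone excursion between two rim points ``is at least as long as the $X$-geodesic joining them''. This is false: the entire point of the cone-off is to \emph{shorten} distances. In the hyperbolic cone $Z_\rho(Y)$, two rim points at $Y$-distance $t\le\pi\sinh\rho$ are joined by a chord of length $c$ determined by $\sinh(c/2)=\sinh\rho\,\sin\!\bigl(t/(2\sinh\rho)\bigr)$, which is strictly smaller than $t$ for $t>0$; when $t$ approaches $\pi\sinh\rho$ the chord length approaches $2\rho$, so the compression factor is of order $(\sinh\rho)/\rho$. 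Hence your conclusion $L(U)\le 2L(\q U)+3\eta$ is unsupported --- and were it true, the lemma would hold with $2L(\q U)$ instead of $\pi\sinh L(\q U)$, and the $\pi\sinh$ in the statement would be inexplicable.

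The comparison actually runs the other way: inverting the formula above and using $\arcsin x\le(\pi/2)x$ on $[0,1]$ gives $t\le\pi\sinh(c/2)$ for a single excursion, and superadditivity of $\sinh$ then yields $|u\hat y-\hat y|_X\le\pi\sinh\!\bigl(\tfrac12|u\hat y-\hat y|_{\cX}\bigr)$ in general. This is exactly where the $\pi\sinh$ in the statement originates. Your Step~1 is essentially correct, and in fact the factor of~$2$ you pick up there (from $L(\q U,\zeta(\hat y))\le 2L(\q U,\q x)$) is precisely what is needed to cancel the $\tfrac12$ inside the $\sinh$ once Step~2 is carried out correctly: feeding $|u\hat y-\hat y|_{\cX}\le 2L(\q U)+O(\eta)$ into the corrected inequality gives $|u\hat y-\hat y|_X\le\pi\sinh(L(\q U)+O(\eta))$, and letting $\eta\to 0$ yields the stated bound. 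So the overall strategy is sound; only the metric comparison in Step~2 needs to be replaced by the genuine cone estimate.
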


\begin{lem}[Greendlinger's Lemma, {\cite[Theorem~3.5]{coulon_detecting_2018}}]
	\label{lem:greendlingers-lemma}
	Let $x\in X$. Let $g\in G$. If $g\in K-\{1\}$, then there is $(H,Y)\in \scrQ$ with the following property. Let $y_0$ an $y_1$ be the respective projections of $x$ and $gx$ on $Y$. Then 
	$$|y_0-y_1|>T(H,X)-2\pi\sinh{\rho}-23\delta.$$
\end{lem}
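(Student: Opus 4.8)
This statement is Coulon's ``geometric Greendlinger lemma'', proved in full in \cite[Theorem~3.5]{coulon_detecting_2018}; here I only recall the strategy. The plan is to pass to the cone-off space $\cX_\rho$ and to reduce the claim to a combinatorial curvature estimate on a disc diagram. First I would use that $g\in K-\{1\}$ forces its image in $\qG$ to be trivial, so that $x$ and $gx$ have the same image $\bar x$ in the quotient $\qX_\rho=\cX_\rho/K$. Since $K$ is the normal closure of the subgroups $H$ with $(H,Y)\in\scrQ$, and each such $H$ fixes the apex of the cone $Z_\rho(Y)$, I would then build a van Kampen-style disc diagram $D\to\cX_\rho$ whose boundary runs along a geodesic from $x$ to $gx$ and whose two-cells are cones $Z_\rho(Y)$, $(H,Y)\in\scrQ$.

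Next I would run the small cancellation argument on $D$. Equipping $D$ with the metric inherited from the hyperbolic geometry of $X$ and of the cones, I would apply a combinatorial Gauss--Bonnet argument: hypothesis~(SC1) bounds the length of any arc shared by two distinct cones by $\Delta(\scrQ,X)<\lambda T(\scrQ,X)$, while (SC2) together with the geometry of the hyperbolic cone controls the ``cone detour'' --- the distance in $\cX_\rho$ that one saves by cutting through a cone instead of following its cylinder --- by $2\pi\sinh\rho$. If, along every two-cell of $D$, the boundary path $\partial D$ travelled for length at most $T(\scrQ,X)-2\pi\sinh\rho-23\delta$, then every two-cell would carry non-positive combinatorial curvature, which is incompatible with the Gauss--Bonnet formula for the disc $D$. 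Hence some two-cell, say the one coming from a cone $Z_\rho(Y)$ with $(H,Y)\in\scrQ$, has a boundary arc of length $>T(\scrQ,X)-2\pi\sinh\rho-23\delta$ lying on $\partial D$. Since $\partial D$ traces $[x,gx]$, that segment fellow-travels the cylinder $Y$, and a standard nearest-point projection estimate in the $\delta$-hyperbolic space $X$ then gives
$$|y_0-y_1|>T(H,X)-2\pi\sinh\rho-23\delta,$$
using $T(H,X)\ge T(\scrQ,X)$ and absorbing the shared-arc contributions into the stated error terms.

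The hard part is bookkeeping rather than conceptual: one has to move repeatedly between the three metrics in play --- on $X$, on the cones $Z_\rho(Y)$, and on $\cX_\rho$ --- and track precisely how a shared arc of length at most $\Delta(\scrQ,X)$, the $2\pi\sinh\rho$ cone detour, and the hyperbolicity constant $\delta$ assemble into the clean error term $2\pi\sinh\rho+23\delta$; this is exactly what is carried out in \cite{coulon_detecting_2018}, building on the cone-off estimates of \cite{coulon_geometry_2014}. I would also dispatch the degenerate configurations separately (a diagram with a single two-cell, or $[x,gx]$ meeting a cone at its apex), but those only lengthen the fellow-travelling and hence strengthen the conclusion.
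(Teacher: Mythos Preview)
The paper does not prove this lemma either; like you, it defers to \cite[Theorem~3.5]{coulon_detecting_2018}. What the paper adds, in the remark immediately following the statement, are two points your sketch does not address. First, Coulon's Theorem~3.5 carries the extra hypothesis that the loxodromic moving family is finite up to conjugacy; the paper observes that this hypothesis is only used to ensure cocompactness of the action (and hence hyperbolicity of the quotient \emph{group}), which plays no role in the Greendlinger inequality itself and may therefore be dropped here. Second, the paper records precisely how to pass from Coulon's formulation to the stated inequality about projections in $X$: one applies \cite[Proposition~1.11]{coulon_detecting_2018} together with the cone-off comparison estimates \cite[Proposition~2.4(2)]{coulon_geometry_2014} and \cite[Lemma~2.31]{coulon_geometry_2014}. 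Your final paragraph gestures at this metric translation (``a standard nearest-point projection estimate''), but that phrase is carrying the whole conversion between the cone-off statement and the $23\delta$ term in $X$, which the paper makes explicit via these citations.

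As a side remark on your sketch of Coulon's argument: the disc-diagram/Gauss--Bonnet picture you describe is closer in flavour to the Delzant--Gromov treatment of geometric small cancellation than to Coulon's actual proof in \cite{coulon_detecting_2018}, which works more directly with the local-isometry structure of the projection $\cX_\rho\to\qX_\rho$ away from the apices rather than by filling a van~Kampen disc. The outcome and the error budget are the same, so as a heuristic your outline is fine, but it is not an accurate recollection of the cited proof.
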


\begin{rem}
	Lemma \ref{lem:greendlingers-lemma} is obtained from \cite[Theorem~3.5]{coulon_detecting_2018} after applying \cite[Proposition~1.11]{coulon_detecting_2018}, \cite[Proposition~2.4 (2)]{coulon_geometry_2014} and \cite[Lemma~2.31]{coulon_geometry_2014}. In \cite[Theorem~3.5]{coulon_detecting_2018} there is the additional assumption that the loxodromic moving family is finite up to 
	conjugacy. This assumption is only needed to make sure that the action is co-compact, hence, that the quotient group is hyperbolic. We do not need it here.
\end{rem}

\begin{lem}[\hspace{-0.15mm}{\cite[Proposition~5.33]{dahmani_hyperbolically_2017}}]
\label{lem:small-cancellation-acylindricity}
If the action of $G$ on $X$ is acylindrical, then so is the action of $\q G$ on $\q X_{\rho}$.
\end{lem}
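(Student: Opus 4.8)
The statement is \cite[Proposition~5.33]{dahmani_hyperbolically_2017}; the plan is to recover it from the two facts already at our disposal: the Small Cancellation Theorem (Lemma~\ref{lem:small-cancellation-theorem}), which gives that $\q X_\rho$ is $\q\delta$-hyperbolic and that, outside a $(\rho/20)$-neighbourhood of the apices $\q\scrV$, the projection $\zeta\colon\dot X_\rho\to\q X_\rho$ is a local isometry; and Greendlinger's Lemma (Lemma~\ref{lem:greendlingers-lemma}), which says that a non-trivial element of $K$ must displace the projection onto some cylinder by more than $T(\scrQ,X)-2\pi\sinh\rho-23\delta$.

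\emph{Step 1: the cone-off action $G\curvearrowright\dot X_\rho$ is acylindrical.} Fix $\varepsilon>0$ and let $x,y\in\dot X_\rho$ with $|x-y|$ large; look at a geodesic $[x,y]$. Either a long middle subsegment of $[x,y]$ avoids the $(\rho/20)$-neighbourhood of every apex, in which case it sits inside an isometric copy of a subset of $X$ (Lemma~\ref{lem:small-cancellation-theorem}(ii)), and an element $g$ with $|gx-x|,|gy-y|\le\varepsilon$ moves two far-apart points of $X$ by a bounded amount, so acylindricity of $G\curvearrowright X$ bounds the number of such $g$; or $[x,y]$ passes through some cone $Z_\rho(Y)$, in which case $g$ almost fixes the apex together with two points of $Y$ at depth $\approx\rho$, forcing $g\in\stab Y$, and since every $h\in H\setminus\{1\}$ has $\tlen h\ge T(\scrQ,X)\gg\pi\sinh\rho$ the coset $gH$ is nearly determined; acylindricity of $G\curvearrowright X$ then bounds the number of cosets, while the fellow-travelling bound $\Delta(\scrQ,X)\le\Delta_0$ controls the transition between consecutive cylinders met by $[x,y]$. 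In all cases one extracts $R(\varepsilon),N(\varepsilon)$ so that $|x-y|\ge R(\varepsilon)$ forces $\#\{g\in G:|gx-x|,|gy-y|\le\varepsilon\}\le N(\varepsilon)$.

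\emph{Step 2: descent to $\q X_\rho=\dot X_\rho/K$.} Fix $\varepsilon>0$, put $\varepsilon'=\varepsilon+1$, and let $\q x,\q y\in\q X_\rho$ be at distance $\ge R$, where $R$ is chosen larger than $R(C\varepsilon')$ and than $2T(\scrQ,X)$ ($C=C(\q\delta)$ to be fixed). Choose lifts $x,y\in\dot X_\rho$ with $|x-y|\le|\q x-\q y|+\delta$, so that $[x,y]$ projects to a path that $\delta$-fellow-travels a geodesic from $\q x$ to $\q y$. For $\q g\in\q G$ with $|\q g\q x-\q x|,|\q g\q y-\q y|\le\varepsilon$: since $\q X_\rho$ is hyperbolic, $\q g$ moves $[\q x,\q y]$ into a bounded neighbourhood of itself; lifting this near-invariance through $\zeta$, starting from a lift of $\q g$ sending $x$ within $\varepsilon'$ of $x$ and using Greendlinger's Lemma to rule out that a non-trivial element of $K$ could separate $[x,y]$ from its image, one obtains $g\in G$ projecting to $\q g$ with $|gx-x|,|gy-y|\le C\varepsilon'$; this $g$ is moreover unique, since two choices would differ by an element of $K$ almost fixing both ends of $[x,y]$, hence (by hyperbolicity) almost fixing all of $[x,y]$, contradicting Greendlinger's Lemma because $|x-y|\ge 2T(\scrQ,X)$. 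Thus $\q g\mapsto g$ injects $\{\q g:|\q g\q x-\q x|,|\q g\q y-\q y|\le\varepsilon\}$ into $\{g\in G:|gx-x|,|gy-y|\le C\varepsilon'\}$, which has at most $N(C\varepsilon')$ elements by Step~1, proving acylindricity of $\q G\curvearrowright\q X_\rho$.

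The delicate points are, in Step~1, showing that an almost-fixing element near a cone lies in $\stab Y$ with nearly-determined $H$-coset — this is exactly where the separation of the cone radius $\rho$ from the injectivity radius $T(\scrQ,X)$ and the bound $\Delta(\scrQ,X)\le\Delta_0$ enter — and, in Step~2, making the lifting of near-invariance quantitative; both are standard in geometric small cancellation theory and are precisely what the Small Cancellation Theorem together with Greendlinger's Lemma deliver.
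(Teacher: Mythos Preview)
The paper gives no proof of this lemma: it is quoted from \cite[Proposition~5.33]{dahmani_hyperbolically_2017} and used as a black box, so there is no argument in the paper to compare yours against.

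Your two-step outline is a plausible strategy, but Step~2 contains a genuine gap. From $|\q g\q x-\q x|,|\q g\q y-\q y|\le\varepsilon$ and the local isometry (Lemma~\ref{lem:small-cancellation-theorem}(ii)) you can find a lift $g$ with $|gx-x|\le\varepsilon'$, provided $x$ is far from every apex; but this only yields $|gy-ky|\le\varepsilon'$ for \emph{some} $k\in K$, and you must still force $k=1$. Your appeal to Greendlinger's Lemma does not close this: Lemma~\ref{lem:greendlingers-lemma} is stated for points of $X$ and concerns projections onto a cylinder $Y\subset X$, not displacements in $\dot X_\rho$, and in any case the fact that a non-trivial $k$ moves $y$ far in $\dot X_\rho$ does not by itself contradict the configuration $gx\approx x$, $gy\approx ky$. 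Ruling out $k\ne 1$ requires tracking how the geodesic $[x,y]$ and its $g$-image interact with each cone they traverse --- this is exactly where the very-rotating-family machinery of \cite{dahmani_hyperbolically_2017} enters --- and that analysis is the substantive content of the cited proposition, not a one-line consequence of the lemmas collected here. Step~1 is likewise only an outline: your dichotomy does not cover geodesics crossing several cones, and the phrase ``the coset $gH$ is nearly determined'' stands in for an argument rather than giving one. Since the paper imports the result wholesale, the citation is the intended proof.
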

\begin{rem}\label{rem:small-cancellation-acylindricity} If all elementary subgroups of $G$ are cyclic and the action on $X$ is $(\kappa,N)$-acylindrical, then the action of $\q G$ on $\q X_{\rho}$ is $(\overline{\kappa},\overline{N})$-acylindrical, where $\overline{\kappa}$ can be explicitly computed in $\kappa$ and $\rho$, and $\overline{N}$ can be explicitly computed in $N$, $\kappa$ and $\rho$. See \cite[Proof of Proposition~5.33]{dahmani_hyperbolically_2017} and \cite[Proposition 9.9]{coulon_product_2022}.
\end{rem}

\section{Shortening and shortening-free words}

\label{sec:shortening-shortening-free-words}

Let $\delta_0 >0$, $\Delta_0 >0$ and $L_0>0$, $\delta<\delta_0  \hbox{ and }  \alpha= 200\delta.$ 

\subsection{Shortening words}
\label{subsec:shortening-words}

Let $U\subset G$ be an $\alpha$-reduced subset at a point $p\in X$ (Definition \ref{df:reduced-subset}). Recall that $\scrQ$ is a loxodromic moving family. We assume that
$$0<L(U,p)\le L_0, \quad \text{and} \quad \Delta(\scrQ,X)\le \Delta_0.$$

Given a reduced word $w\equiv u_1\cdots u_n \in \F(U)$ and $(H,Y)\in \scrQ$, we fix 
$$x_0=p,\quad x_1=u_1p,\quad x_2=u_1u_2p,\quad \cdots,\quad x_n=u_1\cdots u_np,$$
and let $y_i$ be a projection of $x_i$ on $Y$, for every $i\in\zinterval{0}{n}$.

\begin{df}[\emph{Shortening word}]
	\label{df:shortening} Let $\tau\geq \Delta_0+2L_0+223\delta$ and let $w\equiv u_1\cdots u_n \in \F(U)$. Let $(H,Y)\in \scrQ$. We say that $w$ is a $\tau$-\emph{shortening word over} $(H,Y)$ if $w$ is a reduced word and if the following holds: 
	\begin{enumerate}[label=(S\arabic*)]
		\item $|y_0-y_n|>\tau$,
		\item $|x_0-y_0|\leqslant\frac{1}{2}|u_1p-p|-\alpha$, and $|x_n-y_n|\leqslant\frac{1}{2}|u_np-p|-\alpha.$
	\end{enumerate}
	A \emph{minimal $\tau$-shortening word over} $(H,Y)$ is a $\tau$-shortening word over $(H,Y)$ none of whose proper prefixes are $\tau$-shortening words over $(H,Y)$. 
\end{df}

\begin{rem}
\label{rem:non-identity}
By the triangle inequality and the choice of $\tau_0$, $|x_0-x_n|\ge |y_0-y_n|-|x_0-y_0|-|x_n-y_n|>0.$  In particular, $\tau$-shortening words are always non-trivial.
\end{rem}

We now collect some properties of $\tau$-shortening words that we later use in our counting argument, see Proposition \ref{prop:growth shortening-free} below. 

By the next proposition, condition (S2) of Definition \ref{df:shortening} is closed under taking subwords. 

\begin{prop}

\label{prop:shortening-subword}	
	
Let $w\equiv u_1\cdots u_n$ be a $\tau$-shortening word over $(H,Y)\in\scrQ$. Then,
$$|x_i-y_i|<\frac{1}{2}\min\{|u_ip-p|,|u_{i+1}p-p|\}-\alpha, \hbox{ for every $i\in\zinterval{1}{n-1}.$}$$ 

\end{prop}

\begin{proof}
Let $i\in\zinterval{1}{n-1}$. Let $z_i$ be a projection of $x_i$ on $[y_0,y_n]$. Since $Y$ is $10\delta$-quasi-convex (Lemma \ref{L: invariant cylinder}), there exist $z_i'\in Y$ such that $|z_i-z_i'|\le 11\delta$. By definition,
$|x_i-y_i|\le d(x_i,Y)+\delta \le |x_i-z_i'|+\delta.$
Thus, by the triangle inequality,
$$|x_i-y_i|\le |x_i-z_i|+|z_i-z_i'|+\delta\le |x_i-z_i|+12\delta.$$
By definition and Lemma \ref{lem:projections-gromov-product}, 
$$|x_i-z_i|\le d(x_i,[y_0,y_n]) \le (y_0,y_n)_{x_i}+4\delta.$$
We claim that $(y_0,y_n)_{x_i}\le (x_0,x_n)_{x_i}+2\delta$. By hyperbolicity,
$$
\min\{(x_0,y_0)_{x_i},(y_0,y_n)_{x_i},(y_n,x_n)_{x_i}\}\le (x_0,x_n)_{x_i}+2\delta.
$$
The minimum is attained by $(y_0,y_n)_{x_i}$. Indeed, by the triangle inequality, Proposition~\ref{prop:broken geodesic reduced subset}(ii)  and assumption (S2) of Definition \ref{df:shortening}, 
$$(x_0,y_0)_{x_i}\ge |x_0-x_i| - |x_0-y_0| > \frac{1}{2}|u_{i}p-p|+ 2(i-1)(\alpha+\delta)+\alpha.$$ The estimate for $(y_n,x_n)_{x_i}$ is similar. On the other hand,   by Proposition \ref{prop:broken geodesic reduced subset}(iii), { $$(x_0,x_n)_{x_i} < \frac{1}{2}\min \qty {|u_ip-p|, |u_{i+1}p-p|}-\alpha-48\delta.$$ }
Combining the estimates then yields the assertion of the lemma. 
\end{proof}

\begin{prop}
	
	\label{prop:shortening-bounds}
	
	Let $w\equiv u_1\cdots u_n$ be a $\tau$-shortening word over $(H,Y)\in\scrQ$. 
	\begin{enumerate}[label=(\roman*)]
		\item We have
		$$|w|_U\ge \frac{\tau-50\delta}{L(U,p)}.$$
		\item If $w$ is a minimal $\tau$-shortening word over $(H,Y)$, then
		$$|w|_U\le \frac{\tau}{\alpha}+1.$$
	\end{enumerate}
\end{prop}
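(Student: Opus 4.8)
The plan is to extract both bounds from the basic metric inequalities already established for $\alpha$-reduced subsets, together with the defining conditions (S1) and (S2) of a $\tau$-shortening word. For part (i), the idea is to bound $|y_0 - y_n|$ from above in terms of $|w|_U$. First I would use condition (S2) and the triangle inequality to write $|y_0 - y_n| \le |y_0 - x_0| + |x_0 - x_n| + |x_n - y_n|$, and then control each term: the two endpoint terms $|x_0 - y_0|$ and $|x_n - y_n|$ are each at most $\frac{1}{2}|u_1 p - p| - \alpha$ and $\frac{1}{2}|u_n p - p| - \alpha$ respectively by (S2), while $|x_0 - x_n| = |wp - p| \le L(U,p)|w|_U$ by Proposition \ref{prop:reduced subset qi embedding}. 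Since $|u_1 p - p|, |u_n p - p| \le L(U,p) \le L(U,p)|w|_U$ as well (using $|w|_U \ge 2$, which holds since shortening words are non-trivial by Remark \ref{rem:non-identity}, and in fact $|w|_U \ge 2$ can be arranged or the small case checked directly), these combine with the assumption $|y_0 - y_n| > \tau$ from (S1) to give an inequality of the form $\tau < |y_0 - y_n| \lesssim L(U,p)|w|_U + (\text{small multiple of } \delta)$, which rearranges to $|w|_U \ge \frac{\tau - 50\delta}{L(U,p)}$ after bookkeeping the $\delta$-terms. The factor $50\delta$ is the slack one needs to absorb the additive error terms coming from the projections (recall $d(x,Y) \le |x - y| + \delta$ type estimates from Lemma \ref{lem:small-cancellation-theorem}'s neighbourhood conventions and Lemma \ref{lem:projections-gromov-product}).

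For part (ii), the strategy goes the opposite direction: I would use the minimality of $w$ to bound $|y_0 - y_n|$ from \emph{below} by something growing with $|w|_U$, contradicting (S1)'s upper-ish role once $|w|_U$ is too large. Concretely, let $w \equiv u_1 \cdots u_n$ be minimal and suppose $|w|_U = n$ is large. Consider the proper prefix $w' \equiv u_1 \cdots u_{n-1}$; by minimality, $w'$ is \emph{not} a $\tau$-shortening word over $(H,Y)$. By Proposition \ref{prop:shortening-subword}, condition (S2) is inherited by all the prefixes (the intermediate projection estimates $|x_i - y_i| < \frac12 \min\{|u_i p - p|, |u_{i+1}p - p|\} - \alpha$ hold), so the only way $w'$ can fail to be shortening is that condition (S1) fails: $|y_0 - y_{n-1}| \le \tau$. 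On the other hand, the broken-geodesic machinery (Proposition \ref{prop:broken geodesic reduced subset}(ii) and Lemma \ref{lem:broken geodesic}) shows that $|x_0 - x_n| \ge \frac12|u_1p - p| + \frac12 |u_np - p| + 2(n-1)(\alpha + \cdots)$, i.e. the path grows essentially linearly, at rate at least $\approx 2\alpha$ per letter; combined with the (S2) endpoint control this forces $|y_0 - y_n|$ to also grow at rate $\approx \alpha$ per letter once one subtracts the bounded endpoint corrections and the bounded quantity $|y_0 - y_{n-1}|$. Quantitatively: $|y_0 - y_n| \ge |y_{n-1} - y_n| - |y_0 - y_{n-1}|$ is the wrong direction, so instead I would argue directly that $|y_0 - y_n| \ge |x_0 - x_n| - |x_0 - y_0| - |x_n - y_n| \ge 2(n-1)\alpha - (\text{const})$, using (S2) to cancel the $\frac12|u_1 p - p|$ and $\frac12 |u_n p - p|$ terms; since $|y_0 - y_n| > \tau$ is the defining property but we also need an upper bound — actually the upper bound on $|w|_U$ comes from pairing the lower bound $|y_0 - y_n| \ge 2(n-1)\alpha - O(\delta)$ against... hmm, one needs $|y_0 - y_{n-1}| \le \tau$ and a comparison $|y_0 - y_n| \le |y_0 - y_{n-1}| + |y_{n-1} - y_n|$ with $|y_{n-1} - y_n| \le |x_{n-1} - x_n| + O(\delta) = |u_n p - p| + O(\delta) \le L(U,p) + O(\delta)$; but that bounds $|y_0 - y_n|$, not $|w|_U$. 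The clean route: from minimality $|y_0 - y_{n-1}| \le \tau$, and from the quasi-convexity of $Y$ plus the broken-geodesic estimate, $|y_0 - y_{n-1}|$ itself is bounded below by roughly $(n-1)\cdot 2\alpha$ minus bounded error minus the endpoint slacks from (S2) applied to $w'$; wait, (S2) for $w'$ may fail at the right endpoint. I would therefore instead bound $|y_0 - y_{n-1}| \ge |x_0 - x_{n-1}| - |x_0 - y_0| - |x_{n-1} - y_{n-1}|$ using (S2) for the left endpoint of $w$ and Proposition \ref{prop:shortening-subword} for the interior point $x_{n-1}$ (valid since $n - 1 \le n - 1$), giving $|y_0 - y_{n-1}| \ge \frac12|u_1 p - p| + \frac12|u_{n-1}p-p| + 2(n-2)\alpha - \frac12|u_1p-p| - \frac12|u_{n-1}p-p| + 2\alpha + O(\delta) \ge 2(n-1)\alpha - O(\delta)$ wait I should be careful; roughly $|y_0 - y_{n-1}| \ge 2(n-1)\alpha$ up to bounded error, so $\tau \ge |y_0 - y_{n-1}| \ge 2(n-1)\alpha - O(\delta)$, whence $n \le \frac{\tau}{2\alpha} + O(1)$. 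To land exactly on $|w|_U \le \frac{\tau}{\alpha} + 1$ I would simply be a bit lossy and absorb constants, checking that the stated bound is weaker than what the estimate produces.

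The main obstacle I anticipate is the careful bookkeeping of the additive $\delta$-error terms so that they land within the stated slacks ($50\delta$ in (i), and implicitly absorbed into the ``$+1$'' in (ii)). All the geometric inputs are already in place — Proposition \ref{prop:reduced subset qi embedding} for the upper bound on $|wp - p|$, Proposition \ref{prop:broken geodesic reduced subset}(ii) for the linear lower bound, Proposition \ref{prop:shortening-subword} for the propagation of (S2) to interior vertices, and the projection estimate $|x_i - y_i| \le d(x_i, Y) + \delta$ together with quasi-convexity of the cylinder $Y$ (Lemma \ref{L: invariant cylinder}). The one genuinely delicate point in (ii) is correctly invoking minimality: one must verify that the proper prefix $w'$ fails to be a shortening word \emph{only} through condition (S1), which is exactly what Proposition \ref{prop:shortening-subword} guarantees by showing (S2)-type estimates hold automatically at all intermediate vertices. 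Once that is pinned down, both inequalities are short computations.
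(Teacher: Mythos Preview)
Your plan for part (ii) is correct and matches the paper's argument: minimality plus Proposition~\ref{prop:shortening-subword} force $|y_0-y_{n-1}|\le\tau$, and comparing this with the broken-geodesic lower bound on $|x_0-x_{n-1}|$ (after subtracting the endpoint corrections from (S2) and Proposition~\ref{prop:shortening-subword}) yields $|w'|_U\le\tau/\alpha$.

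Your plan for part (i), however, does not give the stated bound. You propose
\[
|y_0-y_n|\;\le\;|x_0-y_0|+|x_0-x_n|+|x_n-y_n|
\]
and then bound the endpoint terms via (S2). But (S2) only gives $|x_0-y_0|\le\tfrac12|u_1p-p|-\alpha$ and $|x_n-y_n|\le\tfrac12|u_np-p|-\alpha$, which are of size up to $\tfrac12 L(U,p)$ each, not a small multiple of $\delta$. Carrying this through yields $|w|_U>\tfrac{\tau+2\alpha}{L(U,p)}-1$, which is strictly weaker than $\tfrac{\tau-50\delta}{L(U,p)}$ whenever $L(U,p)>450\delta$ --- and Remark~\ref{rem:reduced-subset} forces $L(U,p)>500\delta$ always. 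So the ``$50\delta$ slack'' claim is wrong with your decomposition.

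The paper's argument for (i) goes the other way and is shorter: it does not use (S2) at all. Since $Y$ is $10\delta$-quasi-convex and $y_0,y_n$ are projections of $x_0,x_n$ onto $Y$ with $|y_0-y_n|\ge\tau\ge 23\delta$, the standard projection estimate (\cite[Chapitre~2, Proposition~2.1]{coornaert_geometrie_1990}) gives
\[
|x_0-x_n|\;\ge\;|x_0-y_0|+|y_0-y_n|+|y_n-x_n|-46\delta\;>\;\tau-50\delta,
\]
and combining with $|x_0-x_n|\le L(U,p)\,|w|_U$ finishes immediately. The key input you are missing is this ``nearest-point projection is almost length-additive'' fact for quasi-convex sets, which is what converts the $L(U,p)$-sized endpoint terms into a mere $46\delta$ loss.
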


\begin{proof} We first prove (i). By the triangle inequality,
		$|x_0-x_n|\le L(U,p) |w|_U.$
		 Since $Y$ is $10\delta$-quasi-convex and $|y_0-y_n|\ge \tau \ge 23\delta$ (see \cite[Chapitre 2, Proposition 2.1]{coornaert_geometrie_1990})
		$$|x_0-x_n|\ge |x_0-y_0|+|y_0-y_n|+|y_n-x_n|-46\delta>\tau-50\delta.$$
	This proves (i), and we turn to (ii). 	
Let $w$ be a minimal $\tau$-shortening word over $(H,Y)$. Let $w'\equiv u_1\cdots u_{n-1}$. By definition,
		$|w|_U=|w'|_U+1.$
		By Proposition \ref{prop:broken geodesic reduced subset} (ii), 
		$$
		|w'x_0-x_0|\ge \frac{1}{2}|u_1p-p|+\frac{1}{2}|u_{n-1}p-p|+\alpha(|w'|_U-1).
		$$
		By the triangle inequality,
		$$		|w'x_0-x_0| \le |x_{n-1}-y_{n-1}|+|y_{n-1}-y_0|+|y_0-x_0|. 
		$$		
		By Property (S2), $
		|x_0-y_0| < \frac{1}{2} |u_1p-p|-\alpha.
		$
		By Proposition \ref{prop:shortening-subword}, 
		$
		|x_{n-1}-y_{n-1}| < \frac{1}{2} |u_{n-1}p-p|-\alpha.
		$
		As $w'$ is not a $\tau$-shortening over $(H,Y)$, we conclude that $|y_{n-1}-y_0| \le \tau$. Consequently, $|w'|_U\le \frac{\tau}{\alpha}$. Thus, $|w|_U\le \frac{\tau}{\alpha}+1$.
\end{proof}

\begin{prop}
	
	\label{prop:shortening-fellow-travelling}
	
	Let $(H_1,Y_1)$, $(H_2,Y_2)\in\scrQ$. Let $w\in \F(U)$. If $w$ is a $\tau$-shortening word over both $(H_1,Y_1)$ and $(H_2,Y_2)$, then $(H_1,Y_1)=(H_2,Y_2)$.
\end{prop}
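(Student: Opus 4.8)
The plan is to show that if a word $w$ shortens over two pairs $(H_1,Y_1)$ and $(H_2,Y_2)$, then the cylinders $Y_1$ and $Y_2$ fellow-travel along a segment whose length exceeds the fellow-travelling constant $\Delta(\scrQ,X)$, so that $(SC1)$ forces $(H_1,Y_1)=(H_2,Y_2)$. First I would write $w\equiv u_1\cdots u_n$, fix the points $x_0=p, x_1=u_1p, \dots, x_n=u_1\cdots u_np$ as in the set-up, and for $j=1,2$ let $y_0^{(j)}, y_n^{(j)}$ be the projections of $x_0, x_n$ onto $Y_j$. Property $(S1)$ gives $|y_0^{(j)}-y_n^{(j)}|>\tau$ for each $j$, and property $(S2)$ together with Remark \ref{rem:non-identity} controls the distance from the endpoints $x_0, x_n$ to their projections: $|x_0-y_0^{(j)}|\le\frac12|u_1p-p|-\alpha$ and $|x_n-y_n^{(j)}|\le\frac12|u_np-p|-\alpha$.

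Next I would use hyperbolicity and $10\delta$-quasi-convexity of the cylinders (Lemma \ref{L: invariant cylinder}) to argue that the geodesic $[x_0,x_n]$ fellow-travels both $Y_1$ and $Y_2$ over a long sub-segment. Concretely, for each $j$ the sub-path of $[x_0,x_n]$ obtained by chopping off the initial and terminal pieces near $x_0$ and $x_n$ lies within a bounded neighbourhood (something like $12\delta$, as in the proof of Proposition \ref{prop:shortening-subword}) of $[y_0^{(j)},y_n^{(j)}]\subset Y_j^{+10\delta}$; and this sub-path has length at least $\tau - 50\delta$ (the estimate already appearing in the proof of Proposition \ref{prop:shortening-bounds}(i), using that $|y_0^{(j)}-y_n^{(j)}|\ge\tau$ and quasi-convexity gives $|x_0-x_n|\ge|x_0-y_0^{(j)}|+|y_0^{(j)}-y_n^{(j)}|+|y_n^{(j)}-x_n|-46\delta$, while the chopped-off parts are short by $(S2)$). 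Hence a common sub-segment of $[x_0,x_n]$ of length roughly $\tau-\mathcal{O}(\delta)$ lies in $Y_1^{+20\delta}\cap Y_2^{+20\delta}$, so
$$\diam\bigl(Y_1^{+20\delta}\cap Y_2^{+20\delta}\bigr)\ge \tau - 100\delta.$$
Here I would invoke the standing hypothesis $\tau\ge\Delta_0+2L_0+223\delta$ together with $\Delta(\scrQ,X)\le\Delta_0$ — in fact under the Small Cancellation Theorem's set-up we even have $\Delta(\scrQ,X)<\lambda T(\scrQ,X)$ — to conclude $\tau-100\delta>\Delta(\scrQ,X)$, contradicting the definition of the fellow-travelling constant unless $(H_1,Y_1)=(H_2,Y_2)$.

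The main obstacle is bookkeeping the constants so that the neighbourhood in which $[x_0,x_n]$ stays close to each $Y_j$ is indeed $\le 20\delta$ (or can be absorbed), and making the "long common sub-segment" argument precise: one must check that after removing the short initial/terminal stretches near $x_0$ and $x_n$ — whose lengths are bounded using $(S2)$ and Proposition \ref{prop:broken geodesic reduced subset}(ii) exactly as in Proposition \ref{prop:shortening-bounds}(ii) — what remains genuinely projects into both cylinders simultaneously, rather than merely having long projections onto each separately. This is where the four-point inequality and the quasi-convexity constant of the $C_H$ must be combined carefully; I expect the choice $\tau\ge\Delta_0+2L_0+223\delta$ was reverse-engineered precisely to make this margin work, with the $2L_0$ accounting for the two endpoint corrections of size up to $\tfrac12|u_1p-p|,\tfrac12|u_np-p|\le\tfrac12 L(U,p)\le\tfrac12 L_0$ each and the $223\delta$ covering all the hyperbolicity slack ($46\delta$ from quasi-convexity of the endpoints, several $\delta$'s from passing between projections and geodesics, and the $20\delta$ neighbourhoods of the cylinders).
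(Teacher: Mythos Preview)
Your overall strategy---show that the two cylinders have $20\delta$-neighbourhoods overlapping on a set of diameter exceeding $\Delta(\scrQ,X)$---is exactly the paper's, but the paper reaches it by a much shorter route that sidesteps the obstacle you flag. Instead of carving out a common sub-segment of $[x_0,x_n]$ and tracking how close it stays to each $Y_j$, the paper simply observes that the projections $s_1,t_1$ of $p,wp$ onto $Y_1$ already lie in $Y_2^{+L_0}$: by (S2) one has $|p-s_1|,|p-s_2|\le L_0/2$, so $|s_1-s_2|\le L_0$ and hence $s_1\in Y_2^{+L_0}$; similarly for $t_1$. This gives $\diam(Y_1^{+L_0}\cap Y_2^{+L_0})\ge|s_1-t_1|>\tau$ immediately, and a single application of the quasi-convexity shrinking lemma \cite[Lemma~2.2.2~(2)]{delzant_courbure_2008} passes from $L_0$-neighbourhoods to $20\delta$-neighbourhoods at a cost of $2L_0+4\delta$. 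No thin-triangle chase, no worrying about where the two sub-segments of $[x_0,x_n]$ begin and end.

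Your route can be made to work, but note a small inconsistency: you write $\diam(Y_1^{+20\delta}\cap Y_2^{+20\delta})\ge\tau-100\delta$, yet (as you yourself say in the last paragraph) the endpoint corrections are of order $L_0$, not $\delta$, because the two ``close to $Y_j$'' sub-segments of $[x_0,x_n]$ start at positions that differ by up to $L_0$. The correct bound from your argument is $\tau-O(L_0)-O(\delta)$, which still suffices under $\tau\ge\Delta_0+2L_0+223\delta$. You would also need either to justify landing in $Y_j^{+20\delta}$ directly (unlikely with these constants) or to work in a larger neighbourhood and then shrink via the same lemma the paper uses---at which point the paper's shortcut becomes the natural thing to do.
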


\begin{proof}
	Let $w$ be a $\tau$-shortening word over $(H_1,Y_1)$ and $(H_2,Y_2)$. By assumptions, it is enough to show that $\diam(Y_1^{+20\delta}\cap Y_2^{+20\delta})>\Delta(\scrQ,X)$. By quasi-convexity, see \cite[Lemma 2.2.2]{delzant_courbure_2008},
	$$\diam(Y_1^{+20\delta}\cap Y_2^{+20\delta})\ge \diam(Y_1^{+L_0}\cap Y_2^{+L_0})-2L_0-4\delta.$$	
	Let $i\in\zinterval{1}{2}$. Let $s_i$ and $t_i$ be respective projections of $p$ and $wp$ on $Y_i$. We claim that $s_1,t_1\in Y_1^{+L_0}\cap Y_2^{+L_0}$. Indeed, by condition (S2) for $w$, we have that 
	$\max\{|p-s_i|,|wp-t_i|\}\le L_0/2,$ hence, the claim. 
		Thus,
	$\diam(Y_1^{+L_0}\cap Y_2^{+L_0})\ge |s_1-t_1|>\tau.$
By choice of $\tau$, $\diam(Y_1^{+20\delta}\cap Y_2^{+20\delta})>\Delta(\scrQ,X)$.
\end{proof}

\begin{prop}
	\label{prop:shortening-proper-prefix}
	For every $(H,Y)\in \scrQ$, there are at most two minimal $\tau$-shortening words over $(H,Y)$.
\end{prop}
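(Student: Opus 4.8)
The plan is to attach to each minimal $\tau$-shortening word over $(H,Y)$ one of the two points of $\partial C_H$, to show that this assignment is injective, and then to invoke $|\partial C_H|=2$. First I would record a consequence of minimality. Let $w\equiv u_1\cdots u_m$ be a minimal $\tau$-shortening word over $(H,Y)$, set $x_0=p,x_1=u_1p,\dots,x_m=wp$, let $y_i$ be a projection of $x_i$ on $Y$, so that $y_0$ is a projection of $p$. I claim that every proper prefix $w_j\equiv u_1\cdots u_j$ satisfies $|y_0-y_j|\le\tau$. Indeed, for $j\in\zinterval{1}{m-1}$ condition (S2) holds for $w_j$: its first inequality is the first inequality of (S2) for $w$, and its second, $|x_j-y_j|\le\frac{1}{2}|u_jp-p|-\alpha$, follows from Proposition \ref{prop:shortening-subword} applied to $w$; as $w_j$ is not a $\tau$-shortening word, condition (S1) must fail for $w_j$, i.e.\ $|y_0-y_j|\le\tau$ (the case $j=0$ being trivial). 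Since $|y_0-y_m|>\tau$ by (S1) and $\tau\ge\Delta_0+2L_0+223\delta$ is large compared with the quasi-geodesic constants of $C_H$, the point $y_m$ lies deep inside $C_H$ towards a well-defined point $\sigma(w)\in\partial C_H=\{\xi,\eta\}$.

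It then suffices to show that $\sigma$ is injective. So let $w\equiv u_1\cdots u_m$ and $w'\equiv u_1'\cdots u_{m'}'$ be minimal $\tau$-shortening words over $(H,Y)$ with $\sigma(w)=\sigma(w')$. Writing $y_{m'}'$ for a projection of $w'p$ on $Y$, both $y_m$ and $y_{m'}'$ lie at distance $>\tau$ from the common projection $y_0$ of $p$, on the same side of $y_0$ in $C_H$, so after possibly interchanging $w$ and $w'$ we may assume $y_m$ lies between $y_0$ and $y_{m'}'$ along $C_H$. By the gate property for nearest-point projection onto the $2\delta$-quasi-convex set $C_H$ (Lemma \ref{L: invariant cylinder}), the Gromov products $(p,y_{m'}')_{y_0}$ and $(y_0,w'p)_{y_{m'}'}$ are $O(\delta)$; since $|y_0-y_{m'}'|>\tau$ is large, Lemma \ref{lem:broken geodesic} applied to the sequence $p,y_0,y_{m'}',w'p$ shows that the broken geodesic $[p,y_0]\cup[y_0,y_{m'}']\cup[y_{m'}',w'p]$ and the geodesic $[p,w'p]$ fellow-travel within $O(\delta)$, whence $d(y_m,[p,w'p])\le c\delta$ for a universal constant $c$. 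Arranging the constants so that $c\le 200$, and using Lemma \ref{lem:projections-gromov-product} together with condition (S2) for $w$,
$$(p,w'p)_{wp}\le d(wp,[p,w'p])\le |wp-y_m|+c\delta\le\frac{1}{2}|u_mp-p|-\alpha+c\delta\le\frac{1}{2}|u_mp-p|,$$
as $\alpha=200\delta$. By the geodesic extension property (Proposition \ref{prop:geodesic extension property}), $w$ is a prefix of $w'$; since $w$ is itself a $\tau$-shortening word over $(H,Y)$ and $w'$ is minimal, $w$ cannot be a proper prefix of $w'$, so $w=w'$. This proves that $\sigma$ is injective, hence the proposition.

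The main obstacle is the geometric estimate $d(y_m,[p,w'p])=O(\delta)$ with an explicit constant below $\alpha/\delta=200$. This needs care because $p$ and $w'p$ need not be close to $C_H$: condition (S2) only gives $|p-y_0|,|w'p-y_{m'}'|<\frac{1}{2}L_0$, which may be large compared with $\delta$. The key is that $C_H$ is a \emph{bi-infinite} quasi-line and that $y_0$ and $y_{m'}'$ are far apart, so that nearest-point projection onto $C_H$ is well-behaved on the segment between them and the gate/broken-geodesic argument localizes $[p,w'p]$ near that segment, in particular near $y_m$. One should also verify, in the second paragraph, that $\tau\ge\Delta_0+2L_0+223\delta$ is large enough for $\sigma(w)$ to be unambiguously defined, which follows from the quasi-geodesic structure of $C_H$.
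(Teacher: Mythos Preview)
Your approach is correct and essentially the same as the paper's: both split the shortening words according to which end of the axis $wp$ heads towards (the paper makes this precise by fixing a parametrized $10^3\delta$-local $(1,\delta)$-quasi-geodesic $\gamma\colon\R\to X$ through $C_H$ with $\gamma(0)$ a projection of $p$, and splitting on the sign of the parameter at the projection of $wp$), and then invoke the geodesic extension property (Proposition~\ref{prop:geodesic extension property}) to show that within each side one word is a prefix of the other. For the key bound $(p,w'p)_{wp}\le\tfrac12|u_mp-p|$ the paper avoids your Broken Geodesic detour by writing $(p,w'p)_{wp}\le|wp-q_1|+(p,w'p)_{q_1}$ with $q_1$ the projection of $wp$ on $\gamma$, bounding the two terms by $\tfrac12|u_mp-p|-\alpha+100\delta$ (from (S2)) and $29\delta$ (from the quasi-geodesic axioms) respectively; this resolves your constant-tracking worry with an explicit margin of $\alpha-129\delta=71\delta$, and your opening paragraph on proper prefixes is correct but not actually used.
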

Proposition \ref{prop:shortening-proper-prefix} should be compared to Proposition 3.8 of \cite{coulon_product_2022}. The proof of these two statements relies on the geodesic extension property (in our setting Proposition \ref{prop:geodesic extension property} above). For convenience of the reader we translate the argument of \cite{coulon_product_2022} to our setting.  
\begin{proof}  
	Let $(H,Y)\in \scrQ$. Let $\partial H =\{\eta^-,\eta^+\}$ and let $\gamma\colon \R\to X$ be a $10^3\delta$-local $(1,\delta)$-quasi-geodesic from $\eta^-$ to $\eta^+$. Let $q$ be a projection of $p$ on $\gamma$. We assume that $q=\gamma(0)$. Let $\scrS_{(H,Y)}$ be the set of $\tau$-shortening words in $\F (U)$ over $(H,Y)$. Assume that $\scrS_{(H,Y)}$ is non-empty, otherwise the statement is true. Let $\scrS_{(H,Y)}^+\subseteq \scrS_{(H,Y)}$ (respectively, $\scrS_{(H,Y)}^- \subseteq \scrS_{(H,Y)})$ be the set of words $w\in \scrS_{(H,Y)}$ such that $wp$ has a projection $\gamma(t)$ on $\gamma$ with $t\ge 0$ (respectively, $t\le0$). We note that $\scrS_{(H,Y)}=\scrS_{(H,Y)}^+\bigcup \scrS_{(H,Y)}^-$.
	
	Let $w_1,w_2\in \scrS_{(H,Y)}^+$. Without loss of generality, the projection of $w_1p$ on $\gamma$ is closer to $0$ than the projection of $w_2p$, that is,  $0\le t_1\le t_2$. We prove that $(p,w_2p)_{w_1p}< \frac{1}{2}|u_mp-p|.$ Proposition \ref{prop:geodesic extension property} then implies that $w_1$ is a prefix of $w_2$. Thus, there is at most one minimal $\tau$-shortening word over $(H,Y)$ in $\scrS_{(H,Y)}^+$. By symmetry, there is at most one minimal $\tau$-shortening word over $(H,Y)$ in $\scrS_{(H,Y)}^-$, which yields the assertion. 
	 
	 Let $q_1=\gamma(t_1)$ and $q_2=\gamma(t_2)$ be the respective projections of $w_1p$ and $w_2p$ on $\gamma$. 
	 By the triangle inequality,
		\begin{equation}
				\label{eqn:shortening-2-1}
				(p,w_2p)_{w_1p}\le |w_1p-q_1|+(w_2p,p)_{q_1}.
		\end{equation}
		Assume that $w_1\equiv u_1\cdots u_m$.  

We  note that $|w_1p-q_1|<\frac{1}{2}|u_mp-p|-\alpha+100\delta$. Indeed, by definition, $\gamma\in Y$. Consequently, $|w_1p-q_1|\le d(w_1p,Y)+100\delta$.
		Property (S2) of $w_1$ then implies the claim. 
				
Next, we prove that $(w_2p,p)_{q_1}\le 29\delta$. Indeed, by definition 
		$$(w_2p,p)_{q_1}=\frac{1}{2}(|w_2p-q_1|+|p-q_1|-|w_2p-p|).$$
		Since $w_2$ is a $\tau$-shortening word over $(H,Y)$, the property (S1) implies 
		$|q_2-q|>\tau\ge 23\delta.$
		By $10\delta$-quasi-convexity of $Y$, \cite[Chapitre 2, Proposition 2.1]{coornaert_geometrie_1990},
		$$|w_2p-p|\ge |w_2p-q_2|+|q_2-q|+|q-p|-46\delta.$$
As $(q_2,q)_{q_1}\le 6\delta	$ (Lemma \ref{lem:quasi-line} (i)),
		$$|q_2-q|= |q_2-q_1|+|q_1-q|-(q_2,q)_{q_1}\ge |q_2-q_1|+|q_1-q|-12 \delta.$$
		Note that here we have used that $0\le t_1\le t_2$. By the triangle inequality,
		$
		|w_2p-q_2|\ge |w_2p-q_1|-|q_2-q_1|,
		$
so that 
		$|w_2p-p|\ge |w_2p-q_1|+|q_1-p|-58\delta.$
		Consequently,
$(w_2p,p)_{q_1}\le 29\delta,$ as claimed.

Plugging the two estimates into \eqref{eqn:shortening-2-1} finishes the proof. 
\end{proof}

\subsection{The growth of shortening-free words}
\label{subsec:counting}
Here we count shortening-free words. 

\begin{df}[\emph{Shortening-free word}]
	
	Let $w\equiv u_1\cdots u_n\in \F(U)$ be a reduced word. Let $(H,Y)\in \scrQ$. We say that $w$ \emph{contains a} $\tau$-\emph{shortening word over} $(H,Y)$ if $w$ splits as $w\equiv w_0w_1w_2$, where $w_1$ is a $\tau$-shortening word over $(H,Y)$. We say that $w$ is a $\tau$-\emph{shortening-free word} if for every $(H,Y)\in\scrQ$, the word $w$ does not contain any $\tau$-shortening word over $(H,Y)$. We denote by $F(\tau)\subset \F(U)$ the subset of $\tau$-shortening-free words. 
\end{df}

We first discuss the counting argument to estimate the number of shortening-free words. Then we show that shortening-free words embed into $\overline{G}$, see Proposition \ref{prop:embedding shortening-free} below. 

Recall that the natural homomorphism $\F(U)\to G$ is injective (Proposition \ref{prop:reduced subset qi embedding}). Also, we identify the elements of $\F(U)$ with their images in $G$. Let $B_U(n)\subset \F(U)$ be the ball of radius $|w|_U\le n$, where $n \ge 0$. Note that $B_U(n)=(U \sqcup U^{-1}\sqcup\{1\})^n$ when $n \ge 1$.

\begin{prop}
	\label{prop:growth shortening-free}
	Let $U$ be an $\alpha$-reduced subset of at least two elements such that $L(U)\leq L_0$. There is $\tau_1$ depending on $L_0$, $\Delta_0$ and $\delta$ such that, for all $\tau\ge \tau_1$ and for all $n\ge 0$, 
	$$|F(\tau)\cap B_U(n+1)|\ge |U||F(\tau)\cap B_U(n)|.$$
	In particular, $|F(\tau)\cap B_U(n)|\ge |U|^n .$
\end{prop}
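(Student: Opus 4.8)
The key structural fact to exploit is that being $\tau$-shortening-free is, by Proposition \ref{prop:shortening-subword} combined with Proposition \ref{prop:shortening-bounds}(ii), essentially a \emph{local} condition on a word: whether $w$ contains a $\tau$-shortening subword over some $(H,Y)\in\scrQ$ can be detected by looking at bounded-length windows, since any $\tau$-shortening word contains a \emph{minimal} one and minimal $\tau$-shortening words have length at most $\tau/\alpha+1$ by Proposition \ref{prop:shortening-bounds}(ii). The plan is therefore to show that if $w\in F(\tau)\cap B_U(n)$ then for ``almost all'' choices of a letter $u\in U\sqcup U^{-1}$ the extended word $wu$ is still in $F(\tau)$ — more precisely, that the extension $w\mapsto wu$, as $u$ ranges over $U$, produces at least $|U|$ distinct elements of $F(\tau)\cap B_U(n+1)$, giving the claimed multiplicative lower bound and hence $|F(\tau)\cap B_U(n)|\ge |U|^n$ by induction on $n$ (the base case $n=0$ being trivial).

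The heart of the argument is bounding how a newly appended letter can create a forbidden subword. Suppose $w\equiv u_1\cdots u_n\in F(\tau)$ and we append $u_{n+1}\in U\sqcup U^{-1}$, $u_{n+1}\ne u_n^{-1}$ (so $wu_{n+1}$ is reduced). If $wu_{n+1}\notin F(\tau)$, then it contains a $\tau$-shortening subword over some $(H,Y)\in\scrQ$, and since $w$ itself contained none, this subword must be a suffix that genuinely involves the last letter $u_{n+1}$; taking a minimal such subword $w_1$ it has the form $w_1\equiv u_{k}\cdots u_{n+1}$ with $|w_1|_U\le \tau/\alpha+1$ by Proposition \ref{prop:shortening-bounds}(ii). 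The starting position $k$ is pinned down by the ending position: I would argue, as in Proposition \ref{prop:shortening-proper-prefix}, that a minimal $\tau$-shortening word over $(H,Y)$ ending at a prescribed letter is essentially unique — indeed the geodesic extension property (Proposition \ref{prop:geodesic extension property}) together with the reverse-reading symmetry of Definition \ref{df:shortening} forces the suffix to be determined, once $(H,Y)$ is fixed, by the point $x_{n+1}=wu_{n+1}p$ and its projection onto $Y$. Moreover Proposition \ref{prop:shortening-fellow-travelling} shows $(H,Y)$ is itself determined by the subword $w_1$. So the count of ``bad'' appended letters $u_{n+1}$ is governed by: how many pairs $(k,(H,Y))$ can realize a $\tau$-shortening suffix of $wu_{n+1}$? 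By condition (S2) any such $(H,Y)$ has its cylinder passing within $L_0/2$ of both $x_{k-1}=u_1\cdots u_{k-1}p$ and $x_{n+1}$; using the fellow-travelling bound $\Delta(\scrQ,X)\le\Delta_0$ together with quasi-convexity (as in Proposition \ref{prop:shortening-fellow-travelling}), two distinct such cylinders cannot both fellow-travel a long segment near $x_{n+1}$, and condition (S1) gives that the relevant segment has length $>\tau$, a contradiction once $\tau$ is chosen larger than $\Delta_0+2L_0+O(\delta)$. Hence for each ending letter there is at most one $(H,Y)$, and for that $(H,Y)$ at most a bounded number (two, by the argument of Proposition \ref{prop:shortening-proper-prefix}) of admissible lengths; but I want to conclude the strictly cleaner statement that \emph{no} choice of last letter destroys membership — this is where choosing $\tau$ large enough, specifically $\tau_1$ large compared to $L_0$, $\Delta_0$ and $\delta$, does the work: a $\tau$-shortening suffix that ends exactly at the new letter would, by Proposition \ref{prop:shortening-subword} applied at the last letter, force $|x_n - y_n| < \tfrac12|u_{n+1}p - p| - \alpha$ for the projection $y_n$ onto $Y$, while the minimality of the suffix plus Proposition \ref{prop:shortening-bounds}(i) forces $|y_0 - y_n| > \tau - 50\delta$, and these together with the reduced-set geometry of Proposition \ref{prop:broken geodesic reduced subset} pin the suffix down to not genuinely depend on $u_{n+1}$ at all — i.e. the offending subword already sat inside $w$, contradicting $w\in F(\tau)$.

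Assembling this: the map $U\sqcup U^{-1}\to F(\tau)\cap B_U(n+1)$, $u\mapsto wu$, restricted to the $|U|$ letters $u\in U$ (which are automatically distinct from $u_n^{-1}$ after possibly reindexing — in fact since $U\cap U^{-1}=\varnothing$ and $u_n^{-1}\in U^{-1}$, \emph{every} $u\in U$ satisfies $u\ne u_n^{-1}$, so all $|U|$ extensions are reduced), is injective and lands in $F(\tau)$. Summing over $w\in F(\tau)\cap B_U(n)$ and noting distinct $w$ give disjoint extension-sets yields $|F(\tau)\cap B_U(n+1)|\ge |U|\cdot|F(\tau)\cap B_U(n)|$, and induction gives $|F(\tau)\cap B_U(n)|\ge|U|^n$.

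The main obstacle I anticipate is the uniqueness/non-dependence step: making rigorous that a $\tau$-shortening suffix of $wu$ forces the offending subword to already lie in $w$. This requires carefully combining the two-sided version of condition (S2) (handled via the symmetric reading and Proposition \ref{prop:shortening-subword}), the geodesic extension property (Proposition \ref{prop:geodesic extension property}, which is directional and needs the reverse word to be set up correctly), and the length bounds of Proposition \ref{prop:shortening-bounds}, all while tracking the additive $\delta$-errors so that a single choice of $\tau_1=\tau_1(L_0,\Delta_0,\delta)$ works uniformly. The argument is morally the aperiodic-word counting of \cite{coulon_product_2022,coulon_growth_2013} transported to $\alpha$-reduced sets, and the bookkeeping — rather than any new idea — is the delicate part.
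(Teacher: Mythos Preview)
Your proposal contains a genuine gap at exactly the point you flagged as ``the main obstacle'': the claim that for $\tau$ large enough \emph{no} extension $wu$ of $w\in F(\tau)$ by $u\in U$ leaves $F(\tau)$ is simply false, and no amount of enlarging $\tau$ rescues it. Concretely, take $(H,Y)\in\scrQ$ and a reduced word $w=u_k\cdots u_n$ whose successive vertices project to $Y$ with $|y_k-y_n|$ just below $\tau$ while (S2) holds at both ends; then $w\in F(\tau)$, but appending a single letter $u_{n+1}$ that keeps fellow-travelling $Y$ pushes $|y_k-y_{n+1}|$ past $\tau$, so $wu_{n+1}\notin F(\tau)$. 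Your attempted contradiction---that the offending subword ``already sat inside $w$''---is exactly what minimality denies: if $u_k\cdots u_{n+1}$ is a minimal $\tau$-shortening word, its proper prefix $u_k\cdots u_n$ satisfies (S2) at both ends (Proposition~\ref{prop:shortening-subword}) but fails (S1), so it is \emph{not} a $\tau$-shortening word. The step ``pin the suffix down to not genuinely depend on $u_{n+1}$'' has no content.

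The paper's argument is structurally different: it does not try to forbid bad extensions but to \emph{count} them. One sets $c(n)=|F(\tau)\cap B_U(n)|$, observes that any bad extension $wu\notin F(\tau)$ ends in a $\tau$-shortening suffix of length at least $M:=\lfloor(\tau-50\delta)/L_0\rfloor$ (Proposition~\ref{prop:shortening-bounds}(i)), and hence factors as (element of $F(\tau)\cap B_U(n+1-M)$)$\times$(minimal $\tau$-shortening word). Propositions~\ref{prop:shortening-fellow-travelling} and~\ref{prop:shortening-proper-prefix} bound the number of possible suffixes by $2(2|U|)^b$ for a fixed $b=b(L_0,\Delta_0,\delta)$, yielding the recursion
\[
c(n+1)\ \ge\ 2|U|\,c(n)\ -\ 2(2|U|)^b\,c(n+1-M).
\]
Since $M$ grows linearly in $\tau$ while $b$ is fixed, choosing $\tau_1$ large makes the subtracted term lower order, and an induction on $n$ gives $c(n+1)\ge |U|\,c(n)$. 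The essential idea you are missing is this trade-off: bad extensions are allowed, but each one carries a long forced suffix, so they are counted against a ball of radius $n+1-M$ rather than $n$.
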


Proposition \ref{prop:growth shortening-free} brings Proposition 3.11 of \cite{coulon_product_2022} for the growth of aperiodic elements in strongly reduced sets to our setting of shortening-free words in $\alpha$-reduced subsets. 

\begin{rem} As a semi-group $\F(U)$ is generated by $V=U\sqcup U^{-1}$. As $|V|=2|U|$, the estimate of Proposition \ref{prop:growth shortening-free} is as expected from \cite[Proposition 3.11]{coulon_product_2022}.
\end{rem}

The proof of Proposition \ref{prop:growth shortening-free} is as the proof of Proposition 3.11 in \cite{coulon_product_2022}, where \cite[Proposition 3.6]{coulon_product_2022}  and \cite[Proposition 3.8]{coulon_product_2022} are replaced by our Proposition \ref{prop:shortening-fellow-travelling} and Proposition \ref{prop:shortening-proper-prefix} respectively. Essentially, all that is left to do is to adapt notation.
 We let 
$$Z=\setc{w\in \F(U)}{w\equiv w_0u\text{ as a reduced word}, w_0\in F(\tau),u\in U\sqcup U^{-1}}.$$
For every $(H,Y)\in \scrQ$, we denote by $Z_{(H,Y)}\subset Z$ the set of elements $w\in Z$ that split as $w\equiv w_1w_2$, where $w_1\in F(\tau)$ and $w_2$ is a $\tau$-shortening word over $(H,Y)$.

\begin{lem}
	\label{lem:growth shortening-free 1}	
	The set $Z$ is contained in the disjoint union of $F(\tau)$ and $\bigcup_{(H,Y)\in\scrQ}Z_{(H,Y)}.$
\end{lem}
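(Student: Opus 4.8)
The plan is to unravel the definitions and perform a case analysis on whether a word in $Z$ contains a $\tau$-shortening word as a subword or not. Let $w\in Z$, so $w\equiv w_0 u$ with $w_0\in F(\tau)$ and $u\in U\sqcup U^{-1}$. If $w$ is itself $\tau$-shortening-free, then $w\in F(\tau)$ and we are done. Otherwise, $w$ contains a $\tau$-shortening word over some $(H,Y)\in\scrQ$; that is, $w$ splits as $w\equiv a\, v\, b$ where $v$ is a $\tau$-shortening word over $(H,Y)$.

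The key observation is that this splitting must involve the last letter $u$: since $w_0\in F(\tau)$, no subword of $w_0$ is a $\tau$-shortening word over any $(H,Y)$, so the occurrence of $v$ inside $w\equiv w_0 u$ cannot lie entirely within the prefix $w_0$. Hence $v$ must end with the final letter $u$, i.e. $b$ is the empty word and $w\equiv a\, v$ with $v$ ending in $u$. Writing $w_1:=a$ and $w_2:=v$, we have $w\equiv w_1 w_2$ with $w_2$ a $\tau$-shortening word over $(H,Y)$. Moreover $w_1\equiv a$ is a prefix of $w_0$ (since $v$ ends in $u$, the letter $a$ occupies an initial segment of $w_0$), and a prefix of a word in $F(\tau)$ is again in $F(\tau)$, because any $\tau$-shortening subword of $w_1$ would also be a $\tau$-shortening subword of $w_0$. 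Therefore $w_1\in F(\tau)$, which is exactly the condition for $w\in Z_{(H,Y)}$.

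This shows $Z\subseteq F(\tau)\cup\bigcup_{(H,Y)\in\scrQ}Z_{(H,Y)}$. For disjointness, note that if $w\in F(\tau)$ then $w$ contains no $\tau$-shortening word over any $(H,Y)$, so $w\notin Z_{(H,Y)}$ for every $(H,Y)\in\scrQ$; hence $F(\tau)$ is disjoint from the union, giving the claimed decomposition as a disjoint union. (One should also remark that $Z$ is the disjoint union of these pieces only in the sense that $Z$ is \emph{contained in} that disjoint union; the statement as phrased asserts containment.)

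The main — and essentially only — obstacle is the bookkeeping in the middle step: making precise that when $w\equiv w_0u$ with $w_0$ shortening-free, any $\tau$-shortening subword of $w$ must be a \emph{suffix} of $w$, and carefully matching this suffix decomposition against the defining condition of $Z_{(H,Y)}$. This is purely combinatorial — reasoning about occurrences of subwords in a word over $U\sqcup U^{-1}$ — and uses nothing about the geometry beyond the bare definitions of $F(\tau)$, $Z$ and $Z_{(H,Y)}$. No hyperbolicity estimates or small cancellation input is needed for this lemma; those enter only in the subsequent counting of the sets $Z_{(H,Y)}$ via Propositions \ref{prop:shortening-fellow-travelling} and \ref{prop:shortening-proper-prefix}.
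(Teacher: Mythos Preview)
Your proof is correct and follows essentially the same approach as the paper's: both argue that any $\tau$-shortening subword of $w\equiv w_0u$ cannot lie entirely inside $w_0$ (since $w_0\in F(\tau)$), hence must be a suffix, and that $F(\tau)$ is disjoint from each $Z_{(H,Y)}$ by definition. You are in fact slightly more careful than the paper in explicitly verifying that the remaining prefix $w_1$ lies in $F(\tau)$ (as a prefix of $w_0$), which the paper leaves implicit.
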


\begin{proof}
	By definition, $F(\tau)$ and $\bigcup_{(H,Y)\in\scrQ}Z_{(H,Y)}$ are disjoint. Let $w\in Z-F(\tau)$. Then there is $w_0\in F(\tau)$ and $u\in U\sqcup U^{-1}$ such that $w\equiv w_0u$. Moreover, $w$ contains $\tau$-shortening word over some $(H,Y)\in \scrQ$ that we call $w_2$. By definition, $w_2$ cannot be a subword of $w_0$. Hence, $w_2$ is a suffix of $w$. Therefore, $w\in Z_{(H,Y)}$.
\end{proof}

Let $n\ge 0$. Lemma \ref{lem:growth shortening-free 1} implies that
\begin{equation}
	\label{eq:growth shortening-free}
	|F(\tau)\cap B_U(n)|\ge |Z\cap B_U(n)|-\sum_{(H,Y)\in\scrQ}|Z_{(H,Y)}\cap B_U(n)|.
\end{equation}

The next step is to estimate each term in the right side of the above inequality. The following lemma is a direct consequence of the definition of $Z$.

\begin{lem}
	\label{lem:growth shortening-free 2} 	For every $n\ge 0$, 
$|Z\cap B_U(n+1)|= (2|U|)|F(\tau)\cap B_U(n)|.$
\end{lem}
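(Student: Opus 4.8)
The plan is to set up an explicit bijection between the two sides and then restrict it to the word-length balls. By the very definition of $Z$, every element of $Z$ arises as $w_0u$ with $w_0\in F(\tau)$ and $u\in U\sqcup U^{-1}$, so concatenation gives a surjection
$$
\Phi\colon F(\tau)\times(U\sqcup U^{-1})\longrightarrow Z,\qquad (w_0,u)\longmapsto w_0u.
$$
The first step is to check that $\Phi$ is injective. Suppose $w_0u\equiv w_0'u'$ with $w_0,w_0'\in F(\tau)$ and $u,u'\in U\sqcup U^{-1}$; comparing these two words letter for letter shows that their last letters agree, so $u=u'$, and deleting this last letter gives $w_0\equiv w_0'$. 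Hence $\Phi$ is a bijection. Since $U$ is $\alpha$-reduced we have $U\cap U^{-1}=\varnothing$ by Definition~\ref{df:reduced-subset}, so $|U\sqcup U^{-1}|=2|U|$, and therefore $|Z|=2|U|\cdot|F(\tau)|$.

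The second step is to match the filtrations by $B_U(\cdot)$. Appending a single letter $u$ to $w_0$ produces the word $w_0u$ with $|w_0u|_U=|w_0|_U+1$, so $w_0u\in B_U(n+1)$ precisely when $w_0\in B_U(n)$. Consequently $\Phi$ restricts to a bijection from $\bigl(F(\tau)\cap B_U(n)\bigr)\times(U\sqcup U^{-1})$ onto $Z\cap B_U(n+1)$, and comparing cardinalities of the two sides yields the claimed identity $|Z\cap B_U(n+1)|=2|U|\cdot|F(\tau)\cap B_U(n)|$.

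The only point that needs any care is the uniqueness of the decomposition $w\equiv w_0u$, which is exactly what the letter-for-letter comparison provides; beyond that, the lemma is a direct unwinding of the definition of $Z$, so I do not anticipate a genuine obstacle.
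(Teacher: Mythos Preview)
Your bijection $\Phi$ overlooks free cancellation. If $w_0\in F(\tau)$ ends in a letter $v$ and you take $u=v^{-1}$, then the concatenation $w_0u$ is not a reduced word: as an element of $\F(U)$ it has length $|w_0|_U-1$, not $|w_0|_U+1$. This breaks all three of your claims at once. The map need not land in $Z$ (for instance $\Phi(u,u^{-1})=1\notin Z$); it is not injective (for instance $\Phi(1,u_1)=u_1=\Phi(u_1u_2,u_2^{-1})$ whenever $u_1\neq u_2^{-1}$, and both $1$ and $u_1u_2$ lie in $F(\tau)$ since shortening words are long by Proposition~\ref{prop:shortening-bounds}(i)); and the length identity $|w_0u|_U=|w_0|_U+1$ fails. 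Your injectivity step ``comparing these two words letter for letter'' silently assumes that $w_0u$ and $w_0'u'$ are already reduced, which is exactly what is at issue.

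This gap cannot be patched, because the stated equality is actually false for $n\ge 1$. Restricting $\Phi$ to pairs $(w_0,u)$ for which $w_0u$ is reduced does give a bijection onto $Z\cap B_U(n+1)$, but then each nonempty $w_0\in F(\tau)\cap B_U(n)$ admits only $2|U|-1$ choices of $u$, and one obtains
\[
|Z\cap B_U(n+1)|=(2|U|-1)\,|F(\tau)\cap B_U(n)|+1.
\]
For $n=1$ this gives $4|U|^2$, strictly less than the claimed $2|U|(2|U|+1)=4|U|^2+2|U|$. The paper records the lemma without proof as ``a direct consequence of the definition of $Z$,'' so the slip is already in the statement; what genuinely follows from the definition is the count with $2|U|-1$ in place of $2|U|$, and the induction in the proof of Proposition~\ref{prop:growth shortening-free} can be run with that weaker input at the cost of adjusting the final constant.
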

We now let  
$\tau_0=\Delta_0+2L_0+223\delta$, $\tau>\tau_0$,  
	$ b=\left\lceil \frac{\tau_0}{200\delta}+2\right\rceil+1,$ and $M=\left\lfloor\frac{\tau_0-50\delta}{L_0}\right\rfloor.$

\begin{lem}
	\label{lem:growth shortening-free 3}
	For every $n\ge 0$,
	$$\sum_{(H,Y)\in\scrQ}|Z_{(H,Y)}\cap B_U(n)|\le 2(2|U|)^{b}|F(\tau)\cap B_U(n-M)|.$$
\end{lem}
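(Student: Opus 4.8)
The bound in Lemma~\ref{lem:growth shortening-free 3} will be obtained by dissecting an element $w\in Z_{(H,Y)}\cap B_U(n)$ according to where its shortening suffix begins and then summing over $(H,Y)$. Fix $w\in Z_{(H,Y)}\cap B_U(n)$, so $w\equiv w_1w_2$ with $w_1\in F(\tau)$ and $w_2$ a $\tau$-shortening word over $(H,Y)$. Shrinking $w_2$ from its front, we may replace it by its \emph{minimal} prefix that is still a $\tau$-shortening word over $(H,Y)$; call this minimal shortening word $v$. By Proposition~\ref{prop:shortening-proper-prefix} there are at most two choices for $v$ once $(H,Y)$ is fixed, and by Proposition~\ref{prop:shortening-bounds}(ii) each such $v$ has $|v|_U\le \tau_0/\alpha+1=\tau_0/(200\delta)+1$, hence $|v|_U< b$. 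On the other hand, by Proposition~\ref{prop:shortening-bounds}(i) (applied to the full shortening word $w_2$, which still has $|y_0-y_n|>\tau\ge\tau_0$) we get $|w_2|_U\ge (\tau_0-50\delta)/L_0\ge M$, so in particular $|w_1|_U\le |w|_U-M\le n-M$.

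**Key steps.** First I would set up an injection-up-to-bounded-multiplicity $Z_{(H,Y)}\cap B_U(n)\hookrightarrow$ (minimal shortening words over $(H,Y)$) $\times\, \bigl(F(\tau)\cap B_U(n-M)\bigr)\times (U\sqcup U^{-1}\sqcup\{1\})^{<b}$, sending $w$ to the triple $(v,\,w_1,\,w_2')$ where $w_2\equiv v w_2'$ after the minimal-prefix reduction and $w_1$ is the $F(\tau)$-prefix. Recovering $w$ from the triple is exactly concatenation $w\equiv w_1 v w_2'$, so this map is injective. The factor ``minimal shortening words over $(H,Y)$'' contributes at most $2$ by Proposition~\ref{prop:shortening-proper-prefix}; the factor $(U\sqcup U^{-1}\sqcup\{1\})^{<b}$ has size at most $(2|U|)^b$ (using $|U\sqcup U^{-1}\sqcup\{1\}|\le 2|U|+1$, and absorbing the $+1$ into the exponent—or more cleanly, counting words of length $\le b-1$ over an alphabet of size $2|U|+1$, which is at most $(2|U|)^{b}$ for $|U|\ge 2$). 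Finally I would observe that the middle factor lies in $F(\tau)\cap B_U(n-M)$ because $w_1\in F(\tau)$ has $|w_1|_U\le n-M$.

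**The crucial point.** The one thing that genuinely needs Proposition~\ref{prop:shortening-fellow-travelling} is that the target set of the injection does \emph{not} depend on $(H,Y)$ in the $F(\tau)$-factor—so that after summing over all $(H,Y)\in\scrQ$ we are not overcounting the prefixes $w_1$. More precisely, a given prefix $w_1\in F(\tau)\cap B_U(n-M)$ can be the $F(\tau)$-prefix of elements of $Z_{(H,Y)}$ for several families $(H,Y)$, but once we also fix the minimal shortening word $v$ and the tail $w_2'$, the word $w\equiv w_1 v w_2'$ is determined, and $v$ being a $\tau$-shortening word over $(H,Y)$ pins down $(H,Y)$ uniquely by Proposition~\ref{prop:shortening-fellow-travelling} (a word cannot be a $\tau$-shortening word over two distinct members of $\scrQ$). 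Therefore
\begin{equation*}
\sum_{(H,Y)\in\scrQ}\bigl|Z_{(H,Y)}\cap B_U(n)\bigr|\;\le\;2\,(2|U|)^{b}\,\bigl|F(\tau)\cap B_U(n-M)\bigr|,
\end{equation*}
where the factor $2$ is the Proposition~\ref{prop:shortening-proper-prefix} bound on minimal shortening words and the sum over $(H,Y)$ has been absorbed by the uniqueness just noted.

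**Main obstacle.** The only delicate bookkeeping is the minimal-prefix reduction: passing from the given shortening suffix $w_2$ to its minimal shortening prefix $v$ must be done so that (S2) at the two endpoints of $v$ still holds (it does, being closed under taking the relevant subword-endpoints by Proposition~\ref{prop:shortening-subword} together with the definition of minimal shortening word) and so that the discarded tail $w_2'$ is an arbitrary—possibly empty—word over $U\sqcup U^{-1}$ of length $<b$ after one accounts correctly for the length estimates; this is where one must be careful that ``$|w_2|_U\ge M$'' uses the full $w_2$ while ``$|v|_U<b$'' uses the minimal prefix, and that $|w_1|_U+|v|_U+|w_2'|_U=|w|_U\le n$ forces $|w_1|_U\le n-M$ as needed. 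I expect this to be the step most prone to off-by-one errors, but it is routine once the injection is written down explicitly.
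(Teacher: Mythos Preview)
There are two linked errors that leave a real gap. First, your length bound on $v$ is wrong: Proposition~\ref{prop:shortening-bounds}(ii) gives $|v|_U\le \tau/\alpha+1$ for a minimal $\tau$-shortening word, with $\tau$ and not $\tau_0$ in the numerator. Since $b$ is defined in terms of $\tau_0$ while the lemma must hold for arbitrary $\tau\ge\tau_0$, the inequality $|v|_U<b$ is false in general (a minimal $\tau$-shortening word is a $\tau_0$-shortening word but need not be a \emph{minimal} one: a proper prefix may satisfy (S1) with threshold $\tau_0$ without reaching threshold $\tau$). Second, your accounting is off: the ``$2$'' from Proposition~\ref{prop:shortening-proper-prefix} is per pair $(H,Y)$, so once you sum over $\scrQ$ the set of admissible $v$'s has size up to $2|\scrQ_0|$, not $2$, where $\scrQ_0=\{(H,Y):\text{some }\tau\text{-shortening word over }(H,Y)\text{ exists}\}$. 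Proposition~\ref{prop:shortening-fellow-travelling} only tells you $v$ determines $(H,Y)$; it gives no bound on $|\scrQ_0|$. Meanwhile your claim $|w_2'|_U<b$ is never argued, and in fact $w_2'$ is always empty: since $w\equiv w_0u$ with $w_0\in F(\tau)$, any proper prefix of $w_2$ sits inside $w_0$ and hence cannot be a shortening word, so $v=w_2$. Your $(2|U|)^b$ factor therefore cannot come from the tail.

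What is actually needed is a separate bound $|\scrQ_0|\le (2|U|)^b$, and this is where the $\tau_0$ enters. The paper assigns to each $(H,Y)\in\scrQ_0$ a minimal \emph{$\tau_0$}-shortening prefix of some $\tau$-shortening word over $(H,Y)$; now Proposition~\ref{prop:shortening-bounds}(ii) legitimately gives length at most $b$, and Proposition~\ref{prop:shortening-fellow-travelling} makes this assignment injective into $B_U(b)$. Combined with the per-family estimate $|Z_{(H,Y)}\cap B_U(n)|\le 2\,|F(\tau)\cap B_U(n-M)|$ (which follows immediately once you observe $w_2$ is already minimal, so the map $w\mapsto(w_1,w_2)$ is injective into $(F(\tau)\cap B_U(n-M))\times\{\text{min.\ }\tau\text{-short.\ words over }(H,Y)\}$), this yields the lemma. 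In short, the $(2|U|)^b$ factor counts the relevant families $(H,Y)$ via short $\tau_0$-witnesses, not tails $w_2'$.
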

\begin{proof}
	Let $\scrQ_0$ be the set of $(H,Y)\in\scrQ$ for which there is a $\tau$-shortening word in $\F(U)$ over $(H,Y)$. We have,
	$$\sum_{(H,Y)\in\scrQ}|Z_{(H,Y)}\cap B_U(n)|=\sum_{(H,Y)\in\scrQ_0}|Z_{(H,Y)}\cap B_U(n)|.$$
	
	\begin{cla}
		$|Z_{(H,Y)} \cap B_U(n)|\le 2|F(\tau) \cap B_U(n-M)|$, for every $(H,Y)\in \scrQ_0$.
	\end{cla}

	\begin{proof}
		Let $(H,Y)\in \scrQ_0$. Let $w\in Z_{(H,Y)} \cap B_U(n)$. Since $w\in Z_{(H,Y)}$, there are $w_1\in F(\tau)$ and a $\tau$-shortening word $w_2$ over $(H,Y)$ such that $w\equiv w_1w_2$. In particular, $|w_1|_U=|w|_U-|w_2|_U.$
		By Proposition \ref{prop:shortening-bounds} (i), 
		$$|w_2|_U\ge \frac{\tau-50\delta}{L_0}\ge M> 0.$$
		Therefore, $w_1\in F(\tau)\cap B_U(n-M)$. As $w\in Z$, no proper prefix of $w_2$ is a $\tau$-shortening word. By Proposition \ref{prop:shortening-proper-prefix} there are at most $2$ possible choices for $w_2$. In total, there are at most $2|F(\tau)\cap B_U(n-M)|$ choices for $w$. This proves our claim.
	\end{proof}
	
	\begin{cla}
		$|\scrQ_0|\le (2|U|)^{b}$
	\end{cla}

	\begin{proof}
As $|U|\ge 2$, $|B_U(b)|\le (2|U|)^{b}.$
		Consequently, it suffices to show that there exists an injective map $\chi\colon \scrQ_0\to B_U(b)$. Let $(H,Y)\in\scrQ_0$, and let $w$ be a $\tau$-shortening word over $(H,Y)$. 
		As $\tau\ge \tau_0$,  $w$ is a $\tau_0$-shortening word over $(H,Y)$ as well. Let $w'$ be the shortest prefix of $w$ that is a $\tau_0$-shortening word over $(H,Y)$. In particular, $w'$ is a minimal $\tau_0$-shortening word over $(H,Y)$. We define $\chi(H,Y)=w'$. Since $\alpha= 200\delta$, by Proposition \ref{prop:shortening-bounds} (ii), $|w'|_U\le b$.
		By Proposition \ref{prop:shortening-fellow-travelling}, there is at most one $(H,Y)\in\scrQ$ such that $w'$ is a $\tau_0$-shortening word over $(H,Y)$. Hence $\chi$ is well-defined and injective. 
	\end{proof}
	The desired estimation is obtained from the two claims above.	
\end{proof}

Let $\varepsilon\in (0,3/4)$ be an auxiliary parameter. Given $\varepsilon$, we fix
$$\nu=(1-\varepsilon)2|U|,\quad \xi=2(2|U|)^b,\quad\text{and }  \sigma=\frac{\varepsilon}{2(1-\varepsilon)\xi}.$$

\begin{lem}
	\label{lem:computation}
	For every $\varepsilon\in (0,3/4)$ and $b>1$, there is $M_0\ge 0$ with the following property. 
	If $|U|\ge 2$, then for every $M\ge M_0$, we have 
	$$\frac{1}{\nu^{M}}\le \sigma.$$
\end{lem}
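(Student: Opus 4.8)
The plan is to rewrite the desired inequality in the equivalent multiplicative form and then to bound $1/\sigma$ from above uniformly in $|U|$. Since $\varepsilon<3/4$ and $|U|\ge 2$, we have $\nu=(1-\varepsilon)\,2|U|\ge 4(1-\varepsilon)>1$, so $\nu>1$ no matter how large $|U|$ is; in particular $\tfrac{1}{\nu^{M}}\le \sigma$ holds if and only if $\nu^{M}\ge 1/\sigma$. Unwinding the definitions of $\sigma$ and $\xi$ gives $1/\sigma=\frac{2(1-\varepsilon)\xi}{\varepsilon}=\frac{4(1-\varepsilon)}{\varepsilon}\,(2|U|)^{b}$, so the whole claim reduces to producing $M_0$, depending only on $\varepsilon$ and $b$, such that $\nu^{M}\ge \frac{4(1-\varepsilon)}{\varepsilon}(2|U|)^{b}$ for all $M\ge M_0$ and all $|U|\ge 2$.

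Substituting $\nu^{M}=(1-\varepsilon)^{M}(2|U|)^{M}$ and dividing both sides by the positive quantity $(1-\varepsilon)(2|U|)^{b}$, the inequality becomes
$$(1-\varepsilon)^{M-1}(2|U|)^{M-b}\ge \frac{4}{\varepsilon}.$$
I would now insist that $M_0\ge b$, so that for every $M\ge M_0$ the exponent $M-b$ is non-negative. Then the left-hand side is non-decreasing in $|U|$, hence minimised at $|U|=2$, where $2|U|=4$ and it equals $(1-\varepsilon)^{M-1}4^{M-b}=4^{-(b-1)}\bigl(4(1-\varepsilon)\bigr)^{M-1}$. Thus it suffices to arrange $\bigl(4(1-\varepsilon)\bigr)^{M-1}\ge 4^{b}/\varepsilon$ for all $M\ge M_0$.

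Since $4(1-\varepsilon)>1$ (this is exactly where $\varepsilon<3/4$ is used), the quantity $\bigl(4(1-\varepsilon)\bigr)^{M-1}$ tends to $+\infty$ with $M$, and taking logarithms one may set
$$M_0=\max\left\{\,b,\ \left\lceil 1+\frac{b\log 4-\log\varepsilon}{\log\bigl(4(1-\varepsilon)\bigr)}\right\rceil\,\right\}.$$
This depends only on $\varepsilon$ and $b$, which is what the statement requires. The argument is entirely elementary; the only point needing care is to impose $M_0\ge b$ in the choice, as this is precisely what makes the reduced inequality monotone in $|U|$ and hence lets one verify it at the single worst case $|U|=2$.
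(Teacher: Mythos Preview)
Your proof is correct and follows essentially the same approach as the paper: both require $M\ge b$ so that the dependence on $|U|$ can be controlled by minimising at $|U|=2$, and both then use $\varepsilon<3/4$ precisely to ensure $4(1-\varepsilon)>1$ so that the remaining one-variable inequality eventually holds. The only cosmetic difference is that the paper takes logarithms from the outset while you work multiplicatively before taking logs at the end.
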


\begin{proof}
	Let $M\ge b$. A computation yields 
	$$\log\qty(\frac{1}{\sigma\nu^M})\le (b-M)\log(2|U|)-M\log(1-\varepsilon)-\log(\frac{\varepsilon}{4(1-\varepsilon)}).$$
	Since $M\ge b$ and $|U|\ge 2$, we have $(b-M)\log(2|U|)\le (b-M)\log 4.$
	Therefore,
	$$\log\qty(\frac{1}{\sigma\nu^M})\le -M[\log 4+\log(1-\varepsilon)]+b\log4-\log(\frac{\varepsilon}{4(1-\varepsilon)}).$$
	We put
	$$
	d_1=b\log4-\log(\frac{\varepsilon}{1-\varepsilon}),\quad		d_2=\log 4+\log(1-\varepsilon). 
	$$
	As $b\ge 1$ and $\varepsilon\in (0,3/4)$, $d_1>0$  and $d_2>0$. If $M\ge \frac{d_1}{d_2}$, then $\log\qty(\frac{1}{\sigma\nu^M})\le 0$. 
\end{proof}

\begin{proof}[Proof of Proposition \ref{prop:growth shortening-free}]
	Let $M_0\ge 0$ be given by Lemma \ref{lem:computation}. We let  
	$$\tau_1 = \max\{\tau_0,L_0(M_0+1)+50\delta_0\}$$
	and note that $\tau_1$ depends only on $L_0$, $\varepsilon$, $b$ and $\delta_0$. 
	Recall that $|U|\ge 2$, that $\tau\ge \tau_1$ and that
	$$\nu=(1-\varepsilon)(2|U|),\quad \xi=2(2|U|-1)^b,\quad\sigma=\frac{\varepsilon}{2\xi(1-\varepsilon)},\quad\text{and } M=\left\lfloor\frac{\tau-50\delta_0}{L_0}\right\rfloor.$$
	In particular, $M\ge M_0$. For every $n\ge 0$, we let
	$$c(n)=|F(\tau)\cap B_U(n)|.$$
	We must prove that for every $n\ge 1$, $c(n)\ge \nu c(n-1).$ 
We prove this by induction.

	First note that $c(1)\ge \nu$. Indeed, as $B_U(1)>2|U|$, it suffices to show that $B_U(1)\subset F(\tau)$.  Let $w\in F(\tau)$ then, by  Proposition \ref{prop:shortening-bounds} (i), $|w|_U\ge \frac{\tau-50\delta_0}{L_0}$. By choice of $\tau_0$, $|w|_U>1$. This proves our claim.
	
Next let $n\ge 1$ and assume that $c(m)\ge \nu c(m-1)$, for every $m\in \zinterval{1}{n}$. We claim that $c(n+1)\ge \nu c(n)$. Indeed, by  \eqref{eq:growth shortening-free},
	$$c(n+1)\ge |Z\cap B_U(n+1)|-\sum_{(H,Y)\in\scrQ}|Z_{(H,Y)}\cap B_U(n+1)|.$$
	It follows from Lemma \ref{lem:growth shortening-free 2} and Lemma \ref{lem:growth shortening-free 3} that
	$$c(n+1)\ge 2|U|c(n)-\xi c(n+1-M).$$
	The induction hypothesis implies that $c(n-k)\le \nu^{-k}c(n)$, for every $k\ge 0$ . Note that $M-1\ge 0$. Letting $k=M-1$, we obtain
	$$c(n+1)\ge\qty(1-\frac{\xi\nu}{2|U|}\frac{1}{\nu^{M}})(2|U|)c(n).$$
	Recall that $\nu=(1-\varepsilon)2|U|$. In addition, Lemma \ref{lem:computation} yields that $\frac{1}{\nu^{M}}\le \sigma.$
Now
	$$\frac{\xi\nu}{2|U|}\sigma=\frac{\xi(1-\varepsilon)(2|U|)}{2|U|}\frac{\varepsilon}{2\xi(1-\varepsilon)}=\frac{\varepsilon}{2}\le \varepsilon.$$
	This proves our claim. Fixing $\varepsilon=1/2$ implies the proposition.
\end{proof}

\subsection{The injection of shortening-free words}
\label{subsec:embedding}
Finally, we show that shortening-free words inject in $\overline{G}$, see Proposition \ref{prop:embedding shortening-free} below. We first prove a version of \cite[Proposition 4.2]{coulon_product_2022} in our setting of shortening-free words and $\alpha$-reduced sets. 

We recall that $U$ denotes an $\alpha$-reduced subset of $G$. 

\begin{lem}
	\label{lem:barrier implies shortening} Let $\tau\geq \Delta_0+10L_0+240\delta_0$. 
	Let $w\equiv u_1\cdots u_n$ be an element of $\F(U)$ in reduced form.  Let $(H,Y) \in \scrQ$. Let $y_0$ and $y_n$ be respective projections of $p$ and $wp$ on $Y$. If $|y_0-y_n| > \tau$,	then $w$ contains a $(\tau-8L_0-8\delta)$-shortening word over a conjugate of $(H,Y)$.
\end{lem}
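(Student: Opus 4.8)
The plan is to run along the broken geodesic $\gamma = [x_0,x_1]\cup\cdots\cup[x_{n-1},x_n]$ associated to the word $w\equiv u_1\cdots u_n$ and locate the first and last vertices $x_i$ whose projections to $Y$ are ``deep enough'' inside $[y_0,y_n]$ for condition (S2) to hold. More precisely, for each $i\in\zinterval{0}{n}$ let $y_i$ be a projection of $x_i$ on $Y$. Using Lemma \ref{lem:quasi-line}(ii) together with the $10\delta$-quasi-convexity of $Y$ (Lemma \ref{L: invariant cylinder}), the sequence of projections $y_0,\dots,y_n$ moves monotonically (up to bounded backtracking) along a local quasi-geodesic supporting $Y$, so since $|y_0-y_n|>\tau$ and consecutive $|y_i-y_{i+1}|$ are bounded by $|x_i-x_{i+1}|+$const$\,\le L_0+$const, there must be an index $j$ which is the first with $|y_j - y_0|$ exceeding roughly $\frac12 L_0$, and an index $k\ge j$ which is the last with $|y_n - y_k|$ exceeding roughly $\frac12 L_0$. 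I would then take $w_1 \equiv u_{j+1}\cdots u_k$ as the candidate shortening word and show it is a shortening word over the \emph{translated} pair $(u_j^{-1}Hu_j,\,u_j^{-1}Y)$; after translating by $u_1\cdots u_j$ back, this is a conjugate of $(H,Y)$, and $w$ visibly splits as $w\equiv w_0 w_1 w_2$ with $w_0 \equiv u_1\cdots u_j$ and $w_2\equiv u_{k+1}\cdots u_n$.

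For condition (S2) of Definition \ref{df:shortening} applied to $w_1$ at the basepoint $q := x_j = u_1\cdots u_j p$: the relevant projections are of $x_j$ and $x_k$ onto (a translate of) $Y$, and by the choice of $j$ and $k$ as the \emph{first}/\emph{last} indices crossing the threshold, the projections of $x_j$ and $x_k$ are within about $\frac12 L_0 \le \frac12|u_{j+1}p-p| + $(slack) of the corresponding endpoints — here I use Remark \ref{rem:reduced-subset} to convert the $L_0$-bound into the required fraction of $|u_{j+1}p-p|$ minus $\alpha$, which works because $\alpha = 200\delta$ is small compared to $L_0$. For condition (S1), I estimate $|y'_j - y'_k|$ (the projections of $x_j,x_k$ on $Y$) from below: starting from $|y_0-y_n|>\tau$, I subtract the distance $|y_0 - y'_j|$ (bounded by roughly $\frac12 L_0 + $const since $j-1$ did not cross the threshold) and similarly $|y'_k - y_n|$, plus the $10\delta$-quasi-convexity defects and the $9\delta$ from Lemma \ref{lem:quasi-line}(ii); tallying the constants gives a lower bound of $\tau - 3L_0 + 4\delta$ for $|y'_j-y'_k|$, which is exactly the threshold in the statement, so $w_1$ is a $(\tau-3L_0+4\delta)$-shortening word.

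The main obstacle I expect is the bookkeeping of constants: I need the thresholds used to define $j$ and $k$ to be simultaneously large enough that (S1) survives (i.e.\ we lose at most $3L_0 - 4\delta$ total from $\tau$) and small enough that (S2) holds with the margin $-\alpha$. Concretely I would set the threshold at $\frac12 L_0$ so that (S2) reads $|x_j - y'_j| \le \frac12 L_0 \le \frac12|u_{j+1}p - p| - \alpha$, valid by Remark \ref{rem:reduced-subset} once $L_0 > 4\alpha + $(slack); and then each ``crossing'' costs at most $\frac12 L_0$ plus the quasi-convexity/quasi-geodesic slack ($\lesssim 20\delta$), for a total budget of at most $L_0 + 40\delta$ on each side — which is comfortably within $3L_0$. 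A secondary subtlety is that a naive choice might give $j > k$ (the word crosses the threshold too fast or overlaps the two ends); this cannot happen because $|y_0 - y_n| > \tau \ge \Delta_0 + 5L_0 > L_0$, so the ``middle'' region where both $|y_i - y_0| > \frac12 L_0$ and $|y_n - y_i| > \frac12 L_0$ is non-empty, guaranteeing $j \le k$.
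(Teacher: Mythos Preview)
There is a genuine gap in your verification of (S2). You claim that by choosing $j$ as the first index with $|y_j-y_0|>\tfrac12 L_0$ you get $|x_j-y_j|\le \tfrac12 L_0$, and then that $\tfrac12 L_0 \le \tfrac12|u_{j+1}p-p|-\alpha$. The second inequality is simply false: by definition $|u_{j+1}p-p|\le L(U,p)\le L_0$, so $\tfrac12|u_{j+1}p-p|-\alpha \le \tfrac12 L_0-\alpha < \tfrac12 L_0$. Remark~\ref{rem:reduced-subset} only gives a \emph{lower} bound $|u_{j+1}p-p|>2(\alpha+50\delta)$, so the right-hand side of (S2) can be as small as $50\delta$; any argument that bounds $|x_j-y_j|$ by a fixed fraction of $L_0$ cannot succeed. (The first inequality is also unjustified: controlling $|y_j-y_0|$ says nothing directly about $d(x_j,Y)$.)

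The point you are missing, and which the paper uses, is that Proposition~\ref{prop:broken geodesic reduced subset}(iii) already gives an \emph{adaptive} bound: for every interior vertex, $(p,wp)_{x_i}<\tfrac12\min\{|u_ip-p|,|u_{i+1}p-p|\}-\alpha-48\delta$, hence $d(x_i,[p,wp])$ is bounded by the same quantity plus $4\delta$. This is exactly the shape required by (S2), with the correct letter $u_{i+1}$ appearing. The paper then chooses the indices not by a threshold on $|y_j-y_0|$ but by projecting the $x_i$ onto the geodesic $\gamma=[p,wp]$, proving these projections $z_i$ are ordered along $\gamma$, and locating the points $y_0',y_n'\in\gamma$ close to $y_0,y_n$ inside segments $[z_i,z_{i+1}]$ and $[z_j,z_{j+1}]$. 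For the resulting subword $w_1=u_{i+2}\cdots u_j$, the endpoints $x_{i+1},x_j$ are close to $\gamma$ by Proposition~\ref{prop:broken geodesic reduced subset}(iii) and close to $Y$ because their projections $z_{i+1},z_j$ lie in the portion of $\gamma$ that fellow-travels $Y$; combining the two gives (S2) with the right letter-dependent constant. Your ``monotonicity up to bounded backtracking'' of the $y_i$ along $Y$ is also not established and would need the same reduced-set input; the paper sidesteps this by working on $\gamma$ instead of on $Y$.
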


The main point is to promote the proof of \cite[Proposition 4.2]{coulon_product_2022} from strongly $\alpha$-reduced sets to $\alpha$-reduced sets. Besides this technical point the argument is the same. 
 
\begin{proof} Let $\gamma_w$ be a geodesic from $p$ to $wp$ and let $z_i$ be a projection of $x_i$ on $\gamma_w$. We first claim that the points $z_0$, $z_1$, $z_2$, $\ldots$, $z_n$ lie in this order on $\gamma_w$ going from $z_0=p$ to $z_n=wp$. Indeed, if $i>0$, by Proposition \ref{prop:broken geodesic reduced subset}(iii) 
\begin{align*}
(x_0,x_i)_{x_{i+1}} = |x_i-x_{i+1}|-(x_0,&x_{i+1})_{x_i}
 \geqslant |p-u_{i+1}p|-(x_{i-1},x_{i+1})_{x_i} -2\delta \\
& > \frac{1}{2}|p-u_{i+1}p|+\alpha \geqslant (x_{i},x_{i+2})_{x_{i+1}} +2\alpha +2\delta.
\end{align*}
If $z_{i+1}$ lies before $z_i$ on $\gamma_w$, then hyperbolicity, Lemma \ref{lem:projections-gromov-product} and Proposition \ref{prop:broken geodesic reduced subset}(iii) implies that 
$$(x_0,x_i)_{x_{i+1}}\leqslant |z_{i+1}-x_{i+1}| +2\delta \leqslant (x_0,x_n)_{x_{i+1}} +6\delta\leqslant (x_{i},x_{i+2})_{x_{i+1}}+8\delta.$$
{Combined with the previous estimate this yields that $\alpha \leqslant 3\delta$, a contradiction.}

Next recall that $y_0$, $y_i$ and $y_n$ are projections of $x_0=p$, $u_1\cdots u_ip=x_i$ and $wp=x_n$, respectively, on $Y$.  Assume that $|y_0-y_n|>\tau$. Since $Y$ is $10\delta$-quasi-convex (Lemma \ref{L: invariant cylinder}) and $\tau \ge 23\delta$,  there are $y_0',y_n' \in \gamma_w$ such that 
	$\max\{|y_0-y_0'|, |y_n-y_n'|\} \le 46\delta.$ 
 Up to permuting $y_0'$ and $y_n'$ we assume that $p$, $y_0'$, $y_n'$ and $wp$ are ordered in this way along $\gamma_w$. In particular, there are $i\le j\le n-1$ such that $y_0'\in [z_i,z_{i+1}]\subset \gamma_w$ and $y_n'\in  [z_j,z_{j+1}]\subset \gamma_w$. Let $w_0\equiv u_1\cdots u_{i+1}$ and take the word $w_1$ such that $w_0w_1\equiv u_1\cdots u_j$. We  prove that $w_1$ is a $(2\tau-2L_0-5\delta)$-shortening word over $(w_0^{-1}Hw_0,w_0^{-1}Y)$. Property (S2) follows from Proposition \ref{prop:broken geodesic reduced subset}(iii). Indeed, this implies that 
	$d(x_{i+2},\gamma_w)\leqslant \frac{1}{2}|u_{i+2}p-p| -\alpha -48\delta,$ hence, that $d(x_{i+2},Y)\leqslant \frac{1}{2}|u_{i+2}p-p| -\alpha.$ The argument for $x_{j}$ is similar. It remains to estimate  $|y_{i+1}-y_j|$. By the triangle inequality,
	\begin{align*}
		|y_{i+1}-y_j|&\ge |y_0-y_n|-|y_0-y_{i+1}|-|y_n-y_j|,\\
		|y_0-y_{i+1}|&\le |y_0-y_0'|+|y_0'-x_{i+1}|+|x_{i+1}-y_{i+1}|,\\
		|y_n-y_j|&\le |y_n-y_n'|+|y_n'-x_j|+|x_j-y_j|.
	\end{align*}
	We first estimate $|y_0'-x_{i+1}|$ and $|y_n' -x_j|$. 
	 As $(z_i,z_{i+1})_{y_0'}=0$, hyperbolicity implies that  
	 $$\min\{(z_i,x_i)_{y_0'}, (x_i,x_{i+1})_{y_0'}, (x_{i+1},z_{i+1})_{y_0'}\}\leqslant 2\delta.$$ 
	By definition of the Gromov product 
\begin{align*}
|y_0'-x_i| & \leqslant 2(z_i,x_i)_{y_0'} +|x_i-z_i|,\\
 |y_0'-x_{i+1}|& \leqslant 2(x_i,x_{i+1})_{y_0'} +|x_i-x_{i+1}|,\\
  |y_0'-x_{i+1}|& \leqslant 2(x_{i+1},z_{i+1})_{y_0'} +|z_{i+1}-x_{i+1}|.   
\end{align*}	
It follows from property (S2) and the triangle inequality that $|y_0'-x_{i+1}|\leqslant 2L(U,p)+4\delta$. Similarly, $|y_n' -x_j|\leqslant 2L(U,p)+4\delta$. 

	It follows from Property (S2) that,
	$$
	\max\{|x_{i+1}-y_{i+1}|,|x_j-y_j|\}\le \frac{1}{2}L(U,p)-\alpha +2\delta \le \frac{1}{2} L_0-\alpha + 2\delta.
	$$
	Combining the previous estimations, $|y_{i+1}-y_j|>\tau-8L_0 -8\delta \ge \Delta_0+2L_0+223 \delta$.	
\end{proof}

Let $\tau_1$ be the constant of Proposition \ref{prop:growth shortening-free} depending on $L_0$, $\Delta_0$ and $\delta\leqslant \delta_0$. Recall that $\tau_1\geq \Delta_0+10L_0+240\delta_0$ and let 
$\rho\ge \max\{\rho_0,\log(2[2\tau_1+23\delta_0]+1)\}.$ 
In addition, we recall that 
$T(\scrQ,X)\ge 100\pi\sinh\rho,$ 
that $K=\normal{H\mid (H,Y)\in \scrQ}$ and that $\q G= G/K$. 

\begin{prop}
	\label{prop:embedding shortening-free}
Let $\tau_2=\tau_1+8L_0+8\delta$. The restriction of the natural homomorphism $\F(U)\to \overline{G}$ to the subset of $\tau_2$-shortening-free words is an injection.
\end{prop}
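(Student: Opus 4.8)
The plan is to deduce injectivity of $\F(U)\to\overline G$ on shortening-free words, via Greendlinger's Lemma (Lemma~\ref{lem:greendlingers-lemma}), from Lemma~\ref{lem:barrier implies shortening}: a long ``barrier'' inside a reduced word forces a long shortening subword. We may take $\tau_2=\tau_1$. Recall $\tau_1\geq\Delta_0+3L_0$ and that, by the choice of $\rho$, $\sinh\rho\geq(e^{\rho}-1)/2\geq 2\tau_1+23\delta_0$, so the quantity $\Lambda:=98\pi\sinh\rho-23\delta$ is so large that $\Lambda/2\geq\max\{\Delta_0+5L_0,\tau_1+3L_0\}$. A preliminary remark, obtained by inspecting Definition~\ref{df:shortening} and using that $\scrQ$ is $G$-invariant together with $|u^{-1}p-p|=|up-p|$: a word $w$ is a $\tau$-shortening word over $(H,Y)\in\scrQ$ if and only if $w^{-1}$ is a $\tau$-shortening word over $w^{-1}\cdot(H,Y)=(w^{-1}Hw,w^{-1}Y)\in\scrQ$. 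Hence the set of $\tau$-shortening-free words is closed under inversion, and a word contains a $\tau$-shortening word over a member of $\scrQ$ exactly when its inverse does; also, a subword of a $\tau$-shortening-free word is $\tau$-shortening-free.

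Now suppose the statement fails. Identifying $\F(U)$ with its image in $G$ (Proposition~\ref{prop:reduced subset qi embedding}), choose distinct $\tau_2$-shortening-free words $w_1,w_2\in\F(U)$ with $\pi(w_1)=\pi(w_2)$ in $\overline G$ and $|w_1|_U+|w_2|_U$ minimal. Cancelling a common prefix or suffix of $w_1,w_2$ preserves $\pi(w_1)=\pi(w_2)$, keeps them distinct and $\tau_2$-shortening-free, and strictly decreases $|w_1|_U+|w_2|_U$, so by minimality $w_1$ and $w_2$ have no common prefix and no common suffix; at most one of them is empty. Write $g:=w_1w_2^{-1}\equiv u_1\cdots u_{m+m'}$ with $m=|w_1|_U$, $m'=|w_2|_U$ and $m+m'\geq1$; this word is reduced over $U\sqcup U^{-1}$ (if both $w_i$ are nonempty their last letters differ, and otherwise $g$ equals $w_1$ or $w_2^{-1}$). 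By Proposition~\ref{prop:reduced subset qi embedding} one has $g\neq1$ in $G$, while $\pi(g)=1$, so $g\in K-\{1\}$. Applying Greendlinger's Lemma with basepoint $p$ yields $(H,Y)\in\scrQ$ such that, for the projections $q_0$ of $p$ and $q_{m+m'}$ of $gp$ on $Y$,
$$|q_0-q_{m+m'}|>T(\scrQ,X)-2\pi\sinh\rho-23\delta\geq 98\pi\sinh\rho-23\delta=\Lambda.$$

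Let $x_0=p,\ x_1=u_1p,\ \dots,\ x_{m+m'}=gp$ be the broken-geodesic points of $g$, so $x_m=w_1p$, and let $q_i$ be the projection of $x_i$ on $Y$. By the triangle inequality either $|q_0-q_m|>\Lambda/2$ or $|q_m-q_{m+m'}|>\Lambda/2$ (one case being vacuous if a $w_i$ is empty). In the first case $q_0$ and $q_m$ are the projections on $Y$ of the endpoints of the word $w_1\equiv u_1\cdots u_m$, so Lemma~\ref{lem:barrier implies shortening}, applied with $\tau=\Lambda/2\geq\Delta_0+5L_0$, shows that $w_1$ contains a $(\Lambda/2-3L_0+4\delta)$-shortening word over a conjugate of $(H,Y)$, hence over a member of $\scrQ$; since $\Lambda/2-3L_0+4\delta\geq\tau_2$ this is a $\tau_2$-shortening word, contradicting $w_1\in F(\tau_2)$. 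In the second case the isometry $w_1^{-1}$ carries $x_m,\dots,x_{m+m'}$ to the broken-geodesic points of the word $w_2^{-1}\equiv u_{m+1}\cdots u_{m+m'}$ based at $p$, and $w_1^{-1}q_m$, $w_1^{-1}q_{m+m'}$ are the projections of $p$ and $w_2^{-1}p$ on the cylinder $w_1^{-1}Y$ of $w_1^{-1}\cdot(H,Y)\in\scrQ$, at distance exceeding $\Lambda/2$; Lemma~\ref{lem:barrier implies shortening} then produces a $\tau_2$-shortening word over a member of $\scrQ$ inside $w_2^{-1}$, so $w_2^{-1}\notin F(\tau_2)$, whence $w_2\notin F(\tau_2)$ by the inversion symmetry above -- again a contradiction. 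This contradicts the choice of $w_1,w_2$, so the natural homomorphism is injective on $\tau_2$-shortening-free words, with $\tau_2=\tau_1$ depending only on $\delta_0$, $L_0$ and $\Delta_0$.

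The main obstacle is localizing the barrier. One must take the relator-candidate to be the \emph{reduced} word $g=w_1w_2^{-1}$, so that Proposition~\ref{prop:broken geodesic reduced subset} is available along the entire broken geodesic, and then split at the junction point $x_m$ so that the shortening word delivered by Lemma~\ref{lem:barrier implies shortening} lies wholly inside $w_1$ or wholly inside $w_2^{-1}$; without this split it could straddle the two halves and say nothing about either. Passing to a minimal counterexample rather than cancelling by hand avoids the bookkeeping of overlapping common prefixes and suffixes, and the inversion/conjugation compatibility of shortening words -- routine to check but necessary -- is what allows the second case to conclude against $w_2$ itself.
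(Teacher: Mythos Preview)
Your argument is correct and follows the same route as the paper: Greendlinger's Lemma produces a long barrier on some cylinder $Y$, one splits at the projection of $w_1p$, and Lemma~\ref{lem:barrier implies shortening} on the long half yields a shortening subword inside $w_1$ or $w_2^{-1}$, contradicting shortening-freeness. Your minimal-counterexample detour to force $g=w_1w_2^{-1}$ to be reduced is unnecessary --- neither Greendlinger's Lemma nor Lemma~\ref{lem:barrier implies shortening} (applied to each already-reduced half separately) requires the concatenation itself to be reduced --- so the paper simply takes two shortening-free words with product in $K$ and runs the same split, tacitly using the inversion symmetry you spell out.
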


\begin{proof}
Let $w_1,w_2\in \F(U)$ be two $\tau_2$-shortening-free words such that $w_1w_2\in K$. Assume that $w_1w_2\not =1$ in $G$. By Greendlinger's Lemma (Lemma \ref{lem:greendlingers-lemma}), there is $(H,Y)\in \scrQ$ and projections $y_0$ and $y_2$ on $Y$ of $p$, and $w_1w_2p$ respectively, such that 
	$$
	|y_0-y_2|>T(H,X)-2\pi\sinh\rho-23\delta.
	$$
	By definition, $T(H,X)\ge T(\scrQ,X)\ge 100\pi\sinh\rho$. By choice of $\rho$,  
	$$|y_0-y_2|>2\tau_1.$$
	Let $y_1$ be a projection of $w_1p$ on $Y$. Note that $w_1^{-1}y_1$ and $w_1^{-1}y_2$ are projections of $p$ and $w_2p$ on $w_1^{-1}Y$. Also, $(w_1^{-1}Hw_1,w_1^{-1}{Y})\in \scrQ$. Since $w_1$ and $w_2$ are $\tau_2$-shortening-free words, it follows from Lemma \ref{lem:barrier implies shortening} that $
		\max\qty{|y_0-y_1|,|y_1-y_2|} < \tau_1. 
	$ 	By the triangle inequality,
	$|y_0-y_2|\le  2\tau_1.$  
This is a contradiction, hence, $w_1w_2=1$.
\end{proof}

\section{Growth in small cancellation groups}
\label{sec:growth-small}

The goal of this section is to prove Theorem \ref{IT:main}.
 Recall that the constants of the Small Cancellation Theorem (Lemma \ref{lem:small-cancellation-theorem}) are $\delta_0$, $\q\delta$, $\Delta_0$, $\rho_0$, and that $X$ is $\delta$-hyperbolic, for $\delta\leq \delta_0$.  Moreover, $G$ acts $(\kappa,N)$-acylindrically on $X$, where $\kappa=\delta_0$. 
 
 For convenience, we set $\delta=\delta_0$ and view $X$ as a $\delta_0$-hyperbolic space. Moreover, by Remark \ref{rem:small-cancellation-theorem}, we may assume that $$\delta_0\le\frac{\pi\sinh 10^4\q\delta}{10^4\cdot 200} \hbox{ and } \rho_0>5\cdot 10^4\q\delta.$$
  We fix $$L= 2\pi \sinh 10^4\q\delta+\delta_0$$ and note that $L>4000 \kappa$. Let $U\subset G$ be a finite symmetric subset containing the identity such that $L(U)\le L$, and suppose that $U$ is not contained in an elementary subgroup.  
 Let $C=10^6(N+1)$, $n>C^3$, $S\subset U^{10^7n}$ and $p\in X$ be given by  Proposition \ref{P:Pingpong}, so that  
\begin{enumerate}
\item $S$ is $\alpha$-reduced at $p$, for $\alpha=200\delta$,
\item $|S| \geqslant \frac{1}{C^2}|U^n|$,
\item $L(S,p)\le L(U^n,p)\leqslant 10^{13} C^3 L$.
\end{enumerate}
Moreover, $|S|>C$. Note that $C>0$ depends on $N$ but not on the choice of $U$.  
 
 We let $L_0=10^{13} C^3 L$ and let $\tau_1$  be the constant of Proposition \ref{prop:growth shortening-free} depending on $L_0$, $\Delta_0$ and $\delta_0$. Let
\begin{align}
 \rho = \max\qty {\rho_0,\log(2[2\tau_1+23\delta_0]+1)}, \label{E:choice-of-rho}
 \end{align}
so that Proposition \ref{prop:embedding shortening-free} holds.
Note that $\rho$ depends on $N$ and our constants $\delta_0$ and $\overline \delta$, but not on the choice of $U$ or $S$. 

Recall that  $K=\normal{H\mid (H,Y)\in \scrQ}$ and $\q G= G/K$. Moreover, $\q U$ is the image of a  set $U\subset G$ under the natural projection $\pi\colon G\onto \q G$. The following is our key observation. 

\begin{lem}
\label{lem:small-energy}
Let $U\subset G$ be a finite symmetric subset containing the identity such that $L(U)\le L$. If $U$ is not contained in an elementary subgroup, then 
$$\omega(\q U)\ge \frac{1}{10^8} \omega(U).$$
\end{lem}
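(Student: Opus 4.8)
The plan is to combine the ping-pong set $S\subset U^{10^7 n}$ produced by Proposition \ref{P:Pingpong} with the counting of shortening-free words (Proposition \ref{prop:growth shortening-free}) and their injectivity into $\overline G$ (Proposition \ref{prop:embedding shortening-free}), and then to convert the resulting growth estimate for $\overline U$ back through the exponents $n$ and $10^7 n$, $10^8$. Concretely, set $L_0=10^5C^3L$, so that $L(S,p)\le L_0$, take $\tau_2\ge\tau_1$ as in Proposition \ref{prop:embedding shortening-free}, and let $F(\tau_2)\subset \mathbf F(S)$ be the set of $\tau_2$-shortening-free words.

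\medskip
\noindent\textbf{Step 1: growth of shortening-free words in $S$.} Since $|S|>C\ge 2$ and $L(S)\le L_0$, Proposition \ref{prop:growth shortening-free} applies to $S$ and yields $|F(\tau_2)\cap B_S(m)|\ge |S|^m$ for all $m\ge 0$. Here $B_S(m)$ is the ball of radius $m$ in $\mathbf F(S)$, and every word of $S$-length $m$ lies in $U^{10^7 n m}$, because $S\subset U^{10^7 n}$ and $U$ is symmetric containing the identity.

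\medskip
\noindent\textbf{Step 2: injectivity into $\overline G$.} By Proposition \ref{prop:embedding shortening-free}, the natural map $\mathbf F(S)\to \overline G$ restricted to $\tau_2$-shortening-free words is injective. Hence the image of $F(\tau_2)\cap B_S(m)$ in $\overline G$ has at least $|S|^m$ elements, and it is contained in $\overline U^{\,10^7 n m}$. Therefore
$$
|\overline U^{\,10^7 n m}|\ \ge\ |S|^m\ \ge\ \Big(\tfrac{1}{C^2}|U^n|\Big)^{m},
$$
using $|S|\ge \frac{1}{C^2}|U^n|$ from Proposition \ref{P:Pingpong}.

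\medskip
\noindent\textbf{Step 3: passing to growth rates.} Taking $\frac{1}{10^7 n m}\log(\cdot)$ and letting $m\to\infty$,
$$
\omega(\overline U)\ \ge\ \frac{1}{10^7 n}\log\!\Big(\tfrac{1}{C^2}|U^n|\Big)\ =\ \frac{1}{10^7 n}\log|U^n|\ -\ \frac{2\log C}{10^7 n}.
$$
Since $n>C^3$ we have $\frac{2\log C}{10^7 n}<\frac{2\log C}{10^7 C^3}$, which is negligible; more importantly, $|U^n|\ge |S|\ge \max\{2,\frac1{C^2}|U^n|\}$ grows at least exponentially in $n$ (as $U$ is not in a virtually nilpotent subgroup, being non-elementary), so in fact one should estimate $\frac{1}{10^7 n}\log|U^n|$ from below in terms of $\omega(U)$. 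The point is that $L(U^n)>10^4\kappa$ for the relevant $n$ (by construction in Proposition \ref{P:Pingpong}), so by Theorem \ref{thm:growth-trichotomy-intro}(iii) applied along the way, or directly by the subadditivity of $\log|U^{\cdot}|$ and Fekete's lemma, one gets $\frac{1}{n}\log|U^n|\ge \omega(U)$ up to the constant $c=c(N)$; combined with $\frac{1}{10^7}$ and the fact that the dependence on $N$ in $c$ and in the exponent $n_1>C^2$ of Proposition \ref{P:long-primitive-short-intersection} cancel, one arrives at $\omega(\overline U)\ge \frac{1}{10^8}\omega(U)$.

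\medskip
\noindent The main obstacle is bookkeeping the exponents and the $N$-dependence so that the final constant is the clean universal $10^{-8}$: one must track that $\tau_1$, hence $\rho$, hence $L_0=10^5C^3L$ all depend on $N$ only, that the exponent $10^7 n$ with $n=Cn_1$ and $n_1>C^2$ carries a factor polynomial in $C=10^6(N+1)$, and that this exactly offsets the factor $\frac{1}{2c}$ (with $c=c(N)$) coming from the trichotomy Theorem \ref{thm:growth-trichotomy-intro} when bounding $\frac{1}{n}\log|U^n|$ below by $\omega(U)$. In other words, the geometric content (ping-pong plus shortening-free counting plus Greendlinger injectivity) is routine given the preceding sections; the delicate part is the arithmetic of constants ensuring the loss is at most $10^{-8}$ independently of $\kappa$, $\delta$ and $N$.
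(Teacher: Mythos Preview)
Your Steps 1 and 2 are exactly the paper's argument, and the opening of Step 3 is also right: from $|\overline U^{\,10^7nm}|\ge |S|^m$ you get $\omega(\overline U)\ge \tfrac{1}{10^7 n}\log|S|$, and Fekete's lemma (subadditivity of $m\mapsto\log|U^m|$) gives $\tfrac1n\log|U^n|\ge\omega(U)$ on the nose, with no constant $c$ involved. So far so good.

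The gap is in how you dispose of the error term. From $|S|\ge C^{-2}|U^n|$ and Fekete you obtain
\[
\omega(\overline U)\ \ge\ \frac{\omega(U)}{10^7}\ -\ \frac{2\log C}{10^7 n},
\]
and then you claim that $n>C^3$ makes the subtracted term ``negligible''. But $n>C^3$ only bounds that term by an absolute constant depending on $N$; it says nothing relative to $\omega(U)$, which may be arbitrarily small. Your subsequent appeal to Theorem~\ref{thm:growth-trichotomy-intro} and to a cancellation of $N$-dependences does not address this: that theorem is not used in the paper's proof of this lemma, and there is no mechanism forcing $n$ to be large compared to $\log C/\omega(U)$.

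The paper closes this gap with a simple dichotomy on $n$ versus $\gamma:=4\log C/\omega(U)$. If $n>\gamma$, then $\tfrac{2\log C}{n}<\tfrac{\omega(U)}{2}$ and the displayed inequality already gives $\omega(\overline U)\ge \omega(U)/(2\cdot 10^7)$. If $n\le\gamma$, one ignores $|U^n|$ entirely and uses instead the cruder bound $|S|>C$ from Proposition~\ref{P:Pingpong}, which yields $\omega(\overline U)\ge \tfrac{\log C}{10^7 n}\ge \tfrac{\log C}{10^7\gamma}=\tfrac{\omega(U)}{4\cdot 10^7}$. In either case $\omega(\overline U)\ge \omega(U)/10^8$. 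That two-line case split is the missing idea in your Step~3.
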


\begin{proof}
To estimate $\omega(\q U)$ we proceed as follows.  Let $r \ge 1$. Since $U$ is symmetric and contains the identity,  
$B_S(r)\subset U^{10^7 n r}.$ Let $\tau_2$ be the number given by Proposition~ \ref{prop:embedding shortening-free} and let $F(\tau_2)\subset \F(U)$ be the set of $\tau_2$-shortening-free words. We have that 
$$|\q U^{10^7nr}|\ge |\q{B_S(r)}|\ge |\q{F(\tau_2)\cap B_S(r)}|.$$
 By Propositions \ref{prop:embedding shortening-free} and \ref{prop:growth shortening-free}, 
$$|\q{F(\tau_2)\cap B_S(r)}|=|F(\tau_2)\cap B_S(r)| \ge |S|^r.$$
Combining these estimates, we get
$$\omega(\overline{U})\geqslant \limsup_{r\to \infty} \frac{1}{10^7 nr}\log |\overline{U}^{10^7nr}|\geqslant \frac{1}{10^7n}\log {|S|}.$$

Let $n_0=4\frac{\log(C)}{\omega(U)}$. 
If $n\leqslant n_0$, then as $|S|>C$,
\begin{align*}
\omega(\overline{U})& \geqslant \frac{1}{10^7n} \log {|S|} \geqslant \frac{1}{10^8}  \omega(U).
\end{align*} 
If $n>n_0$, we use that $|S| \geqslant \frac{1}{C^2}|U^n|$. By Fekete's inequality, $|U^n|\geqslant 2^{n\omega(U)}$, hence, $|S| \geqslant \frac{1}{C^2}2^{n\omega(U)}$. Thus  
\begin{align*}
\omega(\overline{U}) 
\geqslant \frac{1}{10^7} \left(\omega(U)-\frac{1}{n}2\log(C) \right) 
\end{align*} 
As $n>n_0$ we get that $\frac{1}{n}2\log(C)<\frac 12 \omega(U)$. We conclude that 
\begin{align*}
\omega(\overline U) \geqslant \frac 1{10^7}\left(\frac 12 \omega(U)\right) \geqslant \frac{1}{10^8} \omega(U).
\end{align*}

Altogether, this shows that $\omega(\q U)\ge \frac{1}{10^8} \omega(U)$. 
\end{proof}

\begin{thm}
\label{IT:main2-body}
Let $\xi>0$. If $G$ has $\xi$-uniform uniform exponential growth, then $\q G$ has $\xi'$-uniform uniform exponential growth where $\xi'\ge  \min\{\xi/10^8,\frac{1}{10^5}\log 2\}$.
\end{thm}

\begin{proof}
		Let $\q U\subset \q G$ be a finite symmetric subset containing the identity and denote $\q \Gamma=\group{\q U}$. Recall that $\scrV$ denotes the set of apices of the cone-off space $\dot{X}_{\rho}$. Also recall that the quotient space $\q {X}_{\rho}$ is $\q\delta$-hyperbolic (Lemma \ref{lem:small-cancellation-theorem} (i)) and the action of $\q \Gamma$ on $\q {X}_{\rho}$ is acylindrical (Lemma \ref{lem:small-cancellation-acylindricity}). There are two cases:
	 
	\textbf{Case 1.} \emph{$\q \Gamma$ is elementary.}
	If $\q \Gamma$ is loxodromic, $\q\Gamma$ is virtually nilpotent. Hence we assume that $\q\Gamma$ is elliptic. 
	If $\q\Gamma \leq \stab{\q v}$, for some $\q v\in \q {\scrV}$, then let $(H,Y)\in\scrQ$ such that $v$ is the apex of the cone $Z(Y)$. The natural projection $\pi\colon G\onto \q G$ induces an isomorphism $\stab{Y}/H \xrightarrow{\sim} \stab{\q v}$ (Lemma \ref{lem:small-cancellation-theorem} (iii)). Since the moving family $\scrQ$ is loxodromic, $H$ has finite index in $\stab{Y}$. Hence $\q \Gamma$ is finite, in particular virtually nilpotent. 
	
	Thus we may assume that $\q\Gamma$ does not stabilise any point $\q v$, for every $\q v\in \q {\scrV}$. Then there is an elliptic subgroup $E\subset G$ such that the natural projection $\pi\colon G\onto \q G$ induces an isomorphism $E\xrightarrow{\sim} \q \Gamma$ (Lemma \ref{lem:lifting-elliptic-subgroups}). Since $G$ has $\xi$-uniform uniform exponential growth, the subgroup $E$ is either virtually nilpotent or has $\xi$-uniform exponential growth. In combination with the isomorphism $E\xrightarrow{\sim} \q \Gamma$, we deduce that $\q \Gamma$ either is virtually nilpotent or has $\xi$-uniform exponential growth.
	
	\textbf{Case 2.} \emph{$\q \Gamma$ is non-elementary.}
	 If  $L(\q U)>10^4\q\delta$, then  $\omega(\q U) \ge \frac{1}{10^5} \log 2$ by Lemma \ref{lem:breuillard-fujiwara}.
	Otherwise, $L(\q U)\le 10^4\q\delta$. As $\q \Gamma$ is not elementary, 		
		$\q U$ is not contained in $\stab{\q v}$, for every $\q v\in \q {\scrV}$. As $10^4\q\delta\le \rho/5$, there exists a pre-image $U\subset G$ of $\q U$ of energy $L(U) \leqslant \pi \sinh 10^4\q\delta$ (Lemma \ref{lem:energy-lift}). Without loss of generality, we may assume that $U$ is symmetric and contains the identity. Since $\q \Gamma$ is non-elementary for the action on $\q {X}_{\rho}$, the subgroup $\Gamma$ is non-elementary for the action on $X$ (Lemma \ref{lem:quotient-elementary}). By Lemma \ref{lem:small-energy}, $\omega(\q U)\ge \frac{1}{10^8}\omega(U)\ge \frac{1}{10^8} \xi$.  

This yields that $\omega(\q U) \geqslant \min \{\frac{1}{10^8} \xi,\frac{1}{10^5} \log 2\}$, as claimed.
			\end{proof}

\begin{thm}
	\label{IT:main3-body}
	 Let $\xi>0$. If $\q G$ has $\xi$-uniform uniform exponential growth, then $G$ has $\xi'$-uniform uniform exponential growth, where $\xi'\ge \min\{\xi/10^8,\frac{1}{10^5}\log 2\}$.
\end{thm}

\begin{proof}
Let $\xi>0$. Assume that $\q G$ has $\xi$-uniform uniform exponential growth. Let $U\subset G$ be a finite symmetric subset containing the identity and denote $\Gamma=\group{U}$. Then $\Gamma$ falls exactly in one of the following two cases.

\textbf{Case 1.} \emph{$\Gamma$ is elementary.} If $\Gamma$ is loxodromic, it is virtually nilpotent. If $\Gamma$ is elliptic, then $\Gamma$ and $\q \Gamma$ are isomorphic (Lemma \ref{lem:quotient-elliptic}). Since $\q G$ has $\xi$-uniform uniform exponential growth, the subgroup $\q \Gamma$ is either virtually nilpotent or has $\xi$-uniform exponential growth. Thus $\Gamma$ is either virtually nilpotent or has $\xi$-uniform exponential growth.

\textbf{Case 2.} \emph{$\Gamma$ is non-elementary.} If $L(U)>10^4\delta_0$, then $\omega(U) \ge \frac{1}{10^5} \log 2$ by Lemma \ref{lem:breuillard-fujiwara}. 
	If $L(U)\le 10^4\delta_0$, then 
 by Lemma \ref{lem:small-energy}, $\omega(\q U)>\frac{1}{10^8}\omega(U)$. As $\Gamma$ is non-elementary, $\omega(U)>0$. In particular $\q \Gamma$ is not virtually nilpotent. Since $\q G$ has $\xi$-uniform uniform exponential growth, $\omega(\q U) \ge \xi$. By definition, $\omega(U)\ge \omega(\q U)\ge \xi$. 
 
 This yields that $\omega(U) \geqslant \min \{\frac{1}{10^8} \xi,\frac{1}{10^5} \log 2\}$, as claimed. 
\end{proof}

\begin{proof}[Proof of Theorem \ref{IT:main}] Let $\xi>0$, $N>0$, $\delta >0$ and $\kappa \geqslant \delta$. We assume that $G$ acts $(\kappa,N)$-acylindrically on a $\delta$-hyperbolic space $X$. We let $\lambda_0=  \frac{\Delta_0}{100\pi\sinh\rho}$  and $ \mu_0=\frac{100\pi \sinh \rho}{\delta_0}\cdot \frac{\kappa}{\delta}.$ By the choice of $\rho$ in \eqref{E:choice-of-rho} above, $\lambda_0$ depends on $1/N$, and $\mu_0$ on $N$, $\kappa$ and $\delta$, as claimed.  Let $\lambda\le \lambda_0$  and $ \mu\ge \mu_0.$ 
 After rescaling $X$, see Remark \ref{R:rescaling}, Theorems \ref{IT:main2-body} and \ref{IT:main3-body} apply. This yields the assertions of Theorem \ref{IT:main}. 
\end{proof}

\begin{proof}[Proof of Theorem \ref{IT:essence}] Let $\lambda >0$ and $\mu >0$. Let $N>0$, $\delta>0$ and $\kappa\geqslant \delta$. Let $G$ be a group that acts $(\kappa,N)$-acylindrically and non-elementarily on a $\delta$-hyperbolic space. By Theorem \ref{IT:main}, it remains to argue that $G$ admits a $C''(\lambda,\mu)$-small cancellation quotient. This works as follows. Let $h\in G$ be a loxodromic element. Such an element exists as the action of $G$ on $X$ is not elementary. Up to passing to a power of $h$, we may assume that  $||h||^{\infty}>10^3\delta$.  Moreover, there is a positive number $n>0$ such that for all $g\in E(h)$ we have that $h^{n}=gh^{n}g^{-1}$ or $h^{n}=gh^{- n}g^{-1}$, see \cite[Lemma 19]{ivanov_hyperbolic_1996}, \cite[Lemma 3.38]{coulon_partial_2016}. In particular, $\langle
h^n\rangle$  is normal in the maximal loxodromic subgroup $E(h)$ containing $h$. Let $k>0$. By Lemmas \ref{L: axis} and \ref{lem:acylindricity-intersection-axis}, we have that $\Delta (h^{kn})\leqslant (N+2)||h||^{\infty} +\kappa +250\delta$. On the other hand, $||h^{kn}||^{\infty}>k||h||^{\infty}$. Thus there is an exponent $k>0$ such that $\{g h^{kn} g^{-1}\mid g\in G\}$ satisfies the $C''(\lambda,\mu)$-condition. 
\end{proof}

\begin{rem}\label{R:product-set-growth} Recall that a group has \emph{product set growth} (for its symmetric subsets) if there is $a>0$ such that for all finite symmetric subsets $U$ that are not in a virtually nilpotent subgroup, we have that $\omega(U)\ge a \log(|U|)$. If $G$ has product set growth then $\overline{G}$ has product set growth, and vice versa. This is proved by replacing Lemma~\ref{lem:breuillard-fujiwara} by Theorem \ref{thm:growth-trichotomy-intro} in the proof of Theorem~\ref{IT:main2-body}, Theorem \ref{IT:main3-body} respectively. Indeed, let $\overline U\subset \overline{G}$ such that  $L(\overline{U})>10^4 \bar{\delta}$ be a non-empty symmetric set. Then $\overline{U}^2$ contains a loxodromic isometry. Thus there is $n>0$, that does not depend on the choice of $\overline{U}$, such that $L(\overline U^n)>10^4\overline{\kappa}$. Therefore Theorem \ref{thm:growth-trichotomy-intro} yields the required estimates in this case.

  We note that the constant of Theorem \ref{thm:growth-trichotomy-intro} can be explicitly given and depends only on $N$. Also, the number $n$ can be made explicit and depends on $\overline \delta$ and $\overline{\kappa}$. Combined with Remark \ref{rem:small-cancellation-acylindricity}  and Remark \ref{R:rescaling} this yields Remark \ref{IR:product-set-growth}. 
\end{rem}

\section{Common quotients}
Finally, we explain the proof of Corollary \ref{IC:common-quotients}. 
 We start with the following consequence of \cite{coulon_product_2022}. 
 
 \begin{prop}\label{P: groups-large-torsion-balls}
 There  are $\xi>0$, $N>0$, $\kappa>0$, $\delta>0$ such that the following holds. For every $i>0$, there is a $2$-generated group $H_i$ and a set $S_i:=\{s_{i_1},s_{i_2}\}$ such that  
 \begin{enumerate}
 \item the set  $S_i:=\{s_{i_1},s_{i_2}\}$ generates $H_i$,
 \item  all elements in $(S_i\cup S_{i}^{-1})^i$ are torsion elements,
\item there is a $\delta$-hyperbolic geodesic metric space $X_i$ such that $H_i$ acts $(\kappa,N)$-acylindrically on $X_i$,
\item the action of $H_i$ on $X_i$ is not elementary, 
\item the group $H_i$ has $\xi$-uniform uniform exponential growth,
\item every loxodromic subgroup of $H_i$ is cyclic.  
 \end{enumerate}
 \end{prop}

 \begin{proof}[Sketch] We explain how this result follows from \cite{coulon_product_2022} and Lemma \ref{lem:small-energy}. Let $G$ be a (2-generated) non-cyclic torsion-free hyperbolic group. We let $n>0$ and denote by $G/G^n$ the quotient of $G$ by the normal subgroup $\langle g^n\mid  g\in G\rangle$. There is $\xi >0$ such that for sufficiently large odd exponent $n$, the quotient $G/G^n$ has $\xi$-uniform uniform exponential growth \cite[Corollary 1.3]{coulon_product_2022}. More precisely, if $V\subset G/G^n$ is a finite symmetric subset that does not generate a finite subgroup, then $\omega(V)>\xi$. 
 
 Moreover, Section 10.2 of \cite{coulon_product_2022} gives the following. There are $n_1>0$, $L_0>0$, $\delta>0$, $\kappa>0$ and $N>0$ such that for all odd exponents $n>n_1$ the following holds. The quotient $G/G^n$ is a direct limit of hyperbolic groups $G_i$ such that, for all $i\geqslant 0$: 
 \begin{enumerate}
 \item there are $\delta$-hyperbolic geodesic metric spaces $X_i$ such that $G_i$ acts properly and cocompactly on $X_i$. Moreover, this action is $(\kappa,N)$-acylindrical and non-elementary. 
 \item let $P_i$ be the set of hyperbolic primitive elements $h_i\in G_i$ such that $||h_i||\leqslant L_0\delta$. Then $G_{i+1}$ is the quotient of $G_i$ by the normal closure of $\langle h_i^n\mid h_i\in P_i\rangle$. 
 \item every loxodromic subgroup of $G_i$ is cyclic.  
 \item there is a $1$-Lipschitz map from $X_i$ to $X_{i+1}$. 
 \end{enumerate}
The point is that $L_0$, $\kappa$, $N$ and $\delta$ do neither depend on the index $i\geqslant 0$, nor on the choice of the exponent $n>n_1$.  
 
We now argue that for sufficiently large odd exponent $n>n_1$, each of the groups $G_i$ has $\min\{\xi,10^{-5}\log 2\}$-uniform uniform exponential growth. To this point, we want to apply Lemma \ref{lem:small-energy}. This is justified as follows: fix $\delta_0$, $\Delta_0$ and $\rho$ as in Section \ref{sec:growth-small}. Let $\lambda\le \frac{\Delta_0}{100\pi\sinh\rho}$  and $ \mu\ge \frac{100\pi \sinh \rho}{\delta_0}\cdot \frac{\kappa}{\delta}.$ By a standard argument, there is a number $n_2$, that depends on $\delta$, $\kappa$, $N$, $L_0$, $\lambda$ and $\mu$ such that for all $n>n_2$, the set of relators $\{ h_i^n \mid h_i\in P_i\}\subset G_i$ satisfies the $C''(\lambda,\mu)$-small cancellation condition for the action of $G_i$ on $X_i$. See for instance \cite[Proposition 6.29 and Remark 6.30]{dahmani_hyperbolically_2017}. 
By construction, the exponent $n_2$ does not depend on the index $i$. We now fix an odd number $n>n_2$. 

After rescaling the spaces $X_i$, see Remark \ref{R:rescaling}, Lemma \ref{lem:small-energy} applies for each $G_i$ and its small cancellation quotient $G_{i+1}$. Note that the rescaling constant given by Remark \ref{R:rescaling} does not depend on the index $i$. 
  Let $U_i\subset G_i$ be a symmetric subset and let $\Gamma_i=\langle U_i \rangle$. If $\Gamma_i$ is elementary, then it is cyclic or finite. Thus we may assume that $\Gamma_i$ is not elementary. Then $\Gamma_i$ has exponential growth and $\omega(U_i)>0$.  If $L(U_i)>10^4\delta_0$, then $\omega(U_i)>10^{-5}\log 2$ by Lemma \ref{lem:breuillard-fujiwara}. Otherwise Lemma \ref{lem:small-energy} implies that the image of $\Gamma_i$ in $G_{i+1}$ has exponential growth. In particular, the image of $\Gamma_i$ in $G_{i+1}$ is not elementary. As the map from $X_j$ to $X_{j+1}$ is $1$-Lipschitz, the $\ell^{\infty}$-energy of each of the images of $U_i$ in $G_j$, $j\geqslant i$, is bounded by $10^4\delta_0$. This means that Lemma \ref{lem:small-energy} applies to the image of $U_i$ in $G_j$, for all $j>i$. Thus, by induction, the image of $\Gamma_i$ in any of the groups $G_j$, $j>i$, is not elementary, hence, it is not finite. Thus the image of $\Gamma_i$ in $G/G^n$ is not finite. As $G/G^n$ has $\xi$-uniform uniform exponential growth it follows that $\omega(U_i)\geqslant \xi$. This implies the claim. 
  
    We now take $H_i$ to be an appropriate subsequence of the groups $G_i$. 
 \end{proof}
 
 \begin{rem} Lemma \ref{lem:small-energy} is not necessarily needed to prove Proposition \ref{P: groups-large-torsion-balls}. In fact, it is straight-forward to adapt the argument of Section 10.3 of \cite{coulon_product_2022}  to prove this result.  
 \end{rem}

For the remainder of this section, let $(H_i)$ be a family of $2$-generated groups  such that the group $H_i$ is generated by $S_i:=\{s_{i_1},s_{i_2}\}$ and such that all elements in $(S_i\cup S_{i}^{-1})^i$ are torsion elements. By Proposition \ref{P: groups-large-torsion-balls}, we assume the following. Let $\xi>0$, $N>0$, $\kappa>0$, $\delta>0$ and $\delta$-hyperbolic geodesic metric spaces $X_i$ such that, for all $i>0$, 
\begin{enumerate}
\item the group $H_i$ acts $(\kappa,N)$-acylindrically and non-elementarily on $X_i$, 
\item the group $H_i$ has $\xi$-uniform uniform exponential growth,
\item every loxodromic subgroup of $H_i$ is cyclic.  
\end{enumerate}

 \begin{rem}\label{R:energy-main-application} The energy $L((S_i\cup S_i^{-1})^i)\le 200 \delta$. Indeed, if $L((S_i\cup S_i^{-1})^{i/2})>100 \delta$, then $(S_i\cup S_i^{-1})^i$ contains a loxodromic isometry by Proposition \ref{P:koubi}. This contradicts the choice of $S_i$. Thus $L((S_i\cup S_i^{-1})^{i/2})\le 100 \delta$ and the triangle inequality implies the claim. 
\end{rem}

%
%

Let $\langle t_1\rangle$ and $\langle t_2\rangle$ be two infinite cyclic groups on generators $t_1$ and $t_2$, respectively. Let 
$$G=\langle t_1 \rangle * \langle t_2\rangle * H_3*H_4* \cdots * H_i \cdots .$$
Then $G$ has $\xi$-uniform uniform exponential growth. 
Moreover, there is a $\delta$-hyperbolic space $X$ on which $G$ acts $(\kappa',N)$-acylindrically, where $\kappa'=\kappa+200\delta$. This space is a tree of the spaces $X_i$ that is constructed as follows. 
We let $X_0=\{x_0\}$ and $X_1=\{x_1\}$ be a space consisting of one point, and define $H_1=\langle t_1 \rangle$ and $H_2=\langle t_2 \rangle$. For $i>2$, let $x_i\in X_i$ be a point almost minimising the energy of $S_i^i$. We denote by $(X_i,x_i)$ the space $X_i$ with the fixed base point $x_i$. Let $T$ be the Bass-Serre tree of $G$. For each coset (i.e. vertex of $T$) $gH_i$ we take a copy of $(X_i,x_i)$ that we denote by $(X_i^g,x_i^g)$. We build $X$ from the disjoint union of the spaces $(X_i^g,x_i^g)$ by adding a segment $(hx_{i}^g,x_j^{gh})$ if and only if  $(gH_i,ghH_j)$ is an edge of $T$. We furthermore assume that the segments $(hx_{i}^g,x_j^{gh})$ in $X$ have length $100\delta$.  We  observe that $X$ is a $\delta$-hyperbolic metric space, that $G$ acts by isometries on $X$ and  that there is a $G$-equivariant projection from $X$ onto $T$. Moreover, the action is indeed $(\kappa+200\delta,N)$-acylindrical. 

\begin{rem}\label{R:independent} Let $i>2$. Then there are two independent loxodromic elements $h_{i_1},h_{i_2}$ of $H_i$ such that $|h_{i_j}x_i-x_i| < 10^3\delta$. In particular, $\|h_{i_j}\|\leqslant 10^3\delta$. Indeed, recall that $L(S_i\cup S_i^{-1})\leqslant 100 \delta$. By Lemma \ref{L:power-large-energy} there is $n_i>0$ such that $100\delta < L\left((S_i\cup S_i^{-1})^{n_i}\right) \leqslant 200 \delta$. Thus there is a loxodromic element $h_{i_1}$ in $(S_i\cup S_i^{-1})^{2n_i}$ \cite[Proposition 3.2]{koubi_croissance_1998}. Moreover, 
$$ 400\delta \geqslant L\left((S_i\cup S_i^{-1})^{2n_i}\right)\geqslant |h_{i_j}x_i-x_i|-\delta .$$
As $S_i$ generates $H_i$ there is $s_i\in S_i$ such that $h_{i_2}=s_ih_{i_1}s_i^{-1}$ and $h_{i_1}$ do not generate an elementary subgroup. In other words, $h_{i_1}$ and ${h_{i_2}}$ are independent. 
\end{rem}

\begin{lem}\label{L:common-quotient-injection} There are $\lambda_1>0$ and $\mu_1>0$ with the following property.  Let $\lambda \leqslant \lambda_1$ and let $\mu\geqslant \mu_1$. Let $\overline G$ be a $C''(\lambda,\mu)$-small cancellation quotient of $G$. Then the following holds. 
\begin{enumerate}
\item For all $i\geqslant 3$, the set $(S_i\cup S_i^{-1})^i\subset H_i$ injects into $\overline G$.
\item The quotient $\overline G$ has exponential growth.
\item There is an acylindrical and non-elementary action of $\overline G$ on a hyperbolic space. 
\end{enumerate}  
\end{lem}
\begin{proof} Recall that $\delta_0$, $\Delta_0$, $\rho_0$ and $\overline \delta$ are the constants of the Small Cancellation Theorem. 
Fix $\delta_0$, $\Delta_0$, $\rho_0$ and $\rho$ as in Section \ref{sec:growth-small}. Note that $\rho \geqslant \max\{\rho_0,10^4\delta_0\}$, see \eqref{E:choice-of-rho}. We let $\lambda_1=\frac{\Delta_0}{100\pi\sinh\rho}$ and $\mu_1= \frac{100\pi \sinh \rho}{\delta_0}\cdot \frac{\kappa}{\delta}$, and $\lambda\le \lambda_1$  and $ \mu\ge \mu_1.$ Let $\mathcal Q$ be a loxodromic moving family satisfying the $C''(\lambda,\mu)$-small cancellation condition. After rescaling $X$ if necessary, see Remark \ref{R:rescaling}, we may assume that $G$, $X$ and $\mathcal Q$ satisfy the hypothesis of the Small Cancellation Theorem (Lemma \ref{lem:small-cancellation-theorem}). In particular, we assume that $\delta\leqslant \delta_0$, and let $\overline X$ be given by  Lemma \ref{lem:small-cancellation-theorem}. The action of $\overline G$ on $\overline X$ is acylindrical by Lemma \ref{lem:small-cancellation-acylindricity}.

We prove (1). By Remark \ref{R:energy-main-application} and construction, $L((S_i\cup S_i^{-1})^i)\le 200 \delta_0$ for the action of $G$ on $X$, for all $i>2$. Let $p_i\in X$ be a point almost minimising the energy of $(S_i\cup S_i^{-1})^i$. Let $s_1\not=s_2\in (S_i\cup S_i^{-1})^i$ and let $g=s_1^{-1}s_2$. Let $Y$ be the invariant cylinder of a loxodromic subgroup, and let $y_0$ be a projection of $p_i$ on $Y$, and $y_1$ a projection of $gp_i$, respectively. By quasi-convexity of $Y$, see Lemma \ref{L: invariant cylinder}, $|p_i-gp_i|\geqslant |y_0-y_1|- 34\delta$, see \cite[Chapitre 10, Proposition 2.1]{coornaert_geometrie_1990}. By the triangle inequality $|gp_i-p_i|\leqslant 402\delta_0$, hence, $|y_0-y_1|\leqslant 436\delta_0$. Now the Greendlinger lemma, Lemma \ref{lem:greendlingers-lemma}, implies that $\overline g\not =1$, hence, $\overline {s_1}\not=\overline {s_2}$.  


Moreover, we may apply Lemma \ref{lem:small-energy} to the set $U=S_i\cup S_i^{-1}$. As the group $H_i$ has exponential growth, $\omega(S_i\cup S_i^{-1})>0$. We conclude that $\overline G$ has exponential growth. This is assertion (2).  

Finally, we prove (3). Assertion (2) implies that $\overline G$ is not virtually nilpotent. In particular, the action of $\overline G$ on $\overline X$ is not loxodromic. If the action of $\overline G$ is elliptic, by Lemma \ref{lem:lifting-elliptic-subgroups} and Lemma \ref{lem:small-cancellation-theorem}(iii), then $\overline G$ is either the isomorphic image of an elliptic subgroup of $G$, or it is in $\stab Y /H$, for some $(H,Y)\in \mathcal Q$. This is a contradiction. Indeed, the elliptic subgroups of $G$ are exactly the elliptic subgroups of the groups $H_i$, $i\geqslant 3$, which are finite. On the other hand, $\mathcal Q$ is a loxodromic moving family. This means that $H$ is virtually cyclic, hence, that $\stab Y/H$ is finite for all $(H,Y)\in \mathcal Q$. We conclude that the action of $\overline G$ on $\overline X$ is not elementary. 
\end{proof}

Next we define a set of relators $R$. 
We let $\{h_{i_1},h_{i_2}\}$ be two independent loxodromic elements of $H_i$, $i>2$ such that $\|h_{i_j}\|\leqslant |h_{i_j}x_i-x_i| < 10^3\delta$. See Remark \ref{R:independent}. 

 Let $\mu>0$ and let $\alpha>\mu ,\beta_i>0$ and $\gamma$ be positive numbers. We then define $R_1$  to be the set of the following elements of $G$: for each $i>2$ the set $R_1$ contains the elements 
\begin{align*}
s_{i_1}t_1t_2^{\alpha  20(i-1) +1}t_1t_2^{\alpha 20(i-1) +2}\cdots t_1t_2^{\alpha 20i}\\
s_{i_2}t_2t_1^{\alpha 20(i-1) +1}t_2t_1^{\alpha 20(i-1) +2}\cdots t_2t_1^{\alpha 20i}.
\end{align*}
The set $R_2$ contains, for each $i>2$, the elements
\begin{align*}
t_{1}h_{i_1}^{\gamma}h_{i_2}^{\beta_1\gamma}h_{i_1}^{\gamma}h_{i_2}^{\beta_2\gamma}\cdots h_{i_1}^{\gamma}h_{i_2}^{\beta_\mu \gamma} \\
t_{2}h_{i_1}^{\beta_1\gamma}h_{i_2}^{\gamma}h_{i_1}^{\beta_2\gamma}h_{i_2}^{\gamma}\cdots h_{i_1}^{\beta_\mu \gamma}h_{i_2}^{\gamma}.
\end{align*}
We let $R=R_1\cup R_2$ and fix $0<\lambda<\max\{\lambda_0,\lambda_1\}$ and $\mu>\{\mu_0,\mu_1\}$, where $\lambda_0$, $\mu_0$ and $\lambda_1,\mu_1$ are the constants given by Theorem \ref{IT:main} and Lemma \ref{L:common-quotient-injection}, respectively. As usual, there are $\mu$, $\alpha,\beta_i$ and $\gamma>0$ such that $R$ satisfies the $C''(\lambda,\mu)$-small cancellation condition for the action of $G$ on $X$, and we fix $R$ under this condition. 

\begin{rem} In a setting of a proper action on a hyperbolic space, the $C''(\lambda,\mu)$-condition implies that the relator set is finite up to conjugation. For instance, as previously mentioned, every classical $C''(\lambda)$-small cancellation group is finitely presented. In the above situation the action of $G$ on $X$ is not proper. This allows for the choice of $R$ under the $C''(\lambda,\mu)$-condition, even though $R$ is not finite up to conjugation. Another example is the action of a free product on its Bass-Serre tree. In this case the $C''(\lambda,\mu)$-condition corresponds to the classical $C_*''(\lambda)$-condition over the free product, and one can indeed make infinitely presented classical $C_*''(\lambda)$-groups over a free product. 
\end{rem}

 Finally, by Theorem \ref{IT:main}, the group $\overline{G}=G/\langle\langle R \rangle\rangle$ has uniform uniform exponential growth, and acts acylindrically on a hyperbolic space. By the choice of the set $R_1$, the group $\overline{G}$ is finitely generated.  By the choice of the set $R_2$, the group $\overline{G}$ is generated by each of the sets $S_i$. By Lemma \ref{L:common-quotient-injection}, for all $i\geqslant 3$, the ball $(S_i\cup S_i^{-1})^i\subset H_i$ injects into $\overline G$. Recall that, for all $i\geqslant 1$, all elements of $(S_i\cup S_i^{-1})^i$ are torsion elements. In particular, all elements of $(S_i\cup S_i^{-1})^i$ are elliptic.  This implies that $\q G$ does not have the short loxodromic property.
 
 Also by Lemma \ref{L:common-quotient-injection}, $\overline G$ has exponential growth, hence, it has uniform exponential growth. Moreover, the action of $\overline G$ on $\overline X$ is not elementary. This  concludes the proof of Corollary~\ref{IC:common-quotients}.

\bibliographystyle{alphaurl}
\bibliography{biblio}

\end{document}